\DeclareFontFamily{U}{matha}{\hyphenchar\font45}
\DeclareFontShape{U}{matha}{m}{n}{
      <5> <6> <7> <8> <9> <10> gen * matha
      <10.95> matha10 <12> <14.4> <17.28> <20.74> <24.88> matha12
      }{}
\DeclareSymbolFont{matha}{U}{matha}{m}{n}
\DeclareFontFamily{U}{mathx}{\hyphenchar\font45}
\DeclareFontShape{U}{mathx}{m}{n}{
      <5> <6> <7> <8> <9> <10>
      <10.95> <12> <14.4> <17.28> <20.74> <24.88>
      mathx10
      }{}
\DeclareSymbolFont{mathx}{U}{mathx}{m}{n}
\DeclareMathSymbol{\obot}         {2}{matha}{"6B}
\DeclareMathSymbol{\bigobot}       {1}{mathx}{"CB}
\newcommand{\Lie}{\text{Lie}}
\theoremstyle{definition}
\newtheorem{theorem}[equation]{Theorem}
\newtheorem{proposition}[equation]{Proposition}
\newtheorem{notation}[equation]{Notation}
\newtheorem{definition}[equation]{Definition}
\newtheorem{lemma}[equation]{Lemma}
\newtheorem{corollary}[equation]{Corollary}
\newtheorem{remark}[equation]{Remark}
\date{\today}
\newcommand{\mfa}{\mf{a}}
\numberwithin{equation}{section}
\newcommand{\be}{\begin{enumerate}}
\newcommand{\ee}{\end{enumerate}}
\newcommand{\bi}{\begin{itemize}}
\newcommand{\ei}{\end{itemize}}
\newcommand{\beq}{\begin{equation}}
\newcommand{\eeq}{\end{equation}}
\newcommand{\mf}{\mathfrak}
\newcommand{\ra}{\rightarrow }
\def\End{\operatorname{End}}
\def\Aut{\operatorname{Aut}}
\def\U{\operatorname{U}}
\def\SU{\operatorname{SU}}
\def\Hom{\operatorname{Hom}}
\def\Irr{\operatorname{Irr}}
\def\dim{\operatorname{dim}}
\def\id{\operatorname{id}}
\def\Gal{\operatorname{Gal}}
\def\GL{\operatorname{GL}}
\def\SU{\operatorname{SU}}
\def\Sp{\operatorname{Sp}}
\def\Res{\operatorname{Res}}
\def\ind{\operatorname{ind}}
\def\I{\operatorname{I}}
\def\R{\operatorname{R}}
\def\Nrd{\operatorname{Nrd}}
\def\SO{\text{SO}}
\def\CC{\mathbb{C}}
\def\NN{\mathbb{N}}
\def\ZZ{\mathbb{Z}}
\def\bbU{\mathbb{U}}
\def\bbQ{\mathbb{Q}}
\def\bbZ{\mathbb{Z}}
\def\bbR{\mathbb{R}}
\def\bbG{\mathbb{G}}
\def\bbN{\mathbb{N}}
\def\bbP{\mathbb{P}}
\def\bbC{\mathbb{C}}
\def\cY{{\rm Y}}
\def\A{{\rm A}}
\def\B{{\rm B}}
\def\C{{\mathcal{C}}}
\def\D{{\rm D}}
\def\E{{\rm E}}
\def\F{{\rm F}}
\def\G{{\rm G}}
\def\H{{\rm H}}
\def\J{{\rm J}}
\def\K{{\rm K}}
\def\L{{\rm L}}
\def\M{{\rm M}}
\def\N{{\rm N}}
\def\O{{\rm O}}
\def\P{{\rm P}}
\def\Q{{\rm Q}}
\def\SS{{\rm S}}
\def\V{{\rm V}}
\def\W{{\rm W}}
\def\Vv{\mathcal{V}}
\def\Ww{\mathcal{W}}
\def\Yy{\mathcal{Y}}
\def\b{\beta}
\def\d{\delta}
\def\e{\varepsilon}
\def\g{\gamma}
\def\k{\text{k}}
\def\l{\lambda}
\def\o{\mathfrak{o}}
\def\p{\mathfrak{p}}
\def\th{\theta}
\def\w{\varpi}
\def\Ga{\Gamma}
\def\La{\Lambda}
\def\Y{\Upsilon}
\def\>{\geqslant}
\def\<{\leqslant}
\def\daniel#1{\textcolor{black}{#1}}
\def\tG{\widetilde{G}}
\def\tH{\widetilde{H}}
\def\ov#1{\overline{#1}}
\def\({\left(}
\def\){\right)}
\def\presuper#1#2%
\tikzstyle{notestyleraw}=[
\newcommand{\CoefField}{\mathbf{C}}
\def\C{{\rm C}}
\newcommand{\ti}[1]{\tilde{#1}}
\newcommand{\trd}{\text{trd}}
\newcommand{\tr}{\text{tr}}
\newcommand{\mc}{\mathcal}
\newcommand{\tiG}{\ti{\G}}
\newcommand{\tiH}{\ti{\H}}
\newcommand{\tiC}{\ti{\C}}
\newcommand{\tiP}{\ti{\P}}
\newcommand{\mfg}{\mf{g}}
\newcommand{\timfg}{\ti{\mf{g}}}
\newcommand{\timfb}{\ti{\mf{b}}}
\newcommand{\tipsi}{\ti{\psi}}
\newcommand{\timfa}{\ti{\mf{a}}}
\newcommand{\tiJ}{\ti{\J}}
\newcommand{\tiK}{\ti{\K}}
\newcommand{\disc}{\text{disc}}
\newcommand{\loccit}{{\it loc.cit.}}
\newcommand{\bfI}{{\textbf{I}}}
\newcommand{\bfII}{{\textbf{II}}}
\newcommand{\pD}{\mf{p}_\D} 
\newcommand{\depth}{\text{d}}
\newcommand{\Cay}{\text{Cay}}
\newcommand{\Latt}{\text{Latt}}
\newcommand{\ext}{\text{ext}}
\newcommand{\bext}{\mbox{$\b$-$\ext$}}
\def\ainv{(\bar{\ })}
\newcommand{\into}{\hookrightarrow}
\newcommand{\mult}{\text{mult}}
\newcommand{\gl}{\textbf{gl}}
\newcommand{\simarrow}{\stackrel{\sim}{\longrightarrow}}
\newcommand{\1}{\mathbf{1}}
\newcommand{\Br}{\text{Br}}
\providecommand*{\cupdot}{%
  \mathbin{%
    \mathpalette\@cupdot{}%
  }%
}
\newcommand*{\@cupdot}[2]{%
  \ooalign{%
    $\m@th#1\cup$\cr
    \hidewidth$\m@th#1\cdot$\hidewidth
  }%
}
\title{Cuspidal irreducible complex or~$l$-modular representations of quaternionic forms of~$p$-adic classical groups for 
odd~$p$}
\date{\today}
\author{Daniel Skodlerack}
\begin{document}

\begin{abstract}
Given a quaternionic form~$\G$ of a~$p$-adic classical group ($p$ odd) we classify all cuspidal irreducible
representations of~$\G$ with coefficients in an algebraically closed field of characteristic different from~$p$. 
% It is a generalization of the classification in the~$p$-adic classical group case. 
We prove two theorems: At first: Every irreducible cuspidal representation of~$\G$ is induced from a cuspidal type, i.e. from a certain irreducible representation
of a compact open subgroup of~$\G$, constructed from a~$\b$-extension and a cuspidal representation of a finite group.
Secondly we show that two intertwining cuspidal types of~$\G$ are up to equivalence conjugate under some element of~$\G$. [11E57][11E95][20G05][22E50]
% The article is about the representation theory of an inner form~$G$ of a general linear group over a non-archimedean local field. 
% We introduce semisimple characters for~$G$ whose intertwining classes describe conjecturally via Local Langlands correspondence the behavior on wild inertia.
% These characters also play a potential role to understand the classification of irreducible smooth representations of inner forms of classical groups. 
% We prove the intertwining formula for semisimple characters and an intertwining implies conjugacy like theorem. 
% Further we show that endo-parameters for~$G$, i.e. invariants consisting of simple endo-classes and a numerical part, classify the intertwining classes of semisimple 
% characters for~$G$. They should be the counter part for restrictions of Langlands-parameters to wild inertia under Local Langlands correspondence. 
% 
% In the inner form case the intertwining of two semisimple characters
% % induces a canonical map on attached residue algebras. To a semisimple character is attached a point in the Bruhat-Tits building of~$G$. Roughly speaking, two intertwining semisimple characters are conjugate if there is an element of~$G$ which maps these attached points to each other and whose conjugation verifies the canonical map between the residue algebras. 
\end{abstract}

\maketitle

\section{Introduction}
This work is the third part in a series of three papers, the first two being~\cite{skodlerack:17-1} and~\cite{skodlerack:20}. 
Let~$\F$ be a non-Archimedean local field with odd residue characteristic~$p$. 
The construction and classification of cuspidal irreducible representation complex or~$l$-modular of the set of rational points~$\bbG(\F)$ of a reductive group~$\bbG$ 
defined over~$\F$ has already been successfully studied for general linear 
groups (complex case: \cite{bushnellKutzko:93} Bushnell--Kutzko,~\cite{secherreStevensIV:08},~\cite{broussousSecherreStevens:12},~\cite{secherreStevensVI:12} Broussous--Secherre--Stevens; modular case:~\cite{vigneras:96} Vigneras,~\cite{minguezSecherre:14} Minguez--Sech\'erre) 
and for~$p$-adic classical groups (\cite{stevens:08} Stevens,~\cite{kurinczukStevens:19} Kurinczuk--Stevens, \cite{kurinczukSkodlerackStevens:20} Kurinczuk--Stevens joint with the author). 
In this paper we are generalizing from~$p$-adic classical groups to their quaternionic forms. 
Let us mention~\cite{YuJK:01} Yu,~\cite{fintzen:18},~\cite{fintzen:19} Fintzen and~\cite{kimJL:07} Kim for results over reductive~$p$-adic 
groups in general. 

We need to introduce notation to describe the result. 
%group
We fix a skew-field~$\D$ of index~$2$ over~$\F$ together with an anti-involution~$(\bar{\ })$ on~$\D$ and 
an~$\epsilon$-hermitian form
\[h:\V\times\V\ra\D\] 
on a finite dimensional~$\D$-vector space~$\V$. 
Let~$\G$ be the group of isometries of~$h$. Then~$\G$ is the set of rational points of the connected reductive group~$\bbG$ defined by~$h$ and~$\Nrd=1$, see \S\ref{subsecSemiChar}.
%What and how do we construct the irreducible cuspidal representations of G
Let~$\CoefField$ be an algebraically closed field of characteristic~$p_\CoefField$ different from $p$. We only consider smooth representations with coefficients in~$\CoefField$. 
At first we describe the construction of the cuspidal types (imitating the Bushnell--Kutzko--Stevens framework): 
A cuspidal type is a certain irreducible representation~$\lambda$ of a certain compact open subgroup~$\J$ of~$\G.$
The arithmetic core of~$\lambda$ is given by a skew-semisimple stratum~$\Delta=[\La,n,0,\b]$. It provides the following data (see~\cite{skodlerack:20} 
for more information): 
\begin{itemize}
 \item An element~$\b$ of the Lie algebra of~$\G$ which generates over~$\F$ a product~$\E$ of fields in~$A:=\End_\D\V$. We denote the centralizer of~$\b$ in~$\G$ by~$\G_\b$. 
 \item A self-dual~$\o_\E$-$\o_\D$-lattice sequence~$\La$ of~$\V$ which can be interpreted as a point of the Bruhat-Tits building~$\mf{B}(\G)$ and as the image of a 
 point~$\La_\b$ of the Bruhat-Tits building~$\mf{B}(\G_\b)$ under a canonical map (see~\cite{skodlerack:13})
 \[j_\b:\mf{B}(\G_\b)\ra\mf{B}(\G).\] . 
 \item An integer~$n>0$ which is related to the depth of the stratum. 
 \item Compact open  subgroups of~$\G$:~$\H^1(\b,\La)\subseteq\J^1(\b,\La)\subseteq\J(\b,\La)$, here abbreviated by~$\H^1,\J^1$ and~$\J$. 
 \item A set~$\C(\Delta)$ of characters of~$\H^1$. (so-called self-dual semisimple characters)
\end{itemize}
The representation~$\lambda$ consists of two parts: 

Part 1 is the arithmetic part: One chooses a self-dual semisimple character~$\theta\in\C(\Delta)$, which admits a 
Heisenberg representation~$\eta$ on~$\J^1$ (see~\cite[\S8]{bushnellFroehlich:83} for these extensions) and then
constructs a certain extension~$\kappa$ of~$\eta$  to~$\J$. ($\kappa$ having the same degree as~$\eta$)
Not every extension is allowed for~$\kappa$. For example if~$\La_\b$ corresponds to a vertex in~$\mf{B}(\G_\b)$ 
(which is the case for cuspidal types) we impose that the restriction of~$\kappa$ to a pro-$p$-Sylow subgroup of~$\J$ is intertwined by~$\G_\b$. 

Part 2 is a representation of a finite group (This is the so called level zero part). Let~$\k_\F$ be the residue field of~$\F$. 
The group~$\J/\J^1$ is the set of~$\k_\F$-rational point of a reductive group, here denoted by~$\bbP(\La_\b)$. 
It is also the reductive quotient of the stabilizer~$\P(\La_\b)$ of~$\La_\b$ in~$\G_\b$. The pre-image~$\P^0(\La_\b)$  of~$\bbP(\La_\b)^0(\k_\F)$
(connected component) in~$\P(\La_\b)$ is the parahoric subgroup of~$\G_\b$ corresponding to~$\La_\b$. 
We choose an irreducible representation~$\rho$ of~$\bbP(\La_\b)(\k_\F)$ whose restriction to~$\bbP(\La_\b)^0(\k_\F)$ is a direct sum of 
cuspidal irreducible representations, and we inflate~$\rho$ to~$\J$, still called~$\rho$, and define~$\lambda:=\kappa\otimes\rho$. 
Then~$\lambda$ is called a cuspidal type if~$\P^0(\La_\b)$ is a maximal parahoric subgroup in~$\G_\b$. 
(see. \S\ref{secCuspType})

Then, we obtain the following classification theorem: 
\begin{theorem}[Main Theorem]\label{thmMain}
 \begin{enumerate}
  \item Every irreducible cuspidal $\CoefField$-representation of $\G$ is induced by a cuspidal type. (Theorem~\ref{thmExhaustionG})\label{thmMain-i}
  \item If~$(\lambda,\J)$ is a cuspidal type, then~$\ind_\J^\G\lambda$ is irreducible cuspidal.~\label{thmMain-ii} (Theorem~\ref{thmCuspType})
  \item Two intertwining cuspidal types~$(\lambda,\J)$ and~$(\lambda',\J')$ are up to equivalence conjugate in~$\G$ if and only if they intertwine in~$\G$. (Theorem~\ref{thmIntConG})\label{thmMain-iii}
 \end{enumerate}
\end{theorem}

We need to consider representations~$(\lambda,\J)$ for lattice sequences~$\La$ such that~$\La_\b$ does not correspond to a vertex of~$\mf{B}(\G_\b)$, because the proof of Theorem~\ref{thmMain}\ref{thmMain-i} is done by contradiction in proving:

\begin{proposition}\label{propExhaustionIndirect}
 Let~$\pi$ be a cuspidal irreducible representation of~$\G$ containing~$\theta$. Then~$\P^0(\Lambda_\b)$ is a maximal parahoric subgroup of~$\G_\b$.  
\end{proposition}

This is analogous to the non-quaternionic case, see~\cite[Theorem 12.2]{kurinczukStevens:19} and~\cite[Appendix A]{MiSt}. Let us recall the outline of the construction of~$\b$-extensions for the non-vertex case, see~\cite[\S 4]{stevens:08}, because it is important for what follows in the introduction. Given~$\theta\in\C(\Delta)$ we choose a lattice sequence~$\La_\M$ such that~$(\La_\M)_\b$ corresponds to a vertex of the facet containing~$\La_\b$. Then we choose a path
\[(\La_\M)_\b=\La_\b^{(0)},\La_\b^{(1)},\ldots,\La_\b^{(t)}=\La_\b\]
in the closure of the facet of~$\La_\b$ such that~$\La^{(i)}$ and~$\La^{(i+1)}$ are close enough, i.e. the closure of the facet in~$\mf{B}(\G)$ of one of them contains both lattice sequences. A~$\b$-extension of~$\th$ with respect to~$\La_\M$  is constructed by a sequence of irreducible representations 
\begin{equation}\label{eqKappaSeq}
 \kappa^{(0)}, \kappa^{(1)},\ldots, \kappa^{(t)}=\kappa
\end{equation}
 such that~$\kappa^{(i)}$ is attached to~$\La^{(i)}$,~$\kappa^{(0)}$ is a restriction of a~$\b$-extension~$\kappa_\M\in\b-\ext(\La_\M)$ (of the transfer of~$\theta$),~$\kappa^{(i)}$ and~$\kappa^{(i+1)}$ satisfy a compatibility condition, see Lemma~\ref{lemMutMutStev4.3}, and such that~$\kappa$ is a representation of~$\J(\b,\La)$. Representations~$\kappa$ constructed this way are called~$\b$-extension of~$\th$ on~$\J(\b,\La)$ with respect to~$\La_\M$. See~\S\ref{secBetaExt} for details.
 
The proof of the theorem needs several steps. We need a quadratic unramified field extension~$\L|\F$ and~$\G_\L:=\G\otimes\L$ with its Bruhat-Tits 
building~$\mf{B}(\G_\L)$, and further the building~$\mf{B}(\tiG_\F)$ of the general linear group~$\tiG_\F=\Aut_\F(\V)$. 

Step 1: At first we show that every irreducible representation of~$\G$ contains a self-dual semisimple character. (This is the most difficult part of the theory.),
 see Theorem~\ref{thmStDuke5.1ForQuatCase}. Mainly we use the canonical embeddings
 \[\mf{B}(\G)\hookrightarrow\mf{B}(\G_\L)\hookrightarrow\mf{B}(\tiG_\F)\]
 together with unramified, here~$\Gal(\L|\F)$, and ramified, here~$\Gal(\F[\varpi_\D]|\F)$,  Galois restriction to results of~\cite{stevens:05} and~\cite{stevens:02}. (cf.~\cite[\S8.9]{dat:09})

 Step 2: (The proof of Theorem~\ref{thmMain}\ref{thmMain-ii}) We show that the set of self-intertwininers of~$\lambda$ is equal to~$\J$. This is done using Morris theory, analogous to~\cite[Proposition 6.18]{stevens:08} (without using~\cite[Corollary 6.16]{stevens:08}) and an irreducibility criterion for the modular case
 as done in~\cite[Theorem 12.1]{kurinczukStevens:19}. See~\S\ref{secCuspType} and Theorem~\ref{thmCuspType}.
 
 Step 3: (The proof of Theorem~\ref{thmMain}\ref{thmMain-i}) 
 We prove that a self-dual semisimple character contained in a cuspidal irreducible representation needs to be skew, see \S\ref{secSkew}. 
 Then we prove Proposition~\ref{propExhaustionIndirect}, see~\S\ref{secExhaust}: Starting with a cuspidal irreducibility representation~$\pi$ of~$\G$, it must contain a skew-semisimple character~$\theta\in\C(\Delta)$, for some~$\Delta$, and therefore some representation~$\lambda=\kappa\otimes\rho$ as above. An argument using covers, see~\cite{bushnellKutzko:98}, similar to~\cite[\S7]{stevens:08} shows that~$\P^0(\La_\b)$ is maximal parahoric. More precisely, assuming that~$\P^0(\La_\b)$ is not a maximal 
 parahoric, 
 we find a parabolic subgroup~$\P$ with a Levi~$\M$ and an irreducible representation~$(\lambda_\P,\J_\P)$ ($\J_\P\subseteq\J)$ which induces to~$\lambda$ such that there is a proper Levi~$\M'$ of~$\G$ such that~$\M'\supseteq\M$ and such that~$\lambda_\P$ is a cover of~$(\lambda_\P)|_{\M'}$ (in the sense of strongly positive elements of~$\M'$.).
 
Here we needed to generalize the notion of subordinate decompositions, see \S\ref{secIwahori} to get the parabolic subgroup~$\P$. To prove that~$\lambda_\P$ is a cover of~$(\lambda_\P)_{\M'}$ we need a bound for the set  of self-intertwiners of~$\lambda$, see~\cite[Corollary 6.16]{stevens:08}. 
The proofs in~\cite[\S 6]{stevens:08} do not work anymore for the quaternionic case if we do not use a finer choice of standard~$\b$-extensions. This is the main difference between the quaternionic and the non-quaternionic case. A level zero~$\b$-extension for~$\GL_m(\D)$ on~$\GL_m(\o_\D)$ does not need to be intertwined by~$\D^\times$, where~$\D^\times$ is embedded diagonally into~$\GL_m(\D)$, see Remark~\ref{remtiKappanotnormalizedbyDtimes}. 
The choice of standard~$\b$-extensions is as follows: Suppose we are given~$\th\in\C(\Delta)$ such that~$\La_\b$ does not correspond to a vertex in~$\mf{B}(\G_\b)$. We choose a vertex~$x$ of~$\La_\b$ such that its stabilizer in~$\G_\b$ contains enough Weyl-group elements. We consider~$\b$-extensions~$(\kappa,\J)$ with respect to~$x$. This is analogue to the non-quaternionic case. But further in the quaternionic case we have to impose an extra condition on the chosen~$\b$-extension~$\kappa$, more precisely that the order of the determinant of~$\kappa$ divides~$2p^s$ for some non-negative  integer~$s$. We denote this property by~\textbf{(ORD)}. We say that~$\kappa$ is standard if in~\eqref{eqKappaSeq} the representation~$\kappa_\M$ satisfies~\textbf{(ORD)}. This implies that~$\kappa$ also satisfies~\textbf{(ORD)}. See~\S\ref{secStandardbetaExt}. 

Step 4: For the intertwining implies conjugacy part of Theorem~\ref{thmMain} we use~\cite{skodlerack:20} and~\cite{kurinczukStevens:19} and follow~\cite[\S11]{kurinczukSkodlerackStevens:20}, 
 see \S\ref{secConjCuspTypes}.

In the appendix, see~\S\ref{appErratum4p2}, we have added an erratum on a proposition in~\cite{stevens:02} which was used to show in~\loccit, for~$p$-adic classical groups~$\G'$,  that the coset of any non-$\G'$-split fundamental stratum is  contained in the coset of a skew-semisimple stratum. This was necessary because main statements in~\loccit\ are used in the proof of existence of semisimple characters in irreducible representations.    
The proofs in the erratum were written by S. Stevens, the author of~\cite{stevens:08},
 in 2012, but not published yet.
 
In Appendix~\ref{appMaxSelfdualOrderNonParahoric} we prove a Lemma which is very important for the exhaustion part Theorem~\ref{thmMain}\ref{thmMain-i}. It roughly says, that if in the search of a type in an irreducible representation of~$\G$ with maximal parahoric, one has landed at a vertex (in the weak simplicial structure of~$\B(\G_\b)$) which does not support a maximal parahoric subgroup of~$\G_\b$, then one can move along an edge to resume the search. This idea is disguised in~\cite[\S7]{stevens:08} and~\cite[Appendix]{MiSt}, so that we found that it is worth to give a proof of this lemma, see Lemma~\ref{lemCompNonParahoric}, in lattice theoretic terms. 

This work was supported by a two month stay at the University of East Anglia in summer 2018 and afterwards by my position at ShanghaiTech University.

Daniel Skodlerack\\
Institute of Mathematical Sciences, \\
ShanghaiTech University\\
No. 393 Huaxia Middle Road,\\
Pudong New Area, \\
Shanghai 201210\\
www.skodleracks.co.uk\\
http://ims.shanghaitech.edu.cn/ \\
email: dskodlerack@shanghaitech.edu.cn

\section{Notation}
\subsection{Semisimple characters}\label{subsecSemiChar}
This article is a continuation of~\cite{skodlerack:20} and~\cite{skodlerack:17-1} which we call \bfI\ and \bfII. We mainly follow their notation, but there is a major change, see the remark below, and there are slight changes to adapt the notation to~\cite{stevens:08}. 
% so we just continue with the notation of those. 
% We repeat  the frequently used  notation here.
Let~$\F$ be a non-Archimedean local field of odd residue characteristic~$p$ with valuation~$\nu_\F: \F\ra\bbZ$, valuation ring~$\o_\F$, valuation ideal~$\mf{p}_\F$, residue field~$\k_\F$ and we fix a uniformizer~$\varpi_\F$ of~$\F$. 
We fix an additive character~$\psi_\F$ of~$\F$ of level~$1$. 
We consider a quaternionic form~$\G$ of a~$p$-adic classical group as in~\bfII, i.e.~$\G=\U(h)$ for an~$\epsilon$-hermitian form
\[h:\V\times \V\ra (\D,\bar{(\ )}),\]
where~$\D$ is a skew-field of index~$2$ and central over~$\F$ together with an anti-involution~$\bar{(\ )}:\D\rightarrow\D$ of~$\D$. The form~$h$ defines via its adjoint anti-involution an algebraic group~$\bbU(h)$ defined over~$\F$. We denote with~$\bbG$ the unital component of~$\bbU(h)$, given by the additional equation~$\Nrd=1$. 
 By~\bfII.2.9 (see~\cite[1.III.1]{moeglinVignerasWaldspurger:87})
the sets~$\bbG(\F)$ and~$\bbU(h)(\F)$ coincide with~$\G$, and we will consider~$\bbG$ as the algebraic  group associated to~$\G$.
The ambient general linear group for~$\G$:~$\Aut_\D(\V)$, is denoted by~$\tiG$. 
% \subsection{Strata and characters}
%  We take the theory and the notation for strata from~\cite[Section 4]{skodlerack:17-1} and~\cite[Section 4]{skodlerack:20}. 
Let us recall that a stratum has 
 the standard notation~$\Delta=[\Lambda,n,r,\beta]$, i.e. the entries for~$\Delta'$ are~$\Lambda',n',r',\beta'$ and for~$\Delta_i$ are~$\Lambda^i,n_i,r_i$ and~$\beta_i$. 
A semisimple stratum has a unique coarsest decomposition as a direct sum of  simple strata: $\Delta=\oplus_{i\in I}\Delta_i,$
in particular it decomposes~$\E=\F[\beta]$ into a product of fields~$\E_i=\F[\beta_i]$, provides idempotents via~$1=\sum_i 1^i$ and further decompositions
\[\V=\oplus_{i\in \I}\V^i,\ \A=\End_\D(\V)=\oplus_{i,j\in \I}\A^{ij},\ \A^{ij}:=\Hom_\D(\V^i,\V^j).\]
The form~$h$ comes along with an adjoint anti-involution~$\sigma_h$ on~$\A$ and an adjoint involution~$\sigma$ on~$\tilde{\G}$, defined via~$\sigma(g):=\sigma_h(g)^{-1}$.
We denote by~$\C_?(!)$ the centralizer of~$!$ in~$?$,~$\B:=\C_\A(\beta)$ decomposes into~$\B=\oplus_i \B^i$,~$\B^i=\C_{\A^{ii}}(\beta_i)$. 
We write~$\tiG_i$ for~$(\A^{ii})^\times$. 
The adjoint anti-involution~$\sigma_h$ of~$h$ induces a map on the set of strata~$\Delta\mapsto \Delta^\#$.~$\Delta$ is called self-dual if~$\Delta$ and~$\Delta^\#$ coincide up to a translation of~$\Lambda$, i.e.~$n=n^\#,r=r^\#,\beta=\beta^\#$, and there is an integer~$k$ such that~$\Lambda-k$, which is~$(\Lambda_{j+k})_{j\in\bbZ}$, is equal to~$\Lambda^\#$, i.e.~$\Lambda$ is~\emph{self-dual}. \daniel{A self-dual~$o_\D$-lattice sequence is called~\emph{standard self-dual} if the~$o_\D$-period~$e(\Lambda|o_\D)$ is even and~$\Lambda^\#(z)=\Lambda(1-z)$ for all integer~$z$. Further, in the self-dual semisimple case, the anti-involution~$\sigma_h$ induces an action of~$\langle \sigma_h\rangle$ on the index set~$\I$ of the stratum, and decomposes it as~$\I=\I_+\cupdot\I_0\cupdot\I_-$, with the fixed point set~$\I_0$ and a section~$\I_+$ through all the orbits of length~$2$. We write~$\I_{0,+}$ for~$\I_0\cup\I_+$.}
To a semisimple stratum~$\Delta$ is attached a compact open subgroup~$\tiH(\Delta)$ of~$\tiG$ and a finite set of complex characters~$\tiC(\Delta)$ defined on~$\tiH(\Delta)$. If~$\Delta$ is self-dual semisimple we define~$\C(\Delta)$ as the set of the restriction of the elements of~$\tiC(\Delta)$ to~$\H(\Delta):=\tiH(\Delta)\cap \G$.  
Given a stratum~$\Delta$ we denote by~$\Delta(j-)$ the stratum~$[\Lambda,n,r-j,\beta]$, if~$n\geq r-j\geq 0$, for~$j\in\bbZ$ and analogously we 
have~$\Delta(j+)$. There is a major change of notation to \bfI\ and~\bfII: 
%%%
\begin{remark}
 We make the following convention for the notation. (Caution this is then different form the notation in~\bfI\ and~\bfII.) 
 Every object which corresponds to the general linear group~$\tiG$ is going to get a~$\tilde{(\ )}$ on top. Instead of~$\C_-(\Delta)$ in \bfII\ we write~$\C(\Delta)$, and instead 
 of~$\C(\Delta)$ in~\bfI\ we write~$\tiC(\Delta)$. Analogously for the groups and characters etc.. 
\end{remark}
%%% 

\subsection{Coefficients for the smooth representations}
In this paper we only consider smooth representations of locally compact groups~$\H$ on~$\CoefField$-vector spaces, where~$\CoefField$ is an algebraically closed field whose characteristic, denoted by~$p_\CoefField$, is different from~$p$. \daniel{We write~$\mf{R}_\CoefField(\H)$ or~$\mf{R}(\H)$ for the category of those representations.}

The theory of semisimple characters in~\bfI,~\bfII~and~\cite{stevens:05}, see also \S\ref{subsecSemiChar}, is still valid for~$\CoefField$, because~$\CoefField$ contains a full set~$\mu_{p^\infty}(\CoefField)$ of~$p$-power roots of unity. We fix a group isomorphism 
$\phi$ from~$\mu_{p^\infty}(\CC)$ to~$\mu_{p^\infty}(\CoefField)$ and define
\[\ti{\C}^\CoefField(\Delta):=\{\phi\circ\theta |\ \theta\in\ti{\C}(\Delta)\},\]
and analogously~$\C^\CoefField(\Delta)$ for strata equivalent to self-dual semisimple strata and with respect to~$\psi_\F^\CoefField=\phi\circ\psi_\F$.
We identify~$\ti{\C}^\CoefField(\Delta)$ and~$\C^\CoefField(\Delta)$ with~$\ti{\C}(\Delta)$ and~$\C(\Delta)$, resp., and skip the superscript,  because from now on we only consider~$\CoefField$-valued (self-dual) semisimple characters. We are going to apply the results of~\bfI,~\bfII~and~\cite{stevens:05} to~$\CoefField$-valued (self-dual) semisimple characters without further remark.

\subsection{Buildings and Moy--Prasad filtrations}\label{subsecBuilding}
In this section we recall the description of Bruhat--Tits building of~$\tiG$ and~$\G$ and its Moy--Prasad filtrations in terms of lattice functions. 

To~$\G$ and~$\tiG$ are attached Bruhat--Tits buildings~$\mf{B}(\G)$,~$\mf{B}(\tiG)$ and~$\mf{B}_{red}(\tiG)$, see~\cite{bruhatTitsIII:84} and~\cite{bruhatTitsIV:87}. Important for the study of smooth representation, for example for the concept of depth, are the following filtrations, constructed by Moy--Prasad (\cite{moyPrasad:94},\cite{moyPrasad:96}): Let~$x$ be a point of~$\tiG$ and~$y$ be a point of~$\B(\G)$. They carry 
\begin{itemize}
 \item a filtration of the Lie algebra with~$\o_\F$-modules:
 \begin{itemize}
  \item $(\mf{g}_{y,t})_{t\in\bbR}$,~$\mf{g}_{y,t}\subseteq\Lie(\G)$ 
  \item $(\tilde{\mf{g}}_{x,t})_{t\in\bbR}$,~$\tilde{\mf{g}}_{x,t}\subseteq\Lie(\tiG)$ 
 \end{itemize}
 and 
 \item a filtration of subgroups~$(\G_{y,t})_{t\geq 0}$,~$(\tiG_{x,t})_{t\geq 0}$ of~$\G$ and~$\tiG$, respectively. % given by
%  $\G_{y,t}=\G\cap (1+\tilde{\mf{g}}_{y,t})$ and~$\tiG_{x,t}=\tiG\cap (1+\tilde{\mf{g}}_{x,t})$. 
\end{itemize}

Those can be entirely described using lattice functions. We refer to~\cite{broussousLemaire:02},~\cite{broussousStevens:09} and~\cite[\S3.1]{skodlerack:17-1} for lattice functions and lattice sequences. 

\begin{definition}
 A family~$\Gamma=(\Gamma(t))_{t\in\bbR}$ of full~$\o_\D$-lattices of~$\V$ is called an~$\o_\D$-lattice function if for all real numbers~$t<s$ we have
 \begin{itemize}
  \item~$\Gamma(t)$ is a full~$\o_\D$-lattice in~$V$,
  \item~$\Gamma(t)=\bigcap_{u<t}\Gamma(u)$
  \item~$\Gamma(t)\w_\D=\Gamma(t+\frac{1}{d})$,
 \end{itemize}
where~$d$ is the index of~$\D$ (In our case of a non-split quaternion algebra~$d=2$.).

\end{definition}

We further define~$\Gamma(t+):=\bigcup_{u>t}\Gamma(u)$, and we define the set of discontinuity points:
\[\disc(\Gamma):=\{s\in\bbR|\ \Gamma(s)\neq\Gamma(s+)\}.\]
We can translate~$\Gamma$ by a real number~$s$:~$(\Gamma-s)(t):=\Gamma(t+s)$, and the set of all real translates of~$\Gamma$ is called the translation
class of~$\Gamma$. We denote this class by~$[\Gamma]$. The set of all~$\o_\D$-lattice functions in~$\V$, resp. translation classes of those, is denoted by~$\Latt_{\o_\D}^1\V$, resp.~$\Latt_{\o_\D}\V$, see~\cite{broussousLemaire:02}. 

An~$\o_\D$-lattice function~$\Gamma$ with~$\disc(\Gamma)\subseteq \mathbb{Q}$
corresponds to a lattice sequence~$\Lambda_\Gamma$ in the following way: There exists a minimal positive integer~$e$, such that~$\disc(\Gamma)$ is contained in~$q+\frac{1}{e}\ZZ$
for some~$q\in\bbQ$. Then define:
\[\Lambda_\Gamma(z):=\Gamma\left(\frac{z}{e}\right),\ z\in\ZZ.\]
Conversely we can attach a lattice function to an~$\o_\D$-lattice sequence~$\Lambda$. 
\daniel{Recall that~$\lceil t\rceil$ denotes the smallest integer not smaller than~$t$.} We define:
\begin{equation}\label{eqGammaLambda}\Gamma_\Lambda(t):=\Lambda\left(\left\lceil t e(\Lambda|\F) \right\rceil\right),\ t\in\bbR,\end{equation}
                                                                                                                               
where~$e(\Lambda|\F)$ is the~$\F$-period of~$\Lambda$. 

We fix an~$\o_\D$-lattice function~$\Gamma$ and an~$\o_\D$-lattice sequence~$\Lambda$. 
A lattice sequence~$\Lambda'$ is called an~\emph{affine translation} of~$\Lambda$ if there are a positive integer~$a$ and an integer~$b$
such that
\[\Lambda'(z):=\Lambda\left(\left\lfloor\frac{z-b}{a}\right\rfloor\right),\]
and we denote~$\Lambda'$ by~$(a\Lambda+b)$. If~$a=1$ then we call ~$\Lambda'$ just a~\emph{translation} of~$\Lambda$ and we write $[\Lambda]$ for the translation class. Two lattice sequences $\Lambda$ and~$\Lambda'$ are said to be in the same \emph{affine class} if both have coinciding affine translations.

Then~$\Gamma$ and~$\Gamma_{\Lambda_\Gamma}$ are translates of each other and~$\Lambda$ and~$\Lambda_{\Gamma_\Lambda}$ are in the same affine class. 

The invariant of the translation class of~$\Gamma$ is the~\emph{square lattice function}:
\[t\in\bbR\mapsto\tilde{\mf{a}}_t(\Gamma):=\{a\in\A|\ a\Gamma(s)\subseteq\Gamma(s+t),\ \text{for all }s\in\bbR\},\ \timfa(\Gamma):=\timfa_0(\Gamma),\]
and analogously we have~$(\tilde{\mf{a}}_z(\Lambda))_{z\in\bbZ}~\text{and}~\timfa(\Lambda)$ for~$[\Lambda]$. We write~$\Latt^2_{\o_\F}\A$ for the set
\[\{(\tilde{\mf{a}}_t(\Gamma))_{t\in\bbR}|\ \Gamma\in\Latt^1_{\o_\D}\V \},\]
and there are canonical maps:
\begin{equation}\label{eqBL1}
\Latt^1_{\o_\D}\V\rightarrow\Latt_{\o_\D}\V\stackrel{\sim}{\longrightarrow}\Latt^2_{\o_\F}\A.\end{equation}

Note that~$\Latt^1_{\o_\D}\V$ carries an affine structure, see~\cite[\S~I.3]{broussousLemaire:02}. 
The description of~$\mf{B}(\tiG)$ and~$\mf{B}_{red}(\tiG)$ in terms of lattice functions is stated in the following theorem. 
\begin{theorem}[\cite{broussousLemaire:02}~I.1.4,I.2.4,\cite{bruhatTitsIII:84}~2.11,2.13]\label{thmBuilding}
\begin{enumerate}
 \item\label{thmBuildingi} There exists an affine~$\tiG$-equivariant map 
 \[\iota_{\tiG}:\ \mf{B}(\tiG)\rightarrow\Latt_{\o_\D}^1\V.\]
 Further~$\iota_{\tiG}$ is a bijection, and two~$\tiG$-equivariant affine maps~$\iota_1,\iota_2$ differ by a translation, i.e. there is an element~$s\in\bbR$ such that~$\iota_2\circ\iota_1^{-1}$ has the form 
 \[\Gamma\mapsto\Gamma-s.\]
 \item There is a unique~$\tiG$-equivariant affine map
 \[\iota_{\tiG,red}:\ \mf{B}_{red}(\tiG)\rightarrow \Latt_{\o_\D}\V.\]
 We obtain a commutative diagram
 \[
 \begin{array}{ccc}
  \mf{B}(\tiG) & \stackrel{\iota_{\tiG}}{\longrightarrow} & \Latt^{1}_{\o_\D}\V \\
  \downarrow & & \downarrow\\
  \mf{B}_{red}(\tiG) &  \stackrel{\iota_{\tiG,red}}{\longrightarrow} & \Latt_{\o_\D}\V\\
 \end{array}
 \]
 \end{enumerate}
 \end{theorem}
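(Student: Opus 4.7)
The plan is to construct the map explicitly on one apartment and then extend it by $\tiG$-equivariance, exploiting that $\tiG=\GL_m(\D)$ with $m=\dim_\D\V$. Fix a $\D$-basis $v_1,\dots,v_m$ of $\V$, let $\ti{\T}\subseteq\tiG$ be the diagonal maximal $\F$-split torus, and let $\Aa(\ti{\T})\subseteq\mf{B}(\tiG)$ be its apartment, an affine space modelled on $X_*(\ti{\T})\otimes\bbR\cong\bbR^m$. On the other side, the subset of $\Latt^1_{\o_\D}\V$ consisting of \emph{split} lattice functions $\Gamma=\bigoplus_i\Gamma_i$, with $\Gamma_i$ a lattice function on $\D v_i$, is in natural bijection with $\bbR^m$, each $\Gamma_i$ being determined by its unique translation parameter on the corresponding line. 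I would define
\[\iota_{\tiG}\vert_{\Aa(\ti{\T})}:\Aa(\ti{\T})\stackrel{\sim}{\longrightarrow}\{\text{split lattice functions}\}\]
as the affine $\bbR^m$-identification which sends a fixed base vertex to the standard lattice function $\Gamma_0(t)=\bigoplus_i\w_\D^{\lceil 2t\rceil}\o_\D v_i$, and check equivariance for $\ti{\T}$ (which acts by shifts on both sides) and for $N_{\tiG}(\ti{\T})$ (whose Weyl group permutes the summands).

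The next step is to propagate the map to all of $\mf{B}(\tiG)$ via the formula $\iota_{\tiG}(g\cdot x):=g\cdot\iota_{\tiG}(x)$, using the transitivity $\mf{B}(\tiG)=\tiG\cdot\Aa(\ti{\T})$. Well-definedness reduces to checking that the parahoric stabilizer $\tiG_x$ of any $x\in\Aa(\ti{\T})$ fixes $\iota_{\tiG}(x)$: by an Iwahori factorization this decomposes along the affine root groups of $\ti{\T}$, each of which acts trivially on its fixed split lattice function at $x$ by a direct lattice computation. Bijectivity then follows because both $\mf{B}(\tiG)$ and $\Latt^1_{\o_\D}\V$ are homogeneous $\tiG$-spaces with coinciding isotropy. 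For the uniqueness assertion, any difference $\iota_2\circ\iota_1^{-1}$ is a $\tiG$-equivariant affine self-map of $\Latt^1_{\o_\D}\V$, and by transitivity together with the stabilizer computation, such an automorphism must be one of the global translations $\Gamma\mapsto\Gamma-s$ coming from the scalar action of the center $\F^\times\subseteq\tiG$. Part (ii) is then immediate from part (i): the center $\F^\times$ of $\tiG$ acts on $\mf{B}(\tiG)$ with quotient $\mf{B}_{red}(\tiG)$, and under $\iota_{\tiG}$ this action is precisely the translation action on $\Latt^1_{\o_\D}\V$; passing to translation classes factors $\iota_{\tiG}$ canonically through $\mf{B}_{red}(\tiG)$, giving the unique $\iota_{\tiG,red}$ and making the square commute.

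The principal obstacle is the global well-definedness across apartments: if $x\in\Aa(\ti{\T})\cap\Aa(\ti{\T}')$ lies in two apartments, one must show that the split lattice functions assigned by the two local models coincide at $x$. This is essentially the existence of a valued root datum on $\tiG$ in the sense of Bruhat--Tits, and in the quaternionic setting it can be established by unramified descent: $\D$ splits over the maximal unramified extension $\F^{nr}$, and both $\mf{B}(\tiG)$ and $\Latt^1_{\o_\D}\V$ are identified with the $\Gal(\F^{nr}|\F)$-fixed points of $\mf{B}(\GL_{2m}(\F^{nr}))$ and $\Latt^1_{\o_{\F^{nr}}}(\V\otimes_\F\F^{nr})$ respectively. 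The classical split case then transports the identification, and once this cross-apartment coherence is verified all remaining compatibilities are formal.
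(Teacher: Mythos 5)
The paper does not prove this theorem: it is quoted verbatim, with attribution, from Broussous--Lemaire and Bruhat--Tits, and then used as a black box throughout the building-theoretic parts of the article. Your write-up is therefore not matching an argument in the paper but sketching a proof of the cited result. The method you propose --- define $\iota_{\tiG}$ explicitly on a split apartment, check $N_{\tiG}(\ti{\T})$-equivariance there, extend by $\tiG$-equivariance, and establish cross-apartment coherence by unramified descent --- is indeed the method of the cited reference, so your outline goes in the right direction.

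However, several load-bearing steps are compressed to the point of being inaccurate. First, the claim that ``$\mf{B}(\tiG)$ and $\Latt^1_{\o_\D}\V$ are homogeneous $\tiG$-spaces with coinciding isotropy'' is false as stated: neither space is a single $\tiG$-orbit (points of different barycentric type are not conjugate). Bijectivity should instead be argued apartment by apartment, using that every $\o_\D$-lattice function admits a splitting basis and that $\tiG$ acts transitively on the set of maximal split tori. Second, in the uniqueness paragraph you attribute all $\tiG$-equivariant affine automorphisms of $\Latt^1_{\o_\D}\V$ to ``the scalar action of the center $\F^\times$''; but $\F^\times$ only produces a discrete subgroup of the translations $\Gamma\mapsto\Gamma-s$, $s\in\bbR$. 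The correct argument is that such an automorphism commutes with the $\ti{\T}$-action on the apartment, hence restricts to an affine self-map of $\bbR^m$ commuting with all shifts, hence is a translation; Weyl-invariance forces the translation vector to be diagonal, which is exactly the one-parameter family $\Gamma\mapsto\Gamma-s$. Third, and most importantly, the ``principal obstacle'' paragraph correctly locates the real content of the theorem but does not discharge it. The asserted identifications $\mf{B}(\tiG)\cong\mf{B}(\GL_{2m}(\F^{\mathrm{nr}}))^{\Gal}$ and $\Latt^1_{\o_\D}\V\cong\Latt^1_{\o_{\F^{\mathrm{nr}}}}(\V\otimes_\F\F^{\mathrm{nr}})^{\Gal}$ require specifying a twisted Galois action reflecting the $\D$-module structure, proving unramified descent for the (extended) building, and matching the two descent data --- which is precisely the substance of Broussous--Lemaire's proof. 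As written, the descent step reintroduces the whole theorem, so the sketch does not stand on its own and the citation in the paper should be retained.
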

 
We now describe~$\mf{B}(\G)$. For more details refer to~\cite{broussousStevens:09} and~\cite{lemaire:09}.  Recall the dual of a lattice function~$\Gamma$: 
\[\Gamma^\#(t):=\Gamma((-t)+)^{\#}.\]
The lattice function~$\Gamma^\#$ depends on~$h$, because~$\#$ does, but for different~$\epsilon$-hermitian forms~$h_1,h_2$ on~$\V$, with respect to~$(\D,\ainv)$, with common isometry group~$\G$ the respective duals~$\Gamma^{\#_{h_1}}$ and~$\Gamma^{\#_{h_2}}$
just differ by a translation. The lattice function~$\Gamma$ is called~\emph{self-dual} with respect to~$h$ if~$\Gamma^\#=\Gamma$ or equivalently if~$\tilde{\mf{a}}_t(\Gamma)$ is~$\sigma_h$-invariant for every~$t\in\bbR$, called~\emph{self-dual} square lattice function. 
The map in~\eqref{eqBL1} restricts to a canonical bijection between the set of self-dual~$\o_\D$-lattice functions, which we denote by~$\Latt^1_h\V$, and the set of self-dual square lattice functions, denoted by~$\Latt^2_-\A$. 
The latter inherits an affine structure from~$\Latt_{o_\D}\V$. 

The translation class of the lattice function attached to a with respect to~$h$ self-dual lattice sequence~$\Lambda$ contains exactly one self-dual lattice function. We are going to denote this self-dual lattice function by~$\Gamma_{\Lambda,h}$ 
instead of~\eqref{eqGammaLambda}.

The building of~$\G$ is now described as follows in terms of lattice functions. 

\begin{theorem}[\cite{broussousStevens:09}~4.2]\label{thmIotaG}
 There is a unique affine~$\G$-equivariant map
 \[\iota_G:\ \mf{B}(\G)\rightarrow \Latt_{h}^1\V.\]
 Moreover this map is bijective. 
\end{theorem}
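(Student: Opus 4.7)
The plan is to obtain $\iota_G$ by descent from the already-known bijection $\iota_{\tiG}\colon\mf{B}(\tiG)\to\Latt^1_{\o_\D}\V$, using the involution induced by~$\sigma_h$. The adjoint anti-involution yields an involution $\sigma$ of $\tiG$, and hence of $\mf{B}(\tiG)$, while the duality $\Gamma\mapsto\Gamma^\#$ is an affine involution of $\Latt^1_{\o_\D}\V$. I would first calibrate $\iota_{\tiG}$ so that it intertwines these two involutions, and then restrict to fixed points on both sides.

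To calibrate, note that by the preceding theorem any two affine $\tiG$-equivariant bijections differ by a real translation $T_s$, $s\in\bbR$. A direct computation based on the identity $(\Gamma-s)^\# = \Gamma^\# + s$ shows that the obstruction to $\sigma$-equivariance of $\iota_{\tiG}$ is a translation which can be absorbed by replacing $\iota_{\tiG}$ by a suitable real translate (concretely by $T_{-s/2}\circ\iota_{\tiG}$ if the initial discrepancy is $T_s$); the resulting map is then uniquely pinned down by being both $\tiG$- and $\sigma$-equivariant.

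Next I would restrict to the $\sigma$-fixed (respectively $\#$-fixed) subsets on either side. On the target the $\#$-fixed lattice functions are exactly $\Latt^1_h\V$ by definition. On the source, Bruhat--Tits descent identifies $\mf{B}(\tiG)^\sigma$ canonically and $\G$-equivariantly with $\mf{B}(\G)$; this step uses the hypothesis that $p$ is odd, so that $\sigma$ is a tame involution of the connected reductive $\F$-group $\bbG\subset\tiG$. The calibrated bijection then restricts to an affine $\G$-equivariant bijection $\iota_G\colon\mf{B}(\G)\to\Latt^1_h\V$, which settles existence and bijectivity.

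For uniqueness, any two affine $\G$-equivariant maps into $\Latt^1_h\V$ would differ by a translation of the target, but the identity $(\Gamma-s)^\# = \Gamma^\# + s$ forces $s=0$ on self-dual lattice functions, so no non-trivial translation is compatible with duality. The main obstacle is the descent step: one must verify in lattice-theoretic terms that the $\sigma$-fixed affine subspace of $\Latt^1_{\o_\D}\V$ really is the Bruhat--Tits building of the quaternionic isometry group~$\G$, with the correct affine and simplicial structures, cf.~\cite{broussousStevens:09} and~\cite{lemaire:09}. The rest of the argument is formal.
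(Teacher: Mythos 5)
This statement is not proved in the paper at all: it is quoted verbatim from~\cite[Theorem~4.2]{broussousStevens:09}, and the paper relies on that reference as a black box. So there is no "paper's own proof" to compare against; what you have written is a sketch of how one might prove the cited result.

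Your descent-from-$\tiG$ strategy is a legitimate route and it is, in spirit, how one thinks about the result; but it is not how Broussous--Stevens actually argue. Their proof is more hands-on: they construct $\iota_\G$ apartment by apartment, using self-dual Witt decompositions of~$(\V,h)$ to match apartments of~$\mf{B}(\G)$ with affine subspaces of self-dual lattice functions, extend by~$\G$-equivariance, and then check the global map is well defined, affine, and bijective. The advantage of that route is that it never needs a general fixed-point theorem; everything is verified in lattice-theoretic terms, which is also what one wants for the later Moy--Prasad computations.

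Two remarks on your sketch itself. First, the calibration step is fine in substance but the sign is off: if $\#\circ\iota_{\tiG}\circ\sigma = T_s\circ\iota_{\tiG}$, the correction is $T_{s/2}$, not $T_{-s/2}$ (using $(\Gamma-s)^\#=\Gamma^\#+s$ and $\#\circ T_u=T_{-u}\circ\#$); you hedge on the convention, so this is cosmetic. Second, and more substantively, the descent step "$\mf{B}(\tiG)^\sigma = \mf{B}(\G)$" is precisely the content of the theorem and cannot be waved away by citing tameness. The general Prasad--Yu type statement applies to the reduced building, and here one has to deal with the enlarged building of~$\tiG$ (the $\sigma$-action on the central~$\bbR$-factor, and the fact that the embedding $\mf{B}(\G)\hookrightarrow\mf{B}(\tiG)$ depends on~$h$ whereas the embedding into $\mf{B}_{red}(\tiG)$ does not). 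Moreover $\tiG^\sigma$ may be disconnected, so one has to identify the building of the relevant connected group with the fixed-point set. You do correctly flag this as "the main obstacle," so the self-assessment is honest; but as written the argument delegates the whole theorem to that unproven step. If you want to complete it along these lines, the right references are Prasad--Yu on tame finite group actions on buildings, together with the lattice-model identification in~\cite{broussousLemaire:02} and~\cite{lemaire:09}; alternatively, follow the direct apartment-level construction of~\cite{broussousStevens:09}.
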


Note that the map~$\iota_\G$ depends on~$h$. Given~$(\V,h_1)$ 
and~$(\V,h_2)$ two~$\epsilon$-hermitian spaces w.r.t.~$(\D,\ainv)$
with isometry group~$\G$ then~$\sigma_{h_1}=\sigma_{h_2}$ and we obtain the 
diagram
\[
\begin{array}{ccc}
 \mf{B}(\G) & \overset{\iota_{\G,h_1}}{\longrightarrow} & \Latt^1_{h_1}\V\\
\iota_{\G,h_2}\big\downarrow & & \big\downarrow\rotatebox{90}{\text{$\sim$}}\\
 \Latt_{h_2}^1\V & \stackrel{\sim}{\longrightarrow} & \Latt^2_-\A \\
\end{array}
\]
and the diagram commutes by the uniqueness assertion of Theorem~\ref{thmIotaG}. Therefore
for every~$x\in\mf{B}(\G)$ the lattice functions~$\iota_{\G,h_1}(x)$ and~$\iota_{\G,h_2}(x)$  
are in the same translation class. 

Given~$h$ we embed the building~$\mf{B}(\G)$ into~$\mf{B}(\tiG)$ and~$\mf{B}_{red}(\tiG)$ via the 
following diagram. 

\[
\begin{array}{ccccc}
\mf{B}(\G) &  \longrightarrow  & \mf{B}(\tiG) & & \\
\big\downarrow\rotatebox{90}{\text{$\sim$}}& \rcirclearrowleft & \big\downarrow\rotatebox{90}{\text{$\sim$}} &  & \\
\Latt^1_{h}\V & \longrightarrow & \Latt^1_{\o_\D}\V & \longrightarrow & \Latt_{\o_\D}\V \\
\downarrow\rotatebox{90}{\text{$\sim$}} & \rcirclearrowleft & \downarrow\rotatebox{90}{\text{$\sim$}} & \rcirclearrowleft & \downarrow\rotatebox{90}{\text{$\sim$}} \\
\Latt^2_-\A & \longrightarrow & \Latt^2_{\o_\F}\A & \longrightarrow & \mf{B}_{red}(\tiG) \\
\end{array}
\]

The embedding of~$\mf{B}(\G)$ into~$\mf{B}_{red}(\tiG)$ does not depend on~$h$ contrary to the embedding into~$\mf{B}(\tiG)$. 

We now turn to the description of the Moy--Prasad filtrations. 
At first an~$\o_\D$-lattice functions~$\Gamma$ and an~$\o_\D$-lattice sequence~$\Lambda$ define filtrations of compact open subgroups: 
\[\tilde{\P}_t(\Gamma)=(1+\tilde{\mf{a}}_t(\Gamma))^\times,\ t\geq 0,\]
\[\tilde{\P}_n(\Lambda)=(1+\tilde{\mf{a}}_n(\Lambda))^\times,\ n\in\bbN_0,\]
and if~$\Gamma$ and~$\Lambda$ are self-dual: 
\[\P_t(\Gamma):=\G\cap\tilde{\P}_t(\Gamma),\ \P_n(\Lambda):=\G\cap\tilde{\P}_n(\Lambda),\]
and we need the filtration~$\mf{a}_t(\Gamma):=\tilde{\mf{a}}_t(\Gamma)\cap \A_-$ and analogously~$(\mf{a}_n(\Lambda))_{n\in\bbN}$. 
The relation of those filtrations to the Moy--Prasad filtrations is: 

\begin{theorem}[\cite{lemaire:09}]
If we identify the Lie algebra of~$\tiG$ with~$\A$ and the Lie algebra of~$\G$ with~$\A_-$, then we have for all non-negative real numbers~$t$ \daniel{and points}~$x\in\mf{B}(\tiG)$ and~$y\in\mf{B}(\G)$: 
\begin{enumerate}
 \item $\tiG_{x,t}=\tiP_t(\iota_{\tiG}(x))$, $\tilde{\mf{g}}_{x,t}=\tilde{\mf{a}}_t(\iota_{\tiG}(x))$ and
 \item  $\G_{y,t}=\P_t(\iota_{\G}(y))$, $\mf{g}_{y,t}=\mf{a}_t(\iota_{\G}(y))$. 
\end{enumerate}
\end{theorem}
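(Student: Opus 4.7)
The plan is to establish (i) for $\tiG$ first by an explicit computation on a standard apartment, then extend by $\tiG$-equivariance, and finally deduce (ii) for $\G$ by restriction under the adjoint involution $\sigma = \sigma_h$. Both sides of the asserted equalities are $\tiG$-equivariant in $x$: the Moy--Prasad filtrations by construction, and the lattice-function filtrations because $\iota_{\tiG}$ is $\tiG$-equivariant and $\tilde{\mf{a}}_t(g\Gamma) = g \tilde{\mf{a}}_t(\Gamma) g^{-1}$. It therefore suffices to verify (i) on a single apartment.

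First I would fix a maximal $\F$-split torus $\tilde{\T}$ of $\tiG$ adapted to a decomposition $\V = \bigoplus_{i=1}^m \V_i$ into $\D$-lines, so that the apartment $\Aa(\tilde{\T}) \subseteq \mf{B}(\tiG)$ is parametrized by $\bbR^m$ modulo diagonal translation. A point $x = (t_1,\ldots,t_m)$ corresponds under $\iota_{\tiG}$ to the split lattice function
\[\Gamma_x(s) = \bigoplus_{i=1}^m \piD^{\lceil (s-t_i)e \rceil}\V_i\]
for a suitable $e$. The affine roots of $(\tiG,\tilde{\T})$ are the functions $\alpha_{ij} + k/e$ with $\alpha_{ij}(x) = t_i - t_j$ and $k \in \ZZ$, and the corresponding root spaces in the decomposition $\A = \bigoplus_{i,j} \Hom_\D(\V_j,\V_i)$ are the blocks $\Hom_\D(\V_j,\V_i)$. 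Reading off the definitions of $\tilde{\mf{g}}_{x,t}$ and $\tilde{\mf{a}}_t(\Gamma_x)$ block by block yields their equality; exponentiating (or invoking the Bruhat--Tits lattice-theoretic definition of parahoric subgroups directly) gives the group-level statement. Equivariance then extends (i) to all of $\mf{B}(\tiG)$.

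For (ii), Theorem~\ref{thmIotaG} together with the commutative diagram preceding it identifies $\iota_{\G}(y)$ with a self-dual lattice function, so $\tilde{\mf{a}}_t(\iota_{\G}(y))$ is $\sigma$-stable. The embedding $\mf{B}(\G) \hookrightarrow \mf{B}(\tiG)$ realizes $\mf{B}(\G)$ as (a component of) the $\sigma$-fixed locus. Since $p$ is odd, tame descent applies: the Moy--Prasad filtrations of $\G$ coincide with the $\sigma$-fixed parts of those of $\tiG$ along this embedding, giving $\G_{y,t} = \tiG_{\iota(y),t}^{\sigma}$ and $\mf{g}_{y,t} = \tilde{\mf{g}}_{\iota(y),t}^{\sigma}$. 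Combining this with (i) and the very definitions $\mf{a}_t(\Gamma) = \tilde{\mf{a}}_t(\Gamma) \cap \A_-$ and $\P_t(\Gamma) = \G \cap \tilde{\P}_t(\Gamma)$ finishes (ii).

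The hard part is the descent step: one has to know that the Moy--Prasad filtration of $\G$ really equals the $\sigma$-fixed points of the Moy--Prasad filtration of $\tiG$ at the embedded point. At depth zero this reduces to the classical identification of parahoric subgroups of $\G$ as $\sigma$-fixed parahoric subgroups of $\tiG$; in positive depth one needs smoothness of the $\sigma$-fixed subgroup scheme, which is the standard reason the hypothesis $p \neq 2$ is required. A subsidiary technical point is to pick an apartment of $\tiG$ which is $\sigma$-stable and whose fixed points form an apartment of $\G$, so that the explicit calculation above can be carried out $\sigma$-equivariantly and then restricted; existence of such an apartment is guaranteed by the standard maximal $\F$-split $\sigma$-stable torus construction in odd residue characteristic.
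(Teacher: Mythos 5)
The paper contains no proof of this statement at all: it is quoted verbatim from Lemaire \cite{lemaire:09} as a blackbox input, so there is no ``paper's own proof'' against which to compare. I can therefore only assess your outline on its own merits.

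Your strategy --- compute on a standard apartment of $\tiG$ to get (i), then descend to $\G$ via the involution $\sigma$ --- is the natural route, and part (i) is unproblematic: the block decomposition of $\A$ by $\Hom_\D(\V_j,\V_i)$ matches the affine root space decomposition, and $\tiG$-equivariance of both sides propagates the equality. For (ii), however, you are leaning on two assertions that are the actual content of the theorem, not background facts. First, you invoke ``tame descent'' to claim $\G_{y,t}=\tiG_{\iota(y),t}^{\sigma}$ for all $t\ge 0$; the positive-depth case does follow from smoothness of the fixed-point scheme in odd residue characteristic, but that is precisely the hard technical statement Lemaire proves, so asserting it as standard short-circuits the argument rather than giving it. Second, your depth-zero claim is wrong as written: $\tilde{\P}_0(\Gamma)^{\sigma}=\G\cap\tilde{\P}_0(\Gamma)$ is the \emph{full} stabilizer $\P(\Gamma)$ of the self-dual lattice function, whereas the Moy--Prasad group $\G_{y,0}$ is the parahoric $\P^0(\Gamma)$, which for classical groups is a proper subgroup of $\P(\Gamma)$ in general (the paper explicitly warns a few lines later that ``the stabilizers are in general too large''). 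So ``the classical identification of parahoric subgroups of $\G$ as $\sigma$-fixed parahoric subgroups of $\tiG$'' does not hold. (To be fair, the theorem as transcribed in the paper, with $\P_0$ rather than $\P^0$ on the right, inherits the same ambiguity at $t=0$; Lemaire's result is cleanest for $t>0$.) Finally, a small point: the fixed locus of $\sigma$ in $\mf{B}(\tiG)$ is most cleanly identified with $\mf{B}(\G)$ via the translation-invariant picture, i.e. on $\mf{B}_{\mathrm{red}}(\tiG)$ or equivalently via self-dual square lattice functions; identifying $\iota_{\tiG}\circ\iota_\G^{-1}$ literally with a $\sigma$-fixed locus inside $\Latt^1_{\o_\D}\V$ requires the normalization of duality ($\Gamma^\#=\Gamma$ rather than $\Gamma^\#=\Gamma-s$) that your sketch leaves implicit.
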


As usual we skipt the subscript zero and write~$\P(),\ \tiP()$ for~$\P_0(),\ \tiP_0()$. 

Finally we need the description of the parahoric subgroups in terms of lattice functions/sequences. 
For~$\tiG$ those are the (full) stabilizers~$\tiP(\Gamma),\ \tiP(\Lambda)$. For the classical group~$\G$ the stabilizers are in general too large. 
The parahoric subgroup~$\P^0(\Lambda)$ of~$\G$ is constructed as follows: The quotient~$\P(\La)/\P_1(\Lambda)$ is the set of~$\k_\F$-rational points of a reductive group~$\bbP(\La)$ defined over~$\k_\F$. Let~$\bbP^0(\La)$ be the unital component of~$\bbP(\La)$. 
The pre-image of~$\bbP^0(\La)(\k_\F)$ in~$\P(\La)$ is the parahoric subgroup~$\P^0(\Lambda)$ of~$\G$ defined by~$\Lambda$.
Similar we have~$\P^0(\Gamma)$ using~$\P(\Gamma)/\P_{0+}(\Gamma)$.

We need a finer set of lattice functions/sequences for the study of strata. 
\begin{definition}[\cite{broussousLemaire:02},\cite{skodlerack:14}~7.1,\cite{skodlerack:17-1}~3.6]
 Let~$\b$ be an element of~$\End_\D\V$ and suppose~$\E=\F[\b]$ is a product of fields~$\E=\prod_{i\in\I}\E_i$ with associated splitting~$\V=\oplus_{i\in\I}\V^i$. An~$\o_\D$-lattice function~$\Gamma$ of~$\V$ is called~\emph{$\o_\E-\o_\D$-lattice function} if~$\Gamma$ is split by~$(\V^i)_{i\in\I}$ and~$\Gamma^i=\Gamma\cap\V^i$ is an~$\o_{\E_i}$-lattice function in~$\V^i$,~$i\in\I$. Similarly we define~$\o_\E-\o_\D$-lattice sequences. 
\end{definition}

We use~$\Latt^1_{\o_\E,\o_\D}\V$ to denote the set of~$\o_\E-\o_D$-lattice functions and we set
\[\Latt^1_{h,\o_\E,\o_\D}\V:=\Latt^1_{\o_\E,\o_\D}\V\cap\Latt^1_h\V.\]

We have two simplicial structures on~$\mf{B}(\G)$. The first one, the~\emph{strong} structure, is given by the branching of~$\mf{B}(\G)$, i.e. two points~$x,y\in\mf{B}(\G)$ are said to lie in the same facet if, for all apartments~$\mathcal{A}$ of~$\mf{B}(\G)$, we have
\[x\in\mathcal{A} \text{ iff. }y\in\mathcal{A}, \]
see~\cite{corvallisTits:79}.
The second simplicial structure, called~\emph{weak}, is given by intersection of facets of~$\mf{B}_{red}(\tiG)$ 
to~$\mf{B}(\G)$ via the canonical embedding of~$\mf{B}(\G)$ into~$\mf{B}_{red}(\tiG)$, see~\cite{abramenkoNebe:02}.
The strong facets are unions of weak facets and one obtains the strong structure if one removes the thin panels from the weak structure, see oriflame construction in~\cite{abramenkoNebe:02}. 

\subsection{Centralizer}\label{subsecCentralizer}
Let~$\Delta$ be a semisimple stratum and~$\tiG_\b$ be the centralizer of~$\b$ in~$\tiG$. 
\subsubsection{The self-dual case}
Assume further that~$\Delta$ is self-dual semisimple and write~$\G_\b$ for the centralizer of~$\b$ in~$\G$. 
The stratum provides a pair~($\b,\Lambda$) consisting of an element~$\b$ of the Lie algebra of~$\G$ (which generates a product~$\E$ of
field extensions of~$\F$) and an~$\o_\E-\o_\D$-lattice sequence~$\Lambda$. We need to attach to them a point 
of~$\mf{B}(\G_\b)$ ($\cong\prod_{i\in\I_{0,+}}\mf{B}(\G_{\b}^i)$,~$\G^i$ the image of the projection of~$\G$ to~$\Aut_\D(\V^i)$), and interpret this point as a tuple of 
lattice sequences~$\Lambda^i_\b,\ i\in\I$. The important requirement the tuple has to satisfy is 
the compatibility with the Lie algebra filtrations (abv. \textbf{CLF}, cf. \cite[\S6]{skodlerack:13}):
\begin{equation}\label{eqCLFG}
\mf{a}_z(\Lambda)\cap\End_{\E\otimes_\F\D}(\V)=(\oplus_{i\in\I_0}\mf{a}_z(\Lambda^i_\b))\oplus (\oplus_{i\in\I_+}\ti{\mf{a}}_z(\Lambda^i_\b)),\ z\in\bbZ.
\end{equation}
The CLF-property is meant with respect to the canonical embedding of Lie algebras
$\prod_{i\in\I_{0,+}}\End_{\E_i\otimes\D}\V^i\longrightarrow\End_\D\V$ (no~$\I_-$, and canonical with respect to~$h$.) 
We also need the CLF-property for general linear groups:
\begin{equation}\label{eqCLFGLn}
\tilde{\mf{a}}_z(\Lambda)\cap\End_{\E\otimes_\F\D}(\V)=\oplus_{i\in\I}\tilde{\mf{a}}_z(\Lambda^i_\b),\ z\in\bbZ.
\end{equation}
The construction of~$(\Lambda^i_\b)_{i\in\I}$ is done in several steps: 
\begin{remark}\label{remLbeta}We interpret the buildings~$\mf{B}(\G_\b)$ and~$\mf{B}(\G)$ in terms of lattice functions using \S\ref{subsecBuilding}. Let~$\Gamma_{\Lambda,h}$ be the self-dual lattice function attached to~$\Lambda$. 
\begin{enumerate}
\item
%At first we solve the problem for~$\o_\E$-$\o_\D$-lattice functions. 
By~\cite[Theorem~7.2]{skodlerack:13} there exists a~$\G_\b$-equivariant, affine, injective CLF-map
\[j_\b:\mf{B}(\G_\b)\hookrightarrow\mf{B}(\G)\]
whose image in terms of lattice functions is~$\Latt^1_{h,\o_\E,\o_\D}\V$, in particular it contains~$\Gamma_{\Lambda,h}$.
\item\label{remLbetai} We define for~$i\in\I$ the skewfields: 
 \[\D^i_\b:=\left\{\begin{matrix}
		    \E_i\otimes\D, & \text{if~$\E_i$ has odd degree over~$\F$.}\\
                    \E_i, & \text{else}
                   \end{matrix}\right.
.\]
and there is a right-$\D^i_\b$-vector space~$\V_\b^i$ such that $\End_{\D^i_\b}(\V^i_\b)$ is~$\E_i$-algebra isomorphic to~$\End_{\E_i\otimes\D}(\V^i)$,
and further we can find for every~$i\in\I_0$ an~$\epsilon$-herrmitian-$\D^i_\b$-form~$h^i_\b$ on~$\V_\b^i$ such that its adjoint anti-involution~$\sigma_{h^i_\b}$
coincides with the pullback of the restriction of~$\sigma_h$. 
The construction of~$j_\b$ in~\loccit\ (which mainly uses~\cite[II.1.1.]{broussousLemaire:02}) provides a tuple~$(\Gamma^i_\b)_{i\in\I_ {0.+}}$ 
of~$\o_{\D^i_\b}$-lattice functions, such that~$j_\b((\Gamma^i_\b)_{i\in\I_ {0.+}})=\Gamma_{\Lambda,h}$. We fix a map~\cite[II.3.1]{broussousLemaire:02} and attach an~$\o_{\D^i_\b}$-lattice function~$\Gamma^i_\b$ 
to~$\Gamma\cap\V^i$ for~$i\in\I_-$. 
 \item Let~$e$ be the~$\F$-period of~$\Lambda$.
 We define the~$\o_{\D^i_\b}$-lattice sequence~$\Lambda_\b^i$ via~$\Lambda^i_\b(z):=\Gamma^i_\b(\frac{z}{e}).$
\item Both CLF-properties~\eqref{eqCLFG} and~\eqref{eqCLFGLn} are satisfied, see~\cite[Theorem~7.2]{skodlerack:14} and~\cite[Theorem~II.1.1]{broussousLemaire:02}.
\end{enumerate}
\end{remark}
We write~$\Lambda_\b$ for~$(\Lambda^i_\b)_{i\in\I}$ and we are going to write~$\mf{b}(\Lambda),\tilde{\mf{b}}(\Lambda),\mf{b}_z(\Lambda),\tilde{\mf{b}}_z(\Lambda)$ 
for the intersections of $\mf{a}(\Lambda),\tilde{\mf{a}}(\Lambda),\mf{a}_z(\Lambda),\tilde{\mf{a}}_z(\Lambda)$ with~$\B$, and we define~$\tilde{\P}(\Lambda_\b), \P(\Lambda_\b),\tilde{\P}_t(\Lambda_\b),\P_t(\Lambda_\b),$ $t\geq 0$, etc. as the intersection of~$\tilde{\P}(\Lambda), \P(\Lambda),\tilde{\P}_t(\Lambda),\P_t(\Lambda)$, etc.  with~$\tiG_\b$. We denote by~$\P^0(\Lambda_\b)$ the parahoric subgroup of~$\G_\b$ attached to~$\Lambda_\b$,
in particular $\P^0(\Lambda_\b)\cong\prod_{i\in\I_{0,+}}\P^0(\Lambda^i_\b)$.

\subsubsection{The general linear case}
Suppose~$\Delta$ is a simple stratum.
By~\cite[Lemma II.3.1]{broussousLemaire:02} there exists an affine,~$\tG_\beta$-eqivariant injective map between the Bruhat--Tits buildings of~$\tG_\beta$ and~$\tG$
\[j_\beta:\mf{B}(\tG_\beta)\rightarrow\mf{B}(\tG) \]
whose image corresponds to the set of~$\o_\E$-$\o_\D$-lattice functions in~$\V$ and which is compatible with the Lie algebra filtrations.
This is map is unique up to a translation on~$\mf{B}(\tG_\beta)$ by Theorem~\ref{thmBuilding}\ref{thmBuildingi}.
We choose and fix one, see~\cite[Remark 3.5]{skodlerack:17-1}. 
 
%\begin{definition}[parahoric subgroups of~$\G_\b$]\label{defParahoric}
%The quotient~$\P(
%\end{definition}
\subsection{Base extension}
% The group~$\G$ is the set of~$\F$ rational points of the connected reductive group~$\bbG$ defined over~$\F$. 
Consider~$\bbG$ from~\S\ref{subsecSemiChar}.
Let~$\L|\F$ be a quadratic unramified extension of~$\F$. In fact we only will use the carefully chosen extension in~\bfII.2.1. There is a very explicit description of the base extension from~$\F$ to~$\L$ in \S2 of \bfII. Later we are going to reduce statements to the group~$\bbG(\L)$, also 
denoted by~$\G\otimes \L$, in particular this notation includes the condition~$\det=1$. We  will use a canonical injective map~$i_\L$ of~$\mf{B}(\G)$ into~$\mf{B}(\G\otimes \L)$ given by~$\Gamma_{i_\L(x)}=\Gamma_x$. 
and the same for the ambient general linear groups:~$i_\L: \mf{B}(\tiG)\rightarrow \mf{B}(\tiG\otimes \L)$. 
If we work over~$\L$ we give the objects in question the subscript~$\L$, for example we write~$\timfg_{\L,x}$ and~$\mfg_{\L,x}$ for the Moy--Prasad filtration of a point~$x$
in~$\mf{B}(\G\otimes \L)$. 
For the definition of semisimple characters of~$\tiG\otimes\L$ we choose the~$\Gal(\L|\F)=\langle\tau\rangle$-fixed extension~$\psi_\L$ of~$\psi_\F$ given by
$\psi_\L(x):=\psi_\F(\frac12\tr_{\L|\F}(x))$.
There is an action of~$\Gal(\L|\F)$ on~$\mf{B}(\tiG\otimes\L)$. It induces a~$\Gal(\L|\F)$-action on~$\mf{B}_{red}(\tiG\otimes\L)$ and~$\mf{B}(\G\otimes\L)$. 
The action on~$\mf{B}(\tiG\otimes\L)$ is defined by
\[(\tau.\Gamma_\L)(t):=(\Gamma_\L)(t-\frac12)\varpi_\D.\]
Further the~$\Gal(\L|\F)$-action on~$\A\otimes_\F \L=\End_\L\V$ is
defined by the~$\Gal(\L|\F)$-action on the second factor. The latter action coincides with the conjugation with~$\varpi_\D\in\End_\F\V$.

\subsection{Intertwining}
We recall the notions of intertwining. Suppose we are given a smooth representation~$\gamma$ on some compact open subgroup~$\K$ of some totally disconnected
locally compact group~$\H$. For an element~$g\in\H$ we write ~$\I_g(\gamma)$ for~$\Hom_{\K\cap \K^g}(\gamma,\gamma^g)$, 
and we denote by~$\I_\H(\gamma)$ the set of all~$g\in\H$ such that~$\I_g(\gamma)$ is non-zero, and we call~$\I_\H(\gamma)$  the set of~\emph{intertwining elements} 
of~$\gamma$ in~$\H$.

\subsection{Restriction}
We recall that, given locally compact totally disconnected groups~$\G_1$ and~$\G_2$ such that~$\G_2$ is a topological subgroup of~$\G_1$, we denote 
by~$\Res^{\G_1}_{\G_2}$ the functor from~$\mf{R}(\G_1)$ to~$\mf{R}(\G_2)$ given by restriction from~$\G_1$ to~$\G_2$. 

\daniel{
\subsection{Endomorphisms and involutions}\label{subsecEndoInv}
We have the following algebras of endomorphisms:
\[\A=\End_\D\V,\ \A_\F=\End_\F\V,\ \A_\L=\End_\L\V,\]
and they can be interpreted as fixed point sets using involutions. 
We fix a~$\tau$-skew-symmetric element~$l$ of~$\o^\times_\L$ and a uniformizer~$\varpi_\D$ of~$\D$ which normalizes~$\L$ such that the square of~$\varpi_\D$ is~$\varpi_\F$, see~\bfII2.1. 
Both elements define~$\F$-endomorphisms of~$\V$ via
right-multiplication and we still denote these endomorphisms by~$l$ and~$\varpi_\D$. Let~$\tau_l$ and~$\tau_{\varpi_\D}$ be the involutions on~$\A_\F$ given by conjugation with~$l$ and~$\varpi_\D$, respectively. Then both generate a Kleinian four-group~$\langle\tau_l,\tau_{\varpi_\D}\rangle$  whose fixed point set is~$\A$. The averaging function
\[\emptyset_{\langle\tau_l,\tau_{\varpi_\D}\rangle}:\ \A_\F\twoheadrightarrow \A\]
defined via
\[\emptyset_{\langle\tau_l,\tau_{\varpi_\D}\rangle}(x):=\frac{1}{4}(x+\tau_l(x)+\tau_{\varpi_\D}(x)+\tau_{l}(\tau_{\varpi_\D}(x)))\]
is a very useful projector
for reducing proofs from~$\D$ to~$\F$.
}

\subsection{Tame corestriction}
The aim of this paragraph is to show the construction of self-dual tame corestrictions over~$\D$. This should already have been done in~\bfII, and one could leave it as an exercise for the reader, but since it is important in the proof of Theorem~\ref{thmStDuke5.1ForQuatCase} and
to indicate the little difference in the construction between the skew and the self-dual case, we have decided to devote to it a paragraph. It is an almost trivial generalization of~\cite[Proof of 3.31]{stevens:05} and~\cite[4.13]{secherreStevensIV:08}, all based on~\cite[(1.3.4)]{bushnellKutzko:93}.

For the~$\GL$-case tame corestrictions in the semisimple case are already defined in~\bfI4.13. Given a semisimple stratum~$\Delta=[\La,n,r,\b]$ then a map~$s:\A\rightarrow\B$ is called tame corestriction for~$\beta$ if 
under the canonical isomorphism~$\A_\F\simeq\A\otimes_\F\End_\A(\V)$ the map~$s_\F=s\otimes \id_{\End_\A(\V)}$ is a tame corestriction in the sense of~\cite[6.17]{skodlerackStevens:18}.
\begin{definition}
 We call a tame corestriction~$s$~\emph{self-dual} (with respect to~$h$)
 if~$s$ is~$\sigma_h$-equivariant, i.e.~$\sigma_h|_{\B}\circ s\circ\sigma_h=s.$
\end{definition}

A corestriction for~$\b$ always exists by~\cite[4.2.1]{broussous:99},
and we fix one, say~$s$. 
We define~$h_\F=\trd_{\D|\F}\circ h$ whose anti-involution~$\sigma_{h_\F}$ extends~$\sigma_h$ from~$\A$ to~$\A_\F$. Suppose at first~$\Delta$ is simple. Two tame corestrictions~$s_{1,\F}$ and 
$s_{2,\F}$ differ by a multiplication with an element of~$\o_\E^\times$, see~\cite[(1.3.4)]{bushnellKutzko:93}. Then, as~$\o_\E^\times$ is contained in~$\A$, every multiplication of~$s$ by an element~$u$ of~$\o_\E^\times$ provides a tame corestriction. If~$s_\F$ is self-dual, so is~$s$ too, because~$\A$ and~$\End_\A(\V)$ are~$\sigma_{h_\F}$-invariant. We conclude the existence of a self-dual~$s$ from~\loccit\ in taking a~$\sigma_h$-invariant additive character~$\psi_\E$. 
For the semisimple case we choose a corestriction~$s_i:\A^{ii}\rightarrow\B^i$ for every~$i\in\I_0\cup\I_+$ (self-dual for~$i\in\I_0$) and define
\[s_{i}(a):=\sigma_h(s_{-i}(\sigma_h(a))),\ a\in\A^{ii},\ i\in\I_-.\]
Finally, the definition:
\[s(a):=\sum_{i\in\I}s_i(a^{ii}),\ a\in\A,\]
provides a self-dual corestriction.

\subsection{Brauer characters and Glauberman correspondence}
Most of the statements in the construction of cuspidal representations for locally compact totally disconnected classical groups are first  written for complex representations and then transferred to the modular case using the theory of Brauer characters for representations of finite groups. 
For example various extension of semisimple characters to prop-$p$ proups play an important role in the construction of cuspidal representations. Those extension are defined for the complex case as representations which satisfy a certain criterion using induction and restriction. The Brauer map 
for a given pro-$p$-group gives a bijective correspondence between finite dimensional smooth complex and~$l$-modular representations and the Brauer maps for a pro-$p$-group and a compact open subgroup preserve induction and restriction.
Thus if we apply the Brauer map to such an extension of a semisimple character, we obtain an~$l$-modular representation which satisfies a similar induction and restriction criterion. 

In contrast to the rest, in
this section if we say~\emph{character} we mean 
the trace of a representation. 
Let~$\K$ be a finite group. Here in this section we forget about the category structure of~$\mf{R}_\CoefField(\K)$ and consider it just as the set of isomorphism classes of~$\CoefField$-representations of~$\K$. We further assume for this paragraph that~$p_\CoefField$ is positive and does not divide the order of~$\K$.   

The following construction is provided in~\cite[\S2]{navarro:98}. In choosing a maximal ideal containing~$p_\CoefField$ in the ring of algebraic integers they fix a group isomorphism between the groups of roots of unity of order prime to~$p_\CoefField$
\begin{equation} ()^*:\mu_{p_\CoefField\not\mid}(\bbC)\simarrow\mu_{p_\CoefField\not\mid}(\CoefField),
\label{eqRootOfUnity}\end{equation}
and extend this map to the additive closure (in fact it is extended to the  localization of the ring of algebraic integers with respect to the mentioned maximal ideal.)
Using~\eqref{eqRootOfUnity}
one attaches to a representation~$\eta\in\mf{R}_{\CoefField}(\K)$ a complex character~$\chi_\eta$, the Brauer character of~$\eta$,  and therefore a representation~$\Br_\K(\eta)\in\mf{R}_\bbC(\K)$, and this definition is compatible with direct sums and restriction~$(\K\geq\K')$:
\begin{equation}\label{eqRestriction}
\begin{array}{ccc}\mf{R}_\CoefField(\K) &\stackrel{\Br_\K}{\longrightarrow} & \mf{R}_\bbC(\K)\\
\Res^\K_{\K'}\downarrow& \circlearrowleft & \downarrow\Res^\K_{\K'}\\
\mf{R}_\CoefField(\K') &\stackrel{\Br_{\K'}}{\longrightarrow} & \mf{R}_\bbC(\K').\\
  \end{array}
\end{equation}

We call~$\Br_\K$ the~\emph{Brauer map} for~$\K$. Note that the map~$(\chi_\eta)^*$ given by~$(\chi_\eta)^*(g):=(\chi_\eta(g))^*,\ g\in\K,$ is the character of~$\eta$ with values in~$\CoefField$ given by the trace. 
\begin{theorem}[{\cite[p18, Theorems 2.6 and 2.12]{navarro:98}}]\label{thmBr}
The Brauer map for~$\K$ is a bijection, preserves dimensions and maps the set of classes of irreducible~$\CoefField$-representations onto the set of classes of irreducible~$\bbC$-representations of~$\K$.
\end{theorem}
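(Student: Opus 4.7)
The plan is to leverage Maschke's theorem on both sides and then use character theory, since the standing hypothesis $p_\CoefField\nmid|\K|$ makes both $\mf{R}_\CoefField(\K)$ and $\mf{R}_\bbC(\K)$ semisimple categories in which representations are determined up to isomorphism by their characters, and in each the number of irreducible isomorphism classes equals the number of conjugacy classes of $\K$. It therefore suffices to set up a dimension-preserving bijection on the level of isomorphism classes of irreducibles.

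First I would verify well-definedness of the Brauer character. For $\eta\in\mf{R}_\CoefField(\K)$ and $g\in\K$, the order of $g$ divides $|\K|$ and is hence prime to $p_\CoefField$, so $\eta(g)$ satisfies a polynomial dividing $X^{|\K|}-1$, which is separable over $\CoefField$; hence $\eta(g)$ is diagonalizable with eigenvalues in $\mu_{p_\CoefField\not\mid}(\CoefField)$. Pulling these eigenvalues back through the inverse of~\eqref{eqRootOfUnity} and summing defines $\chi_\eta(g)\in\bbC$. It is then routine to check that $\chi_\eta$ is a class function, is additive under direct sums, and satisfies $\chi_\eta(1)=\dim_\CoefField\eta$, which will give dimension preservation; compatibility with restriction as in the square~\eqref{eqRestriction} is immediate because diagonalization survives restriction to subgroups.

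Next I would transport the orthogonality relations across the lift $()^*$. The complex inner product $\langle\chi,\chi'\rangle:=\frac{1}{|\K|}\sum_{g\in\K}\chi(g)\chi'(g^{-1})$ and its $\CoefField$-analogue match through the lift, using that $|\K|$ is invertible on both sides and that all character values lie in the $\bbZ$-span of roots of unity of order prime to $p_\CoefField$, which is precisely the domain on which $()^*$ extends additively. Pushing forward the standard $\CoefField$-orthogonality of distinct irreducible $\CoefField$-characters (valid by Maschke together with Schur's lemma) yields orthonormality in $\bbC$ of their Brauer lifts. Combined with the matching cardinality of irreducible classes on both sides, this forces $\Br_\K$ to send irreducibles to irreducibles, to be injective, and therefore to be a bijection.

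The main obstacle, and the reason this is more than a tautology, is proving that $\chi_\eta$ is genuinely the character of an actual complex representation of $\K$, not merely a formal sum of roots of unity. The cleanest approach is to show that $\chi_\eta$ is a nonnegative integer combination of irreducible complex characters: by the orthonormality argument above, the Brauer lifts of the $\CoefField$-irreducibles form an orthonormal family of class functions on $\K$ whose cardinality equals the number of conjugacy classes, and hence they must coincide (up to sign) with the set of irreducible complex characters. Controlling the sign and proving surjectivity onto the complex irreducibles is the delicate point, handled in~\cite[Theorem~2.12]{navarro:98} via the decomposition-map machinery; this is where one genuinely uses that $\bbC$ and $\CoefField$ share the same group of $n$-th roots of unity for every $n$ prime to $p_\CoefField$.
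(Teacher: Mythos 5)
The paper does not actually prove this theorem; it is cited to Navarro's book and used as a black box, so there is no in-paper proof to compare against. Judged on its own terms, your reconstruction sets up the Brauer character correctly and the transport of the $\CoefField$-orthogonality relations through the lift $()^*$ is sound (since $()^*$ is a ring isomorphism on the relevant localization and $|\K|$ is a unit there). But the step that does the real work is wrong as stated.

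You claim that because the Brauer lifts $\chi_{\eta_1},\dots,\chi_{\eta_\ell}$ form an orthonormal family of class functions whose cardinality equals the number of conjugacy classes, ``they must coincide (up to sign) with the set of irreducible complex characters.'' This is not a valid inference: an orthonormal basis of the space of class functions is unique only up to a unitary change of basis, not up to signed permutation (already in $\bbC^2$, $\{(1/\sqrt2,1/\sqrt2),(1/\sqrt2,-1/\sqrt2)\}$ is orthonormal but is not $\pm$ the standard basis). What is missing is the integrality input: one must know that each $\chi_{\eta}$ is a $\bbZ$-linear combination of the irreducible complex characters (equivalently, that the decomposition matrix $D$ has integer entries). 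Once that is known, $\langle\chi_\eta,\chi_\eta\rangle=1$ together with integer multiplicities forces exactly one coefficient to be $\pm1$ and the rest zero, and $\chi_\eta(1)=\dim_\CoefField\eta>0$ fixes the sign; orthogonality then gives distinctness and the cardinality count gives surjectivity. Equivalently, the classical argument observes that when $p_\CoefField\nmid|\K|$ the Cartan matrix is the identity (semisimplicity of $\CoefField[\K]$), so $D^{\mathrm t}D=I$ with $D$ a nonnegative integer square matrix, which forces $D$ to be a permutation matrix. Your proof punts exactly this integrality/decomposition-matrix step to Navarro while presenting the orthonormality count as if it already delivered the conclusion; it does not, and that is the genuine gap.
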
  

By the following theorem~$\Br$ commutes with induction.

\begin{proposition}[Brauer--Nesbitt~{\cite[Theorem (8.2)]{navarro:98}}]\label{propBrInd}
 %Suppose~$\K$ is nilpotent. 
 %Then 
 The following diagram is commutative.
 \begin{equation}\label{eqInduction}\begin{array}{ccc}
\mf{R}_\CoefField(\K) &\stackrel{\Br_\K}{\longrightarrow} & \mf{R}_\bbC(\K)\\
\ind^\K_{\K'}\uparrow& \circlearrowleft & \uparrow\ind^\K_{\K'}\\
\mf{R}_\CoefField(\K') &\stackrel{\Br_{\K'}}{\longrightarrow} & \mf{R}_\bbC(\K').\\
\end{array}
\end{equation}\end{proposition}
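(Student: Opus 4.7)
The plan is to reduce the commutativity of~\eqref{eqInduction} to an identity of $\CoefField$-traces. By Theorem~\ref{thmBr}, $\Br_\K$ is a bijection between the isomorphism classes of $\mf{R}_\CoefField(\K)$ and $\mf{R}_\bbC(\K)$, so the diagram commutes if and only if for every~$\eta\in\mf{R}_\CoefField(\K')$ the two $\CoefField$-representations $\Br_\K^{-1}(\ind^\K_{\K'}\Br_{\K'}(\eta))$ and $\ind^\K_{\K'}(\eta)$ are isomorphic. Since $p_\CoefField\nmid|\K|$, Maschke's theorem holds on $\mf{R}_\CoefField(\K)$ and $\mf{R}_\CoefField(\K')$, and a $\CoefField$-representation of $\K$ is determined up to isomorphism by its trace. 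It is therefore enough to check that these two representations have equal trace at every $g\in\K$.

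First I would recall Navarro's extension of the identification~\eqref{eqRootOfUnity}: the map $()^*$ prolongs to a surjective ring homomorphism $R\twoheadrightarrow\CoefField$, where $R$ is the localization of $\overline{\bbZ}$ at a maximal ideal above $p_\CoefField$, and this extension is still injective on $\mu_{p_\CoefField\not\mid}(\bbC)\subset R$. By the very definition of the Brauer character, for every finite group $\H$ with $p_\CoefField\nmid|\H|$ and every $\xi\in\mf{R}_\CoefField(\H)$ the values of $\chi_{\Br_\H(\xi)}$ lie in $R$ and satisfy $\chi_{\Br_\H(\xi)}(g)^*=\tr(\xi(g))$ for all $g\in\H$. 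Second, the classical Frobenius character formula
\[\chi_{\ind^\K_{\K'}\varphi}(g)=\frac{1}{|\K'|}\sum_{\substack{x\in\K\\ x^{-1}gx\in\K'}}\chi_\varphi(x^{-1}gx)\]
is available both for complex representations $\varphi\in\mf{R}_\bbC(\K')$ and, since $|\K'|$ is invertible in $\CoefField$, for $\CoefField$-representations of $\K'$.

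Finally I would apply $()^*$ to the Frobenius formula for $\Br_{\K'}(\eta)$; using that $()^*$ is a ring homomorphism and that $|\K'|$ is a unit in $\CoefField$, one obtains
\[\chi_{\ind^\K_{\K'}\Br_{\K'}(\eta)}(g)^*\;=\;\frac{1}{|\K'|}\sum_{\substack{x\in\K\\ x^{-1}gx\in\K'}}\tr(\eta(x^{-1}gx))\;=\;\tr\bigl(\ind^\K_{\K'}(\eta)(g)\bigr),\]
where the second equality is the Frobenius formula applied in $\mf{R}_\CoefField$. The left-hand side is, by construction, the $\CoefField$-trace of $\Br_\K^{-1}(\ind^\K_{\K'}\Br_{\K'}(\eta))$ at $g$, so the two $\CoefField$-representations share the same trace and are therefore isomorphic by Maschke; applying $\Br_\K$ yields the desired equality. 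The main delicate point is tracking Navarro's extension of $()^*$ to a ring homomorphism on a suitable subring of $\bbC$, which is what permits the term-by-term application above. The nilpotence of $\K$ is not really used in this argument; it only reflects the setting in which the proposition is invoked.
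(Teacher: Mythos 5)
Your overall strategy---reducing the commutativity of the diagram to the Frobenius induction formula and the ring homomorphism $()^*$---is close in spirit to the paper's, but the reduction step contains a genuine error. You claim that, since $p_\CoefField\nmid|\K|$, a $\CoefField$-representation of $\K$ is determined up to isomorphism by its $\CoefField$-valued trace. This is false in positive characteristic: equality of $\CoefField$-traces only determines the multiplicities of the irreducible constituents modulo $p_\CoefField$. For example, take $\K=\ZZ/2\ZZ$ and $p_\CoefField=3$; the representations $3\cdot\mathbf{1}$ and $3\cdot\mathrm{sgn}$ have the same dimension and identically zero trace but are not isomorphic. Consequently, showing $\chi_{\ind^\K_{\K'}\Br_{\K'}(\eta)}(g)^*=\tr\bigl(\ind^\K_{\K'}\eta(g)\bigr)$ for all $g\in\K$ does not, by itself, establish that $\Br_\K^{-1}(\ind^\K_{\K'}\Br_{\K'}(\eta))$ and $\ind^\K_{\K'}\eta$ are isomorphic. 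Applying $()^*$ and then arguing inside $\CoefField$ discards precisely the lift information that distinguishes these possibilities.

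What one must actually verify is that the complex number $\chi_{\ind^\K_{\K'}\Br_{\K'}(\eta)}(g)$ equals the \emph{Brauer} character of $\ind^\K_{\K'}\eta$ at $g$, i.e.\ the sum of the $()^{*-1}$-lifts of the eigenvalues of $\ind^\K_{\K'}\eta(g)$, before any reduction to $\CoefField$ is performed. The paper makes this tractable by using nilpotence, which is genuinely used: by subnormality it suffices to treat $\K'\trianglelefteq\K$, and then $\ind^\K_{\K'}\eta|_{\K'}\cong\bigoplus_{\bar{k}\in\K/\K'}{}^k\eta$, so compatibility of $\Br$ with direct sums and conjugation gives the Brauer-character identity on $\K'$, while for $g\notin\K'$ both sides vanish. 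Your observation that nilpotence should be dispensable is correct, but the trace comparison is not the right substitute; the substitute is the Mackey decomposition of $\Res_{\langle g\rangle}\ind^\K_{\K'}\eta$ combined with the compatibilities of $\Br$ with restriction, conjugation, and direct sums (the diagram \eqref{eqRestriction}), which matches the eigenvalue multisets and hence the Brauer characters directly.
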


% \begin{proof}
% By transitivity of induction it is enough to consider a normal subgroup~$\K'$ of~$\K$, and we will do so from now on in the proof. For~$\eta'\in\mf{R}_{\CoefField}(\K')$ let~$\chi'$ be the character of~$\Br_{\K'}(\eta')$ and~$\chi$ be the character of~$\ind_{\K'}^\K\Br_{\K'}(\eta)$. Now they satisfy:
%  \[\chi(g)=\left\{\begin{array}{ll}
%          \sum_{\bar{k}\in\K/\K'} (^k\chi')(g)&,~g\in\K'\\
%          0 &,~g\not\in\K'\\ 
% \end{array}\right.
% \]
% Thus~$\chi^*$ is the $\CoefField$-character of~$\ind_{\K'}^\K\eta'$. Thus the diagram~$\eqref{eqInduction}$ is commutative.
% \end{proof}

One can now transfer the Glauberman correspondence~\cite{glauberman:68} to the modular case via the Brauer map. 

Let~$\Gamma$ be a finite solvable operator group acting on~$\K$ such that~$\K$ and~$\Gamma$ have relatively prime orders. Let~$\K^\Gamma$ be the fixed point set of~$\K$. 
We consider now sets of isomorphism classes of irreducible representation 
denoted by~$\Irr_{?}(!)$. 
The group~$\Gamma$ acts on~$\Irr_{?}(\K)$ and the fixed point set will be denoted by~$\Irr_{?}(\K)^\Gamma$. Then Glauberman constructs a bijection 
\[\gl^\Gamma_\bbC:~\Irr_\bbC(\K)^\Gamma\simarrow\Irr_\bbC(\K^\Gamma),\]
and we define the~\emph{Glauberman transfer}~$\gl^\Gamma_\CoefField$ for~$\CoefField$-representations via~$\gl^\Gamma_\CoefField:=\Br^{-1}_{\K^\Gamma}\circ\gl^\Gamma_\bbC\circ\Br_\K|_{\Irr_{\CoefField}(\K)^\Gamma}$. 

If~$\Gamma$ is the Galois group of the field extension~$\L|\F$ then we write~$\gl_\CoefField^{\L|\F}$ for the Glauberman transfer.

\begin{remark}
 In this work we only apply the Glauberman correspondence for~$\Gamma$ a cyclic group of order two. So here~$\gl_?(\eta)$ is the unique element of~$\Irr_?(\K^\Gamma)$ with odd multiplicity in~$\Res_{\K^\Gamma}^\K(\eta)$.
 \end{remark}

We set for~$\CoefField$ with characteristic zero~$\Br_{\K}$ to be the identity map in terms of characters. 
 
\section{Exhaustion for semisimple Characters}
In this section we prove the following theorem.
\begin{theorem}[see \cite{stevens:05}~5.1 and~\cite{dat:09}~\S8.9\ for the non-quaternionic case]\label{thmStDuke5.1ForQuatCase}
 Let~$\pi$ be an irreducible representation of~$\G$. Then there is a self-dual semisimple stratum~$\Delta$ with~$r=0$ and an element~$\theta$ of~$\C(\Delta)$ such that~$\theta$ is contained in~$\pi$. 
\end{theorem}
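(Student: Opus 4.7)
The plan is to reduce Theorem~\ref{thmStDuke5.1ForQuatCase} to the known case of $p$-adic classical groups treated in \cite{stevens:05} via the carefully chosen unramified quadratic extension $\L|\F$ of \bfII.2.1 (which splits $\D$ so that $\G\otimes\L$ is an ordinary $p$-adic classical group), and then to descend the resulting character back to $\G$ through a double Galois argument. The two descents involved are the unramified $\Gal(\L|\F)$ and the ramified $\Gal(\F[\varpi_\D]|\F)$: together they recover a $\D$-linear, self-dual character of $\G$ from a character living in the ambient $\F$-linear general linear group $\tiG_\F=\Aut_\F(\V)$.

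First I would produce a fundamental stratum $\Delta_0=[\Lambda,n,n-1,\beta_0]$ whose character appears in $\pi|_{\G_{y,n}}$, via the standard Moy--Prasad argument on the quotient $\mf{g}_{y,r}/\mf{g}_{y,r+}$ at the depth $r$ of $\pi$ (the depth-zero case being handled trivially by $\beta=0$). I would then transfer $\Delta_0$ through the canonical building embeddings
\[\mf{B}(\G)\hookrightarrow\mf{B}(\G\otimes\L)\hookrightarrow\mf{B}(\tiG_\F),\]
using the compatibility of the Moy--Prasad filtrations with these embeddings recalled in \S\ref{subsecBuilding}, to obtain a fundamental stratum $\tilde{\Delta}_0$ for $\tiG_\F$. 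By the general linear theory (\cite{bushnellKutzko:93} refined in \bfI\ and \cite{skodlerack:14} to the semisimple setting over $\D$), $\tilde{\Delta}_0$ is contained in the coset of a semisimple stratum $\tilde{\Delta}$ yielding a semisimple character $\tilde{\theta}\in\tiC(\tilde{\Delta})$ occurring in the representation of $\tiG_\F$ obtained by restriction/induction from $\pi$.

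The heart of the proof is to make this semisimple approximation respect both the involution $\sigma_h$ (so that the resulting stratum is self-dual) and the $\D$-structure on $\V$ (so that the stratum is defined over $\D$, not merely over $\F$). For this I would invoke the appendix erratum $\S$\ref{appErratum4p2} to \cite{stevens:02}, which shows that any non-split fundamental coset is contained in the coset of a skew-semisimple stratum; applied equivariantly with respect to the Klein four-group $\langle\tau_l,\tau_{\varpi_\D}\rangle$ from \S2.7, it produces a $\langle\tau_l,\tau_{\varpi_\D}\rangle$-invariant skew-semisimple approximation. The averaging projector $\emptyset_{\langle\tau_l,\tau_{\varpi_\D}\rangle}:\A_\F\twoheadrightarrow\A$ then descends all stratum data from $\A_\F$ to $\A$, while the Glauberman correspondence $\gl_\CoefField^{\L|\F}$ of \S2.8 (applicable because $[\L:\F]=2$ is coprime to $p$) transfers $\tilde{\theta}$, after the necessary compatibility check with Brauer reduction, to a self-dual semisimple character $\theta\in\C(\Delta)$ occurring in $\pi$ itself by Frobenius reciprocity and the parity characterization of Glauberman fixed-point packets.

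Finally, to force $r=0$, I would iterate a depth-reduction step: given a self-dual semisimple character $\theta_r\in\C([\Lambda,n,r,\beta])$ already known to occur in $\pi$ with $r\geq 1$, pick a self-dual tame corestriction $s:\A\to\B$ for $\beta$ as constructed at the end of \S\ref{subsecCentralizer} to identify $\mf{g}_{y,r}/\mf{g}_{y,r+}$ with a quotient associated to the centralizer $\B$, apply the same Moy--Prasad and skew-semisimple approximation machinery inside $\B$, and use \bfI\ to refine the stratum to depth $r-1$ without changing $\theta_r$ on $\H^{r+1}(\beta,\Lambda)$. The principal obstacle will be the coordination of the two Galois descents in the middle step: ensuring that the stratum descended from $\tiG_\F$ is simultaneously self-dual with respect to $h$ over $\D$ and not merely with respect to $h_\F=\trd_{\D|\F}\circ h$ over $\F$, or $\L$-linear. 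This is precisely what the four-group averaging and the appendix erratum are designed to handle uniformly.
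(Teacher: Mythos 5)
Your high-level strategy — reduce to $\G\otimes\L$, exploit the embeddings $\mf{B}(\G)\hookrightarrow\mf{B}(\G\otimes\L)\hookrightarrow\mf{B}(\tiG_\F)$, use the appendix erratum, and descend back via the two Galois groups — is indeed the skeleton of the paper's argument. But there are three concrete gaps in the way you fill in the skeleton.

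First, you claim that applying the erratum ``equivariantly with respect to the Klein four-group'' produces a $\langle\tau_l,\tau_{\varpi_\D}\rangle$-invariant skew-semisimple approximation, and that the projector $\emptyset_{\langle\tau_l,\tau_{\varpi_\D}\rangle}$ then ``descends all stratum data from $\A_\F$ to $\A$.'' Neither step is justified, and the second is actually false as stated: the averaging projector is additive, but equivalence to a semisimple stratum is not a linear condition, so averaging $\beta_\F$ or the lattice data has no reason to preserve it. The paper sidesteps this entirely. For the descent from $\L$ to $\F$ it takes the \emph{barycentre} $x''_\L$ of the Galois-conjugate points $x'_\L$ and $\tau(x'_\L)$ inside the building (keeping $\beta=b$ fixed), and then invokes a convexity result — Lemma~\ref{lemConvexitySemisimpeStrata}, which shows that the barycentre of two strata equivalent to semisimple strata sharing the characteristic polynomial is again equivalent to a semisimple stratum. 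That convexity lemma is the real workhorse, and your proposal does not mention anything that could replace it. (The four-group projector does appear in the paper, but only in the interior of the induction step, to pull elements like $c'$ and the Cayley parameter $x$ down from $\A_\F$ to $\A$; it is never applied to $\beta$ or to the stratum itself.) The further descent from $\F$ to $\D$ is handled by \bfI.4.54 and \bfII.4.7 applied to the $\Gal(\L|\F)$-fixed coset, not by averaging.

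Second, and more seriously, your final paragraph (``iterate a depth-reduction step\dots refine the stratum to depth $r-1$'') compresses what is actually the hardest part of the theorem into a single sentence, and it omits the one issue that makes the iteration nontrivial: termination. Each depth-reduction step replaces $(\Lambda,r)$ by $(\Lambda',r')$ with $\frac{r'}{e(\Lambda'|\F)}<\frac{r}{e(\Lambda|\F)}$, but without a uniform lower bound on the decrement the iteration need not terminate. The paper devotes Lemma~\ref{lemOptimal} and Corollary~\ref{corNonFundSmallPeriod} to bounding the $\D$-period of $\Lambda'$ by $2\dim_\D\V$, which forces the quantity $\frac{r}{e(\Lambda|\F)}$ into the discrete set $\frac{1}{(4N)!}\bbZ$; the induction in the final proof of Theorem~\ref{thmStDuke5.1ForQuatCase} then proceeds on the minimal element $z$ of that set and derives a contradiction. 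None of this appears in your sketch. Relatedly, your depth-reduction step needs the subrepresentation criterion (Lemma~\ref{lemSubrepCriteria}) and the non-degeneracy of the Heisenberg pairing over $\D$ (Lemma~\ref{lemHeisenberg}), both of which require their own Galois arguments and cannot simply be cited from \bfI.

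Third, a smaller point: Glauberman correspondence is not what carries the semisimple character from $\L$ down to $\F$ in the existence proof. The $\theta'$ constructed in the induction step is obtained by \emph{restriction} of a $\Gal(\L|\F)$-fixed character $\theta'_\L$ to $\H^{r'}(\beta,\Lambda')=\H^{r'}(\beta\otimes 1,\Lambda'_\L)\cap\G$; Glauberman enters the paper only later, in \S\ref{secHeisenberg}, for descending Heisenberg representations and $\beta$-extensions. Conflating the two obscures where the ``parity characterization'' is actually doing work.

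In summary: your outline correctly identifies the base-extension framework and the two Galois groups, but the claimed equivariance of the erratum and the averaging of stratum data are unsupported (and the latter is likely false), and the termination of the depth-reduction iteration — which the paper handles through the optimal-points lemma and a discreteness argument — is not addressed at all. Filling these gaps would essentially reproduce the paper's Lemma~\ref{lemConvexitySemisimpeStrata}, Proposition~\ref{propthetaInductionStep}, and Lemma~\ref{lemOptimal}.
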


The proof of this theorem requires several steps similar to~\loccit.
We fix an irreducible smooth representation~$\pi$ of~$\G$. The proof of the theorem is done by induction. 
\begin{enumerate}
 \item In the base case we have the existence of a trivial semisimple character of the same depth as~$\pi$ contained in~$\pi$.
 \item The induction for~$\theta\in\C(\Delta)$ contained in~$\pi$ is on the fraction~$\frac{r}{e(\Lambda|F)}$. 
 \item For the induction step we need to be able to change lattice sequences: Roughly speaking, given a self-dual semisimple character~$\theta\in\C(\Delta)$ with positive~$r$ and contained in~$\pi$ there is a self-dual stratum~$\Delta'$ such that~$\beta=\beta'$ 
 and~$\frac{r'}{e(\Lambda'|F)}<\frac{r}{e(\Lambda|F)}$ and an element~$\theta'\in\C(\Delta')$ which is contained in~$\pi$. 
 \item These steps are not enough, because one has to ensure that the difference between~$\frac{r}{e(\Lambda|F)}$ and~$\frac{r'}{e(\Lambda'|F)}$ is bounded from below by a positive constant independent of~$\Lambda$ and~$\Lambda'$. 
\end{enumerate}

Recall that the depth of~$\pi$ is the infimum of all  non-negative real numbers~$t$ with a point~$x$ in~$\mf{B}(\G)$ such that the trivial representation of~$\G_{x,t+}$ is contained in~$\pi$.   
Let us recall that the barycentric coordinates of a point~$x$ in~$\mf{B}(\G)$ are the barycentric coordinates of the point with respect to the vertexes of any chamber containing~$x$. 
\begin{lemma}[see~\cite{moyPrasad:94}~5.3,~7.4]\label{lemdepth}
 The depth of~$\pi$ is attained at a point with rational barycentric coordinates. 
\end{lemma}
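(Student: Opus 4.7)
By definition, $\rho(\pi)$ is the infimum over $x \in \mathfrak{B}(\G)$ of the \emph{local depths} $\rho_x(\pi) := \inf\{t \geq 0 : \pi^{\G_{x,t+}} \neq 0\}$. My aim is to show that this infimum is realized and that a realizing point can be chosen with rational barycentric coordinates. First, I would reduce the problem to a fixed apartment $\mathcal{A}$ of $\mathfrak{B}(\G)$. Since $\G$ acts transitively on apartments and $\G_{gx,t+} = g\G_{x,t+}g^{-1}$, the function $x \mapsto \rho_x(\pi)$ is $\G$-invariant, so its infimum over $\mathfrak{B}(\G)$ equals its infimum over $\mathcal{A}$.

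Next, I would use the affine-root description of the Moy--Prasad filtrations inside $\mathcal{A}$: $\G_{x,t+}$ is generated by the relevant torus part and the affine root subgroups $U_\alpha$ for the affine roots $\alpha \in \Psi$ with $\alpha(x) > t$, where $\Psi$ is a discrete set of affine functions on $\mathcal{A}$ taking rational values at rational points. Therefore, for each fixed $t \geq 0$, the subgroup $\G_{x,t+}$ is constant on each facet of the polysimplicial decomposition of $\mathcal{A}$ cut out by the hyperplanes $\{\alpha = t\}_{\alpha \in \Psi}$. As $x$ varies over the closure of a single chamber of $\mathcal{A}$, only finitely many affine roots can contribute to the jumps of the filtration up to any given bound, so $\rho_x(\pi)$ takes values in a discrete rational set on that closure and is lower semi-continuous.

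I would then conclude: the function $\rho_x(\pi)$ is locally constant on the interiors of the facets of the (common refinement of the) polysimplicial decompositions above, and lower semi-continuous. On any closure of a chamber of $\mathcal{A}$ (which is compact) the infimum is therefore attained, and the set of minimizers is a non-empty closed union of facets. Any such facet has rational vertices in $\mathcal{A}$, hence contains points with rational barycentric coordinates, and the infimum on $\mathcal{A}$ (equal to $\rho(\pi)$) is attained at such a point.

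The main obstacle is the semi-continuity and attainment step: one needs to verify that $\rho_x(\pi)$ really does stabilize on facets and that the infimum over $\mathcal{A}$ (a priori taken over an unbounded set) is actually achieved in a single chamber. This reduction to a compact region uses that $\pi$, being irreducible smooth, admits a nonzero vector fixed by some open compact subgroup, which forces $\rho_x(\pi)$ to be large at points $x$ far from an initial minimizer (since the filtration subgroups grow without bound along geodesics in $\mathcal{A}$). Once this is in hand, the piecewise-constant structure dictated by the affine root system yields the rational minimizer, matching the assertions of Moy--Prasad [5.3, 7.4].
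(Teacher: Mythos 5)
Your plan hinges on the claim that $\rho_x(\pi)$ is lower semi-continuous and ``takes values in a discrete rational set'' on the closure of a chamber, and this is precisely the step that does not hold up. The supporting observation is correct as far as it goes --- for a \emph{fixed} $t$, the group $\G_{x,t+}$ is constant on the cells cut out by the hyperplanes $\alpha=t$ --- but the depth at $x$ is a jump value of the filtration at $x$, namely $\alpha(x)$ for some affine root $\alpha$, and $\alpha(x)$ \emph{varies continuously} as $x$ ranges over the chamber closure. So the set of possible values of $\rho_x(\pi)$ on the closure of a chamber is not discrete, the function is not locally constant on any fixed polysimplicial refinement, and the inference to lower semi-continuity collapses. (Note also that the ``common refinement over all $t$'' you invoke would be infinitely fine, since the jump loci $\{\alpha=t\}$ sweep out all of $\mathcal{A}$ as $t$ varies.)

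This is exactly the difficulty the paper warns about. The remark following the lemma observes that $\depth(\pi,\cdot)$ is \emph{upper} semi-continuous (which it then proves, purely for interest), but that upper semi-continuity gives you attainment of the supremum, not the infimum, and that lower semi-continuity is unclear. The paper's actual proof is therefore a one-line citation of Moy--Prasad: they bypass the semi-continuity question entirely by introducing a \emph{finite}, explicitly rational set of ``optimal points'' in a chamber closure and proving (their Theorem~7.4) that the depth is always attained at an optimal point. This combinatorial finiteness statement is the content you would need to reconstruct, and your semi-continuity argument does not supply a substitute for it. To repair your proof you would either have to establish lower semi-continuity directly (which the paper suggests may fail) or reproduce the optimal-point argument from [moyPrasad:94, \S 6--7].
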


This follows from~\loccit\ because the depth is attained at an optimal point, see~\loccit~7.4. 
\begin{remark}
 One could think that a continuity argument with the function
 \[x\in \mf{B}(\G)\mapsto \depth(\pi,x):=\inf\{s\geq 0|\ V_\pi^{\G_{x,s+}}\neq 0\} \]
 could lead to Lemma~\ref{lemdepth}, but it is unclear if this function is continuous. It is upper-continuous, but maybe not lower continuous. Here the idea in~\loccit\ of taking optimal points comes into play 
 which form a finite set for a given chamber~$\C$.  
\end{remark}

We prove the upper-continuity of~$\depth(\pi,*)$ in the above remark. It is not needed for what follows in this article. 
\begin{proof}
 Take~$x\in \mf{B}(\G)$. It corresponds to a self-dual lattice function~$\Gamma_x$ with set~$\disc(x)$ of discontinuity points of~$\Gamma$. We take a CAT(0)-metric~$d(*,*)$ on~$\mf{B}(\G)$ given in~\cite{corvallisTits:79}. The point~$x$ lies in the interior of a facet~$F_x$.
 Then for all positive real~$\delta_1$ there exists a positive real~$\delta_2$ such that 
 \begin{enumerate}
  \item The ball around~$x$ with radius~$\delta_2$ does not intersect any facet of lower dimension than the dimension of~$F_x$.
  \item For all~$x'\in \mf{B}(\G)$ with~$d(x,x')<\delta_2$ and all~$t\in\disc(x)$ there exists a~$t'\in\disc(x')$ such that~$|t-t'|<\delta_1$ and~$\Gamma_{x}(t)=\Gamma_{x'}(t')$.
  \item For all~$x'\in \mf{B}(\G)$ with~$d(x,x')<\delta_2$ and all~$t'\in\disc(x')$ there exists a~$t\in\disc(x)$ such that~$|t-t'|<\delta_1$ and~$\Gamma_{x}(t)\supseteq\Gamma_{x'}(t')$. \label{uppercontiii}
 \end{enumerate}
 Statement~\ref{uppercontiii} implies~$\Gamma_{x}(t)\supseteq\Gamma_{x'}(t+\delta_1)$ for all~$t\in\bbR$. 
 Then, for every~$x'\in \mf{B}(\G)$ with~$d(x,x')<\delta_2$ and every~$s\geq 0$, the group~$\G_{x,s+}$ contains~$\G_{x',(s+2\delta_1)+}.$
 In particular those~$x'$ satisfy
 $\depth(\pi,x')\leq \depth(\pi,x)+2\delta_1$ 
 which finishes the proof.
\end{proof}

For the proofs of the next lemmas we need some duality for the Moy--Prasad filtrations. 
Given a subset~$S$ of~$\A$ the dual~$S^*$ of~$S$ with respect to~$\psi_\F$ is defined as the subset of~$A$ consisting of all elements~$a$ of~$\A$ which satisfy~$\psi_A(sa)=1$ for 
all~$s\in S$. ($\psi_\A:=\psi_\F\circ\trd$).
The main property of this duality is:

\begin{lemma}\label{lemStarD}
 Let~$x$ be a point of~$\mf{B}(\tiG)$. Then we have~$\timfg^*_{x,t}=\timfg_{x,-t+}$ for all~$t\in\bbR$.
\end{lemma}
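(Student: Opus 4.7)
The plan is to reduce the statement to a lattice-theoretic computation of duals with respect to the reduced trace pairing. Using the identification $\timfg_{x,t}=\timfa_t(\Gamma)$ from the Broussous--Lemaire theorem recalled above, where $\Gamma:=\iota_{\tiG}(x)$, and unwinding $\psi_\A=\psi_\F\circ\trd$, the claim becomes the purely lattice-theoretic identity $\timfa_t(\Gamma)^{*}=\timfa_{-t+}(\Gamma)$, with duality taken with respect to the nondegenerate $\F$-bilinear pairing $(a,b)\mapsto\trd(ab)$ on~$\A$.

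For the inclusion $\timfa_{-t+}(\Gamma)\subseteq\timfa_t(\Gamma)^{*}$, I would use the multiplicativity $\timfa_s(\Gamma)\cdot\timfa_u(\Gamma)\subseteq\timfa_{s+u}(\Gamma)$, immediate from the definition of a square lattice function. Given $a\in\timfa_s(\Gamma)$ with $s>-t$ and $b\in\timfa_t(\Gamma)$, the product $ab$ lies in $\timfa_\epsilon(\Gamma)$ for $\epsilon=s+t>0$. Iterating, $(ab)^n\Gamma(r)\subseteq\Gamma(r+n\epsilon)\to 0$ in~$\V$, so $ab$ is topologically nilpotent. Base-changing to a splitting field of~$\D$ realises $\trd(ab)$ as the ordinary trace of a topologically nilpotent matrix, hence it lies in $\mf{p}_\F$; as $\psi_\F$ is trivial on~$\o_\F$, we obtain $\psi_\A(ab)=\psi_\F(\trd(ab))=1$, so $a\in\timfa_t(\Gamma)^{*}$.

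For the reverse inclusion I would match covolumes. The periodicity $\Gamma(t+1)=\Gamma(t)\w_\F$ of the lattice function translates into the $\o_\F$-module equality $\timfa_{t+1}(\Gamma)=\w_\F\timfa_t(\Gamma)$, and both $\timfa_t(\Gamma)^{*}$ and $\timfa_{-t+}(\Gamma)$ are $\o_\F$-lattices in~$\A$ sharing this periodicity. It therefore suffices to match them modulo~$\w_\F$, which reduces to the equality
\[
[\timfa_t(\Gamma)^{*}:\timfa_{t+1}(\Gamma)^{*}]=[\timfa_{-t+}(\Gamma):\timfa_{-t+1+}(\Gamma)].
\]
The left-hand index equals $[\timfa_t(\Gamma):\timfa_{t+1}(\Gamma)]$ by the perfectness of the trace pairing on~$\A$, and the right-hand index equals the same quantity by translating the filtration by~$1$. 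Combined with the easy inclusion of the previous paragraph this forces equality of the two lattices.

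The main obstacle is establishing the nondegeneracy underlying the covolume computation, i.e. that the reduced trace induces nondegenerate pairings between the graded pieces $\timfa_t(\Gamma)/\timfa_{t+}(\Gamma)$ and $\timfa_{-t}(\Gamma)/\timfa_{-t+}(\Gamma)$. I would handle this by base change through a maximal unramified splitting field~$\F'$ of~$\D$: over~$\F'$ the lattice function~$\Gamma$ becomes a standard $\o_{\F'}$-lattice chain on $\V\otimes_\F\F'$, and the statement reduces to the classical duality for Moy--Prasad filtrations of~$\GL$, which follows from the nondegeneracy of the matrix trace pairing on each graded piece of a lattice chain. Galois descent from~$\F'$ to~$\F$ then yields the statement for~$\A$.
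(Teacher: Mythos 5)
Your strategy and the paper's are genuinely different. The paper's proof is a short explicit computation: one chooses a splitting basis $(v_k)_k$ for the lattice function of~$x$, writes $\timfg_{x,t}=\oplus_{i,j}\pD^{\lceil d(t+a_j-a_i)\rceil}E_{ij}$ in terms of the associated elementary matrices, notes that $\timfg^*_{x,t}$ is again split by the $E_{ij}$, and then the whole lemma collapses to a scalar inequality between the exponents, using only that $\trd_{\D|\F}(\pD^k)\subseteq\mf{p}_\F$ exactly when $k\ge 1$. Your plan replaces this by topological nilpotence plus a covolume comparison plus a base change to the split case.

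Your first step (easy inclusion) is essentially correct, but you misstate the hypothesis on~$\psi_\F$: since $\psi_\F$ has level~$1$ it is trivial on $\mf{p}_\F$ and nontrivial on $\o_\F$ — the nontriviality on $\o_\F$ is in fact indispensable for the reverse inclusion. You do land $\trd(ab)$ in $\mf{p}_\F$, which is what is actually used, so this is a misattribution rather than a gap.

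The middle paragraph, though, does not prove the reverse inclusion as written. You compare $\bigl[\timfa_{t+1}(\Gamma)^*:\timfa_t(\Gamma)^*\bigr]$ with $\bigl[\timfa_{-t+}(\Gamma):\timfa_{(1-t)+}(\Gamma)\bigr]$, but both of these indices equal $q^{\dim_\F\A}$ for \emph{any} full $\o_\F$-lattice $L$ obeying $L\mapsto\w_\F L$ under $t\mapsto t+1$; the equality is automatic and carries no information. In particular it in no way rules out a strict inclusion $\timfa_{-t+}(\Gamma)\subsetneq\timfa_t(\Gamma)^*$: you are comparing a lattice with its own $\w_\F$-multiple rather than the two lattices with each other. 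What actually closes the gap is precisely the statement you defer to the last paragraph: the induced pairing
\[
\timfa_s(\Gamma)/\timfa_{s+}(\Gamma)\ \times\ \timfa_{-s}(\Gamma)/\timfa_{(-s)+}(\Gamma)\longrightarrow \k_\F
\]
is nondegenerate for every~$s$. Given this, if $a\in\timfa_t(\Gamma)^*$ lay in $\timfa_s(\Gamma)\setminus\timfa_{s+}(\Gamma)$ with $s\le -t$, nondegeneracy would produce $b\in\timfa_{-s}(\Gamma)\subseteq\timfa_t(\Gamma)$ with $\trd(ab)\notin\mf{p}_\F$, contradicting $a\in\timfa_t(\Gamma)^*$. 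So the graded nondegeneracy is not a technical obstacle to a covolume count — it \emph{is} the reverse inclusion, and the index computation should simply be replaced by this argument. Your proposed route to the nondegeneracy (unramified base change splitting $\D$, reduce to the $\GL$ lattice-chain case, then Galois descent) is plausible but left at the level of a sketch; the paper's splitting-basis computation is self-contained and sidesteps it entirely.
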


\begin{proof}
 We choose a splitting basis~$(v_k)_k$ for~$\Gamma_x$, i.e. 
 \[\Gamma_x(t)=\oplus_k v_k\pD^{\lceil d(t-a_k)\rceil}.\]
 \daniel{Depending on the choice of the basis we have an~$\F$-algebra isomorphism $\A$ $\simeq\M_m(\D)$ and therefore a left~$\D$-vector space action on~$\A$ given by the canonical one on~$\M_m(\D)$. For~$1\leq j,k\leq m$ let~$E_{ij}$ be the element of~$\A$ with kernel~$\oplus_{k\neq j} v_\k\D$ which sends~$v_j$ to~$v_i$.}
 They form a~$\D$-left basis of~$\A$ which splits~$\timfg_x$, more precisely: 
 \[\timfg_{x,t}=\oplus_{ij}\pD^{\lceil d(t+a_j-a_i)\rceil}E_{ij}.\]
 We now show the assertion of the lemma. The inclusion~$\supseteq$ is obvious. For the other inclusion we first remark that for every real number~$t$ the lattice~$\timfg^*_{x,t}$ is split by~$(E_{ij})_{ij}$ too:
 \[\timfg^*_{x,t}=\oplus_{ij}\pD^{e_{ij}(t)}E_{ij},\ e_{ij}(t)\in \bbZ\]
 Then~$e_{ji}(t)+\lceil d(t+a_j-a_i)\rceil$ is positive  and therefore~$e_{ji}(t)> d(-t+a_i-a_j)$ which finishes the proof since~$e_{ji}(t)$ is an integer. 
\end{proof}

For an element~$\beta\in \A$ we define the map~$\tipsi_\beta:\A\ra\CoefField$ via~$\tipsi_\beta(1+a):=\psi_\A(\beta a)$. Some restrictions of~$\tipsi_\beta$ are characters, i.e. multiplicative, as for example in the following case: 

\begin{definition}
 Let~$\Delta=[\Lambda,n,n-1,\beta]$ be a  stratum which is not equivalent to a null-stratum. Then we define~$d_\Delta:=\frac{n}{e(\Lambda|F)}$ to be the depth of~$\Delta$. Let~$x\in\mf{B}(\tiG)$ be a point corresponding to~$\Lambda$.
 The coset of~$\Delta$ in terms of the building is defined as~$\beta+\timfg_{x,-d_\Delta+}$, and if~$\Delta$ is self-dual then we call~$\beta+\mfg_{x,-d_\Delta+}$ 
 its~\textit{self-dual} coset. To~$\Delta$ is attached the character~$\tipsi_\Delta:\ \tiG_{x,d_\Delta}\ra\CoefField$ defined via restriction of~$\tipsi_\beta$. 
  Note that~$\tipsi_\Delta$ is trivial on~$\tiG_{x,d_\Delta+}$ by Lemma~\ref{lemStarD}.
 If~$\Delta$ is self-dual we write~$\psi_\Delta$ for the restriction of~$\tipsi_\Delta$ to~$\G_{x,d_\Delta}$.
 We say that~$\pi$ contains~$\Delta$ or the associate coset, if it contains~$\psi_\Delta$. 
\end{definition}

\begin{proposition}[cf.~\cite{stevens:02}~2.11 and~2.13 for~$\G\otimes \L$]\label{propSemisimpleStratuminpi}
 Suppose~$\pi$ has positive depth. Then~$\pi$ contains a self-dual semisimple stratum~$\Delta$ with~$n=r+1$ and of the same depth. 
\end{proposition}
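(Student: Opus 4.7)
The plan is as follows. By Lemma~\ref{lemdepth} I fix an optimal point~$y\in\mathfrak{B}(\G)$ with rational barycentric coordinates at which the positive depth~$d$ of~$\pi$ is attained. Then~$y$ corresponds to a self-dual~$\mathfrak{o}_\D$-lattice sequence~$\Lambda$, and~$d=n/e(\Lambda|\F)$ for some positive integer~$n$. By definition of the depth,~$\pi^{\G_{y,d+}}$ is non-zero but~$\G_{y,d}$ acts non-trivially on it, so some non-trivial character~$\chi$ of the abelian quotient~$\G_{y,d}/\G_{y,d+}$ occurs in~$\pi$.

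The first step is to produce a self-dual stratum~$\Delta_0=[\Lambda,n,n-1,\beta_0]$ such that~$\chi=\psi_{\Delta_0}$. I would use the Cayley map to identify~$\G_{y,d}/\G_{y,d+}$ with the additive quotient~$(\mathfrak{a}_n(\Lambda)\cap\A_-)/(\mathfrak{a}_{n+1}(\Lambda)\cap\A_-)$, and the self-dual version of Lemma~\ref{lemStarD} (which follows from~$\sigma_h$-invariance of~$\psi_\A$) to identify the Pontryagin dual of this quotient with~$(\mathfrak{a}_{-n}(\Lambda)\cap\A_-)/(\mathfrak{a}_{-n+1}(\Lambda)\cap\A_-)$, which yields a suitable~$\beta_0\in\A_-$. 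Optimality of~$y$ then forces~$\Delta_0$ to be fundamental, because a nilpotent direction in the coset of~$\beta_0$ would allow a Moy--Prasad-style translation of~$y$ to strictly lower the depth, contradicting the choice of~$y$.

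The main step is to replace~$\beta_0$ by a semisimple element in the same self-dual coset. My plan is to base-change along the canonical embedding~$i_\L:\mathfrak{B}(\G)\hookrightarrow\mathfrak{B}(\G\otimes\L)$, where~$\G\otimes\L$ is a genuine~$p$-adic classical group since~$\L$ splits~$\D$. Applying Propositions~2.11 and~2.13 of~\cite{stevens:02}, amended by the erratum of~\S\ref{appErratum4p2}, to the~$\Gal(\L|\F)$-invariant fundamental stratum obtained from~$\Delta_0$ yields a skew-semisimple stratum~$\Delta_1=[\Lambda\otimes_\F\mathfrak{o}_\L,n,n-1,\beta_1]$ in the same coset over~$\G\otimes\L$.

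The principal obstacle, which is the genuinely new point in the quaternionic setting, is the descent of~$\Delta_1$ back to~$\G$: one needs~$\beta_1$ to be fixed by the non-trivial element of~$\Gal(\L|\F)$ and by~$\tau_{\varpi_\D}$ simultaneously, so that it lies in~$\A_-\subset\End_\D\V$. My strategy is to apply the averaging projector~$\emptyset_{\langle\tau_l,\tau_{\varpi_\D}\rangle}$ of the \emph{Endomorphisms and involutions} subsection to~$\beta_1$; the image remains in the coset of~$\beta_0$ and, for~$p$ odd, skew-semisimplicity (detected by the characteristic polynomial of a suitable normalisation of the leading term modulo~$\mathfrak{p}_\F$) is preserved by this averaging because the two involutions commute with the splitting idempotents of~$\beta_1$ up to a controlled permutation of the index set. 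Once the descent is secured, the resulting~$\beta\in\A_-$ gives a self-dual semisimple stratum~$\Delta=[\Lambda,n,n-1,\beta]$ with~$\psi_\Delta=\psi_{\Delta_0}$, hence contained in~$\pi$, finishing the proof.
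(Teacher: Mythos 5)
Your opening steps (optimal point with rational barycentric coordinates, identifying $\G_{y,d}/\G_{y,d+}$ via the Cayley map, dualising by the self-dual version of Lemma~\ref{lemStarD} to obtain a self-dual stratum $\Delta_0=[\Lambda,n,n-1,\beta_0]$ with $\psi_{\Delta_0}\subseteq\pi$) are sound and agree with the paper. The base-change to $\L$ and the appeal to the amended Stevens result are also in line with what the paper does. Where the proof goes wrong is in the descent step, and it does so in two related ways.

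First, you misstate what the amended Stevens theorem delivers. Theorem~\ref{thmStevens4p4ForNonGsplitAndAlsoForNonFundamental} does not produce a semisimple stratum on the \emph{same} lattice sequence $\Lambda\otimes_\F\o_\L$; it produces a \emph{new} lattice sequence $\Lambda'_\L$ (and a new element $\beta_1$) with a coset containment $b+\timfa_{1-n}(\Lambda)\subseteq\beta_1+\timfa_{1-n'}(\Lambda'_\L)$ and equal depths. Consequently your $\beta_0$ \emph{already lies} in the coset of $\beta_1$. The paper exploits precisely this: since $b\in\Lie(\G)$ from the outset, there is nothing to descend on the \emph{element} side at all; the element is kept as $b$ throughout, and only the point in the building (equivalently the lattice sequence) has to be brought back over $\F$.

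Second, and more seriously, your proposed fix — applying the averaging projector $\emptyset_{\langle\tau_l,\tau_{\varpi_\D}\rangle}$ to $\beta_1$ and claiming that skew-semisimplicity survives because "the two involutions commute with the splitting idempotents up to a controlled permutation" — is not a valid step. The projector in question maps $\A_\F\twoheadrightarrow\A$, not $\A_\L\to\A$, and in any case there is no reason for the average of Galois translates of a (skew-)semisimple element to define a stratum equivalent to a semisimple one: the translates need not commute, and semisimplicity of strata is not stable under linear combinations of elements. This is the step the paper is organised to avoid. Its actual mechanism is to average the \emph{points of the building}: given the new point $x'_\L$ over $\L$, one takes the barycentre $x''_\L$ of $x'_\L$ and $\tau(x'_\L)$, which is $\Gal(\L|\F)$-fixed and hence descends to $x''\in\mf{B}(\G)$. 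The coset containments propagate by convexity of the Moy--Prasad filtrations, and the fact that the barycentre again supports a semisimple stratum in the same coset is exactly the content of Lemma~\ref{lemConvexitySemisimpeStrata}, which your argument never invokes. With $x''$ defined over $\F$ and $b\in\Lie(\G)$ unchanged, one concludes via~\textbf{I}.4.54 and~\textbf{II}.4.7 that $[\Lambda'',n'',n''-1,b]$ is equivalent to a self-dual semisimple stratum. So: the gap in your argument is real, and the missing idea is to descend the \emph{point} by barycentre plus the convexity lemma for semisimple strata, rather than to descend the \emph{element} by averaging.
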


For the proposition we need a convexity lemma for semisimple strata. Let us recall: The minimal polynomial~$\mu_\Delta$ for a stratum~$\Delta:=[\Lambda,n,n-1,\beta]$ is the minimal polynomial in~$\k_\F[X]$ of the residue class~$\bar{\mf{\eta}}(\Delta)$ of~$\mf{\eta}(\Delta):=\varpi_\F^\frac{n}{\gcd(n,e(\Lambda|F))}\beta^\frac{e(\Lambda|F)}{\gcd(n,e(\Lambda|F))}$ modulo~$\timfa_{1}(\Lambda)$, see~\bfI.\S4.2.
Further we recall the~\emph{characteristic polynomial}~$\chi_\Delta$ of~$\Delta$ which is the mod~$\mf{p}_\F$-reduction of the reduced characteristic polynomial of $\mf{\eta}(\Delta)$.
The polynomials~$\mu_\Delta$ and~$\chi_\Delta$ depend on the choice of~$\varpi_\F$. 
%where~$g$ is the greatest common divisor of~$n$ with~$e(\Lambda|F)$. 
For the next lemma we need the~\emph{barycentre}~$\frac12\Lambda+\frac12\Lambda'$ of two lattice sequences~$\Lambda,\Lambda'$ with the same~$\F$-period~$e$:
\[\left(\frac12\Lambda+\frac12\Lambda'\right)(m):=\left(\frac12\Gamma_\Lambda+\frac{1}{2}\Gamma_{\Lambda'}\right)\left(\frac{m}{2e}\right),\ m\in\bbZ.\]
Here we have used the affine structure on~$\Latt^1_{\o_\D}\V$, see the description in~\cite[Proof of~7.1]{broussousStevens:09}. 

\begin{lemma}\label{lemConvexitySemisimpeStrata}
 Suppose that~$\Delta:=[\Lambda,n,n-1,\beta]$ and~$\Delta':=[\Lambda',n,n-1,\beta]$ are strata over~$\D$ which are equivalent to semisimple strata and share the characteristic polynomial and the~$\D$-period. Then~$\Delta'':=[\frac12\Lambda+\frac12\Lambda',2n,2n-1,\beta]$ is equivalent to a semisimple stratum. 
\end{lemma}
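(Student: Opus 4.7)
The plan is to reduce the statement to the analogous convexity result for semisimple strata over $\F$, which is the content of the appendix erratum on \cite[Proposition~4.2]{stevens:02}.

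First I would invoke the characterization of semisimple strata via residue polynomials: a stratum $[\Lambda, n, n-1, \beta]$ with $\nu_\Lambda(\beta) = -n$ is equivalent to a semisimple stratum if and only if the minimal polynomial $\mu_\Delta \in \k_\F[X]$ of the residue $\bar{\mf{\eta}}(\Delta)$ is squarefree and coprime to $X$ (see \bfI.\S4.2). Since $\chi_\Delta$ and $\mu_\Delta$ share the same set of irreducible factors, the hypothesis $\chi_\Delta = \chi_{\Delta'}$ combined with the semisimplicity of both $\Delta$ and $\Delta'$ forces $\mu_\Delta = \mu_{\Delta'} =: \mu$, squarefree and coprime to $X$.

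Second, I would compute the invariants of $\Delta''$ explicitly. Both $\Lambda$ and $\Lambda'$ have the same $\F$-period $e$, so $\Lambda''$ has $\F$-period $2e$. Since $n'' = 2n$ and $e'' = 2e$, the quotients $n''/\gcd(n'', e'') = n/\gcd(n, e)$ and $e''/\gcd(n'', e'') = e/\gcd(n, e)$ are unchanged; hence $\mf{\eta}(\Delta'') = \mf{\eta}(\Delta) = \mf{\eta}(\Delta')$ as elements of $\A$. The problem thus becomes to show that the image of this common element in $\timfa_0(\Lambda'')/\timfa_1(\Lambda'')$ still has squarefree minimal polynomial coprime to $X$.

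Third, I would reduce to the $\F$-split case. Viewing $\Lambda$ and $\Lambda'$ as $\o_\F$-lattice sequences $\Lambda_\F$ and $\Lambda'_\F$ of $\V$, one checks that the formation of barycentres commutes with restriction of scalars: $(\Lambda'')_\F = \tfrac{1}{2}\Lambda_\F + \tfrac{1}{2}\Lambda'_\F$. Using the averaging projector $\emptyset_{\langle\tau_l,\tau_{\varpi_\D}\rangle}\colon \A_\F \twoheadrightarrow \A$ introduced earlier, one verifies that an $\A$-stratum is equivalent to a semisimple stratum if and only if its $\A_\F$-refinement is, and that the corresponding characteristic polynomials in $\k_\F[X]$ share the same squarefree part. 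Applying the $\GL$-convexity result from Appendix~\ref{appErratum4p2} to $\Delta_\F$ and $\Delta'_\F$ then gives that $\Delta''_\F$ is equivalent to a semisimple stratum over $\F$, and reversing the reduction yields the claim for $\Delta''$.

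The main obstacle is the bookkeeping in step three: one must verify carefully that semisimplicity and squarefreeness of the minimal polynomial really do transfer faithfully between the $\D$-version and the $\F$-refinement of a stratum, and that the barycentric operation is preserved by restriction of scalars. An alternative, which avoids this reduction, is to argue directly over $\D$ by an explicit residue computation modelled on the proof in the appendix erratum, using the fact that $\timfa_0(\Lambda'')$ is sandwiched between the two filtration pieces of $\Lambda$ and $\Lambda'$ so that $\bar{\mf{\eta}}(\Delta'')$ inherits its eigenvalue structure from $\bar{\mf{\eta}}(\Delta)$ and $\bar{\mf{\eta}}(\Delta')$.
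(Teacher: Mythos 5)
Your opening characterization is incorrect, and this gap propagates through the whole argument. A stratum is equivalent to a semisimple stratum precisely when its minimal polynomial $\mu_\Delta$ is square-free; the factor $X$ is \emph{not} excluded, because semisimple strata here include null components. The paper invokes \bfI.4.8 exactly in the case $X\nmid\mu_{\Delta''}$, and then spends the bulk of the proof on the case $X\mid\mu_\Delta$, which your plan simply asserts cannot occur. Concretely, the paper's handling of that case is the real content: it constructs an idempotent $e$ (a polynomial in $\mf{\eta}(\Delta)=\mf{\eta}(\Delta')$, hence splitting both $\Lambda$ and $\Lambda'$) isolating the $X$-primary part, uses that $e\Delta$ and $e\Delta'$ intertwine to conclude $e\Delta'$ is non-fundamental, applies the first part of the argument to $(1-e)\Delta''$, and reduces the remaining $e$-block to the null-stratum case by convexity. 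None of this appears in your plan.

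Your reduction to $\F$ via restriction of scalars is in fact exactly the paper's first move (it cites \bfI.4.54 to pass to $\Res_\F(\Delta'')$), and the observation that $\mu_\Delta=\mu_{\Delta'}$ because both are the radical of the common characteristic polynomial is also what the paper uses. But your citation is off: the convexity statement you need — that $\mu_{\Delta''}$ divides $\mu_\Delta$ when $\Lambda''$ is the barycentre — is \cite[5.5]{stevens:02}, not anything in Appendix~\ref{appErratum4p2}. That appendix corrects \cite[Proposition~4.2]{stevens:02} (producing a stratum with semisimple residue from a fundamental one) and has nothing to say about barycentres of lattice sequences. Even with the correct citation, step three cannot close the argument as written because it still relies on the false "coprime to $X$" dichotomy.
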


% In this lemma we have interpreted~$\Lambda$ and~$\Lambda'$ as lattice functions, then taken the barycentre and then interpreted~$\frac12\Lambda+\frac12\Lambda'$ as a
% lattice sequence of period~$2e(\Lambda|F)$.  

\begin{proof}
It suffices to prove that~$\Res_\F(\Delta'')$ is equivalent to a semisimple stratum, by~\cite[Theorem~4.54]{skodlerack:17-1}, and therefore for the proof we suppose that~$\Delta,\Delta'$ are strata over~$\F$ instead of~$\D$.
 At first~$\Delta$ and~$\Delta'$ have the same minimal polynomial because it is the radical of the characteristic polynomial by the equivalence to semisimple strata. 
%  There is nothing to show if~$\Delta''$ is equivalent to a null-stratum, so let us assume the contrary. 
 By convexity, see~\cite[5.5]{stevens:02}, we obtain that the minimal polynomial of~$\Delta''$ divides~$\mu_\Delta$. But since~$\Delta$ is equivalent 
 to a semisimple stratum we obtain by~\bfI.4.8\ that~$\mu_\Delta$ and therefore~$\mu_{\Delta''}$ is square-free. Thus, in case that~$X$ does not
 divide~$\mu_{\Delta''}$, we are done by~\bfI4.8.  
 
 Suppose now that~$X$ divides~$\mu_\Delta$. 
 The element~$\bar{\mf{\eta}}(\Delta)$ generates a semisimple algebra over~$\k_\F$ which is isomorphic to the algebra generated by~$\bar{\mf{\eta}}(\Delta')$
 via~$\mf{\eta}(\Delta)$ mod~$\timfa_1(\Lambda)$ is send to~$\mf{\eta}(\Delta')$ mod~$\timfa_1(\Lambda')$, see~\cite[4.46]{skodlerack:17-1}. 
 Let~$e$ be an idempotent which splits~$\Lambda$ and  commutes  with~$\b$ and such that the minimal polynomial of the 
 stratum~$e\Delta:=[e\Lambda,n,n-1,e\beta]$ is~$X$ and~$X\nmid\mu_{(1-e)\Delta}$. See~\cite[6.11]{skodlerackStevens:18} for the construction. 
 The element~$e$ is a polynomial in~$\mf{\eta}(\Delta)$ (Note:~$\mf{\eta}(\Delta)=\mf{\eta}(\Delta')$) with coefficients in~$\o_\F$.  
 Thus~$e$ also splits~$\Lambda'$. Further~$e\Delta$ and~$e\Delta'$ intertwine which implies that~$e\Delta'$ cannot be fundamental by~\cite[6.9]{skodlerackStevens:18}, i.e. the square-free minimal 
 polynomial of~$e\Delta'$  needs to be just~$X$. The stratum~$(1-e)\Delta'$ is fundamental and has the same minimal polynomial as~$(1-e)\Delta$, because the second is 
 fundamental and both strata intertwine. Thus~$(1-e)\Delta''$ is equivalent to a semisimple stratum by the first part of the proof. 
 This reduces to the case~$e=1$, i.e.~$\Delta$ and~$\Delta'$ are equivalent to null-strata. But then~$\Delta''$ 
 is equivalent to a null-stratum too, by convexity~\cite[5.5]{stevens:02}. This finishes the proof.
\end{proof}

For the proof of Proposition~\ref{propSemisimpleStratuminpi} we are going to use Theorem~\ref{thmStevens4p4ForNonGsplitAndAlsoForNonFundamental} (cf.~\cite[Theorem~4.4]{stevens:02}) whose proof uses the erratum of~\cite[Proposition~4.2]{stevens:02} in~\S\ref{appErratum4p2}, see Proposition~\ref{propStevensStrataFix}.

\begin{proof}[Proof of Proposition~\ref{propSemisimpleStratuminpi}]
The depth of~$\pi$ is rational because,~ by~\cite{moyPrasad:94},~$\pi$ attains its depth at an optimal point of~$\mf{B}(\G)$, say~$x$, in particular at a point with rational barycentric coordinates. Then there is an element~$b$ of~$\mfg_{x,-d_\pi}$ such that~$b+\mfg_{x,-d_\pi+}$ is contained in~$\pi$. 
We are going to show that there is a self-dual semisimple stratum~$\Delta$ with~$r+1=n$ whose coset (in~$\timfg$) contains the coset~$b+\timfg_{x,-d_\pi+}$. 
% Note that semisimple and~$n=r+1$ imply that~$\Delta$ is non-null. 
By Theorem~\ref{thmStevens4p4ForNonGsplitAndAlsoForNonFundamental} there is a point~$x'_\L\in \mf{B}(\G\otimes \L)$ with rational barycentric coordinates which satisfies the following property $(*)$: The coset~$b+\timfg_{x'_\L,-d_\pi+}$ is a coset of a semisimple stratum over~$ \L$,~$b\in\timfg_{x'_\L,-d_\pi}$ and~$\timfg_{x'_\L,-d_\pi+}$ 
contains~$\timfg_{i_\L(x),-d_\pi+}$.
The Galois group~$\langle\tau\rangle$ of~$\L|\F$ acts on~$\mf{B}(\tiG\otimes \L)$ with fixed point set~$\mf{B}(\tiG)$, and~$\tau(x'_\L)$ also satisfies property~$(*)$.
We define~$x''_\L$ as the barycentre of the segment between~$x'_\L$ and~$\tau(x'_\L)$. Then~$x''_\L$ also satisfies~$(*)$. Indeed: the containments are trivial by convexity~\cite[5.5]{stevens:05}, and  the corresponding coset is a coset of a semisimple stratum, by Lemma~\ref{lemConvexitySemisimpeStrata}. 
The point~$x''_\L$ is fixed by~$\tau$ and is therefore of the form~$i_\L(x'')$ for some~$x''\in \mf{B}(\G)$. 
Let~$\Delta''=[\Lambda'',n'',n''-1,b]$ be a self-dual stratum for the coset~$b+\timfg_{x'',-d_\pi+}$.  
Note that~$\Delta''$ is not equivalent to a null-stratum, i.e.~$b\not\in\timfg_{x'',-d_\pi+}$, by the definition of~$d_\pi$, as the coset~$b+\mfg_{x'',-d_\pi+}$ is contained in~$\pi$. In other words: the depth of~$\Delta''$ is~$d_\pi$. As~$\Delta''\otimes \L$ is equivalent to a semisimple stratum and as~$b\in \Lie(\G)$
we obtain that~$\Delta''$ is equivalent to a self-dual semisimple stratum by~\bfI4.54 and~\bfII.4.7.
\end{proof}

The same idea of base extension using Theorem~\ref{thmStevens4p4ForNonGsplitAndAlsoForNonFundamental} shows: 
\daniel{
\begin{corollary}[cf.~\cite{stevens:02}~4.4]\label{corFundSemisimple}
Let~$\Delta$ be a self-dual  stratum with~$n=r+1$. 
\begin{enumerate}
 \item If~$\Delta$ is fundamental then there exists a self-dual semisimple stratum~$\Delta'$ with~$n'=r'+1$ such that
$\frac{n}{e(\Lambda|F)}=\frac{n'}{e(\Lambda'|F)}$, and~$\beta+\timfa_{-r}\subseteq\beta'+\timfa'_{-r'}$.   
\item\label{corFundSemisimpleii} If~$\Delta$ is not fundamental, then there exists a self-dual null-stratum~$[\La',r',r',0]$ such that
\[\beta+\timfa_{-r}(\La)\subseteq\timfa'_{-r'}(\La')\]
and~$\frac{r'}{e(\La'|\F)}$ is smaller than~$\frac{n}{e(\La|\F)}$
\end{enumerate}\end{corollary}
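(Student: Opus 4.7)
The plan is to mimic the argument of Proposition~\ref{propSemisimpleStratuminpi}, base extending to the unramified quadratic~$\L|\F$, invoking Theorem~\ref{thmStevens4p4ForNonGsplitAndAlsoForNonFundamental}, and then averaging with the Galois involution~$\tau$ to descend. Write~$\Delta=[\Lambda,n,n-1,\beta]$ with associated point~$x\in\mf{B}(\tiG)$. Extension of scalars gives the stratum~$\Delta\otimes\L$ whose coset sits at~$i_\L(x)\in\mf{B}(\tiG\otimes\L)$, with the same depth~$d_\Delta=n/e(\Lambda|\F)$.

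By Theorem~\ref{thmStevens4p4ForNonGsplitAndAlsoForNonFundamental} applied over~$\L$, there exists a point~$x'_\L\in\mf{B}(\tiG\otimes\L)$ with rational barycentric coordinates and an element~$\beta'\in\End_\L\V$ with the following property~$(*)$: in case~(i) the coset~$\beta'+\timfg_{\L,x'_\L,-d_\Delta+}$ is the coset of a semisimple stratum of depth~$d_\Delta$ and contains~$\beta+\timfg_{\L,i_\L(x),-d_\Delta+}$; in case~(ii) there is an~$\L$-rational null stratum~$[\Lambda'_\L,r'_\L,r'_\L,0]$ whose lattice~$\timfg_{\L,x'_\L,-d'+}$ (with~$d'=r'_\L/e(\Lambda'_\L|\L)<d_\Delta$) already contains~$\beta$. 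Since~$\Delta$ is self-dual, applying~$\tau$ to~$x'_\L$ (and using~$\tau(\beta)=\beta$ in case~(ii), or keeping~$\sigma_h(\beta')$ in case~(i)) yields a point with the same property.

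Take~$x''_\L$ to be the barycentre of the segment joining~$x'_\L$ and~$\tau(x'_\L)$. Convexity of the Moy--Prasad filtration (cf.~\cite[5.5]{stevens:05}) gives the containment~$\timfg_{\L,x''_\L,t+}\supseteq\timfg_{\L,x'_\L,t+}\cap\timfg_{\L,\tau(x'_\L),t+}$ for every~$t$, so the inclusions in~$(*)$ are preserved at~$x''_\L$. In case~(i), Lemma~\ref{lemConvexitySemisimpeStrata} ensures that the semisimple property of the coset is preserved at the barycentre (applied to the pair of strata~$[\Lambda'_\L,n'_\L,n'_\L-1,\beta']$ and its~$\tau$-translate, which share characteristic polynomial and~$\D$-period by construction). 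In case~(ii), the containment in a null-stratum coset is trivially preserved. Being~$\tau$-fixed, the point~$x''_\L$ comes from a unique~$x''\in\mf{B}(\G)$ via~$i_\L$, and we let~$\Lambda''$ be the self-dual lattice sequence attached to~$x''$.

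It remains to transfer the conclusion from~$\L$ back to~$\F$. In case~(i), we obtain a stratum~$\Delta''=[\Lambda'',n'',n''-1,\beta]$ (with depth~$d_\Delta$) whose base change~$\Delta''\otimes\L$ is equivalent to a semisimple stratum; by~\bfI.4.54 the stratum~$\Delta''$ itself is equivalent to a semisimple stratum over~$\F$, and by~\bfII.4.7 and the~$\tau$-invariance it is equivalent to a self-dual semisimple stratum~$\Delta'$, which gives the inclusion~$\beta+\timfa_{-r}(\Lambda)\subseteq\beta'+\timfa'_{-r'}(\Lambda'')$ and the required equality of depths. In case~(ii) the same descent of lattice sequences gives a self-dual null stratum~$[\Lambda'',r'',r'',0]$ with~$\beta\in\timfa''_{-r''}(\Lambda'')$ and~$r''/e(\Lambda''|\F)<d_\Delta$. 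The main obstacle will be to check that the period of~$\Lambda''$ obtained from the barycentre is finite and that the strict depth inequality in case~(ii) survives the averaging; the former is controlled by the rationality of the barycentric coordinates of~$x'_\L$ and~$\tau(x'_\L)$ (so~$x''_\L$ has rational coordinates and thus corresponds to a lattice sequence), and the latter follows because the non-strict inequality holds at both endpoints of the segment and hence, by convexity, along the whole segment.
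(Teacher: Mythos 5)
Your proposal follows the paper's intended route (base change to~$\L$, applying Theorem~\ref{thmStevens4p4ForNonGsplitAndAlsoForNonFundamental} there, $\tau$-averaging, and descent via~\bfI.4.54 and~\bfII.4.7) and is essentially correct, matching the proof of Proposition~\ref{propSemisimpleStratuminpi}. A few small imprecisions should be tightened: the point~$x'_\L$ must be taken in~$\mf{B}(\G\otimes\L)$, not merely~$\mf{B}(\tiG\otimes\L)$, since it is the self-duality of the lattice sequence produced by the theorem that makes the $\tau$-fixed barycentre land in~$\mf{B}(\G)$ rather than only~$\mf{B}(\tiG)$; the parenthetical ``keeping~$\sigma_h(\beta')$'' conflates the adjoint involution with the Galois action --- what is used is~$\tau(\beta)=\beta$, and one should replace~$\beta'$ and~$\tau(\beta')$ by the common coset representative~$\beta$ before invoking Lemma~\ref{lemConvexitySemisimpeStrata}, since that lemma requires both strata to carry the same element; and in case~(ii) the containment furnished by the theorem is~$\beta\in\timfg_{\L,x'_\L,-d'}$, not~$\timfg_{\L,x'_\L,-d'+}$.
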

}
% \begin{remark}
%  The case~$n'=r'$ in Corollary~\ref{corFundSemisimple} means that~$\Delta'$ is a null-stratum, because~$\Delta'$ is semisimple, a case which occurs precisely when~$\Delta$ is not fundamental.
% \end{remark}

\daniel{Apparently we also need the~$\tiG$-version, whose proof is similar to the proof of the previous corollary, using the~$\tiG$-part of Theorem~\ref{thmStevens4p4ForNonGsplitAndAlsoForNonFundamental} instead. 
\begin{corollary}[For (ii) cf.~\cite{secherreStevensIV:08}~3.11]\label{corFundSemisimpletiG}
Let~$\Delta$ be a stratum with~$n=r+1$.
\begin{enumerate}
 \item If~$\Delta$ is fundamental then there exists a semisimple stratum~$\Delta'$ with~$n'=r'+1$ such that
$\frac{n}{e(\Lambda|F)}=\frac{n'}{e(\Lambda'|F)}$, and~$\beta+\timfa_{-r}\subseteq\beta'+\timfa'_{-r'}$.   
\item\label{corFundSemisimpletiGii} If~$\Delta$ is not fundamental, then there exists a null-stratum~$[\La',r',r',0]$ such that
\[\beta+\timfa_{-r}(\La)\subseteq\timfa'_{-r'}(\La')\]
and~$\frac{r'}{e(\La'|\F)}$ is smaller than~$\frac{n}{e(\La|\F)}$ \end{enumerate}\end{corollary}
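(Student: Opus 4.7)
The plan is to apply the $\tiG$-part of Theorem~\ref{thmStevens4p4ForNonGsplitAndAlsoForNonFundamental} directly to $\Delta$. Since no self-duality condition is imposed, the base extension to the unramified quadratic $\L|\F$ and the subsequent Galois/$\tau$-averaging used in the proof of Corollary~\ref{corFundSemisimple} are not needed; correspondingly, the convexity statement Lemma~\ref{lemConvexitySemisimpeStrata} need not be invoked, and we do not have to verify that the resulting stratum descends to $\F$ from $\L$.

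Concretely, I would let $x\in\mf{B}(\tiG)$ be the point corresponding to $\La$, so that $\beta\in\timfg_{x,-d_\Delta}$ and $\timfa_{-r}(\La)=\timfg_{x,-d_\Delta}$. The $\tiG$-version of Theorem~\ref{thmStevens4p4ForNonGsplitAndAlsoForNonFundamental} then yields a point $x'\in\mf{B}(\tiG)$ with rational barycentric coordinates (hence corresponding to a lattice sequence $\La'$), together with a coset $\beta+\timfg_{x',t+}$ containing $\beta+\timfg_{x,-d_\Delta+}$, such that either this coset is the coset of a semisimple stratum of the same depth as $\Delta$ (when $\Delta$ is fundamental), or the depth drops strictly and the coset is of the form $\timfg_{x',t+}$ itself (when $\Delta$ is not fundamental).

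In case (i), I would read off from this a semisimple stratum $\Delta'=[\La',n',n'-1,\beta']$ with $\frac{n'}{e(\La'|\F)}=\frac{n}{e(\La|\F)}$ and $\beta+\timfa_{-r}(\La)\subseteq\beta'+\timfa_{-r'}(\La')$, which is exactly the conclusion. In case (ii), the theorem produces a null-stratum $[\La',r',r',0]$ with $\beta+\timfa_{-r}(\La)\subseteq\timfa_{-r'}(\La')$ and $\frac{r'}{e(\La'|\F)}<\frac{n}{e(\La|\F)}$, which is exactly statement (ii).

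The only step that requires any care is matching the conclusions of the $\tiG$-part of Theorem~\ref{thmStevens4p4ForNonGsplitAndAlsoForNonFundamental} (phrased in terms of Moy--Prasad filtrations at the point $x'$) with the lattice-theoretic formulation $\beta+\timfa_{-r}(\La)\subseteq\beta'+\timfa_{-r'}(\La')$ used in the statement of the corollary; this is a routine translation via the identification $\tilde{\mf{g}}_{x',t}=\tilde{\mf{a}}_t(\iota_{\tiG}(x'))$ of \S\ref{subsecBuilding}. The genuine content of the corollary is entirely contained in Theorem~\ref{thmStevens4p4ForNonGsplitAndAlsoForNonFundamental} itself, whose proof relies on the erratum Proposition~\ref{propStevensStrataFix} of the appendix; once that machinery is in hand, the present corollary is a direct corollary.
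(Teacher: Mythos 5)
There is a genuine gap. You write that "the base extension to the unramified quadratic $\L|\F$ and the subsequent Galois/$\tau$-averaging used in the proof of Corollary~\ref{corFundSemisimple} are not needed," and you then apply Theorem~\ref{thmStevens4p4ForNonGsplitAndAlsoForNonFundamental} to a point $x\in\mf{B}(\tiG)$ directly. But $\tiG=\Aut_\D(\V)$ with $\D$ the quaternion skew-field, whereas Theorem~\ref{thmStevens4p4ForNonGsplitAndAlsoForNonFundamental} (and the erratum Proposition~\ref{propStevensStrataFix} on which it rests) is established only over a \emph{field}: the appendix opens with the explicit caveat "In Appendix~\ref{appErratum4p2} and~\ref{appThmB} we only work over~$\F$ (not~$\D$)." The arguments there (graded pieces $\tilde\Lambda(j)=\Lambda(j)/\Lambda(j+1)$, Hensel lifting of a factorization of the characteristic polynomial, etc.) simply do not apply to $\o_\D$-lattice sequences and cosets in $\End_\D\V$ without modification. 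So you cannot feed an $\o_\D$-lattice sequence into that theorem and get an $\o_\D$-lattice sequence back out.

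This is exactly what the $\L|\F$ base extension in the paper is for, and it is needed here independently of any self-duality: one passes to $\tiG\otimes\L=\Aut_\L\V$, where $\D\otimes_\F\L$ splits and the theorem (over the field $\L$) applies, obtaining a point $x'_\L\in\mf{B}(\tiG\otimes\L)$. One then descends to a $\Gal(\L|\F)$-fixed point by taking the barycentre of $x'_\L$ and $\tau(x'_\L)$, and this is where Lemma~\ref{lemConvexitySemisimpeStrata} enters: it guarantees the averaged coset is still the coset of a stratum equivalent to a semisimple one. Finally one invokes $\bfI4.54$ to conclude the resulting stratum over $\D$ is semisimple. Dropping the self-duality only simplifies the target ($\mf{B}(\tiG)$ rather than $\mf{B}(\G)$, no $\bfII.4.7$); it does not remove the $\L$-extension, the $\tau$-averaging, or the convexity lemma, which are all tied to the $\D$-to-field descent and not to the hermitian structure. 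Your proposal, as written, therefore skips the essential step and does not establish the corollary.
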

}
From now on we assume in this section that~$\pi$ has positive depth. 
By Proposition~\ref{propSemisimpleStratuminpi} there exists a self-dual semisimple stratum~$[\Lambda,n,n-1,\beta]$ contained in~$\pi$. 
\daniel{Now let~$\mathcal{M}$ and~$\mathcal{M}_\F$ be the Levi sub-algebra of~$\A=\End_\D\V$ and~$\A_\F=\End_\F\V$, respectively, which stabilizes the associated decomposition 
\begin{equation}\label{eqassDecomp}
 \bigoplus_{i\in\I}\V^i=\V
\end{equation}
of~$\b$, and we denote by~$\mathcal{M}_\L\subseteq\End_\L\V$ the Levi sub-algebra corresponding to~$\b\otimes 1$. Then~$\mathcal{M}_\L\subseteq\mathcal{M}\otimes_\F\L$.}
We formulate the key proposition for the induction step for Theorem~\ref{thmStDuke5.1ForQuatCase}:

\begin{proposition}[see~\cite{stevens:05}~5.4~over~$\L$]\label{propthetaInductionStep}
 Given a self-dual semisimple stratum~$\Delta$ with positive~$r$, an element~$\theta\in\C(\Delta(1-))$ and an element~$c\in\mf{a}_{-r}\cap\mathcal{M}$, we suppose that~$\theta\psi_c$ 
 is contained in~$\pi$. We fix a self-dual tame corestriction~$s_\beta$ with respect  to~$\beta$. Let~$\Lambda'$ be a self-dual~$\o_E$-$\o_D$-lattice sequence,~$r'$ a positive integer 
 and~$b'$ an element of~$\mf{b}'_{-r'}\cap \mf{b}_{-r}$  such that~$s_\beta(c)+\timfb_{1-r}$ is contained in~$b'+\timfb'_{1-r'}$. 
 Suppose further that~$\frac{r'}{e(\Lambda'|F)}\leq\frac{r}{e(\Lambda|F)}$.
 Then~$\Delta'$ with~$\beta':=\beta$ is a self-dual semisimple stratum and there are~$\theta'\in\C(\Delta'(1-))$ and~$c'\in\mf{a}'_{-r'}$ such that
 $s_\beta(c')$ is equal to~$b'$ and~$\theta'\psi_{c'}$ is contained in~$\pi$. The element~$c'$ can be chosen to vanish if~$b'=0$. 
\end{proposition}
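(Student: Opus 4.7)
The plan is to follow the strategy of Stevens' Proposition~5.4 in~\cite{stevens:05}, proved there for $p$-adic classical groups over $\F$, and adapt it to the present quaternionic setting using the base change and Galois descent techniques from \bfII.

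First, I would dispose of the structural claims. Since $\beta$ already carries the associated splitting $\V=\oplus_{i\in\I}\V^i$ from $\Delta$ and $\Lambda'$ is by hypothesis a self-dual $\o_\E$-$\o_\D$-lattice sequence (hence split by this decomposition and $\sigma_h$-stable), the stratum $\Delta':=[\Lambda',n',r',\beta]$, with $n'$ the depth of $\beta$ for $\Lambda'$, is self-dual semisimple essentially by definition. The character $\theta'\in\C(\Delta'(1-))$ is then taken to be the image of $\theta$ under the canonical transfer bijection $\tau_{\Lambda,\Lambda',\beta}:\C(\Delta(1-))\to\C(\Delta'(1-))$ attached to the common semisimple element $\beta$.

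Second, I would construct $c'$. By the properties of the self-dual tame corestriction recalled in \S2.8 (together with \cite[(1.3.4)]{bushnellKutzko:93}), the induced map $\mf{a}'_{-r'}/\mf{a}'_{1-r'}\twoheadrightarrow\mf{b}'_{-r'}/\mf{b}'_{1-r'}$ is surjective; a standard successive approximation, using that $s_\beta$ admits an $\o_\E$-linear section on each graded piece, yields $c'\in\mf{a}'_{-r'}$ with $s_\beta(c')=b'$ exactly. When $b'=0$ we may take $c'=0$.

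Third, and most substantively, I would show that $\theta'\psi_{c'}$ is contained in $\pi$. The strategy is:
\begin{enumerate}
 \item[(a)] Pick an intermediate self-dual $\o_\E$-$\o_\D$-lattice sequence $\Lambda''$ whose image under the canonical map $j_\beta$ of Remark~\ref{remLbeta} lies on the geodesic between $j_\beta(\Lambda_\beta)$ and $j_\beta(\Lambda'_\beta)$ in $\mf{B}(\G)$. By convexity of the Moy--Prasad filtrations along this geodesic, one obtains coherent inclusions among the pro-$p$ subgroups $\H^1(\beta,\Lambda),\H^1(\beta,\Lambda''),\H^1(\beta,\Lambda')$.
 \item[(b)] Transfer $\theta$ to $\theta''\in\C(\Delta''(1-))$ and lift $b'$ to $c''\in\mf{a}''_{-r''}$ with $s_\beta(c'')=b'$. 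The hypothesis $s_\beta(c)+\timfb_{1-r}\subseteq b'+\timfb'_{1-r'}$ together with the depth inequality $r'/e(\Lambda'|\F)\le r/e(\Lambda|\F)$ ensures that, on the common compact subgroup, $\theta''\psi_{c''}$ restricts to $\theta\psi_c$ on the one hand and to $\theta'\psi_{c'}$ on the other (up to the natural identifications coming from transfer).
 \item[(c)] Since $\theta\psi_c$ is contained in $\pi$, so is its restriction to the common subgroup; an application of Frobenius reciprocity together with the uniqueness of the Heisenberg extension of a semisimple character perturbed by a central quasi-character then yields $\theta'\psi_{c'}\subseteq\pi$.
\end{enumerate}

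The main obstacle I expect is item (c): tracking how the perturbation $\psi_c$ behaves under transfer (it is a character of a pro-$p$ subgroup but not itself a semisimple character), and matching cocycles precisely, require a careful bookkeeping of filtrations. In the quaternionic setting, an extra subtlety is that some of these matchings are not directly available over $\D$ and must be verified after the base change to $\bbG(\L)$ via $i_\L$ (or after the further Galois restriction by $\Gal(\F[\varpi_\D]|\F)$), with subsequent descent carried out using the averaging projector $\emptyset_{\langle\tau_l,\tau_{\varpi_\D}\rangle}$; the depth inequality $r'/e(\Lambda'|\F)\le r/e(\Lambda|\F)$ is invoked exactly to guarantee the required containments of Moy--Prasad pieces throughout this descent.
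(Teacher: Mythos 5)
Your plan captures some of the general shape (reduction along a path, base change to $\L$, Galois descent with the averaging projector), but it has several concrete gaps that would prevent it from closing.

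\textbf{The stratum $\Delta'$ is not semisimple by definition.} A self-dual $\o_\E$-$\o_\D$-lattice sequence paired with $\beta$ gives a self-dual \emph{stratum}, but semisimplicity is a condition relating $r'$ to the critical exponent $k_0(\beta,\Lambda')$. The paper's Step~1 is precisely the chain of inequalities
\[
\frac{r'}{e(\Lambda'|\F)}\leq\frac{r}{e(\Lambda|\F)}<\frac{-k_0(\beta,\Lambda)}{e(\Lambda|\F)}=\frac{-k_0(\beta,\Lambda')}{e(\Lambda'|\F)},
\]
so the hypothesis $\tfrac{r'}{e(\Lambda'|\F)}\le\tfrac{r}{e(\Lambda|\F)}$ is what buys semisimplicity. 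You never invoke $k_0$, so this step is missing, not ``by definition''.

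\textbf{There is no ``canonical transfer bijection'' $\C(\Delta(1-))\to\C(\Delta'(1-))$ in your sense.} Transfer preserves the normalized level $r/e(\Lambda|\F)$, whereas you are allowed $\tfrac{r'}{e(\Lambda'|\F)} < \tfrac{r}{e(\Lambda|\F)}$ strictly. To produce $\theta'$ the paper first scales ($\Lambda$ by $e(\Lambda'|\F)$, $\Lambda'$ by $e(\Lambda|\F)$), extends $\theta_\L$ to the deeper level $e(\Lambda|\F)r'-1$ (which exists because $e(\Lambda'|\F)r-1\ge e(\Lambda|\F)r'-1$), \emph{then} transfers and restricts to $\G$. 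Simply calling $\theta'$ ``the transfer of $\theta$'' skips the extension step that the level mismatch forces.

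\textbf{Steps (a)--(c) gloss over the actual mechanism.} The reduction is not ``pick a $\Lambda''$ on the geodesic and use convexity''; it is Stevens' cut of the segment with the specific inclusion $\tiP_{r_{k+1}}(\Lambda_{k+1})\subseteq\tiK^{r_k}_2(\Lambda_k)$, which is what makes Lemma~\ref{lemSubrepCriteria} applicable. And item (c) is where the real content lives: the two perturbed characters $\theta\psi_c$ and $\theta'\psi_{c'}$ are \emph{not} restrictions of a common character along inclusions of $\H^1$-groups; they are related via conjugation by a Cayley element $\Cay(x)$ with $x\in\mf{m}(\Delta)\cap\mc{M}$ produced from Bushnell--Kutzko's intertwining results, and the passage to $\pi$ is via Lemma~\ref{lemSubrepCriteria}, whose proof rests on the Heisenberg construction and the non-degeneracy of $k_\xi$ (Lemma~\ref{lemHeisenberg}, proved by Galois descent). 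None of this appears; ``Frobenius reciprocity together with uniqueness of the Heisenberg extension'' is not a substitute. Relatedly, your successive-approximation construction of $c'$ would not obviously produce $c'\in\mc{M}$, which is needed for the Iwahori-decomposition argument at the end of Step~4; the paper instead restricts to $\F$, uses Stevens' construction of $c'_\F\in\mc{M}_\F$, and averages.

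So while you correctly anticipate that the heart of the matter is propagating $\psi_c$ across a change of lattice sequence and that Galois descent via $\emptyset_{\langle\tau_l,\tau_{\varpi_\D}\rangle}$ is involved, the central tools --- the $k_0$-comparison, the scaled-extend-transfer construction of $\theta'$, the Cayley conjugation, and the subrepresentation criterion Lemma~\ref{lemSubrepCriteria} --- are missing, and the argument as written does not close.
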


Essentially~\textbf{Proposition~\ref{propthetaInductionStep}} says that if~$\theta\psi_c$ is contained in~$\pi$ then one can work in~$\B$ to look for a ``better'' character~$\theta'\psi_{c'}$.
Note that assuming~$b'$ to be also contained in~$\mf{b}_{-r}$ is not a restriction, because we could just take~$b'=s(c)$.~\textbf{We explain the strategy of its proof} (taken from~\cite[5.4]{stevens:05}): 
At first one constructs open compact subgroups~$\ti{\K}^t_1(\Lambda)$ and~$\ti{\K}^t_2(\Lambda)$ ($t\in\bbN$) of~$\ti{\P}^r(\Lambda)$ via 
\begin{equation}\label{eqK1}\ti{\K}_1^t(\Lambda):=1+\timfa_{\lfloor\frac{t}{2}\rfloor+1}\cap(\prod_{i\neq j}A^{ij}\oplus\prod_i\timfa^i_{t}),\end{equation}
\begin{equation}\label{eqK2}\ti{\K}_2^t(\Lambda):=1+\timfa_{\lfloor\frac{t+1}{2}\rfloor}\cap(\prod_{i\neq j}A^{ij}\oplus\prod_i\timfa^i_{t}),\end{equation}
and further~$\tiH_i^t(\beta,\Lambda)$ and~$\tiJ_i^t(\beta,\Lambda)$ as intersections of~$\tiH(\beta,\Lambda)$ and~$\tiJ(\beta,\Lambda)$ with~$\ti{\K}_i^t(\Lambda)$. 
And we get the groups~$\K^t_i,\H_i^t$ and~$\J_i^t$ if we intersect further down to~$\G$. 
We extend~$\theta$ to a semisimple character of~$\H^{\lfloor\frac{r}{2}\rfloor+1}(\beta,\Lambda)$ (which we still call~$\theta$)  and we consider the character~$\xi:=\theta\psi_c$ on~$\H^r_1(\beta,\Lambda)$. 
Mutatis mutandis as in~\cite[5.7]{stevens:05} one shows that~$\xi$ is contained in~$\pi$ (using~\bfI4.51, instead of~\cite[3.17]{stevens:05}, and~\cite[3.20]{stevens:05} on~$\G\otimes_\F\L$ followed by restriction to~$\G$.) This representation~$\xi$ is very helpful for detecting if a certain  representation is contained in~$\pi$:

\begin{lemma}[cf.~\cite{bushnellKutzko:93}~(8.1.7),~\cite{stevens:05}~5.8]\label{lemSubrepCriteria}
 We granted~$r>0$. Let~$\rho$ be an irreducible representation on an open subgroup~$\U$ of~$\K^r_2(\Lambda)$. 
 Then $\rho$ is a subrepresentation of~$\pi$, if its restriction to~$\U\cap \H^r_1(\beta,\Lambda)$ contains~$\xi|_{\U\cap \H^r_1(\beta,\Lambda)}$. 
\end{lemma}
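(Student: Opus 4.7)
The plan is to adapt the proof of \cite[Lemma~5.8]{stevens:05} (itself based on \cite[(8.1.7)]{bushnellKutzko:93}) to the quaternionic setting. Recall from the paragraph preceding the lemma that the character $\xi=\theta\psi_c$ appears as a subrepresentation of $\pi|_{\H^r_1(\b,\Lambda)}$, so I would fix a nonzero $\xi$-eigenvector $v\in\pi$ once and for all.

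The first and critical step would be a commutator estimate: for every $u\in\K^r_2(\Lambda)$ and every $h\in\H^r_1(\b,\Lambda)$, the commutator $[u,h]$ lies in $\H^r_1(\b,\Lambda)$ and satisfies $\xi([u,h])=1$. The filtration levels in~\eqref{eqK1} and~\eqref{eqK2} are chosen precisely so that this estimate holds: writing $u=1+a$ and $h=1+b$, the leading contribution $ab-ba$ to $[u,h]-1$ has off-diagonal entries in $\timfa$ of level at least $r+1$, and diagonal entries whose pairings (via $\psi_\A(\b\,\bullet)$ on the $\psi_c$ part and via the extension of $\theta$ to $\H^{\lfloor r/2\rfloor+1}(\b,\Lambda)$ on the rest) are trivial. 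Consequently $\K^r_2(\Lambda)$, and a fortiori $\U$, would normalize $\xi|_{\U\cap\H^r_1(\b,\Lambda)}$, and $\U\cap\H^r_1(\b,\Lambda)$ would be normal in $\U$.

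Using the normalization I would then argue as follows. The $\xi|_{\U\cap\H^r_1}$-isotypic subspace of $\rho$ is $\U$-stable; it is nonzero by hypothesis, and by irreducibility of $\rho$ it exhausts $\rho$, so $\rho|_{\U\cap\H^r_1}$ is a multiple of $\xi|_{\U\cap\H^r_1}$. Likewise, the analogous isotypic subspace $M\subseteq\pi$ is $\U$-stable and contains $v$, hence is nonzero. Frobenius reciprocity identifies $\rho$ with a constituent of $\ind^\U_{\U\cap\H^r_1(\b,\Lambda)}(\xi|_{\U\cap\H^r_1})$, and I would then invoke the Mackey--Hecke argument of \cite[5.8]{stevens:05} --- which uses the $\U$-invariance of $\xi$ together with the irreducibility of $\pi$ as a $\G$-representation to force the $\U/(\U\cap\H^r_1)$-action on $M$ (after untwisting by an extension of $\xi$) to realize every irreducible representation of this quotient --- to conclude that every such constituent, and in particular $\rho$, appears in $M\subseteq\pi|_\U$.

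The hard part will be the commutator computation of Step~1. It requires careful bookkeeping of filtration levels in the off-diagonal and diagonal components with respect to the decomposition $\V=\oplus_{i\in\I}\V^i$ of $\b$, together with the self-dual semisimple character structure of $\theta$ and the constraint $c\in\mathcal{M}$. The quaternionic nature of $\D$ introduces notational overhead but no new conceptual difficulty: the averaging projector $\emptyset_{\langle\tau_l,\tau_{\varpi_\D}\rangle}$ of \S2 brings the computation back to the $\GL$-case over $\F$ treated in \cite{stevens:05}.
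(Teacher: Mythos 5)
Your proposal misidentifies both the mechanism and the hard part of this lemma, and in doing so it leaves a genuine gap. The paper's proof of Part~(ii) (that $\pi|_\U$ contains $\rho$) is deferred to the final argument of~\cite[(8.1.7)]{bushnellKutzko:93}, which requires the following input as Part~(i): there is, up to equivalence, a \emph{unique} irreducible representation $\omega$ of $\K^r_2$ containing $\xi$, and $\ind_{\H^r_1}^{\K^r_2}\xi$ is a multiple of this $\omega$. Your proposal never establishes this. The ``Mackey--Hecke argument'' you invoke, as you paraphrase it, asserts that the $\U/(\U\cap\H^r_1)$-action on $M$ ``realizes every irreducible representation of this quotient''; that is not the content of~\cite[5.8]{stevens:05}, nor is it the right statement — what is needed is precisely the opposite kind of rigidity, namely that $\K^r_2$ admits only one irreducible over $\xi$. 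The paper proves this by taking the Heisenberg representation $\mu$ of $\J^r_1$ above $\xi$ (Lemma~\ref{lemHeisenberg}) and showing $\omega:=\ind_{\J^r_1}^{\K^r_2}\mu$ is irreducible via the intertwining count: for $g\in\K^r_2$ intertwining $\mu$, the cardinality $\left|\H^r_1\backslash\J^r_1 g\J^r_1/\H^r_1\right|$ is odd, so by the Glauberman parity criterion~\cite[2.4]{stevens:01-2} the same $g$ intertwines the Glauberman lift $\mu_\L$, and then Stevens'~\cite[5.9]{stevens:05} (together with Remark~\ref{remonSt055.9} about $\mathcal{M}\otimes_\F\L$ versus $\mathcal{M}_\L$) forces $g\in\J^r_{\L,1}\cap\G=\J^r_1$. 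None of this appears in your sketch.

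Your ``commutator estimate'' (that $\K^r_2$ normalizes $\xi|_{\H^r_1}$) is plausible and indeed implicit in the construction of the groups $\K^r_i$, but it is not the hard part, and by itself it is far from sufficient: a group can normalize a character of a normal subgroup and still have many inequivalent irreducibles lying above it, precisely depending on how the commutator form degenerates. The rigidity here comes from the Heisenberg structure on $\J^r_1/\H^r_1$ combined with the parity argument, not from the normalization alone. Finally, the suggestion to reduce to the $\GL$-case over $\F$ via the averaging projector $\emptyset_{\langle\tau_l,\tau_{\varpi_\D}\rangle}$ is the wrong technique for this lemma: the projector is used elsewhere (e.g.\ in Steps~3 and~4 of the proof of Proposition~\ref{propthetaInductionStep}) to produce $\sigma_h$-invariant elements of the Lie algebra, whereas the descent needed here is from $\G\otimes\L$ to $\G$ via Glauberman parity — a different kind of reduction.
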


By~\cite[5.12]{stevens:05} we can cut the line in~$\mf{B}(\G)$ between~$\Lambda$ and~$\Lambda'$ into segments with cutting points~$\Lambda=:\Lambda_1,\Lambda_2,\ldots,\Lambda_s:=\Lambda'$ such that for each index~$1\leq k<s$ we have~$\tiP_{r_{k+1}}(\Lambda_{k+1})\subseteq\tiK^{r_k}_2(\Lambda_\k)$. 
For the definition of~$r_t$, see~\cite[5.1]{stevens:05}.
Indeed: One applies~\loccit\ to the line in~$\mf{B}(\Aut_\F(\V))$ and intersects the inclusions down to~$\tiG$.  
They still satisfy~$\frac{r_t}{e(\Lambda_t|F)}\geq \frac{r_{t+1}}{e(\Lambda_{t+1}|F)}$. 
By~\loccit\ this reduces Proposition~\ref{propthetaInductionStep} to the case~$s=2$. 
Thus we have to prove Case~$s=2$ and Lemma~\ref{lemSubrepCriteria} to obtain  Proposition~\ref{propthetaInductionStep}.

For the proof of Lemma~\ref{lemSubrepCriteria} we need:

\begin{lemma}[cf.~\cite{stevens:05}~5.9,~\cite{kurinczukStevens:19}~3.4]\label{lemHeisenberg}
Granted~$r>0$, there is a unique irreducible representation~$\mu$ of~$\J_1^r$ containing~$\xi$ (called the Heisenberg representation of~$\xi$ to~$\J_1^r$),
because the bi-linear form
 \[k_\xi: \J^r_1/\H^r_1 \times  \J^r_1/\H^r_1\ra\CoefField^\times,\ k_\xi(\bar{x},\bar{y}):=\xi([x,y])=\theta([x,y])\]
 is non-degenerate.
\end{lemma}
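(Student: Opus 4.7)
The statement reduces to a finite-group Heisenberg/Stone--von Neumann assertion, and the only real content is the non-degeneracy of $k_\xi$. First I would verify that $k_\xi$ makes sense as an alternating bi-multiplicative pairing on the finite abelian $p$-group $\J^r_1/\H^r_1$. Standard commutator computations with the Moy--Prasad filtration, mirroring the classical-group argument in \cite[5.9]{stevens:05} and using the definitions \eqref{eqK1}--\eqref{eqK2} together with the extension of $\theta$ to $\H^{\lfloor r/2\rfloor+1}(\beta,\Lambda)$, show that $\H^r_1$ is normal in $\J^r_1$, that $[\J^r_1,\J^r_1]\subseteq \H^r_1$, and that $\xi$ vanishes on triple commutators. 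Hence $k_\xi$ is well defined and takes values in $\mu_{p^\infty}(\CoefField)$.

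For non-degeneracy I would base-change to $\L$. Since $\D\otimes_\F\L\cong M_2(\L)$, the group $\G\otimes\L$ is a non-quaternionic $p$-adic classical group, and the relevant data lift $\Gal(\L|\F)$-equivariantly: write $\H^r_{1,\L},\J^r_{1,\L}$ for the $\L$-versions of our groups and $\xi_\L=\theta_\L\psi_c$ for the transfer of $\xi$. Stevens's result \cite[5.9]{stevens:05}, applied to the classical group $\G\otimes\L$, gives that $k_{\xi_\L}$ is non-degenerate on $\J^r_{1,\L}/\H^r_{1,\L}$. The generator $\tau$ of $\Gal(\L|\F)$ acts on this quotient and preserves $k_{\xi_\L}$ because $\xi_\L$ is $\tau$-fixed. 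Since the quotient is a finite abelian $p$-group with $p$ odd and $|\langle\tau\rangle|=2$, every element decomposes uniquely into $\tau$-eigenparts for $\pm 1$. For the alternating $\tau$-invariant pairing $k_{\xi_\L}$ these eigenparts are mutually orthogonal: if $\tau h = h$ and $\tau g = g^{-1}$, then
$$k_{\xi_\L}(h,g) = k_{\xi_\L}(\tau h,\tau g) = k_{\xi_\L}(h,g^{-1}) = k_{\xi_\L}(h,g)^{-1},$$
so $k_{\xi_\L}(h,g)^2 = 1$, forcing $k_{\xi_\L}(h,g) = 1$ as values lie in $\mu_{p^\infty}(\CoefField)$ with $p$ odd. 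Identifying $\J^r_1/\H^r_1$ with the $+1$-eigenpart, via $i_\L$, CLF-compatibility of the Moy--Prasad filtrations, and the averaging projector $\emptyset_{\langle\tau_l,\tau_{\varpi_\D}\rangle}$ from the ``Endomorphisms and involutions'' paragraph, one recovers $k_\xi$ as the restriction of $k_{\xi_\L}$ to this subspace. Non-degeneracy of $k_{\xi_\L}$ combined with orthogonality of the eigenparts then forces non-degeneracy of $k_\xi$. Once $k_\xi$ is non-degenerate, the finite-group Heisenberg (Stone--von Neumann) theorem produces a unique irreducible $\mu$ of $\J^r_1$ whose restriction to $\H^r_1$ is $\xi$-isotypic, with $\dim\mu = [\J^r_1:\H^r_1]^{1/2}$.

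The hardest point I expect is the identification of $\J^r_1/\H^r_1$ as \emph{precisely} the $\tau$-fixed part of $\J^r_{1,\L}/\H^r_{1,\L}$, not merely a subspace thereof. This reduces to standard prime-to-$p$ cohomology vanishing for pro-$p$-groups with a prime-to-$p$ operator action, but the filtration- and $\sigma$-compatibilities must be checked carefully. An alternative route bypassing this identification is to apply the Glauberman correspondence $\gl_\CoefField^{\L|\F}$ directly to the Heisenberg representation $\mu_\L$ over $\L$ granted by \cite[5.9]{stevens:05}, set $\mu := \gl_\CoefField^{\L|\F}(\mu_\L)$, and verify that it contains $\xi$ using compatibility of $\gl$ with restriction to $\H^r_1$; uniqueness of $\mu$ then follows from uniqueness of $\mu_\L$ together with injectivity of the Glauberman map.
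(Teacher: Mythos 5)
Your first argument is essentially the paper's: base-change to $\L$, invoke \cite[5.9]{stevens:05} for non-degeneracy of $k_{\xi_\L}$, then descend via $\tau$-invariance of the pairing and the odd order of the quotient; your $\pm1$-eigendecomposition is a repackaging of the paper's direct computation $k_{\xi_\L}(\bar x,\bar y)^2=k_{\xi_\L}(\bar x,\bar y\tau(\bar y))$, where $\bar y\tau(\bar y)$ lands in the $\tau$-fixed part $\J^r_1/\H^r_1$ and hence pairs trivially with a kernel element of $k_\xi$. The identification of that fixed-point set with $\J^r_1/\H^r_1$ --- your worried-about ``hardest point'' --- is settled exactly as you suggest, by first Galois cohomology vanishing for the pro-$p$ group $\H^r_{\L,1}$ (additive Hilbert~90); note however that the averaging projector $\emptyset_{\langle\tau_l,\tau_{\varpi_\D}\rangle}$ is not the right tool for that step, since it projects $\End_\F\V$ onto $\End_\D\V$, not an $\L$-level quotient onto its $\F$-level subquotient, so the cohomological argument you also name is what actually carries the load. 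Your alternative route via the Glauberman transfer is equally valid and is in fact the mechanism the paper adopts for the analogous statements Lemma~\ref{lemNonDeg} and Proposition~\ref{propEtaLa}.
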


\daniel{
We will use base extension to~$\L$ for the proof. So we use the following notation over~$\L$:
\begin{notation}\label{notBaseExtKgroup}
If~$\b$ and~$\Lambda$ are fixed and~$t\in\NN_0$ then we write:
\begin{itemize}
 \item $\tilde{\H}_\L^t=\tilde{\H}^t(\beta\otimes 1,\Lambda_\L),\ \tilde{\J}_\L^t=\tilde{\J}^t(\beta\otimes 1,\Lambda_\L)$ and~$\H_\L^t,\ \J_\L^t$ for their intersections with $\G\otimes_\F\L$ (They all are subgroups of~$\tilde{\G}\otimes_\F\L=\Aut_\L(\V)$). 
 \item We also define, for~$s\in\{1,2\}$, the groups~$\tilde{\K}^t_{\L,s}(\Lambda)$ in replacing in~\eqref{eqK1} and~\eqref{eqK2} $\timfa_*(\Lambda)$ by~$\timfa_*(\Lambda_\L)$ and~$\A^{ij}$ by~$(\A\otimes_\F\L)^{ij}$,~$i,j\in\I$. 
 Note that this is not the group given in~\cite[5.2]{stevens:05}, because~$\L[\beta\otimes 1]$ could have more factors than~$\F[\beta]$ and we consider~$\mc{M}\otimes_\F\L$ instead of~$\mc{M}_\L$. 
 \item We then define  the groups
 \[\J_{\L,s}^t=\tilde{\K}_{\L,s}^t\cap\J_\L^t,\ \H_{\L,s}^t=\tilde{\K}_{\L,s}^t\cap\H_\L^t.\]
\end{itemize}
\end{notation}
}

\begin{proof}
Let~$\xi_\L$ and~$\theta_\L$ be the unique~$\Gal(\L|\F)$-fixed extensions to~$\H^r_{\L,1}$ and $\H^{\lfloor\frac{r}{2}\rfloor+1}_\L$  of~$\xi$ and~$\theta$, respectively.
We have the analogous form~$\k_{\xi_\L}$ on~$\J^r_{\L,1}/\H^r_{\L,1}$ for $\xi_\L$, and this form is non-degenerate by the proof in~\cite[5.9]{stevens:05}. Let~$\bar{x}$ be in the kernel of~$k_\xi$ and let~$y$ be an element of~$\J^r_{\L,1}$.
 Then 
 \[\k_{\xi_\L}(\bar{x},\bar{y})=\theta_\L([x,y])=\theta_\L([x,\tau(y)])=\k_{\xi_\L}(\bar{x},\overline{\tau(y)}).\] 
 In particular
 \[\k_{\xi_\L}(\bar{x},\bar{y})^2=\k_{\xi_\L}(\bar{x},\bar{y}\tau(\bar{y})).\]
 We take here the obvious Galois-action on~$\J^r_{\L,1}/\H^r_{\L,1}$. Its fixed point set is~$\J^r_1/\H^r_1$ because the first~$\Gal(\L|\F)$-cohomology of~$\H^r_{\L,1}$ is trivial, 
 in particular~$\bar{y}\tau(\bar{y})$ is an element of~$\J^r_1/\H^r_1$. Thus~$k_{\xi_L}(\bar{x},\bar{y})^2$ vanishes and therefore~$k_{\xi_\L}(\bar{x},\bar{y})=1$, because
 it is a~$p$-th root of unity. Thus~$\k_\xi$ is non-degenerate.
\end{proof}

\begin{proof}[Proof of Lemma~\ref{lemSubrepCriteria}]
 For the proof we skip the argument~$\Lambda$ in the notation. 
 The proof needs two parts: We show
 \begin{enumerate}
  \item There exists up to isomorphism only one irreducible representation~$\omega$ of~$\K_2^r$ which contains~$\xi$. \label{pflemSubreCriteria-Part1}
  In fact we will further obtain that~$\ind_{\H^r_1}^{\K^r_2}\xi$ is a multiple of~$\omega$. 
  \item The restriction of~$\pi$ to~$U$ contains~$\rho$. \label{pflemSubreCriteria-Part2}
 \end{enumerate}
 Part~\ref{pflemSubreCriteria-Part2} follows as in the final argument in the proof of~\cite[(8.1.7)]{bushnellKutzko:93}.
 We only have to prove Part \ref{pflemSubreCriteria-Part1}. 
 We take the representation~$\mu$ of Lemma~\ref{lemHeisenberg} and prove that the~$\omega:=\ind_{\J^r_1}^{\K^r_2}\mu$ is irreducible. Let~$g$ be an element of~$\K^r_2$ which intertwines~$\mu$. Then the~$g$-intertwining space~$\I_g(\mu)$ 
 satisfies the formula:
 \[(\J^r_1:\H^r_1)\dim_\CoefField(\I_g(\mu))=\dim_\CoefField(\I_g(\ind_{\H^r_1}^{\J^r_1}\xi))=\left|\H^r_1\backslash \J^r_1g\J^r_1/\H_1^r\right|\]
 by~\cite[(4.1.5)]{bushnellKutzko:93}. The latter cardinality is equal to
 \[\frac{(\J^r_1:\H^r_1)(\J^r_1:\J^r_1\cap g^{-1}\J^r_1 g)}{(\H^r_1:\H^r_1\cap g^{-1}\H^r_1 g)},\]
 and therefore odd. Thus~$g$ intertwines the Glaubeman transfer~$\mu_\L$ of~$\mu$ by~\cite[2.4]{stevens:01-2}, in particular~$g$ is an element of~$\J^r_{\L,1}$ 
 by the proof of~\cite[5.9]{stevens:05}. (It shows~$\I_{\K^r_{\L,2}}(\mu_\L)\subseteq \J_{\L,1}^r$, see the second part of the proof in~\loccit\ and Remark~\ref{remonSt055.9} below.) Thus~$g\in \J^r_1$. 
 Therefore~$\omega$ is irreducible and 
 \[\ind_{\H^r_1}^{\K^r_2}\xi\cong\ind_{\J^r_1}^{\K^r_2}\ind_{\H^r_1}^{\J^r_1}\xi\cong \ind_{\J^r_1}^{\K^r_2}\mu^{\oplus (\J_1^r:\H_1^r)^{\frac12}}\cong \omega^{\oplus (\J_1^r:\H_1^r)^{\frac12}}\]
 which finishes the proof.  
\end{proof}

\daniel{
\begin{remark}\label{remonSt055.9}
 The proof in~\cite[5.9]{stevens:05}
 uses an Iwahori decomposition adapted to~$\mc{M}_\L$. But still the proof works with an Iwahori decomposition adapted to~$\mc{M}\otimes_\F\L$ which may properly contain~$\mc{M}_\L$. At the end of the proof in \loccit\ the author refers to~\cite[(8.1.8)]{bushnellKutzko:93} for the~\emph{simple} case. In our situation, where we split a stratum over~$\L$ with respect to~$\mc{M}\otimes_\F\L$, the proof in~\cite[(8.1.8)]{bushnellKutzko:93} is applied to the \emph{quasi-simple} case (a notion introduced by Sech\'erre in~\cite{secherreI:04}), i.e. to a stratum~$\Delta\otimes\L$ where~$\Delta$ is a simple stratum over~$\D$. The used part of the proof in~\cite[(8.1.8)]{bushnellKutzko:93} is the induction. 
\end{remark}
}

To finally prove Proposition~\ref{propthetaInductionStep} we need the Cayley map (depending on~$\Lambda$)
\[\Cay:\ \mfa_1\ra \P_1(\Lambda),\ \Cay(a):=\frac{1+\frac{a}{2}}{1-\frac{a}{2}}.\]
It is a bijection. 

\begin{proof}[Proof of Proposition~\ref{propthetaInductionStep}]
 We only need to consider the case~$s=2$ by the above explanation, see the paragraph following Lemaa~\ref{lemSubrepCriteria}. We are going to use base extension to use parts of the proof of~\cite[5.4 (with assumption (H))]{stevens:05}. 
 We need to find~$\theta'\in\C(\Delta'(1-))$ and~$c'\in\mf{a}_{-r'}(\Lambda')$ such that~$\theta'\psi_{c'}\subseteq\pi$ and~$s(c')=b'$.
 
 \textbf{Step 1}: At first,~$\Delta'=[\Lambda',n',r',\b]$ is semisimple, because 
 \[\frac{r'}{e(\Lambda'|\F)}\leq\frac{r}{e(\Lambda|\F)}<\frac{-k_0(\b,\Lambda)}{e(\Lambda|\F)}=\frac{-k_0(\b,\Lambda')}{e(\Lambda'|\F)}\]
 
 \textbf{Step 2} (find~$\theta'$): Let~$\theta_\L\in\C((\Delta\otimes\L)(1-))$ be the Glauberman lift of~$\theta\in\C(\Delta(1-))$. We write~$\Lambda_\L$ for the lattice sequence~$\Lambda$ seen as an~$\o_\L$-lattice sequence. Note that we have 
 \[\C((\Delta\otimes\L)(1-))=\C(e(\Lambda'|\F)\Lambda_\L,e(\Lambda'|\F)r-1,\b\otimes 1)\]
 and  
 \[e(\Lambda'|\F)r-1\geq e(\Lambda|\F)r'-1,\]
 and we take an extension~$\theta_\L^{1}\in\C(e(\Lambda'|\F)\Lambda_\L,e(\Lambda|\F)r'-1,\b\otimes 1)$ of~$\theta_\L$. 
 Let~$\theta_\L'$ be the transfer of~$\theta_\L^{1}$ to
 \[\C(e(\Lambda|\F)\Lambda'_\L,e(\Lambda|\F)r'-1,\b\otimes 1)=\C(\Lambda'_\L,r'-1,\b\otimes 1),\]
 and we define~$\theta'$ as the restriction of~$\theta_\L'$ 
 to~$\H^{r'}(\b,\Lambda')=\H(\Delta'(1-))=\H^{r'}(\b\otimes 1,\Lambda'_\L)\cap\G$. 
 
\textbf{Step 3} (finding~$c'$):  
 Here we restrict  to~$\F$ and we write~$\La_\F$ if we consider~$\La$ as a lattice sequence over~$\F$. At first the map~$s_\F=s\otimes_\F \id_{\End_\A(\V)}$ from~$\A_\F=\A\otimes_\F\End_\A(\V)$ to~$\B_\F$ is a tame corestriction relative to~$\F[\b]/\F$ by definition~\cite[4.13]{skodlerack:17-1}. 
 By~\cite[5.4 after (H)]{stevens:05} there exists an element
 \[c'_\F\in\mf{a}_{-r}(\La_\F)\cap\mf{a}_{-r'}(\La'_\F)\cap\mathcal{M}_\F\]
 such that~$s_\F(c'_\F)=b'$.
 We take the average
 \[c'=\emptyset_{\langle\tau_l,\tau_{\varpi_\D}\rangle}(c'_\F)\in\mf{a}_{-r}(\La)\cap\mf{a}_{-r'}(\La')\cap\mathcal{M},\]
 see~\S\ref{subsecEndoInv}.
 
 \textbf{Step 4} (to show~$\th'\psi_{c'}\subseteq\pi$): We still follow the proof of~\cite[5.4 after (H)]{stevens:05}. 
 Define~$\delta:=c'-c$. By the proof of \loccit\ using~\cite[8.1.13,1.4.10]{bushnellKutzko:93} there exists an element
 $x_\F\in\mf{m}(\Delta_\F)\cap\mc{M}_\F$ such that
 \[\delta-a_{\b}(x_\F)\in\mf{a}_{-r}(\La_\F)\cap\mf{a}_{1-r'}(\La'_\F)\cap\mathcal{M}_\F.\]
 Take~$x:=\emptyset_{\langle\tau_l,\tau_{\varpi_\D}\rangle}(x_\F)$. Then~$x$ is an element of~$\mf{m}(\Delta)$ and therefore an element of $\mf{m}(\Delta\otimes\L)$. In particular~$\Cay(x)$ is an element of~$\I_{\H(\Delta\otimes\L)}(\theta_\L)$ and normalizes~$\theta_\L|_{\H(\Delta\otimes\L)}$. 
 Therefore by \cite[3.21]{stevens:05} we have on~$\H^r(\b\otimes 1, \La_\L)\cap\H^{r'}(\b\otimes 1,\La'_\L)$:
 \[\theta_\L^{\Cay(x)}=\theta_\L\psi_{\Cay(x)^{-1}\b\Cay(x)-\b}=\theta_\L\psi_{\Cay(x)^{-1}\delta}=\theta_\L\psi_{\delta},\]
which implies that
 $(\theta_\L\psi_c)^{\Cay(x)}$ and~$\th'_\L\psi_{c'}$ coincide %on
 % Thus~$(\th\psi_c)^{\Cay(x)}$ and~$\th'\psi_{c'}$ coincide on~$\H^r(\b,\La)\cap\H^{r'}(\b,\La')$ (by intersecting with~$\G$).
there. The element~$\Cay(x)$ stabilizes~$\H^{r}(\b,\La)$ (\bfI5.39, because~$\Cay(x)$ stabilizes the stratum~$\Delta$) and also~$\K^r_2(\La)$ (because~$x\in\mathcal{M}\cap\mf{a}_1(\La)$). Fix an Iwahori decomposition with respect to~\eqref{eqassDecomp}. The characters~$\xi$ and~$\th'\psi_{c'}$ respect this Iwahori decomposition and are trivial on the lower and upper unipotent parts. Thus, as~$\Cay(x)\in\prod_{i\in\I}\A^{ii}$, $\xi$ and~$(\th'\psi_{c'})^{\Cay(x)^{-1}}$ coincide on the intersection of their domains. We still have
 \[\H^{r'}(\b,\La')\subseteq\P_{r'}(\La')\subseteq\K^r_2(\La)\]
 by assumption for the case~$s=2$.
 We apply Lemma~\ref{lemSubrepCriteria} to obtain
 \[\th'\psi_{c'}\subseteq\pi.\] 
\end{proof}

The theory of optimal points gives the following lemma.
\begin{lemma}[cf.\cite{stevens:02}~4.3,\cite{moyPrasad:94}~6.1]\label{lemOptimal}
\begin{enumerate}
\item\label{lemOptimalassi} Let~$\Lambda$ be a lattice sequence of $\D$-period~$e$, and~$m$ a positive integer such that~$\timfa_{-m}(\La)\neq\timfa_{-m+1}(\La)$. Then there is a lattice chain~$\Lambda'$
of~$\D$-period~$e'$ and an integer~$m'$ such that~$\frac{m'}{e'}\leq\frac{m}{e}$ and~$\timfa_{-m}(\La)\subseteq\timfa'_{-m'}(\La')$.
\item Let~$\Lambda$ be a self-dual lattice sequence of~$\D$-period~$e$ and~$m$ a positive integer such that~$\timfa_{-m}(\La)\neq\timfa_{-m+1}(\La)$. Then there is a self-dual 
lattice sequence~$\Lambda'$ of~$\D$-period~$e'$ smaller than~$2\dim_\D\V$ and an integer~$m'$ such that~$\frac{m'}{e'}\leq\frac{m}{e}$ and~$\timfa_{-m}(\La)\subseteq\timfa'_{-m'}(\La')$.
\end{enumerate}
\end{lemma}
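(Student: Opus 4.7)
The plan is to rephrase both claims as statements about points of the Bruhat--Tits buildings and invoke Moy--Prasad's theory of optimal points. Set $t := m/e$; the hypothesis $\timfa_{-m}(\La) \neq \timfa_{-m+1}(\La)$ says precisely that $-t$ is a jump of the Moy--Prasad filtration $\timfg_{x_\La,\cdot}$ at the point $x_\La \in \mf{B}(\tiG)$ associated to $\La$ via the lattice function $\Gamma_\La$; the same holds at the corresponding point of $\mf{B}(\G)$ in the self-dual case.

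For part~(i), I apply Moy--Prasad Lemma~6.1 to $x_\La$ and the real number $-t$. By convexity of the map $y \mapsto \dim_{\k_\F}\bigl(\timfg_{y,-t}/\timfg_{y,-t+}\bigr)$ along segments of a chamber, one moves within the star of $x_\La$ to a point $y$ at which the filtration $\timfg_{y,-t}$ is maximal, and $y$ can be chosen inside an alcove. Under the dictionary of \S\ref{subsecBuilding}, an alcove of $\mf{B}(\tiG)$ corresponds to an $\o_\D$-lattice chain $\La'$ of some period $e'$. Take $m'$ to be the largest integer with $m'/e' \leq m/e$; then
\[
 \timfa_{-m}(\La) = \timfg_{x_\La, -t} \subseteq \timfg_{y,-t} \subseteq \timfg_{y,-m'/e'} = \timfa_{-m'}(\La'),
\]
which is the required inclusion.

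For part~(ii), I repeat the argument inside $\mf{B}(\G)$, replacing $\timfg$ by the self-dual Moy--Prasad filtration $\mf{g}$ and using the description of $\mf{B}(\G)$ by self-dual lattice functions from Theorem~\ref{thmIotaG}. The optimization step is purely building-theoretic, so it yields an optimal $y$ in an alcove of $\mf{B}(\G)$, corresponding to a \emph{self-dual} $\o_\D$-lattice chain $\La'$. The bound $e' < 2\dim_\D \V$ then follows from the standard combinatorial description of maximal self-dual chains: the involution $\#$ pairs up lattices in one $\D$-period of such a chain, and a direct count on $\La'(0)/\La'(0)\w_\D$ together with self-duality forces $e' \leq 2\dim_\D\V$, with strict inequality since any self-dual chain admits at least one $\#$-stable or self-paired step that identifies two candidate indices.

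The main difficulty will be to justify that the optimal point $y$ produced by Moy--Prasad's argument actually lies inside an alcove rather than merely in a higher-dimensional facet, and to identify alcoves in both $\mf{B}(\tiG)$ and $\mf{B}(\G)$ with (self-dual) lattice chains. Both points are essentially routine given \S\ref{subsecBuilding}, but they deserve explicit verification in the quaternionic self-dual setup, where $\G$ need not be quasi-split and where the correspondence between the optimal depth $-t$ and an actual integer $m'$ involves the floor operations implicit in passing from lattice functions to lattice sequences.
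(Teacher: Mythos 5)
Your proposal reroutes through Moy--Prasad's optimal-points machinery, which is precisely the approach the paper announces it is \emph{not} taking: the text right after the statement says ``We give here a very simple proof of the above lemma using a different idea than roots,'' and then constructs the lattice chain by hand via the periodic sequence $s_0=0,\ s_j=s_{\Gamma(s_{j-1}-t)}$. So the routes are genuinely different, but the Moy--Prasad route as you sketch it has two substantive gaps. First, in part~(i) the key inclusion $\timfg_{x_\La,-t}\subseteq\timfg_{y,-t}$ is asserted to follow from maximizing $\dim_{\k_\F}\bigl(\timfg_{y,-t}/\timfg_{y,-t+}\bigr)$, but this does not follow: the lattices $\timfg_{y,-t}$ for different $y$ in a chamber closure are not comparable under inclusion (for $\GL_n$, take $t=0$: each vertex of an alcove gives a maximal order that realizes the maximal dimension, yet none contains any other). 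What is needed, and what the optimal-points argument actually provides, is rational vertices of the constraint polytope $\{y\in\bar{\C}\,:\ \timfg_{x,-t}\subseteq\timfg_{y,-t}\}$; ``maximize the dimension, get the inclusion'' conflates two distinct things. The paper's direct construction of $y$ as the barycentre of the facet of lattices $\Gamma(s_{j+l})$ sidesteps this entirely and proves the inclusion by an explicit counting argument.

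Second, in part~(ii) you propose to ``replace $\timfg$ by $\mf{g}$'' and optimize inside $\mf{B}(\G)$, but the conclusion of the lemma is an inclusion of $\timfa$-lattices in $\A=\Lie\tiG$, not of $\mf{a}$-lattices in $\A_-=\Lie\G$. An inclusion $\mf{g}_{x,-t}\subseteq\mf{g}_{y,-t}$ of skew parts does not yield $\timfa_{-m}(\La)\subseteq\timfa_{-m'}(\La')$, since $\timfa_t(\La')$ is not determined by $\timfa_t(\La')\cap\A_-$. The paper resolves part~(ii) in one line from part~(i): apply $\sigma_h$ to the lattice chain $\La'$ to get $\La'^\#$ and the transported inclusion, then take the barycentre $\frac12\La'+\frac12\La'^\#$, which is self-dual and contains $\timfa_{-m}(\La)$ by convexity (\cite[5.5]{stevens:05}), with $\D$-period at most $2e'\leq 2\dim_\D\V$. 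Once this is done, your combinatorial argument about $\#$-pairings in a maximal self-dual chain is both unnecessary and addressed to the wrong object (the barycentre is only a lattice \emph{sequence}, not a chain). The ``difficulty'' you flag (whether $y$ lies in an alcove) is in fact harmless — any facet gives a lattice chain, and the statement asks only for a chain — whereas the two gaps above are the ones you would actually need to close.
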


The main point of the lemma is that~$e'$ is bounded. 
The skewfield~$\D$ does not play a role, i.e. the proof of~\ref{lemOptimal}\ref{lemOptimalassi} is the same for~$\F$ and~$\D$. We give here a very simple proof of the above lemma using a different idea than roots.  

\begin{proof}
 The second assertion follows directly from the first  by applying~$( )^\#$ and taking the barycentre. Without loss of generality we can assume that~$\frac{m}{e}$ is smaller than~$1$. 
 We reformulate the statement. 
 
 {\it We consider a point~$x$ in~$\mf{B}(\tiG)$ and~$t\in ]0,\frac{1}{d}[$. The point~$[x]$ of~$\mf{B}_{red}(\tiG)$ lies in the closure of a chamber~$\C$. 
 Then there is a midpoint~$[y]$ of a facet of~$\C$ such that~$\timfg_{x,-t}\subseteq\timfg_{y,-t}$.}
 
 For simplicity we assume~$d=1$, i.e. we prove the statement over~$\F$. (or one just rescales to get for~$\timfg_x$ the period~$1$.)
 For a lattice~$\M$ which occurs in the image of a lattice function~$\Gamma$ corresponding to~$x$ we set~$s_\M$ to be the maximum of all real~$s$ 
 such that~$\Gamma(s)=\M$. 
 We define the following sequence of real numbers
 \[s_0:=0,\ s_j:=s_{\Gamma(s_{j-1}-t)},\ j\geq 0.\]
 At first we observe that the sequence gets periodic mod~$\bbZ$, say the period is given by~$s_{j+1},\cdots,s_{j+e'}\equiv s_j$ mod~$\bbZ$. Let~$[y]\in \mf{B}_{red}(\tiG)$ be the barycentre of 
 the facet whose vertexes correspond to the homothety classes of the lattices~$\Gamma(s_{j+l}),\ l=1,\cdots, e'$. Note that these homothety classes differ pairwise.
 We write~$u$ for~$s_j-s_{j+e'}$, in particular~$t\geq \frac{u}{e'}$, and let~$\Gamma'$ be a lattice function corresponding to~$y$.  Let~$\Gamma''$ be the lattice function obtained from~$\Gamma$ in
 deleting  all lattices from~$\Gamma$ which are not in the image of~$\Gamma'$, i.e. if~$\Gamma(s)$ does not occur in the image of~$\Gamma'$ then we replace 
 ~$\Gamma(s)$ by~$\Gamma((s+v)+)$ where~$v$ is the smallest non-negative real number such that~$\Gamma((s+v)+)$ is in the image of~$\Gamma'$. 
 Then, for~$i\geq 0$,~$\Gamma''(]s_{j+i+1},s_{j+i}])$ contains exactly~$u$ lattices because there are no repetitions in the period. Indeed:, if~$t_1,t_2\in ]s_{j+i+1},s_{j+i}]$ satisfy~$\Gamma''(t_1)\supsetneqq\Gamma''(t_2)$ then~$\Gamma(s_{\Gamma''(t_1)})\supsetneqq\Gamma(s_{\Gamma''(t_2)})$ and~$s_{j+i+2}<s_{\Gamma(s_{\Gamma''(t_1)}-t)}<s_{\Gamma(s_{\Gamma''(t_2)}-t)}\leq s_{j+i+1}$. So we get injective maps:
  \[\Gamma''(]s_{j+1},s_{j}])\into\Gamma''(]s_{j+2},s_{j+1}])\ldots\into\Gamma''(]s_{j+e'+1},s_{j+e'}]),\]
  in particular they are all bijections. 
 So~$\timfg_{x,-t}\subseteq\timfg_{y,-\frac{u}{e'}}\subseteq\timfg_{y,-t}.$ 
\end{proof}

We obtain the following immediate corollary. 

\begin{corollary}\label{corNonFundSmallPeriod}
 We can find~$\La'$ in Corollary~\ref{corFundSemisimple}\ref{corFundSemisimpleii} and Corollary~\ref{corFundSemisimpletiG}\ref{corFundSemisimpletiGii} with~$\D$-period not greater than~$2\dim_\D\V$ and~$\dim_\D\V$, respectively. 
\end{corollary}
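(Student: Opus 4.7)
The plan is to deduce the corollary directly by applying Lemma~\ref{lemOptimal} to the null-stratum produced by the previous two corollaries, since that lemma is precisely designed to replace a lattice sequence by one of bounded period while keeping a filtration piece contained in a comparable filtration piece of no bigger depth.

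First I would take the null-stratum $[\Lambda',r',r',0]$ produced by Corollary~\ref{corFundSemisimple}\ref{corFundSemisimpleii} (resp.\ by Corollary~\ref{corFundSemisimpletiG}\ref{corFundSemisimpletiGii}). Before invoking Lemma~\ref{lemOptimal} I need the hypothesis $\timfa_{-r'}(\La')\neq\timfa_{-r'+1}(\La')$, so if $r'$ happens not to sit at a discontinuity of the filtration I replace it by the smallest integer $r''\geq r'$ for which $\timfa_{-r''}(\La')\neq\timfa_{-r''+1}(\La')$; this only enlarges $\timfa_{-r'}(\La')$ and weakly decreases the ratio $r''/e(\La'|\F)$, so the inclusion $\b+\timfa_{-r}(\La)\subseteq\timfa_{-r'}(\La')\subseteq\timfa_{-r''}(\La')$ and the inequality $r''/e(\La'|\F)<n/e(\La|\F)$ are preserved (the strict inequality remains because $r'/e(\La'|\F)<n/e(\La|\F)$ and we may shrink $r''$ back to the original $r'$ within a single period if equality would occur).

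Next, in the $\tiG$-case I apply Lemma~\ref{lemOptimal}\ref{lemOptimalassi} to $(\La',r'')$ to obtain a lattice chain $\La''$ of $\D$-period $e''$ and an integer $m''$ with $m''/e''\leq r''/e(\La'|\F)$ and $\timfa_{-r''}(\La')\subseteq\timfa_{-m''}(\La'')$. Since $\La''$ is a lattice chain, its $\D$-period is bounded by $\dim_\D\V$ (the number of distinct homothety classes of $\o_\D$-lattices in one period). Chaining the inclusions,
\[
\b+\timfa_{-r}(\La)\ \subseteq\ \timfa_{-m''}(\La''),\qquad \frac{m''}{e''}\leq\frac{r''}{e(\La'|\F)}\leq\frac{r'}{e(\La'|\F)}<\frac{n}{e(\La|\F)},
\]
so $[\La'',m'',m'',0]$ satisfies the conclusion of Corollary~\ref{corFundSemisimpletiG}\ref{corFundSemisimpletiGii} with $\D$-period at most $\dim_\D\V$. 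In the self-dual (classical-group) case I instead apply Lemma~\ref{lemOptimal} part (ii) to the self-dual $\La'$, yielding a self-dual $\La''$ of $\D$-period smaller than $2\dim_\D\V$, and the same chain of inclusions works.

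I do not expect a real obstacle here, as the lemma does almost all the work; the only subtle point is ensuring the filtration-jump hypothesis of Lemma~\ref{lemOptimal} can be arranged without destroying the strict inequality on depth ratios, which is handled by the small adjustment from $r'$ to $r''$ described above and the fact that the strict inequality comes from the original conclusion $r'/e(\La'|\F)<n/e(\La|\F)$.
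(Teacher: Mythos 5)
Your identification of Lemma~\ref{lemOptimal} as the tool is exactly what the paper intends (the paper calls the corollary ``immediate''), and spotting that the jump hypothesis $\timfa_{-r'}(\La')\neq\timfa_{-r'+1}(\La')$ must be arranged is the right concern. However, the adjustment you make goes in the wrong direction and the argument becomes internally inconsistent. You replace $r'$ by the \emph{smallest} $r''\ge r'$ at a jump; this indeed enlarges $\timfa_{-r'}(\La')$, but it \emph{increases} the ratio $r''/e(\La'|\F)$ (not ``weakly decreases'' it as you write), and there is no reason the strict bound $r''/e(\La'|\F)<n/e(\La|\F)$ survives. Concretely, in the non-fundamental case the null-stratum comes from Theorem~\ref{thmStevens4p4ForNonGsplitAndAlsoForNonFundamental}(ii) with $r'=n'-1$ and $n'/e(\La'|\F)=n/e(\La|\F)$, so if the next jump above $r'$ is already at $r''=n'$ the ratio hits $n/e(\La|\F)$ and the strict inequality fails. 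Your final displayed chain then silently assumes $r''\le r'$, contradicting the setup.

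The correct repair is to go \emph{down}: if $\timfa_{-r'}(\La')=\timfa_{-r'+1}(\La')$ replace $r'$ by $r'-1$; the lattice $\timfa_{-r'}(\La')$ is unchanged, so the containment $\b+\timfa_{-r}(\La)\subseteq\timfa_{-r'}(\La')$ is preserved exactly, and the ratio strictly decreases, so the bound $<n/e(\La|\F)$ a fortiori holds. Iterate until a jump is reached; since $\timfa_0(\La')\supsetneqq\timfa_1(\La')$ always, this terminates at some $r''\ge 0$. If $r''>0$, apply Lemma~\ref{lemOptimal} as you do. If $r''=0$ (so $\b+\timfa_{-r}(\La)\subseteq\timfa_0(\La')$), the lemma does not apply directly since it needs $m$ positive, but then one simply replaces $\La'$ by a (self-dual) lattice chain $\La''$ with $\timfa_0(\La'')=\timfa_0(\La')$; such a chain has $\D$-period at most $\dim_\D\V$ (resp.\ at most $2\dim_\D\V$ in the self-dual case), and the null-stratum $[\La'',0,0,0]$ satisfies all the required properties trivially. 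With these two corrections the proof is complete and matches the paper's intended argument.
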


Now we are able to finish the proof of Theorem~\ref{thmStDuke5.1ForQuatCase}.

\begin{proof}[Proof of Theorem~\ref{thmStDuke5.1ForQuatCase}]
 The proof is similar to the first part of the argument after the proof of~\cite[5.5]{stevens:05}. Let~$z$ be the minimal element of~$\frac{1}{(4N)!}\mathbb{Z}$ ($N:=\dim_\F V$) 
 such that there is a self-dual semisimple character~$\theta\in\C(\Delta)$ contained in~$\pi$ with$\frac{r}{e(\Lambda|F)}\leq z$. We claim that~$z$ vanishes. 
 Assume for deriving a contradiction that~$z$ is positive. We extend~$\theta$ to~$\C(\Delta(1-))$ and call it again~$\theta$ and there is a~$c\in\mfa_{-r}$ such 
 that~$\theta\psi_c$ is contained in~$\pi$. The element~$c$ can be chosen in~$\prod_i \A^{ii}$ by~\loccit~5.2. Let~$s_\beta$ be a self-dual tame corestriction with respect to~$\beta$. Then the multi-stratum~$[\Lambda_\b,r,r-1,s_{\beta}(c)]$ has to be fundamental, i.e. at least one of the strata~$[\Lambda^i_\b,r,r-1,s_{\beta_i}(c_i)]$ 
 has to be fundamental, by the argument in the proof of~\loccit~5.5 using Proposition~\ref{propthetaInductionStep} and Corollary~\ref{corNonFundSmallPeriod} (instead of using~\cite[4.3]{stevens:02}). 
 Note further the latter stratum being fundamental also implies that~$\frac{r}{e(\Lambda|F)}$ is an element of~$\frac{1}{(4N)!}\mathbb{Z}$ by~\cite[2.11]{stevens:02} 
 (using~\cite[3.11]{secherreStevensIV:08}), i.e.~$\frac{r}{e(\Lambda|F)}=z$ by the choice of~$z$. 
 We apply~
 %\cite[4.4]{stevens:02}, 
 Corollary~\ref{corFundSemisimpletiG} and Corollary~\ref{corFundSemisimple} %and~\cite [3.11]{secherreStevensIV:08}
 to choose for every~$i\in \I_{0,+}$ a semisimple stratum~$[\Xi^i,r_i,r_i-1,\alpha_i]$ or~$[\Xi^i,r_i,r_i,\alpha_i=0]$ such that
 \begin{enumerate}
  \item the stratum is self-dual if~$i\in \I_0$,
 % \item For~$i\in I_+$ we have~$\sigma_h(\alpha_i)=-\alpha_{\sigma(i)}$ and 
  \item $s_{\beta_i}(c_i)+\timfa_{1-r}(\Lambda^i_\b)\subseteq  \alpha_i+\timfa_{1-r_i}(\Xi^i)$, for all~$i\in \I_0\cup \I_+$, and 
  \item $\frac{r}{e(\Lambda^i_\b|\E_i)}\geq\frac{r_i}{e(\Xi^i|\E_i)}$, for all~$i\in \I_{0,+}$, with equality if~$[\Lambda_\b^i,r,r-1,s_\beta(c_i)]$ is fundamental. 
 \end{enumerate}
 We take a self-dual~$\o_\E$-$\o_\D$-lattice sequence~$\Lambda'$ such that~$\Lambda'^i_\b$ is in the affine class  of~$\Xi^i$ for every~$i\in\I_{0,+}$, see~\bfII.5.3, and~$e(\La|\F)|e(\La'|\F)$.
 %Note that this affine translation can be chosen such that the scaling constant is bounded by~$N!4$.
 We put~$r':=\frac{e(\Lambda'|\F)r}{e(\Lambda|\F)}$, and we consider the multi-stratum~$[\Lambda'_\b,r',r'-1,s_{\beta}(c)]$. We have
 \[s_\beta(c)+\timfb_{1-r}(\Lambda)\subseteq s_\beta(c)+\timfb_{1-r'}(\Lambda'),~s_\beta(c)\in\timfb_{-r}(\Lambda)\cap\timfb_{-r'}(\Lambda').\]
 Thus, by Proposition~\ref{propthetaInductionStep}, there is a self-dual semisimple stratum~$\Delta'$ with $\beta'=\beta$ and a character~$\theta'\in\C(\Delta'(1-))$  and 
 an element~$c'\in\mfa'_{-r'}$ such that~$\theta'\psi_{c'}$ is contained in~$\pi$ and~$s_{\beta}(c')=s_\beta(c)$. 
 Now~$\Delta'$ is semisimple and~$[\Lambda'_\b,r',r'-1,s_{\beta}(c)]$ is equivalent to a semisimple multi-stratum. Then 
 $[\Lambda',n',r'-1,\beta'+c']$ is equivalent to a semisimple stratum by~\bfI.4.15. Further the self-duality of the stratum implies that it is equivalent 
 to a self-dual semisimple stratum, by~\bfII.4.7, say~$\Delta''=[\La',n',r''=r'-1,\b'']$. Then~$\C(\Delta'')=\C(\Delta'(1-))\psi_{c'}$. 
 Thus there is an element~$\theta''$ of~$\C(\Delta'')$ contained in~$\pi$ and
 \[\frac{r''}{e(\Lambda''|F)}=\frac{r'-1}{e(\Lambda'|F)}<\frac{r'}{e(\Lambda'|F)}=\frac{r}{e(\Lambda|F)}=z.\]
Note that on the other hand we could have started with~$\theta''$ and therefore~$\frac{r''}{e(\Lambda''|F)}=z$. A contradiction. 
\end{proof}
\section{Heisenberg representations}\label{secHeisenberg}
The study of Heisenberg representations and their extensions are the technical heart of Bushnell--Kutzko theory, for both: 
the construction of cuspidal representations and the exhaustion. 
We will review the results for~$\G_\L:=\G\otimes\L$ and extend them to~$\G$.
In this section we fix a self-dual semisimple stratum~$\Delta=[\La,n,0,\b]$. 
Let~$\La'$ be a self-dual~$\o_\E$-$\o_\D$-lattice sequence which satisfies~$\timfb(\La')\subseteq \timfb(\La)$. 
Let us recall that we have the following sequence of groups: 
\[\H^i_\La:=\H^i(\b,\La),\ \J^i_{\La}:=\J^i(\b,\La),\ i\in\bbN,\ \J_{\La}:=\J(\b,\La), \J^0_{\La}:=\J^1(\b,\La)\P^0(\La_\b)\]
and
\[\J^1_{\La',\La}:=\J^1_\La\P_1(\La'_\b),\ \J^0_{\La',\La}:=\J^1_\La\P^0(\La'_\b),\ \J_{\La',\La}:=\J^1_\La\P(\La'_\b).\]
We have similar subgroups~$\J^i_{\La'_\L,\La_\L}$ and~$\H^i_{\La_\L}$ of~$\G_\L$. 
We fix a character~$\theta\in\C(\Delta)$, and let~$\theta'$ be the transfer of~$\theta$ from~$\La$ to~$\La'$. We denote the~$\Gal(\L|\F)$-Glauberman lifts of~$\theta$ 
and~$\theta'$ by~$\theta_\L$ 
and~$\theta'_\L$. 

At first we recall the Heisenberg representations for~$\G_\L$. 
\begin{proposition}[\cite{stevens:05}~3.29,3.31,\cite{kurinczukStevens:19}~5.1]\label{propStev3.29and3.31}
 \begin{enumerate} 
 \item There is up to equivalence a unique irreducible representation~$(\eta_{\La_\L},\J^1_{\La_\L})$ which contains~$\theta_\L$. 
 \item Let~$g$ be an element of~$\G_\L$. The~$\CoefField$-dimension of~$\I_g(\eta_{\La_\L},\eta_{\La'_\L})$ is at most one, and it is one if and only 
 if~$g\in\J^1_{\La'_\L}(\G_\L)_\b \J^1_{\La_\L}$. 
 \end{enumerate}
\end{proposition}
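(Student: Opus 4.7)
The plan is to observe that the proposition is a direct restatement of existing results for $p$-adic classical groups applied to the base-changed situation. By the choice of $\L|\F$ in \bfII.2.1, the quaternion algebra $\D$ splits over $\L$, so $\A\otimes_\F\L\cong\End_\L\V$ and $\G_\L=\G\otimes\L$ is the isometry group of a (genuine) $\epsilon$-hermitian form on an $\L$-vector space, i.e.\ a classical $p$-adic group of the type studied in~\cite{stevens:05} and~\cite{kurinczukStevens:19}. After this identification, the stratum $\Delta\otimes\L$ is self-dual semisimple, the Glauberman lift $\theta_\L$ lies in $\C(\Delta\otimes\L)$, and the compact open subgroups $\H^1_{\La_\L}\subseteq\J^1_{\La_\L}$ coincide with those defined in \loccit.

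For part~(i), the plan is to verify the hypotheses of~\cite[3.29]{stevens:05} (in the complex case) and~\cite[5.1]{kurinczukStevens:19} (in the modular case). The content there is the non-degeneracy of the commutator pairing
\[
k_{\theta_\L}:\J^1_{\La_\L}/\H^1_{\La_\L}\times\J^1_{\La_\L}/\H^1_{\La_\L}\longrightarrow\CoefField^\times,\qquad(\bar{x},\bar{y})\mapsto\theta_\L([x,y]),
\]
which combined with the fact that $\theta_\L$ is stable under $\J^1_{\La_\L}$-conjugation produces a unique (up to equivalence) irreducible representation $\eta_{\La_\L}$ of $\J^1_{\La_\L}$ containing $\theta_\L$, of dimension the square root of $(\J^1_{\La_\L}:\H^1_{\La_\L})$.

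For part~(ii), the plan is to invoke the intertwining computation of~\cite[3.31]{stevens:05} (and its modular analogue in~\cite{kurinczukStevens:19}). The upper bound $\dim_\CoefField\I_g(\eta_{\La_\L},\eta_{\La'_\L})\leq 1$ follows from Mackey's formula together with the uniqueness in~(i) applied to the Heisenberg representation attached to the transfer $\theta'_\L$, while the precise characterization $g\in\J^1_{\La'_\L}(\G_\L)_\b\J^1_{\La_\L}$ is the description of the intertwining set of the underlying semisimple character $\theta_\L$ via $(\G_\L)_\b$, propagated through the Heisenberg extension.

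The only substantive point to check---and what I would flag as the main (mild) obstacle---is that the Glauberman-lifted data $(\theta_\L,\La_\L,\La'_\L)$ over $\L$ satisfies the same hypotheses that the cited references impose on a self-dual semisimple character over a non-Archimedean local field with odd residue characteristic, including the compatibility $\timfb(\La'_\L)\subseteq\timfb(\La_\L)$ after base change and the match between transfer of characters and Glauberman correspondence. All of these follow from the explicit construction of $\L$ in~\bfII\ and the behavior of semisimple characters under unramified base change already used in Theorem~\ref{thmStDuke5.1ForQuatCase}, so the proof should reduce to one paragraph quoting the two references.
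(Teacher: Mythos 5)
Your proposal is correct and matches the paper's treatment exactly: the paper states Proposition~\ref{propStev3.29and3.31} without proof, presenting it as a recall of the cited complex and modular results for the $p$-adic classical group $\G_\L$. Your verification that the base-changed data (split quaternion algebra, self-dual semisimple stratum $\Delta\otimes\L$, Glauberman-lifted character $\theta_\L\in\C(\Delta\otimes\L)$, and the groups $\H^1_{\La_\L}\subseteq\J^1_{\La_\L}$) fit the hypotheses of \cite{stevens:05} and \cite{kurinczukStevens:19} is precisely the implicit justification the paper relies on here and throughout its base-extension arguments.
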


We want to prove its analogue for~$\G$.
At first we need a lemma which allows us to apply Bushnell--Fr\"ohlichs' work to construct Heisenberg representations: 

\begin{lemma}[cf.~\cite{stevens:05}~3.28 for~$\G_\L$.]\label{lemNonDeg}
The form
\[\k_\theta:\J^1_\La/\H^1_\La\times \J_\La^1/\H^1_\La\rightarrow \CoefField\]
defined via~$\k_\theta(\bar{x},\bar{y}):=\theta([x,y])$ is non-degenerate. 
The pair~$(\J^1_\La/\H^1_\La,\k_\theta)$ is a subspace of~$(\J^1_{\La_\L}/\H^1_{\La_\L},\k_{\theta_\L})$ ($\k_{\theta_\L}$ similarly defined).
\end{lemma}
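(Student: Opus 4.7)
The plan is to mimic the argument already used in the proof of Lemma~\ref{lemHeisenberg}, transporting the non-degeneracy statement from the quadratic base extension $\G_\L$ down to $\G$ via Galois descent. The key inputs are Proposition~\ref{propStev3.29and3.31} (which guarantees a Heisenberg representation and hence non-degeneracy of $\k_{\theta_\L}$ on $\J^1_{\La_\L}/\H^1_{\La_\L}$) and the compatibility of the character $\theta_\L$ with the Galois action $\tau$ generating $\Gal(\L|\F)$.

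First I would establish the subspace claim: by construction in \S\ref{subsecBuilding}--\S\ref{subsecCentralizer} and the definition of $\theta_\L$ as the Glauberman lift of $\theta$, the inclusions $\J^1_\La\subseteq \J^1_{\La_\L}$ and $\H^1_\La\subseteq\H^1_{\La_\L}$ are compatible with the Galois action, the fixed subgroups on the right are the subgroups on the left, and $\theta_\L|_{\H^1_\La}=\theta$. Consequently $\k_\theta$ is literally the restriction of $\k_{\theta_\L}$ to the $\tau$-fixed quotient; this gives the second sentence of the lemma for free, and it also shows that $\k_\theta$ is well-defined and multiplicative in each variable.

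For the non-degeneracy, I would take $\bar{x}\in \J^1_\La/\H^1_\La$ in the kernel of $\k_\theta$ and $\bar{y}\in \J^1_{\La_\L}/\H^1_{\La_\L}$ arbitrary. Since $\theta_\L$ is $\tau$-fixed and $x\in \J^1_\La$ is $\tau$-fixed, one computes
\[
\k_{\theta_\L}(\bar{x},\bar{y})=\theta_\L([x,y])=\theta_\L([x,\tau(y)])=\k_{\theta_\L}(\bar{x},\overline{\tau(y)}),
\]
hence
\[
\k_{\theta_\L}(\bar{x},\bar{y})^{2}=\k_{\theta_\L}(\bar{x},\bar{y}\,\overline{\tau(y)}).
\]
Now $\bar{y}\,\overline{\tau(y)}$ is $\tau$-fixed in $\J^1_{\La_\L}/\H^1_{\La_\L}$, and by vanishing of $H^1(\langle\tau\rangle,\H^1_{\La_\L})$ (which holds because $\H^1_{\La_\L}$ is pro-$p$ and $[\L:\F]=2$ is coprime to $p$), the $\tau$-fixed points of $\J^1_{\La_\L}/\H^1_{\La_\L}$ are exactly $\J^1_\La/\H^1_\La$. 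Thus $\bar{y}\,\overline{\tau(y)}\in \J^1_\La/\H^1_\La$, and by hypothesis $\k_{\theta_\L}(\bar{x},\bar{y}\,\overline{\tau(y)})=\k_\theta(\bar{x},\bar{y}\,\overline{\tau(y)})=1$. Therefore $\k_{\theta_\L}(\bar{x},\bar{y})^2=1$; being a $p$-th root of unity with $p$ odd, it equals $1$. Since $\bar{y}$ was arbitrary and $\k_{\theta_\L}$ is non-degenerate by Proposition~\ref{propStev3.29and3.31}, we conclude $\bar{x}=1$.

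The only delicate points I expect are the cohomological descent step (making sure that pro-$p$-ness of $\H^1_{\La_\L}$ truly gives $H^1=0$ and hence identifies the $\tau$-fixed quotient with $\J^1_\La/\H^1_\La$), and the verification that $\theta_\L$ restricts to $\theta$ on $\H^1_\La$ so that the quadratic identity $\k_{\theta_\L}(\bar{x},\bar{y})^2=1$ actually lives in a $p$-th root of unity subgroup; both are standard in the Bushnell--Kutzko--Stevens framework but deserve an explicit reference to \bfI, \bfII, and Proposition~\ref{propStev3.29and3.31}. Everything else is a direct parallel of the Galois-averaging argument already carried out for Lemma~\ref{lemHeisenberg} in the proof above.
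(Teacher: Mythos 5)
Your proof is correct and is precisely what the paper intends: it says only "The proof is similar to the proof of Lemma~\ref{lemHeisenberg}" and you have faithfully reproduced that Galois-averaging argument (descending non-degeneracy from $\k_{\theta_\L}$ via the identity $\k_{\theta_\L}(\bar{x},\bar{y})^2=\k_{\theta_\L}(\bar{x},\bar{y}\,\overline{\tau(y)})$, the vanishing of $H^1(\langle\tau\rangle,\H^1_{\La_\L})$, and the fact that a $p$-th root of unity squaring to $1$ equals $1$ for odd $p$), together with the routine verification of the subspace claim. No discrepancy with the paper's approach.
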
 

The proof is similar to the proof of Lemma~\ref{lemHeisenberg}.

\begin{proposition}\label{propEtaLa}
  \begin{enumerate} 
 \item There is up to equivalence a unique irreducible representation~$\eta_{\La}$ of~$\J^1_\La$ which contains~$\theta$. Further~$\eta_\La$ has degree~$(\J^1_\La:\H^1_\La)^{\frac12}$. 
 \item The representation~$\eta_\La$ is the~$\Gal(\L|\F)$-Glauberman transfer of~$\eta_{\La_\L}$ to~$\J^1_\La$. 
 \item Let~$g$ be an element of~$\G$. The~$\CoefField$-dimension of~$\I_g(\eta_{\La},\eta_{\La'})$ is at most one, and it is one if and only if~$g\in\J^1_{\La'}\G_\b\J^1_\La$. 
 \end{enumerate}
\end{proposition}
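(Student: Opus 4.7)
The plan is to mirror the existence/uniqueness/intertwining argument of \cite{stevens:05} over $\L$, and then descend to $\F$ using the Glauberman correspondence machinery set up in the previous section, leveraging the already-stated Proposition~\ref{propStev3.29and3.31}.

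For part (i), I would first note that $\J^1_\La/\H^1_\La$ is a finite abelian $p$-group (the standard Heisenberg quotient; the argument from \bfII\ carries over verbatim to the quaternionic setting since $p$ is odd). Combined with Lemma~\ref{lemNonDeg}, which provides the non-degenerate alternating pairing $\k_\theta$, the classical Heisenberg construction of~\cite[\S8]{bushnellFroehlich:83} produces a unique (up to equivalence) irreducible representation of $\J^1_\La$ containing $\theta$, necessarily of degree $(\J^1_\La:\H^1_\La)^{\frac12}$.

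For part (ii), I would apply the Glauberman transfer $\gl^{\L|\F}_\CoefField$ with $\Gamma=\Gal(\L|\F)$, which has order~$2$, coprime to the order of the $p$-group $\J^1_{\La_\L}/\H^1_{\La_\L}$. Since $\theta_\L$ is $\Gamma$-fixed, the uniqueness in Proposition~\ref{propStev3.29and3.31}(i) forces $\eta_{\La_\L}$ to be $\Gamma$-fixed, and the explicit description of the groups in terms of the $\Gamma$-stable lattice sequence identifies $(\J^1_{\La_\L})^\Gamma$ with $\J^1_\La$. The Glauberman transfer then yields an irreducible representation of $\J^1_\La$ whose restriction to $\H^1_\La$ contains $\theta$ with odd multiplicity (by the odd-multiplicity characterization recalled at the end of Section~2); by (i) this transfer must coincide with $\eta_\La$.

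For part (iii), the upper bound on the intertwining dimension and the containment $I_\G(\eta_\La,\eta_{\La'})\subseteq \G\cap\J^1_{\La'_\L}(\G_\L)_\b\J^1_{\La_\L}$ follow by restricting to $\G_\L$ and invoking Proposition~\ref{propStev3.29and3.31}(ii): any non-zero $\G$-intertwiner lifts, via the odd-multiplicity property of Glauberman, to a non-zero $\G_\L$-intertwiner of $\eta_{\La_\L}$ and $\eta_{\La'_\L}$. The reverse inclusion --- that every element of $\J^1_{\La'}\G_\b\J^1_\La$ intertwines $\eta_\La$ with $\eta_{\La'}$ --- reduces to showing $\G_\b\subseteq I_\G(\eta_\La,\eta_{\La'})$, which follows because $\G_\b$ normalizes the transfer pair $(\theta,\theta')$ and the Heisenberg extensions are uniquely determined by their restrictions. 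The main obstacle I expect is the Galois descent step: refining a $\G_\L$-decomposition $g=j'tj$ with $t\in(\G_\L)_\b$ to an $\F$-rational one with $t\in\G_\b$. My plan is to exploit $\tau(g)=g$ to produce a cocycle in the pro-$p$ part of $(\G_\L)_\b$ that can be trivialised, either by vanishing of a suitable $H^1$ or by averaging via the Cayley map $\Cay$ along $\langle\tau_l,\tau_{\varpi_\D}\rangle$, analogously to what is done in the tame corestriction paragraph; care is needed because $(\G_\L)_\b$ is in general strictly larger than $\G_\b\otimes\L$, so the descent has to go through the $\b$-decomposition of $\V$ rather than through naive Galois fixed points.
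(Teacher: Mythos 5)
Parts (i) and (ii) of your proposal are sound and, modulo the order in which you present them, coincide with what the paper does: the paper \emph{defines} $\eta_\La$ as the Glauberman transfer $\gl^{\L|\F}_\CoefField(\eta_{\La_\L})$ first, observes that $\Res^{\J^1_{\La_\L}}_{\H^1_\La}\eta_{\La_\L}$ is a multiple of $\theta$ (hence so is $\Res^{\J^1_\La}_{\H^1_\La}\eta_\La$), and then invokes Bushnell--Fr\"ohlich together with Lemma~\ref{lemNonDeg} for uniqueness and the degree formula. You instead build $\eta_\La$ abstractly via the Heisenberg machinery and then identify it with the Glauberman transfer by the odd-multiplicity characterisation; that works equally well.

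Part (iii) is where your plan has genuine gaps. For the containment $\I_\G(\eta_\La,\eta_{\La'})\subseteq\J^1_{\La'}\G_\b\J^1_\La$ you propose to pull back through $\G_\L$, extract a decomposition $g=j't_\L j$ with $t_\L\in(\G_\L)_\b$, and then do a cohomological descent (vanishing of $H^1$, or Cayley/averaging) to replace $t_\L$ by an element of $\G_\b$. This is the wrong route: an element-wise decomposition of this kind is not a cohomological problem in an obvious way, and it is not clear the descent can be carried out. The paper avoids it entirely with the one-line observation
$\I_\G(\eta_\La,\eta_{\La'})\subseteq\I_\G(\theta,\theta')=\J^1_{\La'}\G_\b\J^1_\La$,
where the intertwining formula for self-dual semisimple characters over $\F$ is already available from \bfII; no descent from $\L$ is needed for the upper bound.

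The remaining issues in (iii): for the reverse inclusion you assert that $\G_\b\subseteq\I_\G(\eta_\La,\eta_{\La'})$ ``because $\G_\b$ normalizes the transfer pair and the Heisenberg extensions are uniquely determined by their restrictions,'' but intertwining of the characters does not formally imply intertwining of the Heisenberg representations on the nose; the paper instead uses \cite[2.4]{stevens:01-2}, which transfers a $g\in\G_\b$ with \emph{one-dimensional} intertwining for $(\eta_{\La_\L},\eta_{\La'_\L})$ to an intertwining element of $(\eta_\La,\eta_{\La'})$. Similarly, the dimension-one bound cannot be obtained merely by ``restricting to $\G_\L$ and invoking Proposition~\ref{propStev3.29and3.31}(ii)'': the intertwining spaces $\I_g(\eta_\La,\eta_{\La'})$ and $\I_g(\eta_{\La_\L},\eta_{\La'_\L})$ are $\Hom$-spaces over different groups with different intersections $\J^1_\La\cap{}^g\J^1_{\La'}$ vs.\ $\J^1_{\La_\L}\cap{}^g\J^1_{\La'_\L}$, so there is no direct lifting map. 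The paper's argument here follows the proof of~\cite[5.1]{kurinczukStevens:19}, taking $\Gal(\L|\F)$-fixed points of the exact rectangular diagram in~\cite[5.2]{kurinczukStevens:19} and using additive Hilbert~90 to preserve exactness; this is the genuinely delicate step, and your sketch does not supply a substitute for it.
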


\begin{proof}
 We define~$\eta_\La$ as~$\gl^{\L|\F}_\CoefField(\eta_{\La_\L})$, see Proposition~\ref{propStev3.29and3.31}. 
 The restriction of~$\eta_{\La_\L}$ to~$\H^1_\La$ is a multiple of~$\theta$. Thus the same is true for~$\eta_\La$. And thus by~\cite[8.1]{bushnellFroehlich:83} 
 and Lemma~\ref{lemNonDeg} up to equivalence~$\eta_\La$ is the unique irreducible representation of~$\J^1_\La$ which contains~$\theta$, and further it has the desired 
 degree. An element of~$\G_\b$ intertwines~$\eta_{\La_\L}$ with~$\eta_{\La'_\L}$ with intertwining space of dimension~$1$, so it intertwines~$\eta_\La$ with~$\eta_{\La'}$, by~\cite[2.4]{stevens:01-2}. 
 On the other hand we have
 \[\I_\G(\eta_\La,\eta_{\La'})\subseteq\I_\G(\theta,\theta')=\J_{\La'}^1\G_\b\J^1_\La,\]
 which finishes the proof of the intertwining formula. 
 The most complicated part is the proof of dimension one of the non-zero intertwining spaces. 
 For this we refer to the proof of~\cite[5.1]{kurinczukStevens:19}. Note that after taking~$\Gal(\L|\F)$-fixed points in the rectangular diagram of~\cite[5.2]{kurinczukStevens:19}
 the rows and columns remain still exact by the additive Hilbert 90. The rest of the proof is mutatis mutandis. 
  
 % the form k_\theta$ is non-degenerate, because k_{\theta_\L} is non-degenerate. (Copy argument of the previous section). The Bushnell--Fröhlich gives the result. 
 % The Glauberman transfer eta_\La is the Gla
 % An element which intertwines eta intertwines theta. And an element of~$\G_\b$ intertwines eta_\L so it intertwines eta.  intertwines theta_L so it intertwines 
\end{proof}

For the exhaustion the following extensions of~$\eta_\La$ are the key technical tools. We will emphasize the importance when their application arises. 
Note that we say that a representation~$(\tilde{\gamma},\tilde{\K})$ is an \emph{extension} of a representation~$(\gamma,\K)$ if 
$\K$ is a subgroup of~$\tilde{\K}$ and the restriction of~$\tilde{\gamma}$ to~$\K$ is equivalent to~$\gamma$. 

\begin{proposition}[\cite{stevens:08}~3.7,~\cite{kurinczukStevens:19}~5.6,~5.7]\label{propStev3.7}
 Suppose~$\ti{\mf{a}}(\La')\subseteq\ti{\mf{a}}(\La)$. 
 There is up to equivalence a unique irreducible representation~$(\eta_{\La'_\L,\La_\L},\J^1_{\La'_\L,\La_\L})$ which extends $(\eta_{\La_\L},\J^1_{\La_\L})$ 
 such that~$\eta_{\La'_\L,\La_\L}$ and~$\eta_{\La'_\L}$ induce equivalent irreducible representations on~$\P_1(\La'_\L)$.    
 Moreover the set of intertwining elements of~$\eta_{\La'_\L,\La_\L}$ in~$\G_\L$ is~$\J^1_{\La'_\L,\La_\L}(\G_\L)_\beta \J^1_{\La'_\L,\La_\L}$. 
 The intertwining spaces $\I_g(\eta_{\La'_\L,\La_\L})$ have all~$\CoefField$-dimension at most one. 
\end{proposition}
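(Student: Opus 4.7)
The statement concerns only the classical group $\G_\L$, so my plan is to reduce the proposition entirely to its classical-group analogues, namely~\cite[3.7]{stevens:08} in the complex case and~\cite[5.6, 5.7]{kurinczukStevens:19} in the $l$-modular case. The key observation is that, since $\L|\F$ is the unramified quadratic extension, $\L$ splits $\D$ (that is, $\D\otimes_\F\L\cong M_2(\L)$), so $\G_\L$ is genuinely a $p$-adic classical group over $\L$ in the sense of those references, and the whole Bushnell--Kutzko--Stevens apparatus of Heisenberg representations and their extensions applies to it directly.

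The first step would be to verify that all the base-extended data align. The stratum $[\La_\L,n,0,\beta\otimes 1]$ is a self-dual semisimple stratum over $\L$, possibly with a finer index set coming from decompositions of the factors $\E_i\otimes_\F\L$; the subgroups $\H^1_{\La_\L}$, $\J^1_{\La_\L}$, $\J^1_{\La'_\L}$, $\J^1_{\La'_\L,\La_\L}$ and $\P_1(\La'_\L)$ named in the statement are precisely those attached to this stratum over $\L$; the hypothesis $\ti{\mf{a}}(\La'_\L)\subseteq\ti{\mf{a}}(\La_\L)$ follows from $\ti{\mf{a}}(\La')\subseteq\ti{\mf{a}}(\La)$ simply by viewing the lattice sequences as $\o_\L$-lattice sequences; and $\eta_{\La_\L}$, as produced in Proposition~\ref{propStev3.29and3.31}, is the Heisenberg representation attached to the Glauberman lift $\theta_\L$ of $\theta$, which is again self-dual semisimple over $\L$. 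Once these identifications are in place, the three assertions---existence and uniqueness (up to equivalence) of an extension $\eta_{\La'_\L,\La_\L}$ of $\eta_{\La_\L}$ to $\J^1_{\La'_\L,\La_\L}$ satisfying the $\P_1(\La'_\L)$-induction-matching condition, the intertwining formula $\I_{\G_\L}(\eta_{\La'_\L,\La_\L})=\J^1_{\La'_\L,\La_\L}(\G_\L)_\beta\J^1_{\La'_\L,\La_\L}$, and the $\CoefField$-dimension bound $\dim\I_g(\eta_{\La'_\L,\La_\L})\leq 1$---follow by direct citation.

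The only genuine obstacle is ensuring that the cited results are formulated in exactly the required generality: they must handle self-dual semisimple (not merely simple) strata, a possibly-finer decomposition of $\F[\beta]\otimes_\F\L$ into fields, and, in the modular setting, coefficients in $\CoefField$ rather than $\bbC$. Both~\cite{stevens:08} and~\cite{kurinczukStevens:19} are written at precisely this level of generality, so the application is immediate and there is nothing further to prove beyond the compatibility check above. Note that, in contrast to the proof of Proposition~\ref{propEtaLa}, no Galois descent from $\G_\L$ to $\G$ is needed here, because the proposition itself is already a statement over $\L$.
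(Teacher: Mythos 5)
Correct, and this matches the paper exactly: Proposition~\ref{propStev3.7} carries the citation bracket~[\cite{stevens:08}~3.7, \cite{kurinczukStevens:19}~5.6, 5.7] and is given no proof in the text, precisely because, as you observe, it is a statement purely over the classical group~$\G_\L$ and follows by direct application of those results to the self-dual semisimple stratum~$\Delta\otimes\L$. Your compatibility checks (splitting of~$\D$ by~$\L$, alignment of the base-extended groups and Heisenberg data, generality of the cited sources, and the absence of any Galois descent) are exactly the verifications implicit in the paper's citation.
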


\begin{proposition}\label{propEtaLaLa}
Suppose~$\ti{\mf{a}}(\La')\subseteq\ti{\mf{a}}(\La)$. 
 There is up to equivalence a unique irreducible representation~$(\eta_{\La',\La},\J^1_{\La',\La})$ which extends~$(\eta_{\La},\J^1_{\La})$ 
 such that~$\eta_{\La',\La}$ and~$\eta_{\La'}$ induce equivalent irreducible representations on $\P_1(\La')$.     
 Moreover~$\eta_{\La',\La}$ is the~$\Gal(\L|\F)$-Glauberman transfer of~$\eta_{\La'_\L,\La_\L}$ to~$\J^1_{\La',\La}$ and the 
 set of intertwining elements of~$\eta_{\La',\La}$ in~$\G$ is~$\J^1_{\La',\La}\G_\beta\J^1_{\La',\La}$.
 The intertwining spaces~$\I_g(\eta_{\La',\La})$ have all~$\CoefField$-dimension at most one. 
\end{proposition}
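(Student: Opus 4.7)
The plan is to follow the template set by the proof of Proposition~\ref{propEtaLa}: define $\eta_{\La',\La}$ as the $\Gal(\L|\F)$-Glauberman transfer of the $\L$-side representation $\eta_{\La'_\L,\La_\L}$ furnished by Proposition~\ref{propStev3.7}, then descend each asserted property. Uniqueness in Proposition~\ref{propStev3.7} forces $\eta_{\La'_\L,\La_\L}$ to be $\Gal(\L|\F)$-fixed, since both defining properties---extension of the Galois-stable $\eta_{\La_\L}$ and matching induction to $\P_1(\La'_\L)$---are $\tau$-equivariant. Hence $\eta_{\La',\La}:=\gl^{\L|\F}_{\CoefField}(\eta_{\La'_\L,\La_\L})$ is well defined.

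For the extension statement, I would use the vanishing of $\H^1(\Gal(\L|\F),\cdot)$ on pro-$p$-groups (odd residue characteristic), giving $(\J^1_{\La_\L})^{\Gal(\L|\F)}=\J^1_\La$ and $(\J^1_{\La'_\L,\La_\L})^{\Gal(\L|\F)}=\J^1_{\La',\La}$. The Clifford compatibility of the Glauberman correspondence with restriction to a $\Gal$-stable normal subgroup of $p'$-coprime index then yields $\eta_{\La',\La}|_{\J^1_\La}\cong\eta_\La$. For the induction property, Proposition~\ref{propBrInd} applies (the relevant finite quotients are $p$-groups and hence nilpotent), so induction to $\P_1(\La')$ commutes with the Brauer map; combined with $\Gal$-equivariance of induction this forces the Glauberman-compatible statement, and applied to both $\eta_{\La'_\L,\La_\L}$ and $\eta_{\La'_\L}$ (Proposition~\ref{propEtaLa}) it yields the required coincidence of inductions over $\F$.

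Uniqueness reverses the construction: any irreducible extension $\eta''$ of $\eta_\La$ satisfying the induction condition has a unique $\Gal$-fixed Glauberman preimage $\ti{\eta}''$ on $\J^1_{\La'_\L,\La_\L}$; by the same compatibility it extends $\eta_{\La_\L}$ and matches the induction on $\P_1(\La'_\L)$, so by Proposition~\ref{propStev3.7} it coincides with $\eta_{\La'_\L,\La_\L}$, whence $\eta''\cong\eta_{\La',\La}$. For the intertwining, any $g\in\G_\b$ intertwines $\eta_{\La'_\L,\La_\L}$ with a one-dimensional space by Proposition~\ref{propStev3.7}, hence intertwines $\eta_{\La',\La}$ by \cite[2.4]{stevens:01-2}; this gives $\J^1_{\La',\La}\G_\b\J^1_{\La',\La}\subseteq\I_\G(\eta_{\La',\La})$. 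For the reverse inclusion,
\[\I_\G(\eta_{\La',\La})\subseteq \I_{\G_\L}(\eta_{\La'_\L,\La_\L})\cap\G= \J^1_{\La'_\L,\La_\L}(\G_\L)_\b\J^1_{\La'_\L,\La_\L}\cap\G,\]
and this intersection equals $\J^1_{\La',\La}\G_\b\J^1_{\La',\La}$ by Galois descent on each factor. One-dimensionality of $\I_g(\eta_{\La',\La})$ follows by taking $\Gal(\L|\F)$-fixed points in the exact rectangular diagram of \cite[5.2]{kurinczukStevens:19} adapted to $\eta_{\La'_\L,\La_\L}$: additive Hilbert~90 preserves exactness, so the $\F$-intertwining space embeds into the one-dimensional $\L$-intertwining space.

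The principal obstacle is the Clifford-theoretic compatibility of the Glauberman correspondence with restriction and induction invoked in the existence and uniqueness steps. Proposition~\ref{propBrInd} records this compatibility only for the Brauer map, whereas the Glauberman correspondence is more delicate; one has to track the unique $\Gal$-fixed extension through Clifford theory to conclude that its restriction to $\J^1_\La$ matches the Glauberman transfer of the restriction, using crucially that $\J^1_\La$ is normal of $p$-power index in $\J^1_{\La',\La}$. Once this compatibility is pinned down, the remaining verifications are formal Galois descent from Proposition~\ref{propStev3.7}, directly mirroring the argument already given for Proposition~\ref{propEtaLa}.
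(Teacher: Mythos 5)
Your overall strategy---define $\eta_{\La',\La}$ as the $\Gal(\L|\F)$-Glauberman transfer of $\eta_{\La'_\L,\La_\L}$ and then pull every assertion back to Proposition~\ref{propStev3.7} by Galois descent---is exactly the one taken in the paper, so the architecture is sound. However, the step you flag yourself as ``the principal obstacle,'' namely a ``Clifford-theoretic compatibility of the Glauberman correspondence with restriction,'' is a genuine gap and not a quotable lemma; the paper does not fill it by proving such a compatibility but by a more elementary parity-of-multiplicities argument. Concretely, since $\Gal(\L|\F)$ has order two, $\eta_{\La',\La}$ is characterized as the unique constituent of $\eta_{\La'_\L,\La_\L}|_{\J^1_{\La',\La}}$ with odd multiplicity; restricting further to $\J^1_\La$, the even-multiplicity constituents contribute evenly, while $\eta_{\La'_\L,\La_\L}|_{\J^1_{\La_\L}}\cong\eta_{\La_\L}$ shows $\eta_\La$ occurs with odd multiplicity in $\eta_{\La'_\L,\La_\L}|_{\J^1_\La}$---hence $\eta_\La$ must sit inside $\eta_{\La',\La}$. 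This only gives containment; to get the \emph{equality} of restrictions needed for ``extension,'' the paper invokes the Glauberman trace condition (reference~(6) of Glauberman) to see that $\gl_\bbC^{\L|\F}(\eta_{\La'_\L,\La_\L})$ and $\gl_\bbC^{\L|\F}(\eta_{\La_\L})$ have the same degree, then transports this through the Brauer map in positive characteristic. Your sketch never produces the degree equality, which is the crucial quantitative input.

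A second gap is your treatment of the reverse intertwining inclusion. You claim
\[
\I_\G(\eta_{\La',\La})\subseteq \J^1_{\La'_\L,\La_\L}(\G_\L)_\b\J^1_{\La'_\L,\La_\L}\cap\G=\J^1_{\La',\La}\G_\b\J^1_{\La',\La}
\]
``by Galois descent on each factor,'' but intersecting a triple product with a fixed-point set does not factor termwise without an intersection lemma of the type proved in Lemma~\ref{lemIntProp}---which is established only later for $\P_1$-groups, not for the $\J^1_{\bullet,\bullet}$-double cosets here. The paper sidesteps this entirely by arguing ``as in the proof of Proposition~\ref{propEtaLa}'': an element of $\G$ intertwining $\eta_{\La',\La}$ intertwines the underlying semisimple character $\theta$, and the intertwining of $\theta$ is already computed. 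This is strictly more elementary and avoids the descent of double cosets. Finally, your appeal to Proposition~\ref{propBrInd} to transfer the induction identity to $\F$ would again require the Glauberman correspondence to commute with induction through a non-$\Gal$-stable subgroup, which is the same unproved compatibility. The paper instead proves irreducibility of both inductions over $\F$ directly, shows $\eta_{\La'_\L,\La_\L}$ occurs with multiplicity one (hence odd multiplicity) in the induced $\L$-representation, and concludes by another parity argument plus Frobenius/Mackey that the two inductions over $\F$ coincide and that $\eta_\La$ occurs once, giving uniqueness.

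In short: same high-level plan, but the three places where you lean on an abstract ``Glauberman is Clifford-compatible'' principle are exactly the places where a rigorous argument is needed, and the paper supplies it with odd-multiplicity bookkeeping (available because $[\L:\F]=2$), the Glauberman trace condition for the degree, and intertwining of the underlying characters.
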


\begin{proof}
 We set~$\eta_{\La',\La}$ to be~$\gl_\CoefField(\eta_{\La'_\L,\La_\L})$. Now~$\eta_{\La',\La}$ is the only irreducible representation of~$\J^1_{\La',\La}$ with an odd multiplicity in~$\eta_{\La'_\L,\La_\L}$ and therefore the irreducible
 constituents of~$\eta_{\La'_\L,\La_\L}|_{\J^1_{\La}}$ with odd multiplicity are contained in~$\eta_{\La',\La}$, i.e.~$\eta_\La$ is 
 contained in~$\eta_{\La',\La}$, knowing that the restriction of~$\eta_{\La'_\L,\La_\L}$ to~$\J^1_{\La_\L}$ is equivalent to~$\eta_{\La_\L}$. 
 The trace condition~\cite[(6)]{glauberman:68} implies for the characteristic zero case that the Glauberman transfers~$\gl_\bbC^{\L|\F}(\eta_{\La'_\L,\La_\L})$ and~$\gl_\bbC^{\L|\F}(\eta_{\La_\L})$ have the same degree.
 We have \[\Br_{\J^1_{\La'_\L,\La_\L}}(\eta_{\La'_\L,\La_\L,\CoefField})=\eta_{\La'_\L,\La_\L,\bbC}\text{  and }\Br_{\J^1_{\La'_\L,\La_\L}}(\eta_{\La'_\L,\CoefField})=\eta_{\La'_\L,\bbC},~\text{see~\S2}.\] Therefore, by Theorem~\ref{thmBr}, we have the equality of degrees of the Glauberman transfers in the modular case. Thus~$\eta_{\La',\La}$ is an extension of~$\eta_{\La}$.
 As in the proof of Proposition~\ref{propEtaLa} we obtain the formula for~$\I_\G(\eta_{\La',\La})$ using Proposition~\ref{propStev3.7} instead of Proposition~\ref{propStev3.29and3.31}.
 It remains to show the following three statements:
 \begin{enumerate}
  \item The representations~$\pi:=\ind_{\J^1_{\La',\La}}^{\P_1(\La')}\eta_{\La',\La}$ and~$\ind_{\J^1_{\La'}}^{\P_1(\La')}\eta_{\La'}$ are
  \begin{enumerate}
  \item irreducible, and
  \item equivalent.   
 \end{enumerate}
 \item The multiplicity of~$\eta_{\La}$ in~$\pi$ is one. 
 \item The intertwining spaces of~$\eta_{\La',\La}$ have at most $\CoefField$-dimension one. 
 \end{enumerate}
 The irreducibility follows from~$\I_{\P_1(\La')}(\eta_{\La',\La})=\J^1_{\La',\La}$ and~$\I_{\P_1(\La')}(\eta_{\La'})=\J^1_{\La'}$. 
 The statement about the intertwining spaces follows from Proposition~\ref{propEtaLa}.
 For the equivalence note at first that~$\eta_{\La'_\L,\La_\L}$ has 
 multiplicity one in\[\ind_{\J^1_{\La'_\L,\La_\L}}^{\P_1(\La'_\L)}\eta_{\La'_\L,\La_\L}\]  (The latter is irreducible, so apply second Frobenius reciprocity and Schur!),
 in particular~$\eta_{\La'_\L,\La_\L}$ has odd multiplicity in $\ind_{\J^1_{\La'_\L}}^{\P_1(\La'_\L)}\eta_{\La'_\L}$. 
 Thus
 \[\Res^{\P_1(\La')}_{\J^1_{\La',\La}}(\gl_\CoefField^{\L|\F}(\ind_{\J^1_{\La'_\L}}^{\P_1(\La'_\L)}\eta_{\La'_\L}))\supseteq \gl_\CoefField^{\L|\F}(\eta_{\La'_\L,\La_\L})=\eta_{\La',\La}.\]
 By irreducibility~$\ind_{\J^1_{\La',\La}}^{\P_1(\La')}\eta_{\La',\La}$ is equivalent to~$\gl_\CoefField^{\L|\F}(\ind_{\J^1_{\La'_\L}}^{\P_1(\La'_\L)}\eta_{\La'_\L})$. 
 Similarly, we obtain that $\gl_\CoefField^{\L|\F}(\ind_{\J^1_{\La'_\L}}^{\P_1(\La'_\L)}\eta_{\La'_\L})$ is also equivalent to $\ind_{\J^1_{\La'}}^{\P_1(\La')}\eta_{\La'}$. It remains to show the multiplicity assertion: Note that the set
 \[\Hom_{\J^1_\La\cap ^g\J^1_{\La',\La}}(\eta_\La,\eta_{\La',\La}^g)\]
 is trivial if~$g\not\in\I_{\G}(\eta_{\La})$. Thus by Frobenius reciprocity and Mackey theory we have
 \[\Hom_{\J^1_\La}(\eta_\La,\ind_{\J^1_{\La',\La}}^{\P_1(\La')}\eta_{\La',\La})=\Hom_{\J^1_\La}(\eta_\La,\eta_\La)=\CoefField.\]
 This finishes the proof.
\end{proof}

\begin{remark}\label{remGlaubetaLaprimeLa}
 The proof also shows that~$\gl_\CoefField^{\L|\F}$ maps the class of~$\eta_{\La'_\L,\La_\L}$ to the class of~$\eta_{\La',\La}$.
\end{remark}

We need to show that the definition of~$\eta_{\La',\La}$ only depends on~$\timfb(\La')$ instead of~$\La'$. 

\begin{proposition}\label{propetaIndLa}
 Let~$\La''$ be a self-dual~$\o_\E$-$\o_\D$-lattice sequence such that $\timfb(\La'')=\timfb(\La')$ and~$\ti{\mf{a}}(\La'')\subseteq\ti{\mf{a}}(\La)$
 and suppose~$\ti{\mf{a}}(\La')\subseteq\ti{\mf{a}}(\La)$. 
 Then~$\J^1_{\La',\La}=\J^1_{\La'',\La}$ and~$\eta_{\La'',\La}$ is equivalent to~$\eta_{\La',\La}$. 
\end{proposition}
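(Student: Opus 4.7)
The plan is two-fold: first establish the equality of groups, then the equivalence of representations.

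For the group equality, observe that $\J^1_{\La',\La}=\J^1_\La \P_1(\La'_\b)$ and $\J^1_{\La'',\La}=\J^1_\La \P_1(\La''_\b)$, so it suffices to show $\P_1(\La'_\b)=\P_1(\La''_\b)$. The hypothesis $\tilde{\mf{b}}(\La')=\tilde{\mf{b}}(\La'')$ identifies the two hereditary $\o_{\E\otimes\D}$-orders in $\B$, and one then reads off $\tilde{\mf{b}}_1(\La')=\tilde{\mf{b}}_1(\La'')$ from the filtration of the attached square lattice function. Hence $\tilde{\P}_1(\La'_\b)=1+\tilde{\mf{b}}_1(\La')=\tilde{\P}_1(\La''_\b)$, and intersecting with $\G_\b$ yields the required equality.

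For the representation equivalence, my plan is to descend to $\G_\L$ via Glauberman transfer. By Remark~\ref{remGlaubetaLaprimeLa} we have $\eta_{\La',\La}=\gl^{\L|\F}_\CoefField(\eta_{\La'_\L,\La_\L})$ and $\eta_{\La'',\La}=\gl^{\L|\F}_\CoefField(\eta_{\La''_\L,\La_\L})$. Since the Glauberman transfer is a bijection on classes of irreducibles, the assertion reduces to showing the corresponding $\L$-analogue:
\[
\eta_{\La'_\L,\La_\L}\cong\eta_{\La''_\L,\La_\L}
\]
as irreducible representations of the common group $\J^1_{\La'_\L,\La_\L}=\J^1_{\La''_\L,\La_\L}$.

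For this $\L$-level statement, both representations are characterized by Proposition~\ref{propStev3.7} as the unique irreducible extension of $\eta_{\La_\L}$ to the common group with the prescribed inducing behavior to $\P_1(\La'_\L)$, respectively $\P_1(\La''_\L)$. To identify them, the plan is to verify that $\eta_{\La''_\L,\La_\L}$ also satisfies the defining property with respect to $\La'_\L$. A convenient route is to introduce a third self-dual $\o_\E$-$\o_\D$-lattice sequence $\La'''$ with $\tilde{\mf{a}}(\La''')\subseteq\tilde{\mf{a}}(\La')\cap\tilde{\mf{a}}(\La'')$ and $\tilde{\mf{b}}(\La''')=\tilde{\mf{b}}(\La')$, and then compare both $\eta_{\La'_\L,\La_\L}$ and $\eta_{\La''_\L,\La_\L}$ to $\eta_{\La'''_\L,\La_\L}$ through the uniqueness of Proposition~\ref{propStev3.7} applied pairwise; alternatively one argues that any two irreducible extensions of $\eta_{\La_\L}$ with the same intertwining set and one-dimensional intertwining spaces differ by a character of the pro-$p$ quotient $\J^1_{\La'_\L,\La_\L}/\J^1_{\La_\L}$, and shows this character is trivial using an Iwahori-type decomposition adapted to $\mc{M}\otimes_\F\L$.

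The main obstacle I anticipate is the $\L$-level comparison: the ambient groups $\P_1(\La'_\L)$ and $\P_1(\La''_\L)$ need not coincide even when their $\b$-centralizer data do, so the inducing properties cannot be compared directly. Constructing the common refinement $\La'''$ and verifying that the two extensions really agree there is where the technical work sits, but it should be tractable using the lattice function machinery of \S\ref{subsecBuilding} together with the intertwining formula of Proposition~\ref{propStev3.7}.
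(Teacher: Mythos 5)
Your group‐equality argument is fine, and the reduction via the Glauberman transfer (Remark~\ref{remGlaubetaLaprimeLa} plus bijectivity) is legitimate in principle. But the detour through $\G_\L$ is both unnecessary and, as you set it up, not actually closed: you reduce the statement for $\G$ to the \emph{same} statement for $\G_\L$, and then your two proposed ways of finishing over $\L$ (comparison through a common refinement $\La'''$, or an Iwahori‐decomposition character argument) are left as sketches — you write yourself that this ``is where the technical work sits.'' So the proposal shifts the difficulty to $\G_\L$ rather than discharging it, and you would still need to carry out essentially the same argument there.

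The step you are missing is that Proposition~\ref{propEtaLaLa} already gives you a uniqueness characterization of $\eta_{\La'',\La}$ \emph{directly over $\F$}: it is the unique extension of $\eta_\La$ to $\J^1_{\La'',\La}$ such that $\ind_{\J^1_{\La'',\La}}^{\P_1(\La'')}\eta_{\La'',\La}\cong\ind_{\J^1_{\La''}}^{\P_1(\La'')}\eta_{\La''}$. The paper exploits this as follows. First one reduces by a path in $\mf{B}(\G)$ (all points of which automatically carry the same $\timfb$) to the case $\timfa(\La')\supseteq\timfa(\La'')$, so $\P_1(\La')\subseteq\P_1(\La'')$. The defining property of $\eta_{\La',\La}$ gives $\ind_{\J^1_{\La',\La}}^{\P_1(\La')}\eta_{\La',\La}\cong\ind_{\J^1_{\La'}}^{\P_1(\La')}\eta_{\La'}$; inducing both sides up to $\P_1(\La'')$ keeps the isomorphism. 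Then one observes $\J^1_{\La'',\La'}=\J^1_{\La'}$ and $\eta_{\La'',\La'}=\eta_{\La'}$, so the defining property of $\eta_{\La'',\La'}$ yields $\ind_{\J^1_{\La'}}^{\P_1(\La'')}\eta_{\La'}\cong\ind_{\J^1_{\La''}}^{\P_1(\La'')}\eta_{\La''}$. Chaining these shows $\eta_{\La',\La}$ satisfies the characterizing property of $\eta_{\La'',\La}$, and uniqueness in Proposition~\ref{propEtaLaLa} finishes the proof — no base change, no common refinement, no character analysis. I would encourage you to rework your proof along these lines: the $\F$‐level uniqueness clause in Proposition~\ref{propEtaLaLa} is exactly the tool this proposition calls for, and the path reduction replaces your tentative $\La'''$ construction with something you can pin down concretely.
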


\begin{proof}
 We consider a path of self-dual~$\o_\D$-$\o_\E$ lattice sequences $\La'=\La_0,$ $\La_1,\ldots,\La_l=\La''$ on a segment from~$\La'$ to~$\La''$ in~$\mf{B}(\G)$, such that
 \[\ti{\mf{a}}(\La_i)\cap\ti{\mf{a}}(\La_{i+1})\in\{\ti{\mf{a}}(\La_i),\ti{\mf{a}}(\La_{i+1})\},\]
 for all~$i=0,1,\ldots,l-1$, in particular we have~$\timfb(\La_i)=\timfb(\La')$ for all~$i=0,\ldots,l$. 
 Thus by transitivity it is enough to consider the case~$\ti{\mf{a}}(\La')\supseteq\ti{\mf{a}}(\La'')$. 
 The representations~$\ind_{\J^1_{\La',\La}}^{\P_1(\La')}\eta_{\La',\La}$ and~$\ind_{\J^1_{\La'}}^{\P_1(\La')}\eta_{\La'}$ 
 are equivalent, and thus~$\ind_{\J^1_{\La',\La}}^{\P_1(\La'')}\eta_{\La',\La}$ is equivalent to~$\ind_{\J^1_{\La'}}^{\P_1(\La'')}\eta_{\La'}$.
 Now~$\J^1_{\La',\La}=\J^1_{\La'',\La}$,~$\J^1_{\La'}=\J^1_{\La'',\La'}$,~$\eta_{\La'',\La'}=\eta_{\La'}$ and
 \[\ind_{\J^1_{\La'}}^{\P_1(\La'')}\eta_{\La'}\cong\ind_{\J^1_{\La''}}^{\P_1(\La'')}\eta_{\La''}\]
 by definition of~$\eta_{\La'',\La'}$. Thus~$\eta_{\La',\La}$ and~$\eta_{\La'',\La}$ are equivalent by Proposition~\ref{propEtaLaLa}. 
\end{proof}

By last proposition we can now define~$\eta_{\La',\La}$ without assuming~$\ti{\mf{a}}(\La')\subseteq\ti{\mf{a}}(\La)$. 
\begin{definition}\label{defEta}
 Granted~$\timfb(\La')\subseteq\timfb(\La)$, we choose~$(\eta_{\La',\La},\J^1_{\La',\La})$ in the isomorphism class of~$(\eta_{\La'',\La},\J^1_{\La'',\La})$,
 where~$\La''$ is a self-dual~$\o_\E$-$\o_\D$-lattice sequence such that~$\timfb(\La')=\timfb(\La'')$ and~$\ti{\mf{a}}(\La'')\subseteq\ti{\mf{a}}(\La)$. 
\end{definition}

%%%%%%%%%%%%%%ssleep

\begin{corollary}[cf.~\cite{stevens:08}~3.8]\label{corResetaLaLaPrime}
 Let~$\La''$ be a further self-dual~$\o_\E$-$\o_\D$-lattice sequence such that~$\timfb(\La'')\subseteq\timfb(\La')$. 
 Then the restriction of~$\eta_{\La'',\La}$ to~$\J^1_{\La',\La}$ is equivalent to~$\eta_{\La',\La}$.
\end{corollary}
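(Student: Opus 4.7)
The strategy is to reduce the statement to its $\L$-analogue (Proposition~\ref{propStev3.7}, equivalently \cite[3.8]{stevens:08}) and descend via the Glauberman correspondence, together with a Clifford/dimension argument.

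First, by Proposition~\ref{propetaIndLa} and Definition~\ref{defEta} I may replace $\La'$ and $\La''$ by self-dual $\o_\E$-$\o_\D$-lattice sequences having the same $\B$-parts, chosen so that $\tilde{\mf{a}}(\La'')\subseteq \tilde{\mf{a}}(\La')\subseteq \tilde{\mf{a}}(\La)$; this does not alter the representations $\eta_{\La',\La}$ and $\eta_{\La'',\La}$, but puts the groups into a genuine chain $\J^1_\La\subseteq \J^1_{\La',\La}\subseteq \J^1_{\La'',\La}$ to which the uniqueness clause of Proposition~\ref{propEtaLaLa} applies. So the question is whether the restriction $\pi:=\eta_{\La'',\La}|_{\J^1_{\La',\La}}$ is equivalent to $\eta_{\La',\La}$.

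Second, over the unramified quadratic extension $\L$, the analogous restriction identity is already known: by~\cite[3.8]{stevens:08} (alternatively, deducible from Proposition~\ref{propStev3.7} together with its uniqueness part),
\[
 \eta_{\La''_\L,\La_\L}\big|_{\J^1_{\La'_\L,\La_\L}}\cong \eta_{\La'_\L,\La_\L}.
\]

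Third, I would carry out a Clifford-type dimension analysis over $\F$. Both $\eta_{\La',\La}$ and $\pi$ extend $\eta_\La$ from $\J^1_\La$ and have the common $\CoefField$-dimension $(\J^1_\La:\H^1_\La)^{1/2}$, by Proposition~\ref{propEtaLaLa} for the first and by $\pi|_{\J^1_\La}=\eta_{\La'',\La}|_{\J^1_\La}=\eta_\La$ for the second. Every irreducible constituent of $\pi$ restricts on the normal subgroup $\J^1_\La$ to a sum of $\J^1_{\La',\La}$-conjugates of $\eta_\La$; since $\J^1_{\La',\La}$ normalizes $\theta$ and $\eta_\La$ is the unique irreducible representation of $\J^1_\La$ containing $\theta$ (Proposition~\ref{propEtaLa}), each such conjugate equals $\eta_\La$. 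Combined with $\pi|_{\J^1_\La}=\eta_\La$ (multiplicity one), this forces $\pi$ to be irreducible and to restrict to $\eta_\La$ on $\J^1_\La$.

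Finally, to identify $\pi$ with $\eta_{\La',\La}$ I would verify the induction condition of Proposition~\ref{propEtaLaLa}, namely
\[
 \ind_{\J^1_{\La',\La}}^{\P_1(\La')}\pi \cong \ind_{\J^1_{\La'}}^{\P_1(\La')}\eta_{\La'}.
\]
The plan is to deduce this from step~2 via the compatibility of the Glauberman correspondence with induction from pro-$p$ (hence nilpotent) subgroups: by Remark~\ref{remGlaubetaLaprimeLa}, both $\eta_{\La',\La}$ and $\eta_{\La'',\La}$ are the Glauberman transfers of their $\L$-counterparts, and Proposition~\ref{propBrInd} together with Galois averaging will allow one to identify both sides as the Glauberman transfer of the common induced representation $\ind_{\J^1_{\La'_\L,\La_\L}}^{\P_1(\La'_\L)}\eta_{\La'_\L,\La_\L}\cong\ind_{\J^1_{\La'_\L}}^{\P_1(\La'_\L)}\eta_{\La'_\L}$. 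The main obstacle will be precisely this final step: isolating the correct irreducible summand picked out by odd multiplicity after restriction, and matching it with $\pi$, requires careful bookkeeping because the Glauberman correspondence does not commute with general restriction functors. The Galois-invariance of all groups involved and the irreducibility already shown for $\pi$ should be exactly what makes the odd-multiplicity characterization of $\gl$ unambiguous here.
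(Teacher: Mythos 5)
Your proposal follows the same skeleton as the paper's proof: reduce to $\CoefField=\bbC$ via the Brauer map, invoke \cite[3.8]{stevens:08} over $\L$, and descend via Glauberman. Your Step~3 (restrict $\pi:=\eta_{\La'',\La}|_{\J^1_{\La',\La}}$ to $\J^1_\La$ to get $\eta_\La$, hence $\pi$ is irreducible) is correct, but it only places $\pi$ among the extensions of $\eta_\La$ to $\J^1_{\La',\La}$ — it does not yet distinguish $\eta_{\La',\La}$ from the other extensions, which differ by characters of the quotient $\J^1_{\La',\La}/\J^1_\La$.

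The genuine gap is your Step~4, and you flag it yourself: verifying the induction characterisation $\ind_{\J^1_{\La',\La}}^{\P_1(\La')}\pi\cong\ind_{\J^1_{\La'}}^{\P_1(\La')}\eta_{\La'}$ by pushing the $\L$-statement through Glauberman "with careful bookkeeping" is not an argument, and indeed Glauberman does not commute with the restriction functors appearing here, so this route needs a real idea you have not supplied. The paper avoids the induction criterion entirely and finishes by a parity argument. By Remark~\ref{remGlaubetaLaprimeLa}, $\eta_{\La'',\La}=\gl^{\L|\F}_\bbC(\eta_{\La''_\L,\La_\L})$ is the \emph{unique} odd-multiplicity constituent of $\eta_{\La''_\L,\La_\L}|_{\J^1_{\La'',\La}}$; hence every other constituent contributes only even multiplicities upon further restriction to $\J^1_{\La',\La}$. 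Since \cite[3.8]{stevens:08} over $\L$ plus $\gl^{\L|\F}_\bbC(\eta_{\La'_\L,\La_\L})=\eta_{\La',\La}$ shows that $\eta_{\La',\La}$ has odd multiplicity in $\eta_{\La''_\L,\La_\L}|_{\J^1_{\La',\La}}$, that odd contribution must come from $\eta_{\La'',\La}|_{\J^1_{\La',\La}}$, i.e.\ $\eta_{\La'',\La}|_{\J^1_{\La',\La}}$ contains $\eta_{\La',\La}$. Equality of degrees (all equal $\deg\eta_\La$) then gives the result. If you replace your Step~4 by this parity observation, your Step~3 becomes superfluous, and you recover the paper's proof.
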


\begin{proof}
It suffices to consider the complex case, by~\eqref{eqRestriction} and Theorem~\ref{thmBr}. So assume~$\CoefField=\bbC$.
 The result now follows from~\cite[Proposition~3.8]{stevens:08} and the Glauberman correspondence, indeed
 \[\eta_{\La''_\L,\La_\L}|_{\J^1_{\La'_\L,\La_\L}}\cong\eta_{\La'_\L,\La_\L},\]
 by~\cite[3.8]{stevens:08} and~$\gl_\bbC^{\L|\F}(\eta_{\La'_\L,\La_\L})=\eta_{\La',\La}$, and thus the latter representation occurs with odd multiplicity 
 in~$\eta_{\La''_\L,\La_\L}|_{\J^1_{\La',\La}}$, thus by Remark~\ref{remGlaubetaLaprimeLa}~$\eta_{\La'',\La}$ contains~$\eta_{\La',\La}$ and hence, as they share the degree with~$\eta_\La$, we get the result. 
\end{proof}

\subsection{Heisenberg representations for general linear groups}
The construction of Heisenberg representations for~$\tilde{\G}$ is similar to the construction for~$\G$. 
So we just state the result we need later. We fix a semisimple stratum~$\Delta=[\Lambda,n,0,\beta]$ and we need to consider the groups 
\[\tilde{\H}^1_\Lambda:=\tilde{\H}^1(\beta,\Lambda), \ \tilde{\J}^1_\Lambda:=\tilde{\J}^1(\beta,\Lambda).\]

\begin{proposition}\cite{secherreI:04}\label{propHeisGL}
 Let~$\tilde{\theta}$ be an element of~$\tilde{\C}(\Delta)$. 
 Then there is up to isomorphism a unique representation~$\tilde{\eta}$ on~$\tilde{\J}^1_\Lambda$ containing~$\tilde{\theta}$. 
\end{proposition}

\section{The isotypic components}
In general proofs on smooth complex representations of locally totally disconnected groups  cannot be easily  transferred to the modular case. One trick used for the case of cuspidal representations of~$\G\otimes\L$  in~\cite{kurinczukStevens:19} is the following lemma:

\begin{lemma}\label{lemIsotypic}
 Let~$\H$ be locally compact and totally disconnected topological group and~$\K,\K^1$ be compact open subgroups of~$\H$ such that~$\K^1$ is a normal pro-$p$ subgroup of~$\K$. Let further~$\pi$ be a smooth $\CoefField$-representation of~$\H$, and~$\eta$ be a smooth~$\CoefField$-representation of~$\K^1$ such that~$\eta$ is normalized by~$\K$.  Then we have
 \[\pi|_\K=\pi^\eta\oplus\pi(\eta)\]
 (a direct sum of~$\K$-subrepresentations), where~$\pi^\eta$ is the~$\eta$-isotypic component of~$\pi$ and~$\pi(\eta)$ the largest subrepresentation of~$\pi|_{\K^1}$ which does not contain a copy of~$\eta$.
\end{lemma}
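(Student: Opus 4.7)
The plan is to reduce the desired direct-sum decomposition to the semisimplicity of $\pi|_{\K^1}$ as a $\K^1$-module, and then to promote it to a decomposition of $\K$-modules using the assumption that $\eta$ is normalized by $\K$.

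First I would establish that every smooth $\CoefField$-representation of $\K^1$ is semisimple. Any $v\in\pi$ is fixed by some open normal subgroup $N\trianglelefteq\K^1$ by smoothness, so the cyclic $\K^1$-submodule it generates factors through the finite $p$-group $\K^1/N$; since $p_\CoefField\neq p$, Maschke's theorem shows that this finite-dimensional submodule is semisimple. Writing $\pi|_{\K^1}=\bigcup_N\pi^N$ over the open normal subgroups of $\K^1$ and using that each $\pi^N$ is a semisimple $\K^1/N$-module, a standard argument (for example Zorn's lemma applied to the poset of semisimple $\K^1$-submodules) yields that $\pi|_{\K^1}$ is a direct sum of finite-dimensional irreducible $\K^1$-representations. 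The same analysis applies to $\eta$, so $\eta$ is itself a direct sum of finite-dimensional irreducible smooth $\K^1$-representations.

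Given the semisimplicity, let $S$ denote the set of isomorphism classes of irreducible smooth $\K^1$-representations occurring in $\eta$, and define $\pi^\eta$ (respectively $\pi(\eta)$) as the sum of all irreducible $\K^1$-subrepresentations of $\pi$ whose isomorphism class lies in $S$ (respectively outside $S$). By construction $\pi|_{\K^1}=\pi^\eta\oplus\pi(\eta)$, with $\pi^\eta$ being the $\eta$-isotypic component and $\pi(\eta)$ being the largest $\K^1$-subrepresentation of $\pi$ containing no copy of $\eta$, equivalently with $\Hom_{\K^1}(\eta,\pi(\eta))=0$. To see that both summands are $\K$-stable, observe that for $g\in\K$ and any irreducible $\K^1$-subrepresentation $W\subseteq\pi$, the translate $gW$ is again an irreducible $\K^1$-subrepresentation (as $\K^1\trianglelefteq\K$) whose isomorphism class is that of the conjugate $W^{g^{-1}}$. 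Since $\eta$ is normalized by $\K$, the set $S$ is stable under $\K$-conjugation, and hence $gW\subseteq\pi^\eta$ iff $W\subseteq\pi^\eta$, and similarly for $\pi(\eta)$.

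The single delicate step is the semisimplicity of the smooth $\K^1$-module $\pi|_{\K^1}$, which is exactly where the pro-$p$ hypothesis on $\K^1$ together with the hypothesis $p_\CoefField\neq p$ enters; once this is in hand, both the direct-sum splitting and the $\K$-equivariance are formal consequences of the $\K$-invariance of the isomorphism class of $\eta$ together with the canonical nature of isotypic decompositions inside semisimple modules.
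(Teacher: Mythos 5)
Your proof is correct and follows exactly the approach the paper intends: the paper dispenses with the proof by remarking that the lemma is ``trivial using the fact that~$\pi|_{\K^1}$ is semisimple,'' and your argument fills in precisely that reasoning (semisimplicity via Maschke on finite $p$-quotients, isotypic decomposition, and $\K$-stability from the normalization hypothesis on $\eta$).
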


This lemma, which is trivial using the fact that~$\pi|_{\K^1}$ is semisimple, is still very useful. 

\section{$\b$-extension}\label{secBetaExt}
In this section we generalize~$\b$-extensions to~$\G$, see \cite[\S4]{stevens:08} for the case of~$\G\otimes \L$ (see also~\cite{secherreII:05},~\cite[5.2.1]{bushnellKutzko:93} for~$\tiG$). Its construction for classical groups is a complicated process.
 We fix a self-dual semisimple stratum~$[\La,n,0,\b]$ and a self-dual semisimple character~$\th\in\C(\La,0,\b)$. 
 We fix self-dual~$\o_\E$-$\o_\D$ lattice sequences~$\La_m,\La,\La',\La_\M$ and~$\La''$ such that~$\timfb(\La_m)$ (resp.~$\timfb(\La_\M)$) is 
 minimal (resp. maximal) and 
 \[\timfb(\La_m)\subseteq\timfb(\La)\subseteq\timfb(\La')\subseteq\timfb(\La_\M),\]
 and such that~$\timfb(\La)\subseteq\timfb(\La'')$. So we have
\[\timfb(\La)\subseteq\timfb(\La')\cap\timfb(\La'').\]
We are going to use the representations~$\eta_\La$,~$\eta_{\La'}$,~$\eta_{\La,\La'}$, etc.~constructed in~\S\ref{secHeisenberg}.
Recall that~$\eta_{\Lambda'}$ is the Heisenberg representation of the transfer of~$\th$ to~$C(\La',0,\b)$.

We call~$\La_\M$ and~$\La_m$ a~\emph{maximal}, respectively a~\emph{minimal},~$\o_\E$-$\o_\D$-lattice sequence according to the fact that~$\timfb(\La_\M)$ is maximal and~$\timfb(\La_m)$
is minimal with respect to inclusion.

\subsection{General idea}\label{subsecIntrobExt} 
We present here Stevens' strategy from~\cite[\S4]{stevens:08}:
We put
\[\ext(\La,\La'):=\{(\kappa'_\cong,\J_{\La,\La'})\in\mf{R}_\CoefField(\J_{\La,\La'}) |\ \kappa'|_{\J^1_{\La,\La'}}\cong \eta_{\La,\La'}\},~\ext(\La'):=\ext(\La',\La'),\]
where the subscript~$\cong$ indicates the isomorphism class of the representation in question.
Depending on~$\La_\M$ we only choose certain extensions of~$\eta_{\La'}$ to~$\J_{\La'}$, i.e. a subset~$\bext_{\La_\M}(\La')$ of~$\ext(\La')$, and call them~$\b$-extensions with respect to~$\La_\M$.

\begin{enumerate}
 \item For~$\La_\M$ the set~$\bext(\La_\M)$ is defined to consist of those elements of~$\ext(\La_\M)$ which are mapped into~$\ext(\La_m,\La_\M)$ under restriction to~$\J_{\La_m,\La_\M}$.
\item For~$\La'$ we construct a bijection~$\Psi$ from~$\ext(\La',\La_\M)$ to~$\ext(\La')$, see below, and the set~$\bext_{\La_\M}(\La')$ is then defined to be the image of the composition
\[\bext(\La_\M)\stackrel{\Res_{\J_{\La',\La_\M}}^{\J_{\La_\M}}}{\longrightarrow}\ext(\La',\La_\M)\stackrel{\Psi}{\simarrow}\ext(\La').\]
\end{enumerate}
We define a map
\[\Psi_{\La,\La',\La''}:\ \ext(\La,\La')\rightarrow\ext(\La,\La'')\]
as follows (to get~$\Psi$ in (ii) substitute~$(\La,\La',\La'')$ by~$(\La',\La_\M,\La')$ ):  
\begin{itemize}
 \item Consider a path of self-dual~$\o_\E$-$\o_\D$ lattice sequences
 \begin{equation}\label{eqPath}\La'=\La_0,\La_1,\cdots,\La_l=\La''\end{equation}
 such that  
 \begin{equation}\label{eqIncl}
  \ti{\mf{a}}(\La_i)\cap\ti{\mf{a}}(\La_{i+1})\in\{\ti{\mf{a}}(\La_i),\ti{\mf{a}}(\La_{i+1})\},\ \timfb(\La)\subseteq\timfb(\La_i)\cap\timfb(\La_{i+1}).
 \end{equation}
 for all indexes~$i\in\{0,\cdots,l-1\}$.
 \item Define the maps~$\Psi_{\La,\La_i,\La_{i+1}}$ in requiring isomorphic inductions to~$\P(\La_\b)\P_1(\La)$, 
 \item and then put
 \begin{equation}\label{eqDefPsi}
 \Psi_{\La,\La',\La''}:=\Psi_{\La,\La_{l-1},\La_l}\circ\Psi_{\La,\La_{l-2},\La_{l-1}}\circ\cdots\circ\Psi_{\La,\La_{0},\La_{1}}.
 \end{equation}
\end{itemize}
We now give the details to those steps, beginning with the maximal case. 

\subsection{The existence of~$\b$-extensions for the maximal compact case}
We are interested in  extensions of~$\eta_\La$ to~$\J_\La$, but not all of them (cf.~\cite[Remark~4.2]{stevens:08}). 
At first we define~$\b$-extensions for~$\La_\M$. 
We define~$\bext(\La_\M)$ as in~\S\ref{subsecIntrobExt}(i), i.e. as the set of all isomorphism classes of irreducible representations~$\kappa$ of~$\J_{\La_\M}$ such that the restriction of~$\kappa$ to~$\J^1_{\La_m,\La_\M}$ is isomorphic to~$\eta_{\La_m,\La_\M}$.

The following proposition shows that~$\bext(\La_\M)$ is non-empty. Note that~$\J^1_{\La_m,\La}$ is a pro-$p$-Sylow subgroup of~$\J_{\La}$ and every pro-$p$-Sylow subgroup 
of~$\J_\La$ is of such a form, i.e. for an appropriate~$\La_m$, and they are all conjugate in~$\J_\La$. 

\begin{proposition}\label{propExisteceOfbextLaM} Granted~$\timfb(\La_m)\subseteq\timfb(\La)\subseteq\timfb(\La')$:
\begin{enumerate}
  \item \label{propExisteceOfbextLaM-i} There exists an extension~$(\kappa,\J_\La)$ of~$(\eta_{\La_m,\La},\J^1_{\La_m,\La})$.
  \item\label{propExisteceOfbextLaM-ii} Let~$(\kappa',\J_{\La'})$ be an extension of~$(\eta_{\La_m,\La'},\J^1_{\La_m,\La'})$.
  %, and suppose~$\timfb(\La_m)\subseteq\timfb(\La')\subseteq\timfb(\La)$.
  Then the restriction of~$\kappa'$ to~$\J^1_{\La,\La'}$ is equivalent to~$(\eta_{\La,\La'},\J^1_{\La,\La'})$. 
  \item \label{propExisteceOfbextLaM-iii} Let~$(\kappa,\J_{\La_\M})$ be an extension of~$(\eta_{\La_\M},\J^1_{\La_\M})$. Then are equivalent:
  \begin{enumerate}
  \item\label{propExisteceOfbextLaM-iii(a)} $\kappa\in\bext(\La_\M)$. 
  \item\label{propExisteceOfbextLaM-iii(b)} $\kappa$ is an extension of~$\eta_{\La_\M}$ such that for every pro-$p$-Sylow subgroup~$S$ of~$\J_{\La_\M}$ the restriction of~$\kappa$ to~$S$ is intertwined by the whole of~$\G_\b$.
  \end{enumerate}
 \end{enumerate}
\end{proposition}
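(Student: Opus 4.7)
My plan is to reduce each part to already-established machinery: part (ii) will follow at once from Corollary~\ref{corResetaLaLaPrime}, part (i) will come by Glauberman descent from Stevens' result for the classical group $\G_\L$, and part (iii) will combine the intertwining formula in Proposition~\ref{propEtaLaLa} with a twist-by-character uniqueness argument on the finite reductive quotient.

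For (ii), the hypothesis $\timfb(\La_m)\subseteq\timfb(\La)$ yields $\P_1(\La_{m,\b})\supseteq\P_1(\La_\b)$ (the pro-unipotent radical grows as the parahoric shrinks), hence $\J^1_{\La_m,\La'}\supseteq\J^1_{\La,\La'}$. Restricting $\kappa'$ from $\J_{\La'}$ in two stages, first to $\J^1_{\La_m,\La'}$ and then to $\J^1_{\La,\La'}$, the first step produces $\eta_{\La_m,\La'}$ by hypothesis and the second produces $\eta_{\La,\La'}$ by Corollary~\ref{corResetaLaLaPrime} applied with $\La''=\La_m$ and base lattice sequence $\La'$ in place of $\La$.

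For (i), I will invoke the $\G_\L$-version in~\cite[\S4]{stevens:08}, which produces an extension $\kappa_\L$ of $\eta_{\La_{m,\L},\La_\L}$ to $\J_{\La_\L}$. The intermediate step is to choose $\kappa_\L$ in its $\Gal(\L|\F)$-orbit so as to be stable, so that the Glauberman transfer applies: the collection of such extensions is a torsor under the characters of the finite group $\J_{\La_\L}/\J^1_{\La_{m,\L},\La_\L}$, on which $\Gal(\L|\F)$ acts compatibly, and a cohomological averaging argument on this torsor (using that $\eta_{\La_{m,\L},\La_\L}$ is itself Galois-stable as the Glauberman lift of $\eta_{\La_m,\La}$) produces a fixed point. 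Setting $\kappa:=\gl_\CoefField^{\L|\F}(\kappa_\L)$ then yields an irreducible representation of $\J_\La=(\J_{\La_\L})^{\Gal(\L|\F)}$ whose restriction to $\J^1_{\La_m,\La}$ is the Glauberman transfer of $\eta_{\La_{m,\L},\La_\L}$, which equals $\eta_{\La_m,\La}$ by Remark~\ref{remGlaubetaLaprimeLa}.

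The main obstacle is part (iii). The implication (a)$\Rightarrow$(b) is immediate from Proposition~\ref{propEtaLaLa}, since $\I_\G(\eta_{\La_m,\La_\M})=\J^1_{\La_m,\La_\M}\,\G_\b\,\J^1_{\La_m,\La_\M}\supseteq\G_\b$ and every pro-$p$-Sylow subgroup of $\J_{\La_\M}$ is $\J_{\La_\M}$-conjugate to such a $\J^1_{\La_m,\La_\M}$. For the converse, I fix $S=\J^1_{\La_m,\La_\M}$; any extension $\kappa|_S$ of $\eta_{\La_\M}$ to $S$ differs from $\eta_{\La_m,\La_\M}$ by a tensor with a character $\chi$ of the finite $p$-group $S/\J^1_{\La_\M}\cong\P_1(\La_{m,\b})/\P_1(\La_{\M,\b})$, the $\k_\F$-points of the unipotent radical of a Borel in $\bbP(\La_{\M,\b})$. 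The plan is to show that if $\chi\ne 1$ then the $\G_\b$-intertwining of $\kappa|_S$ must fail: choose $g\in\G_\b$ whose image in $\bbP(\La_{\M,\b})(\k_\F)$ is a regular element of a maximal torus acting non-trivially on the unipotent radical by all roots, and compare the intertwining identity for $\kappa|_S$ and $(\kappa|_S)^g$ with the one for $\eta_{\La_m,\La_\M}$ and $(\eta_{\La_m,\La_\M})^g$; the ratio forces $\chi=\chi^g$ on $S\cap S^g$, and as $g$ ranges this pins $\chi$ down to the trivial character.
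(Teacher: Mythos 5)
Parts (ii) and (iii) of your proposal run along the same lines as the paper's proof. For (ii), the two-stage restriction through Corollary~\ref{corResetaLaLaPrime} (with the relabelling you describe) is exactly the paper's one-line argument. For (iii), the direction (a)$\Rightarrow$(b) reproduces the paper's appeal to Proposition~\ref{propEtaLaLa} together with $\J_{\La_\M}$-conjugacy of pro-$p$-Sylows; for (b)$\Rightarrow$(a), writing $\kappa|_S\cong\eta_{\La_m,\La_\M}\otimes\chi$ via Lemma~\ref{lemTensProdOffEnd}\ref{lemTensProdOffEndPartii} and then forcing $\chi=1$ from its $\G_\b$-intertwining is precisely what the paper delegates to the "analogue of~\cite[3.10]{stevens:08}". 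Your proposed mechanism for pinning $\chi$ down (varying a regular $g\in\G_\b$) is a reasonable sketch of that sublemma, though as written it stops short of a complete verification.

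Part (i) is where you genuinely diverge and where the argument breaks. The paper proves existence by re-running Stevens' construction~\cite[Theorem~4.1]{stevens:08} in the quaternionic setting; you instead try to descend a $\G_\L$-extension $\kappa_\L$ via the Glauberman transfer $\gl^{\L|\F}_\CoefField$. But $\gl^{\L|\F}_\CoefField$, and the Brauer map underlying its modular version, are only available for representations of pro-$p$ groups such as $\J^1_{\La',\La}$: this is what makes $|\Gal(\L|\F)|=2$ coprime to the relevant group order (as Glauberman requires) and keeps $p_\CoefField$ from dividing it (as the Brauer bijection requires). The representation $\kappa_\L$ lives on the full compact group $\J_{\La_\L}$, whose reductive quotient $\J_{\La_\L}/\J^1_{\La_\L}$ is a finite group of Lie type of order divisible by~$2$ and, in general, by~$p_\CoefField$; so neither ingredient applies to $\kappa_\L$, and the formula $\kappa:=\gl^{\L|\F}_\CoefField(\kappa_\L)$ is not even defined. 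Even setting that aside, the existence of a Galois-stable $\kappa_\L$ is unjustified: the extensions of $\eta_{\La_{m,\L},\La_\L}$ to $\J_{\La_\L}$ form a torsor under the group $A$ of characters of $\J_{\La_\L}$ trivial on a pro-$p$-Sylow, the obstruction to a Galois-fixed point lives in $H^1(\Gal(\L|\F),A)$, and "averaging" kills this only when $|A|$ is odd, which is not guaranteed here. The paper's direct adaptation of Stevens' construction sidesteps all of this.
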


For the proof we need a lemma:
\begin{lemma}[\cite{bushnellKutzko:93}~(5.3.2)(proof), cf.~\cite{kurinczukStevens:19}~2.7]\label{lemTensProdOffEnd}
 Let~$\K$ be a totally disconnected and locally compact group and let~$(\rho_i,\W_i),\ i=1,2,$ be two smooth representations of~$\K$. Suppose~$\K_2$ is a normal open subgroup of~$\K$ contained in the kernel of~$\rho_2$. Suppose that the sets~$\End_\K(\W_1)$ 
 and~$\End_{\K_2}(\W_1)$ coincide. Then: 
 \[\End_\K(\W_1\otimes_\CoefField\W_2)\cong\End_\K(\W_1)\otimes_\CoefField \End_\K(W_2).\]
 In particular if~$\K$ is compact and~$\K_2$ is pro-finite with~$p_\CoefField$ not dividing the pro-order of~$\K_2$  we get:
 \begin{enumerate}
  \item\label{lemTensProdOffEndParti} $\W_1\otimes_\CoefField\W_2$ is irreducible if and only if~$\W_1$ and~$\W_2$ are irreducible. 
  \item\label{lemTensProdOffEndPartii} Suppose~$\rho_1$ is irreducible and let~$\rho$ be an irreducible representation of~$\K$ such that~$\rho|_{\K_2}$ is isomorphic to a direct sum of copies of~$\rho_1|_{\K_2}$. Then there is an irreducible representation~$\rho'_2$ on~$\K$ containing~$\K_2$ in its kernel such that 
  $\rho$ is equivalent to~$\rho_1\otimes\rho'_2$. 
 \end{enumerate}
\end{lemma}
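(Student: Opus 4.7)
First I would establish the main isomorphism by passing to the normal subgroup $\K_2$, and then derive the two particular statements from it together with the absolute irreducibility of $\W_1|_{\K_2}$. Since $\K_2$ acts trivially on $\W_2$, a choice of $\CoefField$-basis $(v_j)$ of $\W_2$ gives a $\K_2$-isomorphism $\W_1\otimes_\CoefField\W_2\cong\bigoplus_j\W_1$, whence
\[\End_{\K_2}(\W_1\otimes\W_2)\cong\End_{\K_2}(\W_1)\otimes_\CoefField\End_\CoefField(\W_2).\]
I will check that this identification is $\K/\K_2$-equivariant for the conjugation action on both sides, where on the right $\K$ acts on $\End_{\K_2}(\W_1)$ by conjugation (which preserves $\K_2$-equivariance since $\K_2\triangleleft\K$) and on $\End_\CoefField(\W_2)$ via $\rho_2$; the latter action automatically factors through $\K/\K_2$. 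Taking $\K$-fixed points and using the hypothesis $\End_{\K_2}(\W_1)=\End_\K(\W_1)$ then yields
\[\End_\K(\W_1\otimes\W_2)\cong\End_\K(\W_1)\otimes_\CoefField\End_\K(\W_2).\]

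For part~(i), the ``only if'' direction is immediate: a proper nonzero subrepresentation of either $\W_i$ produces a proper nonzero subrepresentation of $\W_1\otimes\W_2$, using that the other factor is nonzero. For the converse, assume $\W_1,\W_2$ are irreducible. Schur's lemma together with the hypothesis gives $\End_{\K_2}(\W_1)=\End_\K(\W_1)=\CoefField$, so $\W_1|_{\K_2}$ is absolutely irreducible; compactness of $\K$ forces $\W_2$ to be finite-dimensional. I would then show that the map $\W_2\to\Hom_{\K_2}(\W_1,\W_1\otimes\W_2)$, $v\mapsto(w\mapsto w\otimes v)$, is a $\K$-equivariant isomorphism. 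Given any $\K$-subrepresentation $\U\subseteq\W_1\otimes\W_2$, the subspace $\Hom_{\K_2}(\W_1,\U)\subseteq\W_2$ is $\K$-stable, and by irreducibility of $\W_2$ is either zero (in which case $\U=0$, since $\U|_{\K_2}$ is $\W_1$-isotypic) or all of $\W_2$ (in which case $\U=\W_1\otimes\W_2$).

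For part~(ii), I would set $\rho_2':=\Hom_{\K_2}(\W_1,\W)$, where $\W$ is the underlying space of $\rho$. For $k\in\K_2$ and $\phi\in\rho_2'$ the computation $(k\cdot\phi)(w)=k\phi(k^{-1}w)=\phi(w)$, using $\K_2$-equivariance of $\phi$, shows that $\K_2$ acts trivially on $\rho_2'$, so the $\K$-action factors through $\K/\K_2$. Evaluation
\[\W_1\otimes_\CoefField\rho_2'\longrightarrow\W,\qquad w\otimes\phi\mapsto\phi(w),\]
is $\K$-equivariant and nonzero, hence surjective by irreducibility of $\rho$; a dimension count (both sides have dimension $m\cdot\dim\W_1$, where $m$ is the multiplicity from the hypothesis) then shows it is an isomorphism. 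Finally, a proper $\K$-subrepresentation of $\rho_2'$ would pull back to a proper $\K$-subrepresentation of $\W$, contradicting irreducibility, so $\rho_2'$ is irreducible. The main delicate point throughout will be the $\K/\K_2$-equivariant identification of multiplicity spaces, which only behaves well because $\W_1|_{\K_2}$ is absolutely irreducible---exactly what the hypothesis on endomorphism rings guarantees.
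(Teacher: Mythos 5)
Your proof is correct, and while the argument for the main isomorphism is essentially the same as the paper's (you phrase it as taking $\K_2$-invariants first and then $\K/\K_2$-invariants, while the paper expands $\Phi=\sum_i g_i\otimes f_i$ against a basis of $\End_\CoefField(\W_2)$ and argues on coefficients; the mechanism --- $\K_2$ trivial on $\W_2$ plus the hypothesis killing the $\K/\K_2$-action on $\End_{\K_2}(\W_1)$ --- is identical), your treatments of (i) and (ii) follow a genuinely different route. For (i) the paper picks a nonzero element $\sum_{i=1}^u w_i^{(1)}\otimes w_i^{(2)}$ of minimal tensor rank in a subrepresentation and uses $\End_{\K_2}(\W_1)=\CoefField$ to force $u=1$; you instead pass through the $\K$-equivariant multiplicity-space isomorphism $\W_2\cong\Hom_{\K_2}(\W_1,\W_1\otimes\W_2)$ and argue that the multiplicity space of a $\K$-subrepresentation is a $\K$-subrepresentation of the irreducible $\W_2$. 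For (ii) the paper deduces the statement from (i) by realizing $\rho$ as a Jordan--H\"older constituent of $\W_1\otimes\ind_{\K_2}^\K\mathbf{1}$, whereas you give the direct Clifford-theoretic construction $\rho_2'=\Hom_{\K_2}(\W_1,\W)$ with the evaluation isomorphism. Both routes are sound: the paper's is more hands-on and keeps everything at the level of elementary tensors, while yours isolates the role of the multiplicity functor $\Hom_{\K_2}(\W_1,-)$ and makes explicit exactly where absolute irreducibility of $\W_1|_{\K_2}$ (equivalently $\End_{\K_2}(\W_1)=\CoefField$) is used; the one point you should state rather than leave implicit is that $\K_2$-semisimplicity of all the restricted modules (needed both to identify $\End_{\K_2}(\W_1)=\CoefField$ with irreducibility of $\W_1|_{\K_2}$ and to conclude that a nonzero $\U$ has nonzero multiplicity space) comes from the hypothesis that $p_\CoefField$ does not divide the pro-order of $\K_2$.
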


\begin{proof}
 Take a~$\CoefField$-basis~$f_i$ of~$\End_\CoefField(\W_2)$. We have the~$\K$-action on~$\End_{\CoefField}(\W_1\otimes\W_2)$ via conjugation:~$k.\Phi:=k\circ\Phi\circ k^{-1}$,
 where we consider on~$\W_1\otimes\W_2$ the diagonal action of~$\K$. Then every element~$\Phi=\sum_ig_i\otimes f_i$ of~$\End_\K(\W_1\otimes_\CoefField\W_2)$  is fixed by~$\K_2$
 and therefore~$g_i$ has to be~$\K_2$-equivariant and therefore~$\K$-equivariant by assumption. So~$\Phi$ is an element of 
 $\End_\K(\W_1\otimes_\CoefField\W_2)\cap (\End_\K(\W_1)\otimes_\CoefField \End_\CoefField (W_2))$. Now a similar argument for~$\Phi$ using a~$\CoefField$-basis of~$\End_\K(W_1)$
 shows that~$\Phi$ is an element of~$\End_\K(\W_1)\otimes_\CoefField \End_\K(W_2)$.  
%  Now, if~$\CoefField$ has characteristic zero,~\ref{lemTensProdOffEndParti} follows from semisimplicity of smooth representations of~$\K$ character theory for finite groups, and~\ref{lemTensProdOffEndPartii}
%  follows from~\ref{lemTensProdOffEndParti} and Frobenius reciprocity. 
%  It is a bit more to say if~$\CoefField$ has positive characteristic.
%  
Now~$\ref{lemTensProdOffEndPartii}$ follows from~\ref{lemTensProdOffEndParti} because, by~\ref{lemTensProdOffEndParti},~$\W_1\otimes\ind_{K_2}^\K 1$ has a Jordan--H\"{o}lder composition series where all the factors are of the form~$\W_1\otimes_\CoefField\W_2$, $\W_2$ depending on the factor.
 So it remains to show~\ref{lemTensProdOffEndParti}. So, suppose~$\W_1$ and~$\W_2$ are irreducible~$\CoefField$-representations of~$\K$. Then, by
 \[\End_{\K_2}(\W_1)=\End_\K(\W_1)=\CoefField\]
and the pro-finiteness of~$\K_2$ with~$p_\CoefField$ not dividing the pro-order,~$\W_1|_{\K_2}$ is irreducible too. Let~$\ti{\W}$ be a non-zero subrepresentation of~$\W_1\otimes_\CoefField\W_2$ and 
\[\sum_{i=1}^u w_i^{(1)}\otimes_\CoefField w_i^{(2)}\]
a non-zero sum of elementary tensors contained in~$\ti{\W}$. Let~$u$ be minimal. For every pair~$(i_1,i_2)$ of  indexes and any finite tuple~$(k_j)_j$ of~$\K_2$ we have 
\[\sum_jk_jw_{i_1}^{(1)}=0\text{ if and only if }\sum_jk_jw_{i_2}^{(1)}=0\]
by the minimality of~$u$. Thus there is a~$\K_2$-isomorphism of~$\W_1$ which maps~$w^{(1)}_{i_1}$ to~$w^{(1)}_{i_2}$ and therefore~$w^{(1)}_{i_1}$ and~$w^{(1)}_{i_2}$ are linearly dependent, as~$\End_{\K_2}(\W_1)=\CoefField$, and the minimality of~$u$ implies~$i_1=i_2$. Thus~$u=1$,~$\W_1\otimes w_1^{(2)}$ is contained in~$\tilde{\W}$ (because~$\W_1$ is irreducible over~$\K_2$), and thus~$\W_1\otimes\W_2\subseteq\tilde{\W}$. 
\end{proof}

\begin{proof}[Proof of Proposition~\ref{propExisteceOfbextLaM}]
 The existence assertion~\ref{propExisteceOfbextLaM-i} is proven mutatis mutandis to~\cite[Theorem~4.1]{stevens:08}. 
 Assertion~\ref{propExisteceOfbextLaM-ii} follows from Corollary~\ref{corResetaLaLaPrime}. 
 For~\ref{propExisteceOfbextLaM-iii}:  Let~$\La_m$ be a self-dual~$\o_\E$-$\o_\D$-lattice sequence such that~$\timfb(\La_m)$ is minimal 
 and~$\ti{\mf{a}}(\La_m)\subseteq\ti{\mf{a}}(\La_\M)$. The representation~$\eta_{\La_m,\La_\M}$ is intertwined by the whole of~$\G_\b$, by Proposition~\ref{propEtaLaLa}, 
 and further the pro-$p$-Sylow subgroups of~$\J_{\La_\M}$ are all conjugate in~$\P(\La_{\M,\b})$. Thus~\ref{propExisteceOfbextLaM-iii(a)} implies~\ref{propExisteceOfbextLaM-iii(b)}.
 Suppose~\ref{propExisteceOfbextLaM-iii(b)} then, by Lemma~\ref{lemTensProdOffEnd}\ref{lemTensProdOffEndPartii}~$\kappa|_{\J^1_{\La_m,\La_\M}}$ is equivalent to~$\eta_{\La_m,\La_\M}\otimes\varphi$ for some inflation~$\varphi$ of a  characters of~$\J^1_{\La_m,\La_\M}/\J^1_{\La_\M}$ (The latter group is isomorphic to~$\P_1(\La_{m,\b})/\P_1(\La_{\M,\b})$).
 Thus~$\varphi$ is intertwined by the whole of~$\G_\b$ and by the analogue of~\cite[3.10]{stevens:08} we obtain that~$\varphi$ is trivial. 
\end{proof}

\begin{corollary}
 The sets~$\ext(\La,\La')$ and~$\b$-$\ext(\La_\M)$ are not empty, and~$\b$-$\ext(\La_\M)$ does not depend on the choice of~$\La_m$.
\end{corollary}

\subsection{The map~$\Psi_{\La,\La',\La''}$ in the inclusion case}
At first we assume~$\ti{\mf{a}}(\La)\subseteq\ti{\mf{a}}(\La')\cap\ti{\mf{a}}(\La'')\in\{\ti{\mf{a}}(\La'),\ti{\mf{a}}(\La'')\}$. Take~$\kappa'_\cong\in\ext(\La,\La')$. 

\begin{lemma}[cf.~\cite{stevens:08}~4.3]\label{lemMutMutStev4.3}
There is a unique~$(\kappa''_\cong,\J_{\La,\La''})\in\ext(\La,\La'')$ such that
\begin{equation}\label{eqk'k''}
\ind_{\J_{\La,\La'}}^{\P_{\La,\La}}\kappa'\cong\ind_{\J_{\La,\La''}}^{\P_{\La,\La}}\kappa''.
\end{equation}
(Where we define~$\P_{\La,\La'}:=\P(\La_\b)\P_1(\La')$, e.g.~$\P_{\La,\La}:=\P(\La_\b)\P_1(\La)$.)
\end{lemma}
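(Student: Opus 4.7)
I would derive $\kappa''$ from $\kappa'$ via the $\eta_{\La,\La''}$-isotypic decomposition of the induced representation $\pi := \ind^{\P_{\La,\La}}_{\J_{\La,\La'}}\kappa'$, following the strategy of~\cite[Lemma~4.3]{stevens:08} but now transferred from $\G\otimes\L$ to $\G$ through the tools developed in~\S\ref{secHeisenberg}.

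First, I would show that $\pi$ is irreducible. By the standard Mackey criterion this reduces to proving $\I_{\P_{\La,\La}}(\kappa') = \J_{\La,\La'}$: any $g \in \P_{\La,\La} = \P(\La_\b)\P_1(\La)$ intertwining $\kappa'$ also intertwines its restriction $\kappa'|_{\J^1_{\La,\La'}} \cong \eta_{\La,\La'}$, so by Proposition~\ref{propEtaLaLa} we have $g \in \J^1_{\La,\La'}\G_\b\J^1_{\La,\La'}$, and an Iwahori-type decomposition of $\P_{\La,\La}$ adapted to the splitting of $\b$ then forces the $\G_\b$-component of $g$ into $\P(\La_\b)$, giving $g \in \J_{\La,\La'}$.

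Next, using Corollary~\ref{corResetaLaLaPrime} combined with the inclusion hypothesis $\tilde{\mf{a}}(\La')\cap\tilde{\mf{a}}(\La'') \in \{\tilde{\mf{a}}(\La'), \tilde{\mf{a}}(\La'')\}$, I would show that $\pi|_{\J^1_{\La,\La''}}$ contains $\eta_{\La,\La''}$. By Lemma~\ref{lemIsotypic} the $\eta_{\La,\La''}$-isotypic component $\pi^{\eta_{\La,\La''}}$ is then a $\J_{\La,\La''}$-subrepresentation, and Lemma~\ref{lemTensProdOffEnd}\ref{lemTensProdOffEndPartii} identifies each irreducible $\J_{\La,\La''}$-constituent $\kappa''$ of $\pi^{\eta_{\La,\La''}}$ as an element of $\ext(\La,\La'')$. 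The irreducibility of $\pi$ together with Frobenius reciprocity then yields $\pi \cong \ind^{\P_{\La,\La}}_{\J_{\La,\La''}}\kappa''$, establishing equation~\eqref{eqk'k''}. For uniqueness, any other $\tilde\kappa'' \in \ext(\La,\La'')$ satisfying~\eqref{eqk'k''} must embed as an irreducible subrepresentation of $\pi^{\eta_{\La,\La''}}$, and a dimension count via Frobenius reciprocity together with the degree formula of Proposition~\ref{propEtaLa} and Lemma~\ref{lemTensProdOffEnd}\ref{lemTensProdOffEndParti} shows that $\pi^{\eta_{\La,\La''}}$ is itself irreducible of dimension $[\P_{\La,\La}:\J_{\La,\La''}]\cdot\dim\eta_{\La,\La''}$, forcing $\tilde\kappa'' \cong \kappa''$.

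The main obstacle I expect is the intertwining computation $\I_{\P_{\La,\La}}(\kappa') = \J_{\La,\La'}$: making the $\G_\b$-intertwiners of $\eta_{\La,\La'}$ cooperate with the $\P_1(\La)$-component of $\P_{\La,\La}$ requires careful use of an Iwahori-type decomposition adapted to the associated splitting of $\b$, and this is precisely where the hypothesis on the relative position of $\tilde{\mf{a}}(\La')$ and $\tilde{\mf{a}}(\La'')$ enters, ensuring compatibility of the $\J^1$-subgroups at $\La'$ and $\La''$ so that the Heisenberg intertwining given by Proposition~\ref{propEtaLaLa} propagates to the full~$\kappa'$.
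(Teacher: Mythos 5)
Your overall strategy — induce $\kappa'$, identify the relevant isotypic component, pin down uniqueness via multiplicity one — is in the same spirit as the paper's argument, but the route differs in a structurally important way. The paper does \emph{not} attack the general triple $(\La,\La',\La'')$ at once. It first observes that, by transitivity of the $\Psi$'s, it suffices to treat the two degenerate cases $(\La,\La',\La)$ and $(\La,\La,\La'')$; in the first of these, $\J_{\La,\La''}=\J_\La$ and $\eta_{\La,\La''}=\eta_\La$, so all the bookkeeping happens against the ``anchor'' Heisenberg representation $\eta_\La$. It then proves irreducibility of the induced representation not at the level of $\P_{\La,\La}$ but at the pro-$p$ level: $\pi|_{\P_1(\La)}\cong\ind_{\J^1_{\La,\La'}}^{\P_1(\La)}\eta_{\La,\La'}$ is irreducible because $\I_{\P_1(\La)}(\eta_{\La,\La'})=\J^1_{\La,\La'}$ and $\P_1(\La)$ is pro-$p$. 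This matters in the modular case: for a pro-$p$ group the ``trivial intertwining implies irreducible induction'' criterion is unproblematic, whereas the same statement for the non-pro-$p$ group $\P_{\La,\La}$ that you invoke as ``the standard Mackey criterion'' needs more care when $p_\CoefField>0$. The multiplicity-one of $\eta_\La$ in $\pi$ then comes directly from Mackey plus Proposition~\ref{propEtaLaLa}, $\kappa$ is defined as $\pi^{\eta_\La}$ via Lemma~\ref{lemIsotypic}, and uniqueness is immediate.

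Two concrete gaps in your version. First, your uniqueness argument is numerically off: the $\eta_{\La,\La''}$-isotypic component of $\pi|_{\J_{\La,\La''}}$, if your scheme works, has dimension $\dim\eta_{\La,\La''}$, not $[\P_{\La,\La}:\J_{\La,\La''}]\cdot\dim\eta_{\La,\La''}$ — the latter is $\dim\pi$ itself, and asserting the isotypic piece has that size would contradict irreducibility of $\pi$. What you really want (and what the paper uses, for the reduced case) is multiplicity one of the Heisenberg representation in $\pi$. Second, to pass from ``$\kappa''\subseteq\pi|_{\J_{\La,\La''}}$ plus Frobenius reciprocity'' to the isomorphism~\eqref{eqk'k''}, you must also know that $\ind_{\J_{\La,\La''}}^{\P_{\La,\La}}\kappa''$ is irreducible, which is a second intertwining computation you never make; the paper handles this by noting the same irreducibility argument applies on both sides. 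As an aside, your sketch of $\I_{\P_{\La,\La}}(\kappa')=\J_{\La,\La'}$ via ``an Iwahori-type decomposition forcing the $\G_\b$-component into $\P(\La_\b)$'' is not quite the right mechanism: the clean way is to write $g=j_1cj_2$ with $j_1,j_2\in\J^1_{\La,\La'}\subseteq\P_1(\La)$, $c\in\G_\b$, observe $c\in\P_1(\La)\P(\La_\b)\P_1(\La)\cap\G_\b$, and apply the intersection property in Lemma~\ref{lemIntProp} to deduce $c\in\P(\La_\b)$. So the conclusion you want is true and provable with the paper's tools, but the supporting lemma is not the one you name.
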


%We are going to write~\eqref{eqk'k''}($\La,\La,\La'',\kappa',\kappa''$) for~\eqref{eqk'k''}.

\begin{proof}
The proof follows the idea of~\cite[Lemma~6.2]{kurinczukStevens:19} (\cite[5.2.5]{bushnellKutzko:93} and~\cite[4.3]{stevens:08}). By transitivity we only need to proof the assertion for the cases~$(\La,\La',\La'')=(\La,\La',\La)$ 
 and~$(\La,\La',\La'')=(\La,\La,\La'')$. 
 We just consider the first case. We denote by~$\pi$ the left hand side of~\eqref{eqk'k''}. We have to find~$\kappa_{\cong}\in\ext(\La,\La)$ such that 
~\eqref{eqk'k''} holds (for~$\kappa''=\kappa$), and it will be unique, because~$\eta_\La$ has multiplicity one in~$\pi$, because the restriction~$\pi|_{\P_1(\La)}$, which is~$\ind_{\J^1_{\La,\La'}}^{\P_1(\La)}\eta_{\La,\La'}$ is irreducible and isomorphic to~$\ind_{\J^1_{\La}}^{\P_1(\La)}\eta_{\La}$, by Proposition~\ref{propEtaLaLa}. We choose~$\kappa=\pi^{\eta_\La}$, see Lemma~\ref{lemIsotypic}. We obtain~\eqref{eqk'k''} for~$\kappa''=\kappa$ because~$\ind_{\J_\La}^{\P_{\La,\La}}\kappa$ is irreducible by an argument similar to the one given at the beginning of the proof. 
\end{proof}

We define~$\Psi_{\La,\La',\La''}(\kappa'_\cong):=\kappa''_\cong$ using~$\kappa''_\cong$ from Lemma~\ref{lemMutMutStev4.3}.
In fact~$\J_{\La,\La'}$ does only depend on~$\timfb(\La)$ instead of~$\La$, and even more:

\begin{lemma}\label{lemIndepOfLa}
Suppose~$\ti{\La}\in\Latt_{\o_\E,\o_\D}^1(\V)$. Then~$\Psi_{\La,\La',\La''}\circ\Res_{\J_{\La,\La'}}^{\J_{\ti{\La},\La'}}$ and~$\Res_{\J_{\La,\La''}}^{\J_{\ti{\La},\La''}}\circ\Psi_{\ti{\La},\La',\La''}$ coincide if~$\timfb(\La)\subseteq\timfb(\ti{\La})$ and~$\ti{\mf{a}}(\ti{\La})\subseteq\ti{\mf{a}}(\La')\cap\ti{\mf{a}}(\La'')$. 
\end{lemma}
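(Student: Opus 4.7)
My plan is to reduce to a single inclusion-type segment of a common defining path of $\Psi_{\La,\La',\La''}$ and $\Psi_{\ti\La,\La',\La''}$, and then to verify the characterising induction equivalence after restriction.

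First, I would check that $\Res$ carries $\ext(\ti\La,\La^{*})$ into $\ext(\La,\La^{*})$ for $\La^{*}\in\{\La',\La''\}$: the hypothesis $\timfb(\La)\subseteq\timfb(\ti\La)$ gives $\P_1(\La_\b)\subseteq\P_1(\ti\La_\b)$, hence $\J^1_{\La,\La^{*}}\subseteq\J^1_{\ti\La,\La^{*}}$ and $\J_{\La,\La^{*}}\subseteq\J_{\ti\La,\La^{*}}$, so the restriction maps are defined. Applying Corollary~\ref{corResetaLaLaPrime} to the triple $(\La^{*},\ti\La,\La)$ (in place of $(\La,\La',\La'')$) yields $\eta_{\La,\La^{*}}|_{\J^1_{\ti\La,\La^{*}}}\cong\eta_{\ti\La,\La^{*}}$, and a further restriction to $\J^1_{\La,\La^{*}}$ gives $\eta_{\ti\La,\La^{*}}|_{\J^1_{\La,\La^{*}}}\cong\eta_{\La,\La^{*}}$, so the restriction of an extension is again an extension of the right Heisenberg representation.

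Second, I would choose a path $\La'=\La_0,\La_1,\ldots,\La_l=\La''$ in $\Latt^1_{h,\o_\E,\o_\D}\V$ satisfying~\eqref{eqIncl} for $\ti\La$; since $\timfb(\La)\subseteq\timfb(\ti\La)$, the same path automatically satisfies~\eqref{eqIncl} for $\La$. By~\eqref{eqDefPsi} both $\Psi_{\La,\La',\La''}$ and $\Psi_{\ti\La,\La',\La''}$ decompose as compositions along this common path, so by transitivity it suffices to treat a single segment; hence I may assume $l=1$ and, after possibly swapping $\La'$ and $\La''$, that $\ti{\mf a}(\La')\subseteq\ti{\mf a}(\La'')$.

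Third, write $\tilde\kappa_0:=\tilde\kappa\in\ext(\ti\La,\La')$ and $\tilde\kappa_1:=\Psi_{\ti\La,\La',\La''}(\tilde\kappa_0)$. By the characterisation defining $\Psi_{\ti\La,\La',\La''}$,
\[
\ind_{\J_{\ti\La,\La'}}^{\P_{\ti\La,\ti\La}}\tilde\kappa_0\;\cong\;\ind_{\J_{\ti\La,\La''}}^{\P_{\ti\La,\ti\La}}\tilde\kappa_1.
\]
The goal is to prove the analogous equivalence at level $\La$,
\[
\ind_{\J_{\La,\La'}}^{\P_{\La,\La}}\Res\tilde\kappa_0\;\cong\;\ind_{\J_{\La,\La''}}^{\P_{\La,\La}}\Res\tilde\kappa_1,
\]
which, by the uniqueness in Lemma~\ref{lemMutMutStev4.3}, identifies $\Res\tilde\kappa_1$ with $\Psi_{\La,\La',\La''}(\Res\tilde\kappa_0)$. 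Both sides are irreducible $\P_{\La,\La}$-representations by the irreducibility argument of Lemma~\ref{lemMutMutStev4.3}, so to conclude it is enough to exhibit a common irreducible constituent; using Mackey decomposition together with Proposition~\ref{propEtaLaLa} one checks that $\eta_\La$ occurs with multiplicity one in the restriction of each side to $\J^1_\La$, which forces the isomorphism. The principal obstacle lies here: the inducing groups $\P_{\La,\La}$ and $\P_{\ti\La,\ti\La}$ are not in general comparable (there is no a priori inclusion between $\P_1(\La)$ and $\P_1(\ti\La)$), so the equivalence at level $\ti\La$ cannot be transferred to level $\La$ by a direct restriction; one must argue intrinsically, using the uniqueness and irreducibility afforded by Lemma~\ref{lemMutMutStev4.3} to match the two irreducible representations of $\P_{\La,\La}$ through their shared $\eta_\La$-component.
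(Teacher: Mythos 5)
Your first two observations are sound and match the paper's implicit setup: the restriction maps are well-defined (using Corollary~\ref{corResetaLaLaPrime}), and by choosing a common defining path for $\Psi_{\La,\La',\La''}$ and $\Psi_{\ti\La,\La',\La''}$ you may reduce to a single inclusion segment. But the argument then stalls precisely where you flag ``the principal obstacle,'' and your proposed resolution does not close the gap.

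The missing move, which is the whole content of the paper's short proof, is a further reduction in the other variable: one first reduces to the case $\timfa(\La)\subseteq\timfa(\ti\La)$. This is legitimate because one may replace $\La$ by a $\bar\La$ with $\timfb(\bar\La)=\timfb(\La)$ and $\timfa(\bar\La)\subseteq\timfa(\ti\La)$ (the groups $\J_{\La,\La^{*}}$, $\P_{\La,\La^{*}}$ and hence the characterising equivalence \eqref{eqk'k''} depend on $\La$ only through $\timfb(\La)$, and the transition from $\La$ to $\bar\La$ is handled by a segment as in \S\ref{secGeneralPsi}). Once $\timfa(\La)\subseteq\timfa(\ti\La)$, the inclusions $\P(\La_\b)\subseteq\P(\ti\La_\b)$ and $\P_1(\ti\La)\subseteq\P_1(\La)$ put $\P_{\La,\ti\La}$ between $\P_{\ti\La,\ti\La}$ and $\P_{\La,\La}$, so one may restrict the defining isomorphism at level $\ti\La$ to $\P_{\La,\ti\La}$ and then induce to $\P_{\La,\La}$ to obtain \eqref{eqk'k''} at level $\La$. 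Without this reduction there is no functor to transport the isomorphism, exactly as you observe.

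Your substitute argument --- that $\eta_\La$ occurs with multiplicity one in each side, ``which forces the isomorphism'' --- is not correct. Two irreducible representations of $\P_{\La,\La}$ can each contain $\eta_\La$ with multiplicity one and still be non-isomorphic; indeed this phenomenon is ubiquitous in this theory (different $\beta$-extensions, twists by characters of the quotient, etc.). To conclude an isomorphism from $\eta_\La$-isotypic data you would need to show both sides have the \emph{same} $\eta_\La$-isotypic component as a $\P_{\La,\La}$-subrepresentation, not merely the same multiplicity of $\eta_\La$ as a $\J^1_\La$-constituent. The restrict-and-induce step is precisely what supplies that identification.
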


\begin{proof}
 To show this assertion it is enough to consider the case~$\ti{\mf{a}}(\La)\subseteq\ti{\mf{a}}(\tilde{\La})$. 
 (For the general case take~$\bar{\La}\in\Latt_{\o_\E,\o_\D}\V$ with~$\ti{\mf{a}}(\bar{\La})\subseteq\timfa(\ti{\La})$ and~$\timfb(\La)=\timfb(\bar{\La})$, and 
 use a segment from~$\La$  
 to~$\bar{\La}$.) 
 We start with~$(\kappa',\kappa'')$ satisfying~\eqref{eqk'k''} for~$\ti{\La}$ instead of~$\La$. Then we restrict to~$\P_{\La,\ti{\La}}$ and 
 induce to~$\P_{\La,\La}$ to obtain~\eqref{eqk'k''} for~$\La$. This proves the lemma. 
\end{proof}

By Lemma~\ref{lemIndepOfLa} we can define~$\Psi_{\La,\La',\La''}$ if~$\ti{\mf{a}}(\Lambda)$ may not be contained in $\ti{\mf{a}}(\La')\cap\ti{\mf{a}}(\La'')$.
Suppose~$\timfb(\La)\subseteq\ti{\mf{a}}(\La')\cap\ti{\mf{a}}(\La'')$ and choose~$\ti{\La}$ such that~$\ti{\mf{a}}(\ti{\La})\subseteq\ti{\mf{a}}(\La')\cap\ti{\mf{a}}(\La'')$ and~$\timfb(\La)=\timfb(\ti{\La})$. Then define~$\Psi_{\La,\La',\La''}$
to be~$\Psi_{\ti{\La},\La',\La''}$.

\subsection{The map~$\Psi_{\La,\La',\La''}$ in the general case}\label{secGeneralPsi}

We do not require~$\ti{\mf{a}}(\La')\cap\ti{\mf{a}}(\La'')\in\{\ti{\mf{a}}(\La'),\ti{\mf{a}}(\La'')\}$ here. 
We choose a path~\eqref{eqPath} of self-dual~$\o_\E$-$\o_\D$-lattice sequences and define~$\Psi_{\La,\La',\La''}$ as in~\eqref{eqDefPsi}. 
Now one has to prove that this definition is independent of the choice of the path. For that it is enough to consider a triangle of self-dual~$\o_\E$-$\o_\D$
lattice sequences~$\La_1,\La_2,\La_3$ such that~$\ti{\mf{a}}(\La_1)\subseteq\ti{\mf{a}}(\La_2)\subseteq \ti{\mf{a}}(\La_3)$ with~$\timfb(\La)\subseteq\timfb(\La_1)$ and 
show the commutativity
\[\Psi_{\La,\La_2,\La_3}\circ\Psi_{\La,\La_1,\La_2}=\Psi_{\La,\La_1,\La_3}.\]
We take an~$\o_\E$-$\o_\D$-lattice sequence~$\ti{\La}$ such that~$\ti{\mf{a}}(\ti{\La})\subseteq\ti{\mf{a}}(\La_1)$ and~$\timfb(\La)=\timfb(\ti{\La})$. 
We choose~$\kappa_{i,\cong}\in\ext(\La,\La_i)$,~$i=1,2,3$, such that $\Psi_{\La,\La_1,\La_2}(\kappa_{1,\cong})=\kappa_{2,\cong}$ 
and~$\Psi_{\La,\La_1,\La_3}(\kappa_{1,\cong})=\kappa_{3,\cong}$. Then $\Psi_{\La,\La_2,\La_3}(\kappa_{2,\cong})=\kappa_{3,\cong}$ follows from~\eqref{eqk'k''} and transitivity. 
This finishes the definition of~$\Psi_{\La,\La',\La''}$. We have the following result on intertwining:

\begin{proposition}[cf.~\cite{stevens:08}~Lemma~4.3]\label{propIntPsi}
 Suppose~$\Psi_{\La,\La',\La''}(\kappa'_\cong)=\kappa''_\cong$. Then $\I_{\G_\b}(\kappa')=\I_{\G_\b}(\kappa'')$.
\end{proposition}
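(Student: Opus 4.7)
The plan is to reduce, via the path definition \eqref{eqDefPsi} and transitivity of the asserted equality of intertwining sets, to an elementary step, and then to extract the intertwining information from the defining induction identity \eqref{eqk'k''} together with Proposition \ref{propEtaLaLa}.

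First, by \eqref{eqDefPsi} the map $\Psi_{\La,\La',\La''}$ is a composition of elementary maps along a path satisfying \eqref{eqIncl}, and the equality $\I_{\G_\b}(\kappa')=\I_{\G_\b}(\kappa'')$ is transitive along such a composition. So it suffices to treat a single step, where $\ti{\mf{a}}(\La')\cap\ti{\mf{a}}(\La'')\in\{\ti{\mf{a}}(\La'),\ti{\mf{a}}(\La'')\}$. By Lemma~\ref{lemIndepOfLa} we may also replace $\La$ by a self-dual $\o_\E$-$\o_\D$-lattice sequence with the same $\timfb$ and with $\timfa$ contained in $\timfa(\La')\cap\timfa(\La'')$, and by the transitivity argument at the start of the proof of Lemma~\ref{lemMutMutStev4.3} (namely $\Psi_{\La,\La',\La''}=\Psi_{\La,\La,\La''}\circ\Psi_{\La,\La',\La}$) we are finally reduced to the case $\La''=\La$ with $\ti{\mf{a}}(\La)\subseteq\ti{\mf{a}}(\La')$. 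In this situation $\J_\La\subseteq\J_{\La,\La'}$ and the $\b$-extension $\kappa''\in\ext(\La)$ is characterised by $\ind_{\J_{\La,\La'}}^{\P_{\La,\La}}\kappa'\cong\ind_{\J_\La}^{\P_{\La,\La}}\kappa''$.

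Next, induce both sides of this isomorphism further to $\G$ to obtain $\Pi:=\ind_{\J_{\La,\La'}}^{\G}\kappa'\cong\ind_{\J_\La}^{\G}\kappa''$. For any $g\in\G_\b$, Proposition~\ref{propEtaLaLa} gives $g\in\I_\G(\eta_\La)\cap\I_\G(\eta_{\La,\La'})$ with one-dimensional intertwining spaces, and restriction to the Heisenberg parts therefore forces $\dim\I_g(\kappa'),\,\dim\I_g(\kappa'')\in\{0,1\}$. Applying Mackey's formula to $\End_\G(\Pi)$ on both sides and using the explicit support formula $\I_\G(\eta_{\La,\La'})=\J^1_{\La,\La'}\G_\b\J^1_{\La,\La'}$ from Proposition~\ref{propEtaLaLa}, one matches, for each $g\in\G_\b$, the $\J_{\La,\La'}$-double coset of $g$ with the union of $\J_\La$-double cosets it breaks into under the inclusion $\J_\La\subseteq\J_{\La,\La'}$; all of these lie in $\J^1_\La\G_\b\J^1_\La$. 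Summing the intertwining contributions and using the at-most-one bound on each term then gives $\dim\I_g(\kappa')=\dim\I_g(\kappa'')$, hence the desired equality $\I_{\G_\b}(\kappa')=\I_{\G_\b}(\kappa'')$.

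The main obstacle is precisely this Mackey bookkeeping: one has to check that, under the isomorphism $\Pi\cong\Pi$, the support of the Hecke-type contributions on the two sides matches coset by coset for elements of $\G_\b$, rather than merely summing to the same total dimension. This matching is facilitated by the fact that $\J_{\La,\La'}/\J_\La\cong\P(\La'_\b)/\P(\La_\b)$ is a finite subgroup of $\G_\b$, so the refinement from $\J_{\La,\La'}$-cosets to $\J_\La$-cosets stays inside $\G_\b$; the argument is then essentially the one carried out in \cite[Lemma~4.3]{stevens:08} for $\G\otimes\L$, with Proposition~\ref{propEtaLaLa} replacing Proposition~\ref{propStev3.7} in order to restrict the intertwining analysis to $\G_\b$.
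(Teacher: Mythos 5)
The reduction strategy you use (composing along a path, then using Lemma~\ref{lemIndepOfLa} and the transitivity $\Psi_{\La,\La',\La''}=\Psi_{\La,\La,\La''}\circ\Psi_{\La,\La',\La}$ to reduce to a single elementary inclusion) matches the spirit of the paper's reduction to $\ti{\mf{a}}(\La)\subseteq\ti{\mf{a}}(\La')\subseteq\ti{\mf{a}}(\La'')$, and is fine. The gap is in the ``Mackey bookkeeping'' that you correctly flag as the main obstacle but then do not actually carry out. The paper's proof at this point does exactly one nontrivial thing: it says to follow the second part of~\cite[Lemma~4.3]{stevens:08} (equivalently~\cite[2.9]{secherreII:05}), \emph{using Lemma~\ref{lemIntProp} instead of~\cite[2.6]{stevens:08}}. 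That intersection property, $\P_1(\La)g\P_1(\La)\cap\G_\b=\P_1(\La_\b)g\P_1(\La_\b)$ for $g\in\G_\b$, is precisely the technical input that lets one control which double cosets of the smaller group $\J_\La$ inside a double coset of $\J_{\La,\La'}$ meet $\G_\b$, and hence turn the global isomorphism $\ind_{\J_{\La,\La'}}^\G\kappa'\cong\ind_{\J_\La}^\G\kappa''$ into an element-by-element equality of intertwining sets inside $\G_\b$. You never invoke it, and your substitute justification is not correct as stated.

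Concretely: knowing that some $h\in\J_{\La,\La'} g\J_{\La,\La'}$ intertwines $\kappa''$ does not by itself put $h$ (or $g$) back into $\G_\b$ in a way compatible with $\kappa''$; what forces the detection to happen on $\P_1(\La_\b)$-cosets inside $\G_\b$ is exactly Lemma~\ref{lemIntProp}. Also, the assertion that ``$\J_{\La,\La'}/\J_\La\cong\P(\La'_\b)/\P(\La_\b)$ is a finite subgroup of $\G_\b$'' is imprecise --- it is a finite quotient, not a subgroup; one can pick coset representatives in $\P(\La'_\b)\subset\G_\b$, but that alone does not make the $\J_\La$-refinement of a $\J_{\La,\La'}$-double coset ``stay inside $\G_\b$'' in the sense your argument needs. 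So the proposal identifies the right obstacle but leaves it open, and omits the single lemma the paper supplies to close it; as written it is not a complete proof.
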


For this proposition we need the following intersection property:
\begin{lemma}[cf.\cite{stevens:08}~2.6]\label{lemIntProp}
 Let~$g$ be an element of~$\G_\b$. Then we have 
 \begin{equation}\label{eqIntProperty}
 (\P_1(\La)g\P_1(\La))\cap \G_\b=\P_1(\La_\b)g\P_1(\La_\b).
 \end{equation}
\end{lemma}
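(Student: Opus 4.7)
The inclusion $\P_1(\Lambda_\b) g \P_1(\Lambda_\b) \subseteq (\P_1(\Lambda) g \P_1(\Lambda)) \cap \G_\b$ is immediate from the CLF property~\eqref{eqCLFG}, which gives $\P_1(\Lambda_\b) \subseteq \P_1(\Lambda) \cap \G_\b$ (together with $g \in \G_\b$).

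For the reverse inclusion I would reduce to the classical case via base extension to the quadratic unramified extension $\L|\F$. Over $\L$, the group $\G_\L = \G \otimes \L$ is a $p$-adic classical group, so the analogous identity
\[(\P_1(\Lambda_\L) g \P_1(\Lambda_\L)) \cap (\G_\L)_\b = \P_1(\Lambda_{\L,\b}) g \P_1(\Lambda_{\L,\b})\]
is exactly~\cite[Lemma~2.6]{stevens:08}. Since $\P_1(\Lambda) \subseteq \P_1(\Lambda_\L)$ and $\G_\b \subseteq (\G_\L)_\b$, every $h \in (\P_1(\Lambda) g \P_1(\Lambda)) \cap \G_\b$ admits a factorization $h = u_\L g v_\L$ with $u_\L, v_\L \in \P_1(\Lambda_{\L,\b})$. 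It then remains to descend to a factorization with both factors in the Galois-fixed subgroup $\P_1(\Lambda_\b) = \P_1(\Lambda_{\L,\b})^{\Gal(\L|\F)}$.

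For the descent, the set of factorizations of $h$ of the above shape is a (left--right) torsor under the pro-$p$ group $Z := \P_1(\Lambda_{\L,\b}) \cap g\,\P_1(\Lambda_{\L,\b})\,g^{-1}$, acting by $(u,v) \mapsto (uz, g^{-1}z^{-1}g\,v)$. Since $h$ and $g$ are Galois-fixed and $\Lambda_{\L,\b}$ is Galois-stable, this torsor carries a natural $\Gal(\L|\F)$-action. The obstruction to producing a Galois-fixed factorization lies in $H^1(\Gal(\L|\F), Z)$, and vanishes because $Z$ is pro-$p$ with $p$ odd: via the Cayley transform the question reduces to additive Hilbert~90, exactly as in the proof of Lemma~\ref{lemHeisenberg}. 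The main obstacle will be this descent step, where one must set up the non-abelian torsor action carefully and verify its Galois equivariance before invoking the cohomology vanishing. An intrinsic alternative would be to first prove the $\tiG$-analog of the identity directly from the CLF property~\eqref{eqCLFGLn} and then descend via the self-dual involution $\sigma$ by a Cayley-type symmetrization, trading the Galois bookkeeping for somewhat more involved lattice arithmetic.
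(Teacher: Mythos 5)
Your proposal is correct but takes a genuinely different route from the paper. The paper first establishes the $\tiG$-analogue
\[
\bigl(\tiP^1(\La)\,g\,\tiP^1(\La)\bigr)\cap\tiG_\b=\tiP^1(\La_\b)\,g\,\tiP^1(\La_\b),
\]
by appealing to the proof of~$\bfII.4.8$ (the general-linear version over~$\D$), and then takes $\sigma$-fixed points on both sides using~\cite[2.12(i)]{kurinczukStevens:19}. In other words, the paper descends through the adjoint involution $\sigma$ on $\tiG$ (an involution over $\F$), not through the Galois action of $\Gal(\L|\F)$. Your primary approach instead base-extends to $\L$, uses Stevens' classical-group result directly, and then Galois-descends via the vanishing of $H^1(\Gal(\L|\F),Z)$ for the pro-$p$ torsor group $Z$. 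Both reach the result, and in fact your ``intrinsic alternative'' at the end of your proposal --- prove the $\tiG$-analogue from CLF, then descend via $\sigma$ --- is essentially the paper's proof. The paper's route is shorter because it can cite pre-packaged results for both steps; your route makes the torsor mechanics explicit, which is informative but introduces more bookkeeping.

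Two points in your Galois-descent step deserve explicit verification rather than assertion. First, you need $\P_1(\Lambda_{\L,\b})\cap\G_\b=\P_1(\Lambda_\b)$, i.e.\ that the Moy--Prasad filtration of the $\L$-centralizer $(\G_\L)_\b$ at $\Lambda_{\L,\b}$ intersects $\G_\b$ in the filtration of $\G_\b$ at $\Lambda_\b$; this follows from the CLF property over $\L$ and $\timfa_1(\La_\L)\cap\A=\timfa_1(\La)$, but you should say so. Second, you need $\tau(\P_1(\Lambda_{\L,\b}))=\P_1(\Lambda_{\L,\b})$ to make the torsor $\Gal(\L|\F)$-equivariant; this holds because $\Lambda_{\L,\b}$ corresponds to a point of $\mf{B}((\G_\L)_\b)$ fixed by $\tau$ (being the image of the $\F$-rational point $\Lambda_\b$), but again is an assertion, not a given. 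With these spelled out, the nonabelian $H^1$-vanishing for an order-$2$ group acting on a pro-$p$ group with $p$ odd is indeed standard and matches the use of additive Hilbert~90 elsewhere in the paper (e.g.\ Lemma~\ref{lemHeisenberg} and the proof of Proposition~\ref{propEtaLa}), so the cohomological core of your argument is sound.
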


\begin{proof}
At first: The proof of 
\begin{equation}\label{eqIntPropTilde}(\ti{\P}_1(\La)g\ti{\P}_1(\La))\cap\ti{\G}_\b=\ti{\P}_1(\La_\b)g\ti{\P}_1(\La_\b)\end{equation}
is mutatis mutandis to the proof of~\bfII.4.8. 
Now one takes~$\sigma$-fixed points on both sides of~\eqref{eqIntPropTilde} to obtain~\eqref{eqIntProperty} by~\cite[2.12(i)]{kurinczukStevens:19}.
\end{proof}

\begin{proof}[Proof of Proposition~\ref{propIntPsi}]
 Using the construction of~$\Psi_{\La,\La',\La''}$ we can assume without loss of generality that~$\ti{\mf{a}}(\La)\subseteq\ti{\mf{a}}(\La')\subseteq\ti{\mf{a}}(\La'')$. 
 Now the proof is as for the second part of~\cite[4.3]{stevens:08}  (see~\cite[2.9]{secherreII:05}), using Lemma~\ref{lemIntProp} instead of~\cite[2.6]{stevens:08}. 
\end{proof}

\subsection{Defining~$\b$-extensions in the general case}
\label{secGeneralbetaExt}
Suppose further that~$\timfb(\La'')$ is contained in~$\timfb(\La_\M)$ in this paragraph. 

% We can now define~$\b$-extensions:

\begin{definition}[cf.\cite{stevens:08}~4.5,~4.7]
Granted~$\timfb(\La)\subseteq\timfb(\La_\M)$, we call the following set the~\emph{set of (equivalence classes) of~$\b$-extensions of~$\eta_\La$ to~$\J_\La$ relative to~$\La_\M$}:
\[\bext_{\La_\M}(\La):=\{\Psi_{\La,\La_\M,\La}(\Res^{\J_{\La_\M}}_{\J_{\La,\La_\M}}\kappa_\cong)|\ \kappa_\cong\in\bext(\La_\M)\}.\]
We call the elements of
\[\bext^0_{\La_\M}(\La):=\Res^{\J_{\La}}_{\J^0_\La}(\bext_{\La_\M}(\La))\]
(equivalence classes) of~\emph{$\b$-extensions of~$\eta_\La$ to~$\J^0_\La$ relative to~$\La_\M$}.
\end{definition}

\begin{theorem}[\cite{stevens:08}~4.10]\label{thmbetaext}
 Granted~$\timfb(\La')\cup\timfb(\La'')\subseteq\timfb(\La_\M)$, there is a unique map $\Psi_{\La',\La''}^0$ from~$\bext^0_{\La_\M}(\La')$ to~$\bext^0_{\La_\M}(\La'')$  depending on~$\La_\M$ such that 
 \begin{equation}\label{eqDefPsi0}
\Psi_{\La',\La''}^0\circ\Res^{\J_{\La'}}_{\J^0_{\La'}}\circ\Psi_{\La',\La_\M,\La'}\circ\Res^{\J_{\La_\M}}_{\J_{\La',\La_\M}}=
 \Res^{\J_{\La''}}_{\J^0_{\La''}}\circ\Psi_{\La'',\La_\M,\La''}\circ\Res^{\J_{\La_\M}}_{\J_{\La'',\La_\M}}
\end{equation}
 on~$\bext(\La_\M)$. 
 The map~$\Psi_{\La',\La''}^0$ is bijective. 
\end{theorem}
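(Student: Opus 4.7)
My plan is to prove Theorem~\ref{thmbetaext} by adapting the strategy of~\cite[Theorem~4.10]{stevens:08} to the quaternionic setting, leveraging Lemma~\ref{lemTensProdOffEnd}, Proposition~\ref{propExisteceOfbextLaM}, Lemma~\ref{lemMutMutStev4.3}, and Proposition~\ref{propIntPsi}. The existence and bijectivity of $\Psi^0_{\La',\La''}$ rest on understanding $\bext(\La_\M)$ as a torsor under a character group, and tracking how twists by characters propagate through the defining composition of~\eqref{eqDefPsi0}.

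\textbf{Step 1 (torsor structure).} Since any two elements $\kappa_1, \kappa_2$ of $\bext(\La_\M)$ restrict to the same irreducible representation $\eta_{\La_\M}$ on the pro-$p$ normal subgroup $\J^1_{\La_\M}$, Lemma~\ref{lemTensProdOffEnd}\ref{lemTensProdOffEndPartii} with $\K=\J_{\La_\M}$ and $\K_2=\J^1_{\La_\M}$ yields $\kappa_2 \cong \kappa_1 \otimes \chi$ for a unique character $\chi$ of the finite abelian group $\J_{\La_\M}/\J^1_{\La_\M}$. The characterization in Proposition~\ref{propExisteceOfbextLaM}\ref{propExisteceOfbextLaM-iii} forces $\chi$ to be trivial on the image of $\J^1_{\La_m,\La_\M}$. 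In this way $\bext(\La_\M)$ is identified, via tensor product, as a torsor under a subgroup of characters of $\J_{\La_\M}/\J^1_{\La_\M}$.

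\textbf{Step 2 (propagation through~$\Psi$).} Next I would show that the composition $\Psi_{\La',\La_\M,\La'}\circ\Res_{\J_{\La',\La_\M}}^{\J_{\La_\M}}$ takes tensor products with characters to tensor products with characters, in a controlled way. Restriction clearly intertwines the twisting action, and $\Psi_{\La',\La_\M,\La'}$ is defined by the induction identity~\eqref{eqk'k''} to $\P_{\La',\La'}$, which behaves compatibly with tensoring by characters extending through the induction. Combined with the uniqueness clause of Lemma~\ref{lemMutMutStev4.3}, the resulting twist on the output is a character $\tilde\chi$ of $\J_{\La'}/\J^1_{\La'}$ which is trivial on $\J^0_{\La'}$ whenever $\chi$ is trivial on $\J^0_{\La_\M}$. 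The analogous statement holds with $\La''$ in place of $\La'$, and crucially both correspondences are induced by the same subgroup of characters of $\J_{\La_\M}/\J^1_{\La_\M}$ modulo those trivial on~$\J^0_{\La_\M}$.

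\textbf{Step 3 (definition and uniqueness).} For $\kappa^0 \in \bext^0_{\La_\M}(\La')$, pick any $\kappa\in\bext(\La_\M)$ whose image under the LHS of~\eqref{eqDefPsi0} equals $\kappa^0$, and define $\Psi^0_{\La',\La''}(\kappa^0)$ as the value of the RHS at this $\kappa$. By Step~2 the fibre of the LHS over $\kappa^0$ consists precisely of the $\chi$-twists where $\chi$ maps to a trivial character on $\J^0_{\La'}$; the same reasoning for $\La''$ then shows that the RHS value is unchanged along this fibre. Hence the map depends only on $\kappa^0$, not on the lift. Uniqueness of $\Psi^0_{\La',\La''}$ is forced by the requirement that~\eqref{eqDefPsi0} commute for all $\kappa\in\bext(\La_\M)$.

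\textbf{Step 4 (bijectivity).} The construction is symmetric in $\La'$ and $\La''$, so the same procedure yields $\Psi^0_{\La'',\La'}$, and the two compositions $\Psi^0_{\La'',\La'}\circ\Psi^0_{\La',\La''}$ and $\Psi^0_{\La',\La''}\circ\Psi^0_{\La'',\La'}$ collapse to the identity because they are both computed through a common lift in $\bext(\La_\M)$.

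The main obstacle will be the compatibility asserted in Step~2: carefully identifying how a character of $\J_{\La_\M}/\J^1_{\La_\M}$ is transported to a character of $\J_{\La'}/\J^1_{\La'}$ under the composition of restriction and $\Psi_{\La',\La_\M,\La'}$, and showing that triviality on the connected parahoric component is preserved. This requires the $\b$-variant of the intersection property in Lemma~\ref{lemIntProp} together with the intertwining formula in Proposition~\ref{propIntPsi}, in order to match the relevant quotients of stabilizers and parahoric subgroups of~$\G_\b$ associated to the different lattice sequences.
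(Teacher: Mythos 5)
Your torsor-theoretic framing of $\bext(\La_\M)$ is a reasonable lens, and your Steps 1, 3, and 4 are broadly consistent with how the paper organizes the argument (it too reduces to the inclusion case $\timfa(\La')\subseteq\timfa(\La'')$ and defines $\Psi^0$ by lifting to $\bext(\La_\M)$ and pushing forward). However, there is a genuine gap precisely at the point you flag as ``the main obstacle,'' and your suggested toolkit does not close it. Two remarks on Step 1: the group $\J_{\La_\M}/\J^1_{\La_\M}\cong\bbP(\La_{\M,\b})(\k_\F)$ is in general a (possibly disconnected) reductive group over $\k_\F$, not a finite abelian group; the one-dimensionality of the correction factor in Lemma~\ref{lemTensProdOffEnd}\ref{lemTensProdOffEndPartii} comes from dimension counting, not from commutativity of the quotient.

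The real issue is Step 2. You need that two lifts $\kappa_\M$ and $\kappa_\M\otimes\chi$ in $\bext(\La_\M)$ which produce the same element of $\bext^0_{\La_\M}(\La')$ also produce the same element of $\bext^0_{\La_\M}(\La'')$ after the other composition. Lemma~\ref{lemIntProp} and Proposition~\ref{propIntPsi} control intertwining sets, but they do not by themselves match up the two characters you get on $\J_{\La'}/\J^1_{\La'}$ and $\J_{\La''}/\J^1_{\La''}$ after restriction to the respective $\J^0$'s. The paper does something different: from the induction identity~\eqref{eqk'k''} with $\La=\La'$, restricting both sides to $\P^0_{\La',\La'}$ produces
\[
\ind_{\J^0_{\La',\La''}}^{\P^0_{\La',\La'}}\kappa''|_{\J^0_{\La',\La''}}\;\cong\;\ind_{\J_{\La'}^0}^{\P^0_{\La',\La'}}\kappa'^0,
\]
in which both sides are irreducible, and the Heisenberg multiplicity-one property (as in Lemma~\ref{lemMutMutStev4.3}) shows $\kappa'^0$ determines $\kappa''|_{\J^0_{\La',\La''}}$. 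The remaining, and essential, step — that there is only one $\b$-extension in $\bext^0_{\La_\M}(\La'')$ extending $\Res^{\J^0_{\La''}}_{\J^0_{\La',\La''}}\kappa''^0$ — is taken over from the proof of~\cite[4.10]{stevens:08}; this extension-uniqueness is exactly the ``compatibility'' you would need to verify for the torsor transport, and it does not follow from your Step 2 as written. Your Step 4 also needs revision: once you have reduced to the inclusion case, ``symmetry'' is not available, and injectivity should instead be read off from the same irreducible-induction identity, which shows $\kappa''|_{\J^0_{\La',\La''}}$ determines $\kappa'^0$.
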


\begin{proof}
At first:  A map~$\Psi^0_{\La',\La''}$ satisfying~\eqref{eqDefPsi0} is uniquely determined and surjective by the definition of~$\bext^0_{\La_\M}(\La')$ 
and~$\bext^0_{\La_\M}(\La'')$. Further we have $\Psi^0_{\La',\La'}=\id_{\bext_{\La_\M}^0(\La')}$ and if~$\Psi^0_{\La'.\La''}$ 
and~$\Psi^0_{\La''.\La'''}$ satisfy~\eqref{eqDefPsi0} then $\Psi^0_{\La'',\La'''}\circ\Psi^0_{\La'.\La''}$ too. 
Further if~$\Psi_{\La',\La''}^0$ exists and is bijective then we can take~$(\Psi_{\La',\La''}^0)^{-1}$ as~$\Psi_{\La'',\La'}^0$. 
Thus we only have to consider the case~$\ti{\mf{a}}(\La')\subseteq\ti{\mf{a}}(\La'')$.
We define~$\Psi^0_{\La',\La''}$ in several steps:
\begin{itemize}
\item Let~$\kappa'^0$ be a~$\b$-extension of~$\eta_{\La'}$ to~$\J^0_{\La'}$ and let $\kappa'$ be an arbitrary $\b$-ex\-tension of~$\eta_{\La'}$ to~$\J_{\La'}$ 
such that the restriction of~$\kappa'$ to~$\J^0_{\La'}$ is equivalent to~$\kappa'^0$. 
\item We choose a  class~$\kappa_{\M,\cong}\in\bext(\La_\M)$  
such that\[\Psi_{\La',\La_\M,\La'}(\Res^{\J_{\La_\M}}_{\J_{\La',\La_\M}}\kappa_{\M,\cong})=\kappa'_\cong\] and we put
$\kappa''_\cong:=\Psi_{\La'',\La_\M,\La''}(\Res^{\J_{\La_\M}}_{\J_{\La'',\La_\M}}\kappa_{\M,\cong})$.
\item Then we define:
\[\Psi^0_{\La',\La''}(\kappa'^0_\cong):=\kappa''|_{\J^0_{\La''}}=:\kappa''^0_\cong.\]
\end{itemize} 
We claim that~$\Psi^0_{\La',\La''}$ is well-defined, i.e. independent of the choices made. 
Denote
\[\kappa_\cong:=\Psi_{\La',\La',\La''}(\kappa'_\cong).\]
Then we obtain by definition and Lemma~\ref{lemIndepOfLa}
(see the paragraph after~\ref{lemIndepOfLa}~to get the analogue for the general~$\Psi$, see~\S\ref{secGeneralPsi})
\begin{eqnarray*}
 \kappa_\cong &=& (\Psi_{\La',\La_\M,\La''}\circ\Psi_{\La',\La',\La_\M})(\kappa'_\cong)\\
 &=& \Psi_{\La',\La_\M,\La''}(\Res^{\J_{\La_\M}}_{\J_{\La',\La_\M}}\kappa_{\M,\cong})\\
 &=& (\Res^{\J_{\La''}}_{\J_{\La',\La''}}\circ\Psi_{\La'',\La_\M,\La''})(\Res^{\J_{\La_\M}}_{\J_{\La'',\La_\M}}\kappa_{\M,\cong})\\
 &=& \Res^{\J_{\La''}}_{\J_{\La',\La''}}\kappa''_\cong.
\end{eqnarray*}
Thus~\eqref{eqk'k''} (with~$\La=\La'$) is satisfied. We restrict~\eqref{eqk'k''} to~$\P^0_{\La',\La'}$ (This is~$\P^0(\La'_\b)\P_1(\La')$) to obtain:
\begin{equation}\label{eqCompatibilityDefPsi0}
 \ind_{\J^0_{\La',\La''}}^{\P^0_{\La',\La'}}\kappa''|_{\J^0_{\La',\La''}}\cong \ind_{\J_{\La'}^0}^{\P^0_{\La',\La'}}\kappa'^0
\end{equation}
Both sides are irreducible, because their restrictions to~$\P_{\La'}^1$ are. 
This implies that~$\kappa'^0$ uniquely determines the isomorphism class of the restriction of~$\kappa''^0_\cong$ to~$\J^0_{\La',\La''}$, 
because, as in the proof of Lemma~\ref{lemMutMutStev4.3},~$\eta_{\La',\La''}$ has multiplicity one in the left hand side of~\eqref{eqCompatibilityDefPsi0}. 
% by Frobenius reciprocity, Mackey decomposition and~$\I_{\P^0_{\La',\La'}}(\eta_{\La''})=\J^0_{\La',\La''}$
Now mutatis mutandis as in the proof of~\cite[4.10]{stevens:08} one shows that there is only one element of~$\bext_{\La_\M}^0(\La'')$ 
extending~$\Res^{\J^0_{\La''}}_{\J^0_{\La',\La''}}\kappa''^0_\cong$. Thus~$\kappa''^0_\cong$ is uniquely determined by~$\kappa'^0_\cong$.
This shows that~$\Psi^0_{\La',\La''}$ is well-defined. On the other hand the restriction of~$\kappa''$ to~$\J^0_{\La',\La''}$ uniquely determines~$\kappa'^0_\cong$,
by equation~\eqref{eqCompatibilityDefPsi0}. Hence the injectivity of~$\Psi^0_{\La',\La''}$. 
\end{proof}

\begin{proposition}\label{propBetakappaetamixed}
Let~$\kappa'_\cong\in\bext_{\La_\M}(\La')$. Then, granted~$\timfb(\La)\subseteq\timfb(\La')$, we have that~$\eta_{\La,\La'}$ is isomorphic to the restriction of~$\kappa'$ to~$\J^1_{\La,\La'}$. 
\end{proposition}

\begin{proof}
 We consider the segment from~$\La_\M$ to~$\La'$ in the building~$\mf{B}(\G_\beta))$ of~$\G_\beta$ and pairwise different points
 \[\La_\M=\La_0,\La_1,\La_2,\ldots,\La_{l-1},\La_u=\La'\]
 on the segment such that for all indexes~$s\in\{1,\ldots,u\}$ the condition
 \[\timfa(\La_{s-1})\cap\timfa(\La_s)\in\{\timfa(\La_{s-1}),\timfa(\La_s)\}\]
 is satisfied, i.e. there is a facet in~$\mf{B}(\G)$ with respect to the weak structure, containing one of the points~$\La_{s-1}\ ,\La_s$ and such that its closure contains both points. We further get
 \[\timfb(\La_\M)\supseteq\timfb(\La_1)=\ldots\timfb(\La_{u-1})=\timfb(\La')\] 
 because~$\La_1,\La_2,\ldots,\La_{u-1}$ are elements of the same facet in the building of~$\G_\beta$. 
 By the construction of~$\beta$-extensions there is a~$\beta$-extension~$\kappa_\M$ for~$\La_\M$ such that \[\Phi_{\La',\La_\M,\La'}((\kappa_\M)_\cong)=\kappa'_\cong.\]
 The map~$\Phi_{\La',\La_\M,\La'}$ decomposes as
 \[\Phi_{\La',\La_\M,\La'}=\Phi_{\La',\La_{u-1},\La_u}\circ\Phi_{\La',\La_{u-2},\La_{u-1}}\circ\ldots\circ\Phi_{\La',\La_0,\La_1}\]
 and we prove by induction that, for every~$s$ from~$0$ to~$u$, the restriction to~$\J^1_{\La,\La_s}$ of a representation~$\kappa_s$ in the isomorphism class~$\Phi_{\La',\La_\M,\La_s}((\kappa_\M)_\cong)$ is isomorphic to~$\eta_{\La,\La_s}$.
 The base case~$(s=0)$ follows from Proposition~\ref{propExisteceOfbextLaM}\ref{propExisteceOfbextLaM-ii}. And for positive~$s$ we apply Lemma~\ref{lemIndepOfLa} and Lemma~\ref{lemMutMutStev4.3} to obtain
 the isomorphism 
 \[\ind_{J_{\La,\La_{s-1}}}^{\P_{\La''_s,\La''_s}}\kappa_{s-1}\cong\ind_{J_{\La,\La_{s}}}^{\P_{\La''_s,\La''_s}}\kappa_s\]
 where~$\La''_s$ is a lattice sequence which satisfies
 \begin{itemize}
  \item $\timfa(\La''_s)\subseteq\timfa(\La_{s-1})\cap\timfa(\La_s)$ and 
  \item $\timfb(\La''_s)=\timfb(\La)$.
 \end{itemize}
We can find such a lattice sequence on the segment between~$
 \frac12\Gamma_{\La_{s-1}}+\frac12\Gamma_{\La_s}$ and~$\Gamma_{\La}$. 
 We restrict the compatibility to~$\P^1_{\La''_s,\La''_s}$ to obtain by induction hypothesis
 the isomorphism
 \[\ind_{J^1_{\La,\La_{s-1}}}^{\P^1_{\La''_s,\La''_s}}\eta_{\La,\La_{s-1}}\cong\ind_{J^1_{\La,\La_{s}}}^{\P_{\La''_s,\La''_s}^1}\Res^{\J_{\La_s}}_{\J^1_{\La,\La_s}}(\kappa_s).\]
 The latter restriction of~$\kappa_s$ is therefore isomorphic to~$\eta_{\La,\La_s}$ by Proposition~\ref{propEtaLaLa}. 
 \end{proof}

\subsection{$\beta$-extensions for general linear groups}
We have a similar theory of $\beta$-ex\-tensions for~$\tilde{\G}$. 
We only recall the definition for~$\beta$-extensions
for the case of maximal compact subgroups. 
Let~$\Delta=[\Lambda,n,0,\beta]$ be a semisimple stratum such that~$\tilde{\P}(\Lambda_\beta)$ is a maximal compact subgroup of~$\tilde{\G}$. We fix an element~$\tilde{\theta}$ of~$\tilde{\C}(\Delta)$ 
 on~$\tilde{\H}^1_\Lambda$. 
Let~$\tilde{\eta}$ be 
the Heisenberg representation of~$\tilde{\J}^1_\Lambda$ 
containing~$\tilde{\theta}$, see Proposition~\ref{propHeisGL}.  We denote by~$\beta-\widetilde{\ext}(\Lambda)$ the set of 
all representations~$\tilde{\kappa}$ of 
 $\tilde{\J}_\Lambda:=\tilde{\J}(\beta,\Lambda)$ such that
\begin{itemize}
 \item The restriction of~$\tilde{\kappa}$  to~$\tilde{\J}^1_\Lambda$ is isomorphic to~$\tilde{\eta}$ and 
 \item the restriction of~$\tilde{\kappa}$ to a pro-$p$-Sylow subgroup of~$\tilde{\J}_\Lambda$ is 
 intertwined by every element of the centralizer~$\tilde{\G}_\beta$. 
\end{itemize}

See~\cite{stevens:08} for the case~$\D=\F$ and~\cite{secherreI:04} for the simple case.

\section{Cuspidal types}\label{secCuspType}

 In this section we construct cuspidal types for~$\G$, similar to~\cite{stevens:08} and~\cite{kurinczukStevens:19} for~$\G\otimes\L$ (cf.~\cite{bushnellKutzko:93},~\cite{secherreII:05}), and we follow their proofs. 
 Let~$\theta\in\C(\Delta)$ with~$r=0$ be a self-dual semisimple character with Heisenberg representation~$(\eta,\J^1)$.
From now on we skip the lattice sequence from the subscript if there is no cause of confusion, e.g. we write~$\J^1,\J,\eta$ for~$\J^1_\La,\J_{\La},\eta_\La$. 

\begin{definition}[cf.~\cite{MiSt} Definition~3.3]
 Let~$\rho$ be an irreducible~$\CoefField$-representations of $\P(\Lambda_\b)$ whose restriction to~$\P^0(\Lambda_\b)$ is an inflation of a direct sum  of cuspidal irreducible representations of~$\bbP(\Lambda_\b)^0(\k_\F)$. We call such a~$\rho$ a~\emph{cuspidal inflation} w.r.t.~$(\Lambda,\beta)$. 
 Let~$\kappa$ be a~$\b$-extension of~$\eta$ (with respect to a self-dual~$\o_\E$-$\o_\D$-lattice sequence~$\La_\M$ with~$\timfb(\La_\M)$ maximal such that~$\timfb(\La_\M)$ contains~$\timfb(\La)$). We  call the representation~$\lambda:=\kappa\otimes\rho$ a \emph{cuspidal type} of~$\G$ if \vspace{-0.2cm}
 \begin{itemize}
  \item the parahoric~$\P^0(\Lambda_\b)$ is maximal and 
  \item the centre of~$\G_\beta$ is compact.  
 \end{itemize}
\end{definition}

\begin{remark}\label{remCuspTypeMustBeSuppBySkewStratum}
 If~$\l$ is a cuspidal type then the underlying stratum~$\Delta$ has to be skew, i.e. the action of~$\sigma_h$ 
 on the index set~$\I$ is trivial, because of the compactness of the centre of~$\G_\beta$. 
\end{remark}
% 
% We can give a first relation between extensions~$\kappa_\L$ of~$\eta_\L$ as in the above Proposition and~$\b$-extensions of~$\eta$ in the following way:  
% 
% \begin{proposition}\label{propkappaLkapparho}
%  Let~$\kappa_\L$ be given as in Proposition~\ref{propTauFixedEstensionOfEta_L} and let~$\kappa$ be a~$\b$-extension of~$\eta$ (with respect to some maximal~$\timfb(\La_\M)$). There is a smooth representation~$\rho$ of~$\J/\J^1$ such that~$\kappa_\L|_{\J}=\kappa\otimes \rho$.
%  Moreover we have that~$\kappa\otimes\rho$ is irreducible if an only~$\rho$ is irreducible.
% \end{proposition}
% 
% 
% \begin{proof}
%  The restriction of~$\kappa_\L$ to~$\J^1$ is a finite multiple of~$\eta$ and therefore every irreducible 
%  constituent of~$\kappa_\L|_\J$ is of the form~$\kappa\otimes\rho'$ for some inflation~$\rho'$ of an irreducible representation of~$\J/\J^1$, by Lemma~\ref{lemTensProdOffEnd}\ref{lemTensProdOffEndPartii}. This finishes the first part. 
%  The remaining part is covered by~\ref{lemTensProdOffEnd}\ref{lemTensProdOffEndParti}.
% \end{proof}

The main motivation for the definition of a~$\beta$-extension is the following theorem, which is assertion~\ref{thmMain-ii} in Theorem~\ref{thmMain}: 
\begin{theorem}[cf.{~\cite[Theorem~6.18]{stevens:08},~\cite[Theorem~12.1]{kurinczukStevens:19}}]\label{thmCuspType}
 Let~$\lambda$ be a cuspidal type. Then~$\ind_\J^\G\lambda$ is a cuspidal irreducible representation of~$\G$. 
\end{theorem}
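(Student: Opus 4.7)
The plan is to follow the two-stage strategy of~\cite[Theorem~6.18]{stevens:08} and~\cite[Theorem~12.1]{kurinczukStevens:19}: first establish that $\pi := \ind_\J^\G \lambda$ is irreducible by computing $\I_\G(\lambda)$, then deduce cuspidality from the cuspidality of $\rho$ on the finite reductive quotient of the maximal parahoric $\P^0(\La_\b)$. By~\cite[(4.1.5)]{bushnellKutzko:93}, since $\J$ is compact open and $\lambda$ smooth irreducible, $\pi$ is irreducible iff $\I_\G(\lambda) = \J$. The representation $\lambda = \kappa \otimes \rho$ is itself irreducible on $\J$: since $\kappa|_{\J^1} \cong \eta$ is already irreducible, $\End_\J(\kappa) = \End_{\J^1}(\kappa) = \CoefField$; and since $\rho$ is trivial on the pro-$p$ subgroup $\J^1$, Lemma~\ref{lemTensProdOffEnd}\ref{lemTensProdOffEndParti} (with $\K = \J$, $\K_2 = \J^1$) delivers irreducibility. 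The initial containment for the intertwining is $\I_\G(\lambda) \subseteq \I_\G(\eta) = \J^1 \G_\b \J^1$ by Proposition~\ref{propEtaLa}, so any intertwining element writes as $g = u_1 b u_2$ with $u_i \in \J^1$ and $b \in \G_\b$, and $b$ itself intertwines $\lambda$.

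Next, I would narrow $b$ using the structure of $\kappa$ and $\rho$. Starting from Proposition~\ref{propExisteceOfbextLaM}\ref{propExisteceOfbextLaM-iii} in the maximal case $\La_\M$, transferring to $\La$ by the bijection $\Psi^0_{\La',\La''}$ of Theorem~\ref{thmbetaext} and applying Proposition~\ref{propIntPsi} (which preserves $\G_\b$-intertwining under the $\Psi$ map), one obtains that every $b \in \G_\b$ intertwines $\kappa$, with $\I_{\G_\b}(\kappa)$ carried by $\P(\La_\b)\cdot(\J^1 \cap \G_\b)$. The factor $\rho$, inflated from an irreducible representation of $\bbP(\La_\b)(\k_\F)$ whose restriction to $\bbP(\La_\b)^0(\k_\F)$ is a direct sum of cuspidals, constrains $b$ further: using Lemma~\ref{lemIntProp} to reduce intertwining of $\rho$ to an intertwining inside the finite reductive quotient, maximality of $\P^0(\La_\b)$ (no proper parahoric strictly containing it in $\G_\b$) combined with compactness of $Z(\G_\b)$ (so no central element can shift $\La_\b$ to a distinct vertex) force $b \in \P(\La_\b) \subseteq \J$.

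Cuspidality now amounts to the vanishing of $\pi_{\N'}$ for every proper parabolic $\P' = \M'\N'$ of $\G$. By a standard double-coset / Bruhat argument (compare~\cite[\S6]{stevens:08}), this reduces to checking that for no $g \in \G$ does an average of $\lambda$-vectors over $g\N' g^{-1} \cap \J$ survive. Filtering through $\J^1$ turns this into an averaging problem on the finite reductive quotient $\J/\J^1 \cong \bbP(\La_\b)(\k_\F)$: the image of $g\N' g^{-1} \cap \J$ sits inside the unipotent radical of some proper parabolic of this finite group, and since $\rho|_{\P^0(\La_\b)}$ is a direct sum of cuspidal representations of the connected reductive finite group $\bbP(\La_\b)^0(\k_\F)$, the usual cuspidality criterion for finite reductive groups forces these averages to vanish, hence $\pi$ is cuspidal.

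The main obstacle is the second paragraph: pinning down $\I_{\G_\b}(\kappa)$ exactly to $\P(\La_\b)\cdot(\J^1 \cap \G_\b)$ requires the delicate interplay of the $\beta$-extension machinery of Section~\ref{secBetaExt} (Theorem~\ref{thmbetaext} and Proposition~\ref{propIntPsi}) with the intersection identity Lemma~\ref{lemIntProp} and the compactness of $Z(\G_\b)$ — which, via Remark~\ref{remCuspTypeMustBeSuppBySkewStratum}, also guarantees that $\Delta$ is skew, so that no unramified central torus can enlarge the intertwining.
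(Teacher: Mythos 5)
Your proposal has a genuine gap: it treats irreducibility as equivalent to $\I_\G(\lambda)=\J$ via \cite[(4.1.5)]{bushnellKutzko:93}, but that equivalence only holds over $\CC$ (or in the banal modular case). The paper works over a general algebraically closed $\CoefField$ with $p_\CoefField\neq p$ possibly positive, and for $p_\CoefField>0$ trivial intertwining is not sufficient for the compact induction to be irreducible. The paper instead appeals to Vigner\'as's criterion \cite[4.2]{vigneras:01}, which demands \emph{two} verifications: your Part~1 ($\I_\G(\lambda)=\J$) and additionally that $\lambda$ is a direct summand of $\pi'|_\J$ whenever $\pi'$ is an irreducible smooth representation containing $\lambda$. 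The paper supplies this second part by the isotypic-component trick of Lemma~\ref{lemIsotypic} together with the computation $\pi^\eta=\lambda$ from Lemma~\ref{lemetaIsotypicinInduced}\ref{lemetaIsotypicinInduced-ii}: an irreducible $\pi'\supseteq\lambda$ is a quotient of $\pi$ by Frobenius reciprocity, so $\lambda=\pi^\eta\twoheadrightarrow\pi'^\eta\supseteq\lambda$, forcing $\pi'^\eta=\lambda$ to be a summand. Your proposal has no analogue of this step, and without it the argument breaks down in positive characteristic.

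Two secondary remarks. First, for the intertwining computation the paper does not route through the $\Psi$-machinery of Proposition~\ref{propIntPsi} and Theorem~\ref{thmbetaext} as you suggest; it restricts $\lambda$ to $\J^0$, picks an irreducible constituent $\kappa_0\otimes\rho_0$, uses that a $g$-intertwiner must factor as $S\otimes T$ with $S\in\I_g(\eta)$ one-dimensional, then the pro-$p$-Sylow characterization of $\beta$-extensions from Proposition~\ref{propExisteceOfbextLaM}\ref{propExisteceOfbextLaM-iii} forces $T$ to intertwine $\rho_0$ on a pro-$p$-Sylow, and Morris theory (\cite[Lemma~7.4]{kurinczukStevens:19}, \cite[Prop.~1.1(ii)]{stevens:08}) gives $g\in\P(\La_\b)$ -- a more direct route than yours. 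Second, your final paragraph on cuspidality via Jacquet modules is unnecessary: once irreducibility of $\ind_\J^\G\lambda$ is established with $\J$ compact open (and $\G$ has compact centre), cuspidality is automatic, which is exactly what the paper uses implicitly. That paragraph is also incomplete as stated (it does not explain why the image of $g\N'g^{-1}\cap\J$ lands in a unipotent radical of the finite quotient, nor does it account for the factor $\kappa$), but since it is not needed, the real issue remains the missing modular half of the irreducibility proof.
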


The proof in case that~$\CoefField$ has
positive characteristic needs an extra lemma. We write $\mult_\eta(\pi)$ for the multiplicity of~$\eta$ in a smooth representation~$\pi$.

\begin{lemma}[cf.~\cite{kurinczukStevens:19} Corollary~8.5(ii)]\label{lemetaIsotypicinInduced}
 Let~$(\lambda=\kappa\otimes\rho,\J)$ be a cuspidal type. Put~$\pi=\ind_{\J}^\G\rho$. Then
 \begin{enumerate}
  \item\label{lemetaIsotypicinInduced-i} $\Hom_{\J^1}(\kappa,\pi)\simeq\rho$ over~$\J$ and 
  \item\label{lemetaIsotypicinInduced-ii} $\pi^\eta=\lambda$ ($\lambda$ is canonically contained in~$\pi$).
 \end{enumerate}
\end{lemma}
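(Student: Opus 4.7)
The plan is to derive (ii) from (i) by a multiplicity count, and to prove (i) by Mackey--Frobenius combined with the intertwining formula for $\eta$ together with the cuspidality of $\rho$. (Note: the statement should read $\pi = \ind_\J^\G \lambda$; the symbol $\rho$ under the induction is surely a typo.)

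For (i), I would first apply the Mackey decomposition of $\Res_{\J^1}^\G\ind_\J^\G\lambda$ and Frobenius reciprocity to obtain
\[
\Hom_{\J^1}(\kappa,\pi) \;\cong\; \bigoplus_{g \in \J^1\backslash\G/\J} \Hom_{\J^1 \cap {}^g\J}\bigl(\kappa,\,{}^g\lambda\bigr).
\]
Since $\kappa|_{\J^1} = \eta$ is irreducible and $\lambda|_{\J^1}$ is isotypic of type $\eta$, a necessary condition for the $g$-summand to be nonzero is that $g$ intertwines $\eta$. By Proposition~\ref{propEtaLa}, $\I_\G(\eta) = \J^1\G_\b\J^1$, so after absorbing $\J^1$-factors into the double coset one is reduced to a sum over representatives in $\G_\b$, indexed by $\P^1(\Lambda_\b)\backslash \G_\b/\P(\Lambda_\b)$ (using Remark~\ref{remCuspTypeMustBeSuppBySkewStratum} and $\J = \J^1\P(\Lambda_\b)$).

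Next I would separate the contribution from $g \in \P(\Lambda_\b) \subseteq \J$ from that of $g \in \G_\b \setminus \P(\Lambda_\b)$. In the first case, $\J^1 \cap {}^g\J = \J^1$ and $\Hom_{\J^1}(\kappa, \lambda) = \Hom_{\J^1}(\kappa,\kappa\otimes\rho) \cong \rho$ by irreducibility of $\kappa|_{\J^1}$; since $\kappa$ extends to $\J$ the natural $\J$-action on this Hom space is precisely $\rho$. In the second case I need to show the Hom space vanishes: factor it through the one-dimensional intertwining $\Hom_{\J^1\cap{}^g\J^1}(\kappa,{}^g\kappa)$ provided by Proposition~\ref{propEtaLaLa}, so that the question is transported to an intertwining statement for $\rho$ at the level of the reductive quotient $\bbP(\Lambda_\b)(\k_\F)$. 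Since $\P^0(\Lambda_\b)$ is a maximal parahoric of $\G_\b$ and $\G_\b$ has compact centre, cuspidality of $\rho$ on $\P^0(\Lambda_\b)$ forces this intertwining to vanish for $g \notin \P(\Lambda_\b)$. This is the technical heart of the argument; everything else is formal.

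Part (ii) then follows by a dimension count. By Frobenius reciprocity $\lambda$ embeds canonically into $\pi|_\J$, and since $\lambda|_{\J^1}$ is $\eta$-isotypic this embedding factors through $\pi^\eta$ (in the sense of Lemma~\ref{lemIsotypic} applied with $\K = \J$, $\K^1 = \J^1$). On the other hand (i) gives
\[
\dim \pi^\eta \;=\; (\dim\eta)\cdot\dim\Hom_{\J^1}(\eta,\pi) \;=\; (\dim\eta)(\dim\rho) \;=\; \dim\lambda,
\]
using that $\Hom_{\J^1}(\kappa,\pi) = \Hom_{\J^1}(\eta,\pi)$ as vector spaces. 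Equality $\pi^\eta = \lambda$ follows.

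The main obstacle will be the vanishing of the contributions from double cosets $\P(\Lambda_\b)g\P(\Lambda_\b)$ with $g \not\in \P(\Lambda_\b)$: this is where cuspidality of $\rho$, together with the maximality of $\P^0(\Lambda_\b)$ and compactness of $Z(\G_\b)$, must be used in an essential way. I would expect to mirror the corresponding step in \cite[Corollary~8.5(ii)]{kurinczukStevens:19}, using the $\b$-extension property of $\kappa$ (Proposition~\ref{propExisteceOfbextLaM}(\ref{propExisteceOfbextLaM-iii})) to reduce the intertwining computation to the finite reductive quotient where Harish-Chandra cuspidality applies directly.
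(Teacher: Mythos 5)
For part~(ii) your argument is identical to the paper's: both are the multiplicity count $\mult_\eta(\pi^\eta)=\dim_\CoefField\Hom_{\J^1}(\eta,\pi)=\dim_\CoefField\rho=\mult_\eta(\lambda)$, combined with the canonical embedding $\lambda\hookrightarrow\pi^\eta$ from Frobenius reciprocity and Lemma~\ref{lemIsotypic}. (You are also right that $\pi=\ind_\J^\G\rho$ in the statement is a typo for $\ind_\J^\G\lambda$.)

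For part~(i) the paper gives no argument at all beyond the citation: it asserts that the proof of \cite[Corollary~8.5(ii)]{kurinczukStevens:19} goes through verbatim, and the verification that this is legitimate for the quaternionic $\G$ is postponed to \S\ref{secConjCuspTypes}, where it is reduced to the exactness of the diagram from \cite[Lemma~5.2]{kurinczukStevens:19} after taking $\Gal(\L|\F)$-fixed points. That cited result is itself proved in Kurinczuk--Stevens using the ``Morris functor'' machinery --- the adjoint pairs $(\R_\kappa,\I_\kappa)$, $(\R_{\Gamma_\beta},\I_{\Gamma_\beta})$, $(\R^0_{\Gamma_\beta},\I^0_{\Gamma_\beta})$ reproduced in \S\ref{secConjCuspTypes} --- and a comparison isomorphism reducing $\R_\kappa\circ\I_\kappa$ to the level-zero setting, rather than the direct Mackey computation you propose. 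Your Mackey--Frobenius route is therefore genuinely different in style, even though it must consume the same ultimate ingredients: the intertwining formula $\I_\G(\eta)=\J^1\G_\beta\J^1$ (Proposition~\ref{propEtaLa}), one-dimensionality of $\eta$-intertwining, the pro-$p$-Sylow characterization of $\beta$-extensions (Proposition~\ref{propExisteceOfbextLaM}), cuspidality of $\rho$ and maximality of $\P^0(\Lambda_\b)$. What your direct approach buys is transparency about where each hypothesis enters; what the paper's functorial citation buys is that the same machinery is reused wholesale in \S\ref{secConjCuspTypes} for the intertwining-implies-conjugacy theorem, so proving \cite[8.5]{kurinczukStevens:19} once pays twice.

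One technical point you should not gloss over: the vanishing step concerns $\Hom_{\J^1\cap{}^g\J}(\kappa,{}^g\lambda)$ for $g\in\G_\b\setminus\P(\Lambda_\b)$, and the group $\J^1\cap{}^g\J$ is \emph{not} of the form $\J^1\cap{}^g\J^1$; it contains pro-$p$ elements coming from ${}^g\P(\Lambda_\b)$. The ``transport to the finite reductive quotient'' you invoke is therefore not an immediate consequence of the one-dimensional $\eta$-intertwining: you need an Iwahori-type decomposition of $\J^1\cap{}^g\J$ matching a parahoric decomposition in $\G_\b$ (this is where \cite[Lemma~7.4]{kurinczukStevens:19} / \cite[Proposition~1.1(ii)]{stevens:08} enter in Theorem~\ref{thmCuspType}), so that the unipotent radical of a proper parabolic of $\bbP^0(\Lambda_\b)(\k_\F)$ appears inside the image, after which cuspidality of $\rho$ kills the invariants. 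You flag this as ``the technical heart'' and that is exactly right --- but as written it is the one place where your sketch leans on the cited result rather than replacing it.
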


\begin{proof}
 \begin{enumerate}
  \item The proof of~\cite[Corollary~8.5(ii)]{kurinczukStevens:19}
  is valid without changes in our situation. 
  \item The representation~$(\lambda,\J)$ is contained in~$\pi^\eta$ and 
  \[\mult_\eta(\pi^\eta)=\dim_\CoefField\Hom_{\J^1}(\eta,\pi)=\dim_\CoefField(\rho)=\mult_\eta(\lambda)\]
  by~\ref{lemetaIsotypicinInduced-i}. This finishes the proof. 
 \end{enumerate}
\end{proof}

% The proof of part one is mutatis mutandis to~\cite[Theorem~6.18]{stevens:08}, where the key idea can also be found~\cite{bushnellKutzko:93}. 
% We repeat the proof to show that it does only depend on the definition of a~$\beta$-extension and~\cite[1.1.(ii)]{stevens:08}.
% One does not need~\cite[6.16]{stevens:08}. 
\begin{proof}[Proof of Theorem \ref{thmCuspType}]
 Let~$\lambda=\kappa\otimes\rho$ be a cuspidal type. Put~$\pi:=\ind_\J^\G\lambda$. We want to apply the irreducibility criterion~\cite[4.2]{vigneras:01} to show that~$\pi$ is irreducible and hence cuspidal irreducible. We have to show two parts:
 \begin{itemize}
  \item[Part 1:] $\I_\G(\lambda)=\J$ (which already implies the irreducibility of~$\pi$ if~$\CoefField$ has characteristic zero) and 
  \item[Part 2:] The representation~$\lambda$ is a direct summand of every smooth irreducible representation of~$\G$ whose restriction to~$\J$ has~$\lambda$ as a subrepresentation.    
 \end{itemize}

Part 1: The proof of this part is similar to the proof of~\cite[Proposition~6.18]{stevens:08}, but we are going to give a simplification avoiding the use of~\cite[Corollary~6.16]{stevens:08}. 

 Take an irreducible component~$(\rho_0,W_{\rho_0})$ of~$\rho|_{\J^0}$ and denote
~$\kappa_0=\kappa|_{\J^0}$. Then~$\kappa_0\otimes\rho_0$ is irreducible
 by Lemma~\ref{lemTensProdOffEnd}\ref{lemTensProdOffEndParti}. 
 The restriction of~$\lambda$ to~$\J^0$ is equivalent to a direct sum of~$\J/\J^0$-conjugates
 of~$\kappa_0\otimes\rho_0$, because~$\J^0$ is a normal subgroup of~$\J$. Note that~$\J/\J^0$ is isomorphic to
 $\P(\Lambda_\b)/\P^0(\Lambda_\b)$, so that the conjugating elements can be taken in~$\P(\Lambda_\b)$. 
 An element~$g\in\G$ which intertwines~$\lambda$ intertwines~$\kappa_0\otimes\rho_0$ up to~$\P(\Lambda_\b)$-conjugation, and it also intertwines~$\eta$. So it is an element of~$\J^1\G_\b\J^1$. We can therefore without loss of generality assume~$g$ as an element of~$\G_\b$ which intertwines~$\kappa_0\otimes\rho_0$. Hence as~$\I_g(\eta)$ is one-dimensional and the restriction of~$\rho_0$ to~$\J^1$ is trivial we obtain that a~$g$-intertwiner of~$\kappa_0\otimes\rho_0$, i.e. an non-zero element 
 of~$\I_g(\kappa_0\otimes\rho_0)$, has to be a tensor product of endomorphisms~$S\in\I_g(\eta)$ and~$T\in\End_\CoefField(\W_{\rho_0})$, see~\cite[Lemma 2.7]{kurinczukStevens:19}. 
 Let~$Q$ be a pro-p-Sylow subgroup of~$\J^0$. Then~$g$ is an element of $\I(\kappa|_Q)$ by the definition of~$\beta$-extension. In 
 particular~$S\in\I_g(\kappa|_Q)$, because~$\I_g(\eta)$ is~$1$-dimensional. Thus~$T\in\I_g(\rho_0|_Q)$. In particular~$g$ intertwines the 
 restriction of~$\rho$ to a pro-p-Sylow subgroup. Thus, by Morris theory, i.e. here~\cite[Lemma~7.4]{kurinczukStevens:19} and~\cite[Proposition~1.1(ii)]{stevens:08},~$g$ is an element of~$\P(\Lambda_\b)$. This finishes the proof of part one.
 
 Part 2: The proof is given in the proof of~\cite[Theorem~12.1]{kurinczukStevens:19}: An irreducible representation~$\pi'$ containing~$\lambda$ is a quotient of~$\pi$ by Frobenius reciprocity and we therefore have
 \[\lambda=\pi^\eta\twoheadrightarrow\pi'^\eta\supseteq\lambda\]
 by Lemma~\ref{lemetaIsotypicinInduced}\ref{lemetaIsotypicinInduced-ii}. Thus~$\pi'^\eta=\lambda$ and Lemma~\ref{lemIsotypic} finishes the proof.
\end{proof}

% 
% Further we obtain that~$\kappa$ has a ``big'' intertwining, and we can deduce it directly from the non-quaternionic case:
% 
% \begin{lemma}
%  Let~$g$ be an element of~$\G$ which intertwines~$\kappa_L$ then~$g$ intertwines~$\kappa$. 
% \end{lemma}
% 
% \begin{proof}
%  If~$\Phi$ is an element of~$\I_g(\kappa_L)$, then it is an element of~$\I_g(\kappa_L|_{\J})$ i.e. an element of
%  $\I_g(\kappa\otimes\rho)$ for some representation~$\rho$ of~$\J/\J^1$. Now we use that~$\dim_\bbC(\I_g(\eta))$ is one-dimensional. One shows mutatis mutandis to the proof of~\cite[5.2.3]{bushnellKutzko:93}
%  that~$\Phi$ is an element of $\I_g(\kappa)\otimes_\bbC\I_g(\rho)$. Thus~$g$ intertwines~$\kappa$.  
% \end{proof}
% 

% 
% \section{Cuspidal types}

\section{Partitions subordinate to a stratum}\label{secIwahori}
In the proof of the exhaustion in~\cite{stevens:08} the author has to pass to decompositions of~$\V$ which are so-called exactly subordinate 
to a skew-semisimple stratum~$\Delta$ 
(with~$r=0$), 
see~\cite[Definition 6.5]{stevens:08}.
In our situation of quaternionic forms we need to generalize this approach, 
because the centralizer of~$\E_i$ in~$\End_{\D}\V^i$ is not given by the 
same vector space~$\V^i$, if~$\b_i\neq 0$, i.e.~$\End_{\E_i\otimes\D}\V^i$ is isomorphic to~$\End_{\D_\b^i}\V^i_\b$, see \S\ref{subsecCentralizer} (We have~$2\dim_{\E_i}(\V_\b^i)=\dim_{\E_i}V^i$). 
We generalize the notion of decompositions of~$\V$ which are exactly sub-ordinate to a semisimple stratum
by certain families of idempotents. (This is indicated by the arguments given in~\cite[\S5]{stevens:08}.)
We fix a semisimple stratum~$\Delta$ with~$r=0$. 
\begin{definition}
\begin{enumerate}
 \item We call a finite tuple of non-zero idempotents~$(e^{(j)})_{j\in S}$ of~$\B=\End_{\E\otimes_\F\D}(\V)$ an~\emph{$\E\otimes_\F\D$-partition} of~$\V$ 
 if $e^{(j)}e^{(k)}=0$ for all~$j\neq k,$ $j,k\in S$, and~$\sum_je^{(j)}=1$. An~$\E\otimes_\F\D$-partition~$(e^{(j)})_{j\in S}$ is called a \emph{subordinate} to 
 ~$\Delta$ if~$(\W^{(j)})_{j\in S}$ with~$\W^{(j)}:=e^{(j)}\V$ is a splitting of~$\Delta$, or equivalently if~$(\W^{(j)})_{j\in S}$ is a splitting of~$\Lambda$,
 i.e.~$e^{(j)}\in\ti{\mf{a}}(\Lambda)$,~$j\in S$. 
 \item We call an~$\E\otimes_\F\D$-partition~$(e^{(j)})_{j\in S}$ of~$\V$~\emph{properly subordinate} to~$\Delta$ if it is subordinate to~$\Delta$ and the residue class~$e^{(j)}+\tilde{\mf{b}}_1(\Lambda)$ 
 in~$\tilde{\mf{b}}(\Lambda)/\tilde{\mf{b}}_1(\Lambda)$ is a central idempotent. 
 \end{enumerate}
\end{definition}

Analogously we have the notion of ``being self-dual-subordinate to a stratum'':
\begin{definition}
Suppose~$\Delta$ is a skew semisimple stratum. Let~$(e^{(j)})_{j\in S}$ be an~$\E\otimes_\F\D$-partition of~$\V$ subordinate to~$\Delta$. 
 The partition~$(e^{(j)})_{j\in S}$ is called~\emph{self-dual-subordinate} to~$\Delta$ if the set of the idempotents~$e^{(j)}$ is~$\sigma_h$-invariant 
 with at most one fixed point. As in~\cite{stevens:08} we are then going to use a set~$S:$
 \begin{equation}\label{eqS}\{\pm 1,\ldots,\pm m\}\subseteq S\subseteq\{0,\pm 1,\ldots,\pm m\}\end{equation} as the index set, such that~$\sigma_h(e^{(j)})=e^{(-j)}$, for all~$j\in S$. Note that we have~$S=\{0\}$ if~$m=0$.  
 %(We just have added~$e^{(0)}:=0$ if there was no~$\sigma_h$-fixed idempotent among the~$e^{(j)}$.) 
 An~$\E\otimes_\F\D$-partition self-dual-subordinate to~$\Delta$ is called~\emph{properly self-dual-subordinate} to~$\Delta$, if the partition is properly subordinate to~$\Delta$. 
 Suppose~$(e^{(j)})_{j\in S}$ is  properly self-dual-subordinate to~$\Delta$. We call it~\emph{exactly subordinate to}~$\Delta$ if it cannot be refined by another~$\E\otimes_\F\D$-partition of~$\V$  properly self-dual-subordinate 
 to~$\Delta$. 
\end{definition}
\begin{remark}\label{remarkExSubordinate}
Let~$(e^{(j)})_{j\in S}$ be a partition of~$\V$ exactly subordinate to a skew-semisimple stratum~$\Delta=[\Lambda,n,0,\beta]$. Then, for every non-zero index~$j\in S$,
there is exactly one index~$i_j\in\I$ such that~$e^{(j)}\1^{i_j}\neq 0$. 
\end{remark}

\begin{proof}
 The set
 \[(\{e^{(j)}\1^{(i)}|\ j\in S\setminus\{0\},\ i\in\I\}\setminus \{0\})\cup\{e^{(j)}|\ j\in S\cap\{0\}\}\]
 is a refinement of the partition~$(e^{(j)})_{j\in S}$. The latter is  exactly subordinate to~$\Delta$, so both partitions coincide. This finishes the proof. 
\end{proof}

These notions of partitions subordinate to a stratum enable Iwahori decompositions as in~\cite{stevens:08}.
Let~$(e^{(j)})_{j\in S}$ be a~$\E\otimes_\F\D$-partition of~$\V$. Let~$\tilde{\M}$ be the Levi subgroup of~$\ti{\G}$ defined as:
\[\tilde{\M}:=\tiG\cap(\prod_{j\in S}\End_{\D}(\W^{(j)})).\]
Let~$\ti{\P}$ be a parabolic subgroup of~$\tiG$ with Levi~$\ti{\M}$, and write~$\ti{\U}_+$ and~$\ti{\U}_-$ for the radical of~$\ti{\P}$ and the opposite 
parabolic~$\ti{\P}^{op}$, respectively. We write~$\M,\P,\U_+$ and~$\U_-$ for the corresponding intersections with~$\G$. 

\begin{lemma}[cf. \cite{stevens:08}~5.2,~5.10]\label{lemStevensIwahori}
 Suppose~$(e^{(j)})_{j\in S}$ is subordinate to~$\Delta$.
 \begin{enumerate}
\item\label{lemStevensIwahoriPart1}  Then~$\ti{\H}^1(\b,\La)$ and~$\ti{\J}^1(\b,\La)$ have Iwahori decompositions with respect to the product~$\ti{\U}_-\ti{\M}\ti{\U}_+$.
 Moreover, the groups~$\ti{\H}(\b,\La)$ and~$\ti{\J}(\b,\La)$ have a Iwahori decomposition with
 respect to~$\ti{\U}_-\ti{\M}\ti{\U}_+$ if~$(e^{(j)})_{j\in S}$ is properly subordinate to~$\Delta$.
 \item Suppose~$\Delta$ is skew-semisimple and that~$(e^{(j)})_{j\in S}$ is self-dual-subordinate to~$\Delta$. 
 Then the groups~$\H^1(\b,\La)$ and~$\J^1(\b,\La)$ have Iwahori decompositions with respect to $\U_-\M\U_+$.
 Additionally,~$\H(\b,\La)$ and~$\J(\b,\La)$ have a Iwahori decomposition with respect to~$\U_-\M\U_+$ if~$(e^{(j)})_{j\in S}$ is properly self-dual-subordinate to~$\Delta$.
\end{enumerate}
\end{lemma}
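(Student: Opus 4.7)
The plan is to follow Stevens' proofs of~\cite[5.2,~5.10]{stevens:08} mutatis mutandis. The common principle is that once one establishes a block decomposition of the underlying lattice data induced by the idempotents~$(e^{(j)})_j$, the Iwahori factorization for the associated groups follows by exponentiating~$1+(\cdot)$ and collecting matrix-entry pieces.

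For part~\ref{lemStevensIwahoriPart1}, subordination of~$(e^{(j)})_j$ to~$\Delta$ places each~$e^{(j)}$ in~$\tilde{\mathfrak{b}}(\Lambda)$, so each~$e^{(j)}$ commutes with~$\beta$ and preserves every filtration~$\tilde{\mathfrak{a}}_k(\Lambda)$ under both left and right multiplication. This produces a direct-sum decomposition
\[\tilde{\mathfrak{a}}_k(\Lambda)=\bigoplus_{i,j}e^{(i)}\tilde{\mathfrak{a}}_k(\Lambda)e^{(j)},\]
and, by running induction along the recursive construction of the lattices underlying~$\tilde{H}^1(\beta,\Lambda)$ and~$\tilde{J}^1(\beta,\Lambda)$ in~\textbf{I}, that block decomposition is inherited by these lattices as well. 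Exponentiating turns the block decomposition into the desired Iwahori factorization relative to~$\tilde{U}_-\tilde{M}\tilde{U}_+$. For~$\tilde{H}(\beta,\Lambda)$ and~$\tilde{J}(\beta,\Lambda)$, which properly contain~$\tilde{P}(\Lambda_\beta)$, the sharper \emph{properly subordinate} hypothesis is used: the image of each~$e^{(j)}$ in~$\tilde{\mathfrak{b}}(\Lambda)/\tilde{\mathfrak{b}}_1(\Lambda)$ is central, so~$\tilde{P}(\Lambda_\beta)$ is generated by its intersection with~$\tilde{M}$ together with the off-diagonal pro-$p$-pieces, and no cross-block movement is required.

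For part~(ii), the self-dual-subordination hypothesis gives~$\sigma_h(e^{(j)})=e^{(-j)}$, so with the symmetric indexing~$\{0,\pm 1,\ldots,\pm m\}$ and a compatible choice of~$\tilde{P}$ the involution~$\sigma$ interchanges~$\tilde{U}_+$ with~$\tilde{U}_-$ while stabilizing~$\tilde{M}$. Since~$\tilde{H}^1(\beta,\Lambda)$ and~$\tilde{J}^1(\beta,\Lambda)$ are~$\sigma$-stable, taking~$\sigma$-fixed points in the Iwahori decomposition of part~\ref{lemStevensIwahoriPart1} by means of~\cite[2.12]{kurinczukStevens:19} (cf.\ the argument in Lemma~\ref{lemIntProp}) yields the Iwahori decomposition of~$H^1(\beta,\Lambda)$ and~$J^1(\beta,\Lambda)$ relative to~$U_-MU_+$. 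The statement for~$H(\beta,\Lambda)$ and~$J(\beta,\Lambda)$ follows identically once the properly-self-dual-subordinate refinement has produced the Iwahori decomposition of~$\tilde{H}(\beta,\Lambda)$ and~$\tilde{J}(\beta,\Lambda)$.

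The main obstacle is the passage to~$\sigma$-fixed points: because~$\sigma$ swaps the two unipotent radicals rather than stabilizing each individually, this is not a direct Lang--Steinberg situation. The substantive input is the uniqueness of the Iwahori factorization, together with the vanishing of an appropriate first~$\sigma$-cohomology of the pro-$p$-unipotent factors, which holds because~$p$ is odd and~$|\langle\sigma\rangle|=2$; this is exactly what~\cite[2.12]{kurinczukStevens:19} packages. Once one has verified that the block decomposition produced in part~\ref{lemStevensIwahoriPart1} is itself~$\sigma$-equivariant, which is immediate from~$\sigma_h(e^{(j)})=e^{(-j)}$ and the self-duality of~$\Lambda$, the quaternionic version of~\cite[5.10]{stevens:08} is a formal consequence.
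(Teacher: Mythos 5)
Your route is genuinely different from the paper's, and one of its claims is incorrect.

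On the approach: the paper does not redo the block-decomposition induction over~$\D$. It base-extends to the quadratic unramified extension~$\L$, where~$\D\otimes_\F\L$ splits, so that Stevens' Lemmas~5.2 and~5.10 apply verbatim to the classical group~$\G\otimes\L$; this gives
\[
\ti{\H}^1(\b,\La)\subseteq\bigl(\ti{\H}^1(\b\otimes 1,\La_\L)\cap\ti{\U}_{-,\L}\bigr)\bigl(\ti{\H}^1(\b\otimes 1,\La_\L)\cap\ti{\M}_\L\bigr)\bigl(\ti{\H}^1(\b\otimes 1,\La_\L)\cap\ti{\U}_{+,\L}\bigr),
\]
and the $\tau$-invariance of the three factors together with the uniqueness of the Iwahori factorization with respect to~$\ti{\U}_{-,\L}\ti{\M}_\L\ti{\U}_{+,\L}$ brings the factorization down to~$\F$. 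All four assertions of the lemma are treated this way. Your plan---to check directly that the recursive construction of~$\ti{\H}^1(\b,\La),\ti{\J}^1(\b,\La)$ in~\textbf{I} respects the block decomposition~$\timfa_k(\La)=\bigoplus_{i,j}e^{(i)}\timfa_k(\La)e^{(j)}$---is Stevens' original argument reprised over a division algebra; it is plausible, but you only assert the inductive step rather than running it, and the base-extension trick imports the classical result without any re-proof.

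On the error: you write that~$\sigma$ interchanges~$\ti{\U}_+$ and~$\ti{\U}_-$ while stabilizing~$\ti{\M}$, and you build your ``main obstacle'' around that. With the symmetric indexing~$\{0,\pm 1,\ldots,\pm m\}$, $\sigma_h(e^{(j)})=e^{(-j)}$ and the ordering $-m<\cdots<0<\cdots<m$, the anti-involution~$\sigma_h$ sends the block~$(i,j)$ to the block~$(-j,-i)$; since~$i<j$ precisely when~$-j<-i$, this preserves the block upper-triangular structure. Composing with inversion, $\sigma$ preserves each of~$\ti{\U}_+$ and~$\ti{\U}_-$ individually (as it must: otherwise~$\U_\pm=\ti{\U}_\pm\cap\G$ would be trivial, and the Iwahori decompositions~$\U_-\M\U_+$ used throughout~\S\ref{secIwahori} would be vacuous). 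The $\sigma$-descent you propose is therefore routine---three $\sigma$-stable factors plus uniqueness of factorization---and none of the cohomological apparatus you invoke is needed at this point. The obstacle you identify does not exist; the actual content lies in establishing the $\ti{\G}$-factorization over~$\D$ in the first place, which the paper sidesteps by working over~$\L$.
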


\begin{proof}
 We just show the first assertion of~\ref{lemStevensIwahoriPart1}, because the other statements follow similarly. The idempotents satisfy
 $e^{(j)}\in\timfb(\La)\subseteq\timfb(\La_\L).$
 We apply \loccit\ to obtain for~$\ti{\M}_\L=\ti{\M}\otimes\L,\ti{\U}_{+,\L}=\ti{\U}_{+}\otimes\L$ and~$\ti{\U}_{-,\L}=\ti{\U}_{-}\otimes\L$:
%\[\ti{\H}^1(\b,\La)\subseteq \ti{\H}^1(\b\otimes 1,\La_\L)=
%(\ti{\H}^1(\b\otimes 1,\La_\L)\cap\ti{\U}_{-,\L})(\ti{\H}^1(\b\otimes 1,%\La_\L)\cap\ti{\M}_\L)(\ti{\H}^1(\b\otimes 1,\La_\L)\cap\ti{\U}_{+,\L}).%\]
 \[\ti{\H}^1(\b,\La)\subseteq
(\ti{\H}^1(\b\otimes 1,\La_\L)\cap\ti{\U}_{-,\L})(\ti{\H}^1(\b\otimes 1,\La_\L)\cap\ti{\M}_\L)(\ti{\H}^1(\b\otimes 1,\La_\L)\cap\ti{\U}_{+,\L}).\]
The~$\tau$-invariance of the three factors and the uniqueness of the Iwahori decomposition (w.r.t.~$\ti{\U}_{-,\L}\ti{\M}_\L\ti{\U}_{+,\L}$) gives the result.  
\end{proof}

Suppose that~$\Delta$ is skew-semisimple and 
that~$(e^{(j)})_{j\in S}$ is properly  self-dual-subordinate to~$\Delta$. 
As in~\S\ref{secConjCuspTypes}, as~$\Delta$ is fixed, we skip the parameters~$\La$ and~$\b$ for the sets~$\H^1,\J^1,\J,$ etc.. 
Let~$(\eta,\J^1)$ be the Heisenberg representation of 
a self-dual semisimple character~$\theta$ and let~$\kappa$ be a~$\beta$-extension of~$\eta$ with respect to some maximal~$\o_\E$-$\o_\D$-lattice sequence. 
Analogously to~\cite{stevens:08} we can introduce representations~$(\theta_\P)$,~$(\eta_\P,\J^1_\P)$ and~$(\kappa_\P,\J_\P)$. 
The corresponding groups are defined via: 
% \[\J^1_\P=(\H^1\cap\U_-)(\J^1\cap\P)=(\H^1\cap\U_-)(\J^1\cap\M)(\J^1\cap\U_+),\ \J_\P=(\H^1\cap\U_-)(\J\cap\P)=(\H^1\cap\U_-)(\J\cap\M)(\J^1\cap\U_+).\]
\[\J^1_\P=(\H^1\cap\U_-)(\J^1\cap\P)=(\H^1\cap\U_-)(\J^1\cap\M)(\J^1\cap\U_+)\]
and
\[\J_\P=(\H^1\cap\U_-)(\J\cap\P)=(\H^1\cap\U_-)(\J\cap\M)(\J^1\cap\U_+).\]
At first one extends~$\theta$ to a character~$\theta_\P$ trivially to~$\H^1_\P:=(\H^1\cap\U_-)(\H^1\cap\M)(\J^1\cap\U_+)$, i.e. via
\[\theta_\P(xy):=\theta(x),\ x\in\H^1,\ y\in (\J^1\cap\U_+).\]
We define~$(\eta_\P,\J^1_\P)$ as the natural representation (given by~$\eta$) on the set of~$(\J^1\cap\U_+)$-fixed vectors of~$\eta$. Similarly, 
we define~$(\kappa_\P,\J_\P)$ on the set of~$\J\cap\U_+$-fixed points, using~$\kappa$. 

Then we have the following properties: 
\begin{proposition}[cf. \cite{stevens:08}~Lemma~5.12,~Proposition~5.13 for~$\G\otimes\L$]\label{propStevens_5_12_5_13_etaP}
 $\kappa_\P$ is an extension of~$\eta_\P$ and~$\eta_\P$ is the Heisenberg representation of~$\theta_\P$ on~$\J^1_\P$. 
 Further we have~$\ind_{\J^1_\P}^{\J^1}\eta_\P\cong\eta$ and~$\ind_{\J_\P}^{\J}\kappa_\P\cong\kappa$. Further the representation~$\eta_\P$ occurs with multiplicity one in~$\eta$.
\end{proposition}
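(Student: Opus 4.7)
The plan is to follow the arguments of \cite[Lemma~5.12 and Proposition~5.13]{stevens:08} for $\G\otimes\L$ and to transfer them to $\G$ by the Galois-descent/Glauberman framework already used throughout \S\ref{secHeisenberg}. Since the partition $(e^{(j)})_j$ lies in $\B$, each idempotent is $\Gal(\L|\F)$-invariant, so $\M$, $\U_\pm$, $\P$ are the $\tau$-fixed subgroups of their $\L$-counterparts, and the truncated subgroups $\H^1_\P$, $\J^1_\P$, $\J_\P$ are the $\tau$-fixed subgroups of $\H^1_{\P,\L}$, $\J^1_{\P,\L}$, $\J_{\P,\L}$. First I would observe that $\theta_\P$ agrees with the restriction of Stevens' character $\theta_{\P,\L}$ to $\H^1_\P$; being a character is then inherited, and the Iwahori decompositions of Lemma~\ref{lemStevensIwahori} ensure the product structure is compatible.

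Next I would show that $\eta_\P$ is the Heisenberg representation of $\theta_\P$ on $\J^1_\P$. The argument reduces to two points. The first is non-degeneracy of the commutator form $k_{\theta_\P}$ on $\J^1_\P/\H^1_\P$: I would mimic the Galois-descent argument of Lemma~\ref{lemHeisenberg}. Non-degeneracy of $k_{\theta_{\P,\L}}$ holds over $\L$ by Stevens, and if $\bar x\in\J^1_\P/\H^1_\P$ lies in the radical then
\[
k_{\theta_{\P,\L}}(\bar x,\bar y)^2 \;=\; k_{\theta_{\P,\L}}\bigl(\bar x,\bar y\,\tau(\bar y)\bigr),
\]
which vanishes since $\bar y\,\tau(\bar y)$ lies in the $\F$-quotient by triviality of the first Galois cohomology of $\H^1_{\P,\L}$; thus $k_{\theta_\P}(\bar x,\cdot)$ is trivial. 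The second point is that $\dim\eta_\P=(\J^1_\P:\H^1_\P)^{1/2}$, which can be read off from the $(\J^1\cap\U_+)$-fixed-vector definition of $\eta_\P$ or inherited from the $\L$-case via Glauberman. Applying \cite[\S8]{bushnellFroehlich:83} to $(\J^1_\P,\theta_\P)$ then identifies $\eta_\P$ uniquely as the Heisenberg representation over $\theta_\P$.

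For the induction isomorphism $\ind_{\J^1_\P}^{\J^1}\eta_\P\cong\eta$, my approach is a degree count combined with Frobenius reciprocity. By Lemma~\ref{lemStevensIwahori} one has $[\J^1:\J^1_\P]=[\J^1\cap\U_-:\H^1\cap\U_-]$ and $[\J^1_\P:\H^1_\P]=[\J^1\cap\M:\H^1\cap\M]$, while $[\J^1:\H^1]$ factors as the product of the three Iwahori contributions; the $\sigma_h$-symmetry $[\J^1\cap\U_-:\H^1\cap\U_-]=[\J^1\cap\U_+:\H^1\cap\U_+]$ then yields $[\J^1:\J^1_\P]\cdot(\J^1_\P:\H^1_\P)^{1/2}=\dim\eta$. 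Since $\eta|_{\J^1_\P}$ contains $\theta_\P$ and therefore the unique Heisenberg representation $\eta_\P$, Frobenius reciprocity forces the claimed isomorphism. The analogous statement $\ind_{\J_\P}^{\J}\kappa_\P\cong\kappa$ follows by the same dimension argument, using $\J/\J^1\cong\J_\P/\J^1_\P$ together with the fact that $\kappa_\P$ extends $\eta_\P$ compatibly with $\kappa$ extending $\eta$.

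The main obstacle is the non-degeneracy of $k_{\theta_\P}$: it is the one step that is not purely formal and hinges on triviality of the relevant first Galois cohomology. Once that is in hand, everything else is careful bookkeeping with Iwahori decompositions, parallel to Stevens' argument over $\L$.
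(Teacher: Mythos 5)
You take a genuinely different route from the paper, and there is a real gap at the identification of $\eta_\P$ with the Heisenberg representation of $\theta_\P$. The paper does not work with non-degeneracy of $k_{\theta_\P}$ at all. It introduces the genuine Heisenberg group $\J^1_\M=(\J^1\cap\M)\H^1$ carrying the Heisenberg representation $\eta_\M$ of $\theta$, shows that every irreducible representation of $\J^1_\P$ containing $\theta$ is a twist $\eta_\M\otimes\phi$ by an inflated character $\phi$ of $\J^1\cap\U_+/\H^1\cap\U_+$, uses the Glauberman transfer to see $\gl_\CoefField^{\L|\F}(\eta_{\P_\L})\cong\eta_\M\otimes\1$, and combines Glauberman injectivity with Stevens' irreducible induction over $\L$ (via Mackey) to show that $\eta_\M\otimes\1$ induces irreducibly to $\eta$. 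Multiplicity one then gives $\eta_\P\cong\eta_\M\otimes\1$: writing $\eta|_{\J^1_\P}=\bigoplus_\phi(\eta_\M\otimes\phi)^{m_\phi}$, only the $\phi=\1$ constituent has nonzero $(\J^1\cap\U_+)$-fixed vectors, and $m_\1=1$. The dimension of $\eta_\P$ thus falls out as a \emph{by-product} of this structural decomposition, rather than being an input.

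Your proposal inverts this: you want to feed $\dim\eta_\P=(\J^1_\P:\H^1_\P)^{1/2}$ into Bushnell--Fr\"ohlich as an input, but you offer no workable way to obtain it. Neither of your two suggestions is adequate. The fixed-vector definition of $\eta_\P$ gives no direct handle on the dimension of the fixed subspace; over $\bbC$ one can do a trace computation (using that the character of $\eta$ vanishes off $\H^1$ and that $\theta$ is trivial on $\H^1\cap\U_\pm$), but that is substantial extra work and needs a Brauer-character detour when $p_\CoefField>0$. And the Glauberman transfer does \emph{not} preserve dimensions (it passes to a fixed-point subgroup), so inheriting $\dim\eta_\P$ from $\dim\eta_{\P_\L}$ is not automatic. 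Your Iwahori index count $[\J^1:\J^1_\P]\cdot(\J^1_\P:\H^1_\P)^{1/2}=\dim\eta$ is correct, as is the descent argument for non-degeneracy of $k_{\theta_\P}$ (once one extracts non-degeneracy over $\L$ from Stevens' Lemma~5.12); the problem is upstream, in pinning $\eta_\P$ itself to the abstract Heisenberg representation.
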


By~\cite[Lemma 5.12]{stevens:05} the natural representation~$(\eta_{\P_\L},\J^1_{\P_\L})$ of~$\eta_\L$ on the set of~$\J^1_{\P_\L}\cap\U_{\L,+}$-fixed points of~$\eta_\L$ induces to~$\eta_\L$ (for the complex case by~\loccit\ and for~$\CoefField$ by the Brauer map, as~$\Br(\eta^\CoefField_{\P_\L})=\eta^\bbC_{\P_\L})$. We will use this in the proof. 
\begin{proof}
 Note at first that~$\J^1/\H^1$ is abelian, so subgroups in between~$\H^1$ and~$\J^1$ are normal in~$\J^1$.  
 On the group~$(\J^1\cap\M)\H^1$, which we denote by~$\J^1_{\M},$ is a Heisenberg representation~$\eta_{\M}$ of~$\theta$. Every irreducible representation of~$\J^1_\P$ containing~$\th$ is of the form~$\eta_\M\otimes\phi$ for some inflation~$\phi$ of a character of~$
\J^1_\P/\J^1_\M$ (this is isomorphic to~$\J^1\cap\U_+/\H^1\cap\U_+$):
 \[\eta_\M\otimes\phi(xy):=\eta_\M(x)\phi(y),\ x\in\J^1_\M,\ y\in \J^1\cap\U_+.\]
 So, up to equivalence, the only irreducible representation of~$\J^1_\P$ containing~$\theta$ and~$\1_{\J^1\cap\U_+}$ (the trivial representation of~$\J^1\cap\U_+$) is~$\eta_\M\otimes\1_{\J^1\cap\U_+}$. Thus the Glauberman transfer~$(\gl_\CoefField^{\L|\F}(\eta_{\P_\L}),\J^1_\P)$ is equivalent to~$\eta_\M\otimes\1_{\J^1\cap\U^+}$. Now, if~$g\in\J^1$ intertwines the latter representation, so it normalizes~$\gl_\CoefField^{\L|\F}(\eta_{\P_\L})$ and therefore, by the injectivity of the Glauberman transfer,~$g$ normalizes~$\eta_{\P_\L}$ as well. Thus~$g\in\J^1_{\P_\L}\cap \G=\J^1_\P$, by Mackey, as~$\eta_{\P_\L}$ induces irreducibly to~$\J^1_\L$, and we obtain
 \[\ind_{\J^1_\P}^{\J^1}(\eta_\M\otimes\1_{\J^1\cap\U_+})\cong\eta,\]
as the left hand side is irreducible and contains~$\th$. 
The restriction of~$\eta$ to~$\J^1_\P$ is a direct sum of extensions of~$\eta_\M$, and further,~$\eta_{\M}\otimes\1_{\J^1\cap\U_+}$ has multiplicity one in~$\eta$, by Frobenius reciprocity. Thus, we conclude that~$\eta_\P$ is equivalent 
to~$\eta_\M\otimes\1_{\J^1\cap\U_+}$,
and is therefore the Heisenberg representation of~$\theta_\P$. This finishes the assertions corresponding to~$\eta_\P$. 

We now prove the assertion for~$\kappa_\P$. This representation is irreducible, because its restriction to~$\J^1_\P$ is equivalent to~$\eta_\P$, as~$\J^1\cap\U_+=\J\cap\U_+$ (The decomposition is properly subordinate to~$\Delta$!). Therefore, we have
\[\Res^\J_{\J^1}(\ind_{\J_\P}^\J\kappa_\P)\cong\ind_{\J^1_\P}^{\J^1}\eta_\P\cong\eta,\]
so~$\kappa_\P$ induces irreducibly to~$\kappa$.
\end{proof}
The restrictions of~$\theta$,\ $\eta_\P$ and~$\kappa_\P$ to~$\M$ are tensor-products, for example if~$0\in S$ then we have
\begin{itemize}
 \item $\theta|_{\H^1\cap\M}\cong (\theta_0,\H_{\La^0}^1)\otimes \bigotimes_{j>0}(\tilde{\theta}_j,\tH_{\La^j}^1) $ 
 \item $\eta_\P|_{\J^1\cap\M}\cong (\eta_0,\tilde{\H}_{\La^0}^1)\otimes\bigotimes_{j>0}(\tilde{\eta}_j,\tilde{\J}_{\La^j}^1)$ 
 \item $\kappa_\P|_{\J\cap\M}\cong (\kappa_0,\J_{\La^0})\otimes\bigotimes_{j>0}(\tilde{\kappa}_j,\tilde{\J}_{\La^j})$ 
\end{itemize}
and similarly if~$0\not\in S$.
Note that~$\eta_\P$ is a Heisenberg representation of~$\theta_\P$.

We have a proposition similar to Proposition~\ref{propStevens_5_12_5_13_etaP} for the mixed case: 
\begin{proposition}\label{propetaPmixed}
 Under the conditions of this paragraph suppose further that~$\Lambda'$ is a self-dual~$\o_\E$-$\o_\D$-lattice sequence such that~$\timfb(\La')$ is contained in~$\timfb(\La)$ 
 and such that~$(e^{(j)})_{j\in S}$ is properly subordinate to~$[\La',-,0,\beta]$. 
 Then there exist up to isomorphism exactly one representation
 $\eta_{\La',\La,\P}$ on
 \[\J^1_{\La',\La,\P}:=(\H^1_\La\cap\U_-)(\J^1_{\La',\La}\cap\P)\]
 such that it is an extension of~$\eta_\P$ and satisfies
 \[\ind_{\J^1_{\La',\La,\P}}^{\J^1_{\La',\La}}(\eta_{\La',\La,\P})\cong\eta_{\La',\La}.\]\end{proposition}
\begin{proof}
 The representation~$\eta_\P$ occurs with multiplicity one in~$\eta$ which is the restriction of~$\eta_{\La',\La}$ to~$\J^1$. This implies the uniqueness. 
 The existence is proven in three steps:
 \begin{enumerate}
  \item \label{eLLStep1} We prove that~$\J^1_{\La',\La,\P}$ is a group.
  \item \label{eLLStep2} We define~$\eta_{\La',\La,\P}$ as a certain extension of~$\eta_\P$.
  \item \label{eLLStep3} We prove that~$\eta_{\La',\La,\P}$ 
  induces to~$\eta_{\La',\La}$.  
 \end{enumerate}
 
Step~\ref{eLLStep1}: The group~$\J^1_{\La',\La}$ has an Iwahori decomposition with respect to~$\U_-\M\U_+$ because~$\J$ has one and we have
\[\J\cap\U_{\pm}=\J^1\cap\U_{\pm}.\]
The latter identity is a consequence of the partition~$(e^{(j)})_{j\in S}$ being properly subordinate to~$\Delta$. 
Further we get
\[\H^1(\J^1_{\La',\La}\cap\P)=\J_{\La',\La,\P}^1\]
because the group~$\H^1$ has an Iwahori decomposition with respect to~$\U_-\M\U_+$ and~$\J\cap\M$ normalizes~$\H^1\cap\U_+$. Now~$\H^1$  is normalized by~$\J^1_{\La',\La}\cap\P$, because it is normalized by~$\J$. 

Step~\ref{eLLStep2}: Let~$\W$ be the representation space of~$\kappa$. The restriction of~$\kappa$ to~$\J^1_{\La',\La}$ is isomorphic to~$\eta_{\La',\La}$ by Proposition~\ref{propBetakappaetamixed}, and we can therefore assume without loss of generality that the restriction of~$\kappa$ to~$\J^1_{\La',\La}$ is equal to~$\eta_{\La',\La}$. 
Now let~$\eta_{\La',\La,\P}$ be the natural representation of
$\J_{\La',\La,\P}^1$ on the set of~$\J\cap\U_+$-fixed points of~$\eta_{\La',\La,\P}$, i.e. we consider the representation
\[(\Res^{\J^1_{\La',\La}}_{\J^1_{\La',\La,\P}}(\eta_{\La',\La}),\W^{\J^1\cap\U_+}).\]
This is a restriction of~$\kappa_\P$ and an extension of~$\eta_\P$.

Step~\ref{eLLStep3}: We restrict the second isomorphism given in Proposition~\ref{propStevens_5_12_5_13_etaP} to~$\J_{\La',\La}^1$ to obtain
\[\eta_{\La',\La}\cong\ind_{\J_{\La',\La,\P}^1}^{\J^1_{\La',\La}}(\Res^{\J_\P}_{\J^1_{\La',\La,\P}}\kappa_\P)\]
which finishes the proof, because the latter restriction is~$\eta_{\La',\La,\P}$. 
\end{proof}

\begin{corollary}\label{corkPbetaext}
Suppose the partition~$(e^{(j)})_{j\in S}$ is exactly subordinate to~$\Delta$. Then \begin{enumerate}
\item~$\tilde{\kappa}_j$ is a~$2\beta_j$-extension, for all positive~$j\in S$,
\item~$\kappa_0$ is a~$\beta_0$-extension, if~$0\in S$.
\end{enumerate}
\end{corollary}

\begin{proof}
 The proof needs several steps.
 
 Step 1: We choose a minimal parahoric in~$\G_\beta\cap\M$ in the following way.  Let~$j$ be a non-negative element of~$S$. We denote by~$\G^j$ the image of~$\G$ under the projection onto~$\Aut_\D(\V^{(j)})$ and we choose an~$\o_{\E_j}$-$\o_\D$-lattice sequence~$\La^j_{m,j}$ such that~$j_{\beta_j}^{-1}(\Gamma_{\La^j_{m,j}})$ is an element in a chamber (under the weak simplicial structure) of~$\mf{B}((\G^{j})_{\beta_j})$ such that the closure of this chamber contains~$j_{\beta_j}^{-1}(\Gamma_{\La^j})$. We now take a self-dual~$\o_\E$-$\o_\D$-lattice sequence~$\La_m$ with self-dual lattice function
 \[(\bigoplus_{j\in S,~j\geq 0}\Gamma_{\La_{m,j}})\oplus (\bigoplus_{j\in S,~j>0}\Gamma_{\La_{m,j}}^{\#}).\]
Without loss of generality we can assume that~$\Gamma_{\La_m}$ is close enough to~$\Gamma_{\La}$ such that for every~$i\in\I$ the point~$j_{\beta_i}^{-1}(\Gamma_{\La^i_{m}})$ is an element of a facet whose closure contains~$j_{\beta_i}^{-1}(\Gamma_{\La^i})$. 
This implies that~$\tilde{\mf{b}}(\La_m)$ is a subset of~$\tilde{\mf{b}}(\La)$. 

 Step 2: The restriction of~$\kappa$ to~$\J^1_{\La_m,\La}$ is isomorphic to~$\eta_{\La_m,\La}$, by Proposition~\ref{propBetakappaetamixed}. Thus
 if we restrict the second isomorphism of Proposition~\ref{propStevens_5_12_5_13_etaP} to~$\J^1_{\La_m,\La}$ we obtain 
 \[\ind_{\J^1_{\La_m,\La,\P}}^{\J^1_{\La_m,\La}}(\kappa_\P|_{\J_{\La_m,\La,\P}^1})\cong\eta_{\La_m,\La}\]
 and Proposition~\ref{propetaPmixed} finishes the proof using the fact that~$\kappa_\P$ is an extension of~$\eta_\P$, i.e. we obtain that~$\kappa_\P$ is an extension of~$\eta_{\La_m,\La,\P}$. 
\end{proof}

\section{$\beta$-extensions with determinant of order twice a~$p$-power.}

We need to choose~$\beta$-extensions~$\kappa$ more carefully: they should have a big enough intertwining. For example those elements of~$\G_\b\cap\prod_{i\in\I}\mf{n}(\La_{\b_i})$ which do not permute the blocks of
$\timfb(\La)_0/\timfb(\La)_1$ should intertwine~$\kappa$. (Recall that~$\mf{n}(\La_{\b_i})$ is the normalizer of~$\La_{\b_i}$ in~$(\tiG_i)_{\b_i}$.) This is important for generalizing the proof of
\cite[Proposition 6.14]{stevens:08} to the quaternionic case. 
For that reason we consider $\beta$-extensions with determinant of order~$2p^s$,~$s\geq 0$.

\begin{definition}\label{def2pord}
 Let~$\kappa$ be a finite dimensional representation of a group~$\J$. We say that~$\kappa$ satisfies
 \textbf{(ORD)} if its determinant~$\det(\kappa)$ is a character of order dividing~$2p^s$, for some~$s\geq 0$.  
\end{definition}

\begin{notation}
 Given a pro-finite group~$\J$ we denote by~$\J^{(p)}$ the subgroup generated by all pro-$p$-Sylow subgroups of~$\J$ and we denote by~$\J^{(p,rad)}$ the pro-$p$-radical of~$\J$. 
\end{notation}

\begin{proposition}\label{prop2pord}
Let~$\Delta=[\Lambda,n,0,\beta]$ be a self-dual semisimple stratum
$\theta\in\C(\Delta)$ and~$\eta$ be the Heisenberg representation for~$\theta$. 
Let~$\Lambda_\M$ be a maximal~$\o_\E$-$\o_\D$-lattice sequence such that
$\timfb(\Lambda)\subseteq\timfb(\Lambda_\M)$. 
Then there exists a~$\beta$-extension~$\kappa$ $\in\bext_{\La_\M}(\La)$ satisfying~\textbf{(ORD)}. 
\end{proposition}

For the proof we need two lemmas.

\begin{lemma}\label{lemclassmaxORD}
 Under the assumptions of Proposition~\ref{prop2pord}, suppose~$\La_\M=\La$. Then there exists a unique~$\beta$-extension~$\kappa$ $\in\bext(\La)$ whose determinant has order a power of~$p$.
\end{lemma}

\begin{proof}
 Using Bezout's lemma, we can twist a~$\b$-extension~$\kappa\in\bext(\La)$ with a character such that its determinant has~$p$-power order, noting that the~$\CoefField$-dimension of~$\kappa$ is a power of~$p$. 
 For the uniqueness two~$\b$-extensions in~$\bext(\La)$ differ by a twist with a character~$\chi$ which is
 trivial on the subgroup~$\J_{\La}^{(p)}$  of~$\J_\La$, see~\cite[Lemma 3.10]{stevens:08}. The group~$\J_\La/\J_{\La}^{(p)}$ has no element of order~$p$. Thus~$\chi$ is trivial on~$\J_\La$ again by Bezout's lemma.
\end{proof}

\begin{lemma}\label{lemIndORD}
 Let~$\G_2$ be a finite group and~$\G_1$ be a subgroup of~$\G_2$ such that~$\G_2=\G_1\G_2^{(p,rad)}$. Suppose~$\kappa_1$ is a representation of~$\G_1$ of~$p$-power dimension and let~$\kappa_2$ be the representation of~$\G_2$ induced by~$\kappa_1$. Then
 $\kappa_2$ satisfies~\textbf{(ORD)} if and only if $\kappa_1$ satisfies~\textbf{(ORD)}.
\end{lemma}

\begin{proof}
 At first note that~$\kappa_2$ has~$p$-power dimension. We have for all~$g_1\in\G_1$ the following identity:
 \[\det(\kappa_2(g_1))=\prod_{[g]}(-1)^{l(g,g_1)+1}\det(\kappa_1(gg_1^{l(g,g_1)}g^{-1})),\]
 where~$[g]$ passes through a system of representatives for orbits of the left-action of~$\langle g_1\rangle$ on the set of~$\G_1$-right-cosets of~$\G_2$.  Here~$l(g,g_1)$ is the length of the orbit of~$\G_1g$ under this action. We obtain the if-part of the assertion, because~$\G_2$ is generated by its~$p$-radical and~$\G_1$.
 For the only-if part we choose a character~$\chi$ of~$\G_1$ of order prime to~$p$ such that the twist of~$\kappa_1$ with~$\chi$ satisfies~\textbf{(ORD)}. We need to show that~$\chi$ satisfies~\textbf{(ORD)}, i.e. that~$\chi$ is trivial or quadratic. 
 From
 \begin{equation}\label{eqGCommutatorGp}
   ([\G_2,\G_2]\G_2^{(p)})\cap\G_1=[\G_1,\G_1]\G_1^{(p)}
 \end{equation}
 follows that~$\chi$ inflates to~$\G_2$ and therefore the representation
 $\kappa_1\chi$ induces to~$\kappa_2\chi$ which satisfies~\textbf{(ORD)} by part one of the proof.  
 It follows that~$\chi$ satisfies~\textbf{(ORD)} because~$\kappa_2$ and~$\kappa_2\chi$ do. 
 To see~\eqref{eqGCommutatorGp} we note that the commutator subgroup of~$\G_2$ is contained in~$[\G_1,\G_1]\G_2^{(p,rad)}$ and that~$\G_2^{(p)}$ is contained in~$\G_1^{(p)}\G_2^{(p,rad)}$.
\end{proof}

\begin{proof}[Proof of Proposition~\ref{prop2pord}]
 The preceeding lemmas~\ref{lemclassmaxORD} and~\ref{lemIndORD} 
 imply the Proposition, because~$\bext_{\La_\M}(\La)$ is defined by~\eqref{eqk'k''} along a path from~$\La_\M$ to~$\La$ in~$\mf{B}(\G_\b)$. 
\end{proof}

An immediate consequence of Proposition~\ref{prop2pord} and Lemma~\ref{lemIndORD} is the following. 

\begin{proposition}\label{propJPORD}
 Suppose the conditions of Proposition~\ref{prop2pord}. Let~$(e_j)_{j\in \SS}$
 be a partition exactly subordinate to~$\Delta$. Consider a parabolic subgroup~$\P$ of~$\G$ with Levi subgroup~$\M$ as in~\S\ref{secIwahori}. 
 Let~$\kappa\in\bext_{\La_\M}(\La)$ be a~$\beta$-extension containing~$\theta$ such that~$\kappa$ satisfies~\textbf{(ORD)}. Then~$\kappa_\P$ on~$\J_\P$ satisfies~\textbf{(ORD)}.
\end{proposition}

Let~$\kappa_\P$ as in Proposition~\ref{propJPORD}.
We have the decomposition
\[\kappa_\P|_{\J\cap\M}\cong (\kappa_0,\J_{\La^0})\otimes\bigotimes_{j>0}(\tilde{\kappa}_j,\tilde{\J}_{\La^j}).\]
Then~$\kappa_0$ and~$\tilde{\kappa}_j,\ j>0$ satisfy~\textbf{(ORD)}.

In particular the case~$j>0$ is important. 

\begin{proposition}\label{propGLmaxORD}
 Let~$\Delta$ be a simple stratum with parameter~$r=0$, with maximal~$\Lambda_\b$,~$\tilde{\theta}\in\tilde{\C}(\Delta)$ and~$\tilde{\kappa}\in\beta-\widetilde{\ext}(\La)$ containing~$\tilde{\theta}$. Suppose~$\tilde{\kappa}$ satisfies~\textbf{(ORD)}. Then the~$\mf{n}(\La_\b)$ normalizes~$\tilde{\kappa}$. 
%  The definition of~$\D_\beta$ is given in Remark~\ref{remLbeta}\ref{remLbetai}.
\end{proposition}

\begin{remark}\label{remtiKappanotnormalizedbyDtimes}
 If we skip the assumption~\textbf{(ORD)} in Proposition~\ref{propGLmaxORD} then the assertion doesn't hold. 
 Consider 
 \begin{itemize}
  \item a skewfield~$\D$ central and  of index~$2$ over~$\F$ and with residue field~$k_\D$,
  \item the group~$\tilde{G}=\D^\times$,
  \item the stratum~$\Delta=[\mathfrak{p}_\D^{\mathbb{Z}},0,0,0]$ with the trivial simple character
  $(\tilde{\theta},1+\mathfrak{p}_\D)$. 
 \end{itemize}
 Then the set~$\b$-$\widetilde{\ext}(\mathfrak{p}_\D^\mathbb{Z})$ of~$\beta$-extensions containing~$\tilde{\theta}$ is the set of inflations to~$\o_\D^\times$ of~$\CoefField$- valued characters of~$k_\D^\times$.
 The field~$\CoefField$ has characteristic different from~$p$ and we can find a group monomorphism~$\chi$ from 
 $k_\D^\times$ into~$\CoefField^\times$. There are elements in~$k_\D$ which are not fixed under the non-trivial automorphism of~$k_\D|k_\F$. Thus the inflation of~$\chi$ to~$\o_\D^\times$ is not normalized by any uniformizer of~$\D$. 
\end{remark}

\newcommand{\tikappa}{\tilde{\kappa}}
\newcommand{\tith}{\tilde{\theta}}

\begin{proof}
 Let~$g$ be an element of~$\mf{n}(\La_\b)$. 
 The representations~$\tilde{\kappa}$ and~$\tilde{\kappa}^g$ are extensions of the Heisenberg representation of~$\tith$. 
 The restriction of~$\tilde{\kappa}$ to any pro-$p$-Sylow subgroup of~$\tilde{\J}_\La$ is intertwined by~$\tiG_\b$. Thus the same holds for~$\tilde{\kappa}^g$ and therefore the latter is a~$\b$-extension for~$\tith$. Thus there is a character~$\chi$ of~$\tilde{\J}_\La$ trivial 
on~$\tilde{\J}_\La^1$ such that~$\tikappa^g$ is isomorphic to the twist of~$\tikappa$ with~$\chi$. 
 The restriction of~$\chi$ to any pro-$p$-Sylow subgroup of~$\tilde{\J}_\La$ is intertwined by~$\tiG_\b$ and therefore~$\chi$ is trivial on~$\tilde{\J}_\La^{(p)}$, see~\cite[Lemma 3.10]{stevens:08}, and therefore~$\chi$ is an inflation of a character of~$k_{\D_\b}^\times$. Thus~$\chi$ is trivial or the unique quadratic character of~$\tilde{\J}_\La$, because~$\chi$ satisfies~\textbf{(ORD)} as~$\tikappa$ and~$\tikappa^g$ do. Now
 ~$\det(\tikappa)$ and~$\det(\tikappa^g)$ have the same order, say~$2^\epsilon p^j$ for some~$\epsilon\in\{0,1\}$ and some non-negative integer~$j$. Thus the~$p^j$th power of both determinants agree. Therefore~$\chi$ is the trivial character. 
\end{proof}

\section{Standard~$\beta$-extensions}\label{secStandardbetaExt}
Given a full skew-semisimple stratum~$\Delta=[\Lambda,n,0,\beta]$ we need to fix a vertex in~$\mf{B}(\G_\beta)$ (with respect to the weak simplicial structure) to choose~$\beta$-extensions. Given that we have fixed the signed hermitian form~$h$, there is a canonical choice for this vertex. 
For the non-quaternionic case we refer to the remark in~\cite[\S4.2]{stevens:08}. 
At first we choose a standard vertex in the following way in several steps:
\begin{enumerate}
 \item[Step 1:] According to the associated splitting of~$\beta$ the lattice sequence~$\Lambda$ decomposes into~$\Lambda=\bigoplus_{i\in\I}\Lambda^i$ 
 \item[Step 2:] We choose a standard self-dual~$\o_{\D_{\beta}^i}$-lattice sequence~$\Theta^i$ in the affine class of
 $\Lambda_\beta^i$,~$i\in\I$. See~\ref{subsecCentralizer} for the definition of~$\Lambda_\beta^i$. 
 \item[Step 3:] The Bruhat-Tits building of~$\G_{\beta}^i$ contains in the facet of~$\Lambda_\beta^i$ the following vertex in the weak simplicial structure
 \[\Theta_{stmax}^i(2r+s):=\mf{p}_{\D_\beta^i}^r\Theta^i(s),\ s\in\{0,1\},\ r\in\mathbb{Z},\]
 see~\cite[Remark in~\S4.2]{stevens:08}.
 \item[Step 4:] Choose a standard self-dual $\o_{\E_i}$-$\o_\D$-lattice sequence~$\Lambda_{stmax}^i$ such that~$(\Lambda_{stmax}^i)_{\beta_i}$ and 
 $\Theta^i_{stmax}$ are in the same affine class,~$i\in\I$. We choose them such that all~$\Lambda^i_{stmax}$ have the same~$\o_\D$-period and such that this period is minimal.
 \item[Step 5:] We put
 \[\Lambda_{stmax}:=\bigoplus_{i\in\I}\Lambda_{stmax}^i\]
 and we call the vertex corresponding to~$(\Theta^i)_{i\in\I}$ in the Bruhat--Tits building of~$\G_\beta$ \emph{the standard vertex of~$\Lambda_\beta$}.
\end{enumerate}

Let~$\theta$ be a an element of~$\C(\Delta)$. 

\begin{definition} A~$\beta$-extension~$\kappa\in\beta-\ext_{\Lambda_{stmax}}(\Lambda)$ of~$\th$ is called~\emph{standard} if it satisfies~\textbf{(ORD)}.
\end{definition}

\section{Main theorems for the classification}
Given a cuspidal irreducible representation of~$\G$ then there is a semisimple character~$\theta\in\C(\Lambda,0,\b)$ contained in~$\pi$. 
Thus it contains the Heisenberg representation~$(\eta,\J^1)$ of~$\theta$ and there is an irreducible representation~$\rho$ of~$\J/\J^1$ and a~$\beta$-extension~$\kappa$ of
$\eta$ such that~$\kappa\otimes\rho$ is contained in~$\pi$. Now one has to prove:
\begin{theorem}[Exhaustion]\label{thmExhaustionG}
%  The pre-image of~$\bbP^0(j_\E(\Lambda))(\k_\F)$ in~$\J$ is a maximal parahoric subgroup of~$\G$ and~$\rho$ is a cuspidal inflation w.r.t.~$(\La,\b)$. 
The representation~$\kappa\otimes\rho$ is a cuspidal type. In particular~$\ind_{\J}^\G(\kappa\otimes\rho)\cong\pi$. 
\end{theorem}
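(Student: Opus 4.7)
The plan is to follow the blueprint of Stevens~\cite[\S 6]{stevens:08} and Kurinczuk--Stevens~\cite{kurinczukStevens:19}, using the generalized notion of partitions subordinate to a stratum developed in \S\ref{secIwahori} to handle the quaternionic setting. Since $\kappa\otimes\rho$ already occurs in $\pi|_\J$, it suffices to verify the three conditions that make $\kappa\otimes\rho$ a cuspidal type: (i) the parahoric $\P^0(\Lambda_\beta)$ is maximal in $\G_\beta$; (ii) the centre of $\G_\beta$ is compact (which forces the stratum to be skew, as in Remark~\ref{remCuspTypeMustBeSuppBySkewStratum}); and (iii) the restriction of $\rho$ to $\P^0(\Lambda_\beta)$ inflates a direct sum of cuspidal irreducible representations of $\bbP(\Lambda_\beta)^0(\k_\F)$.

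First I would establish~(iii) by contradiction. If $\rho|_{\P^0(\Lambda_\beta)}$ contained a non-cuspidal constituent, then a proper parabolic subgroup of $\bbP(\Lambda_\beta)^0(\k_\F)$ would appear, and lifting it yields a non-trivial $\E\otimes_\F\D$-partition $(e^{(j)})_j$ of $\V$ that is properly self-dual-subordinate to~$\Delta$. Combining Lemma~\ref{lemStevensIwahori} (Iwahori decomposition) with Proposition~\ref{propStevens_5_12_5_13_etaP} (the compatibility $\ind_{\J_\P}^\J\kappa_\P\cong\kappa$) one extracts from $\pi$ a non-zero vector on which the unipotent radical of a proper parabolic of $\G$ acts trivially, contradicting the cuspidality of~$\pi$. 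Exactly the same Jacquet-module argument, applied to a refinement of the partition determined by~$\Delta$ itself, rules out~(i): non-maximality of $\P^0(\Lambda_\beta)$ produces an exactly self-dual-subordinate partition strictly refining the trivial one, whence a non-trivial constituent in a proper parabolic restriction. For~(ii), a non-compact centre supplies a cocharacter producing a proper parabolic of $\G$, and again cuspidality of $\pi$ is violated, as in~\cite[\S 6]{stevens:08}. In each place where the original argument over $\L$ appeals to the structure of $p$-adic classical groups, one descends to $\G$ via the Glauberman correspondence using the Heisenberg theory of \S\ref{secHeisenberg} and the $\beta$-extensions of \S\ref{secBetaExt}.

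Once (i)--(iii) hold, $\kappa\otimes\rho$ is by definition a cuspidal type, so Theorem~\ref{thmCuspType} shows that $\ind_\J^\G(\kappa\otimes\rho)$ is cuspidal irreducible. Frobenius reciprocity gives a non-zero $\G$-homomorphism $\ind_\J^\G(\kappa\otimes\rho)\twoheadrightarrow\pi$, and irreducibility on both sides forces an isomorphism, completing the theorem. The main obstacle is the transition from non-maximality of $\P^0(\Lambda_\beta)$ to a genuine refinement of the partition: when the vertex corresponding to $\Lambda_\beta$ in the weak simplicial structure of $\mf{B}(\G_\beta)$ does not itself support a maximal parahoric, one must first move along an edge to a better self-dual lattice sequence while preserving the semisimple character, and this step is precisely the content of Lemma~\ref{lemCompNonParahoric} in Appendix~\ref{appMaxSelfdualOrderNonParahoric}. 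A further subtle point is ensuring that the $\beta$-extension $\kappa$ remains compatible with the one attached to the Levi $\M$ after such a change of lattice sequence; this uses the uniqueness statement of Theorem~\ref{thmbetaext}.
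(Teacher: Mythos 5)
Your high-level plan (descend to $\G$ via Glauberman, produce a self-dual-subordinate partition, apply Iwahori decompositions, contradict cuspidality of $\pi$ via a Jacquet module) matches the outline of the paper's proof, and your emphasis on Lemma~\ref{lemCompNonParahoric} for the edge-walk step is accurate. There are, however, two substantive gaps and one redundancy.

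First, you never impose the minimality condition on $\La$: the paper immediately fixes a skew-semisimple stratum so that $\timfb(\La)$ is minimal among all $\o_\E$-$\o_\D$-lattice sequences carrying a skew-semisimple character contained in~$\pi$. This is not a cosmetic choice: it is what makes the cuspidality of~$\rho$ a consequence of~\cite[Lemma~7.4]{stevens:08} (any non-cuspidal constituent would let you shrink $\timfb(\La)$ while still containing a semisimple character in~$\pi$, contradicting minimality), and it is what forces the exactly subordinate partition to have $\P^0(\La_\b^{(0)})$ maximal when $e_0\neq 0$ (using Lemma~\ref{lemCompNonParahoric}, one would otherwise again shrink $\timfb(\La)$). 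Your version conflates two logically different contradictions: for an arbitrary $\La$, non-cuspidality of $\rho|_{\P^0(\La_\b)}$ does not yet violate cuspidality of~$\pi$, it only violates minimality. Without the minimality hypothesis the theorem as you've phrased it is false for a badly chosen~$\La$.

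Second, the Jacquet-module step is far more delicate than ``Iwahori decomposition $+$ compatibility of $\kappa_\P$ produces a nonzero $\N_u$-invariant vector''. To invoke Theorem~\ref{thmBKJacquet} one must exhibit a strongly $(\P',\J^0_\P)$-positive central element $\zeta$ and an invertible element of $\mc{H}(\G,\lambda_\P)$ supported on $\J^0_\P\zeta\J^0_\P$. The paper does this by splitting into the two cases of~\cite[\S7.2]{stevens:08}, depending on whether $\rho^{(j)}\simeq\rho^{(-j)}$ for all $j>0$: Case~1 cites~\cite[Theorem~7.2]{bushnellKutzko:98} and~\cite[6.16]{stevens:08}, whereas Case~2 requires a bespoke construction of $\zeta$ using the Weyl group elements $s_m,s_m^\varpi$ and (in the modular case) the Geck--Hiss--Malle description of the Hecke algebra~\cite[4.2.12]{geckJacon:11}. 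Your proposal does not engage with this dichotomy at all; Proposition~\ref{propStevens_5_12_5_13_etaP} alone does not deliver the required Hecke-algebra invertibility, and the step ``one extracts from $\pi$ a non-zero vector on which the unipotent radical acts trivially'' simply restates the conclusion of Theorem~\ref{thmBKJacquet} without verifying its hypotheses.

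Finally, your condition~(ii) needs no argument: in the quaternionic setting, $\G_\b$ has no $\SO(1,1)(\F)$ factors, so its centre is automatically compact. The compactness of the centre never needs a cocharacter argument here.
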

Note: The induction assertion is given by Theorem~\ref{thmCuspType}. 
% The second theorem is:
% \begin{theorem}[cf.{~\cite[Theorem~6.18]{stevens:08}}]\label{thmCuspType}
%  Let~$\lambda$ be a cuspidal type then~$\ind_\J^\G\lambda$ is a cuspidal irreducible representation of~$\G$. 
% \end{theorem}
The second main theorem is:
\begin{theorem}[Intertwining implies conjugacy, cf.~\cite{kurinczukSkodlerackStevens:20}~11.9 for~$\G_\L$]\label{thmIntConG}
 Suppose $(\lambda,\J)$ and~$(\lambda',\J')$ are two cuspidal types of~$\G$ which intertwine in~$\G$ (or equivalently which compactly induce 
equivalent representations of~$\G$.) Then there is an element~$g\in\G$ such that~$g\J g^{-1}=\J'$ and~$ ^g\lambda$ is equivalent to~$\lambda'$. 
\end{theorem}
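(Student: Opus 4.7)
The plan is to adapt the proof of \cite[Theorem~11.9]{kurinczukSkodlerackStevens:20}, which handles the $p$-adic classical case, to the quaternionic setting using the results established in the present paper and in \cite{skodlerack:20}. The argument proceeds in three main reductions.

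\textbf{First}, I would match the underlying self-dual semisimple characters. Write $\lambda=\kappa\otimes\rho$ and $\lambda'=\kappa'\otimes\rho'$, with Heisenberg representations $(\eta,\J^1)$ and $(\eta',\J'^1)$ attached to self-dual semisimple characters $\theta\in\C(\Delta)$ and $\theta'\in\C(\Delta')$. Any $g\in\G$ intertwining $\lambda$ with $\lambda'$ restricts to an intertwiner of $\eta$ with $\eta'$, and hence of $\theta$ with $\theta'$. Using the matching theorem for self-dual semisimple characters from \cite{skodlerack:20} (together with Theorem~\ref{thmStDuke5.1ForQuatCase} to ensure both characters are placed on compatible strata) and the classification of conjugacy classes of such characters, I would conjugate $\lambda'$ by an element of $\G$ so as to arrange $\theta=\theta'$. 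By Remark~\ref{remCuspTypeMustBeSuppBySkewStratum} both strata are skew, which considerably simplifies the matching argument.

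\textbf{Second}, once $\theta=\theta'$ (so $\eta=\eta'$ by Proposition~\ref{propEtaLa}) I would show that the parahoric data also coincide, i.e.\ $\J=\J'$. This follows from the maximality of $\P^0(\Lambda_\b)$ and $\P^0(\Lambda'_\b)$ in $\G_\b$, the intertwining formula $\I_\G(\eta,\eta')=\J'^1\G_\b\J^1$ of Proposition~\ref{propEtaLa}, and the description of $\J/\J^1\cong\P(\Lambda_\b)/\P^0(\Lambda_\b)$: an intertwining element pushes $\Lambda'_\b$ to a lattice sequence producing the same maximal parahoric, so after a further conjugation we may assume $\J=\J'$. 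Then $\kappa$ and $\kappa'$ are two $\beta$-extensions of the same $\eta$, so by Theorem~\ref{thmbetaext} they differ by the inflation of a character $\chi$ of $\J/\J^1$, and I would absorb $\chi$ into $\rho'$, replacing $\lambda'$ by $\kappa\otimes\rho''$ where $\rho'':=\rho'\otimes\chi$ is again a cuspidal inflation.

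\textbf{Third}, it remains to compare $\kappa\otimes\rho$ and $\kappa\otimes\rho''$. An element of $\G$ intertwining these lies in $\J^1\G_\b\J^1$ by Proposition~\ref{propEtaLa}, and the one-dimensionality of the intertwining spaces $\I_g(\eta)$ then forces, via Lemma~\ref{lemTensProdOffEnd} and the argument already used in Part~1 of the proof of Theorem~\ref{thmCuspType}, that the intertwiner lies in $\J\cdot N_{\G_\b}(\P^0(\Lambda_\b))$. Using the Morris-type description of $N_{\G_\b}(\P^0(\Lambda_\b))$ (as in the reference made to \cite[Proposition~1.1(ii)]{stevens:08} in the proof of Theorem~\ref{thmCuspType}) this reduces the problem to showing that two cuspidal inflations $\rho$ and $\rho''$ of $\P(\Lambda_\b)$ that intertwine in $N_{\G_\b}(\P^0(\Lambda_\b))$ are conjugate in $\P(\Lambda_\b)$, which is the standard finite-group statement for cuspidal irreducible representations of the reductive quotient $\bbP(\Lambda_\b)(\k_\F)$.

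The main obstacle is the first step: carrying out the matching of self-dual semisimple characters in the quaternionic case requires coordinating the unramified descent through $\Gal(\L|\F)$ (used throughout this paper) with the ramified descent through $\Gal(\F[\varpi_\D]|\F)$, i.e.\ handling the two commuting involutions $\tau_l,\tau_{\varpi_\D}$ simultaneously; once one knows that the characters match and both types are skew, Steps~2 and~3 go through essentially as in \cite[\S11]{kurinczukSkodlerackStevens:20}. The equivalence between intertwining in $\G$ and equivalence of the compact inductions is immediate from Theorem~\ref{thmCuspType} (irreducibility of the induced representation) and Mackey's formula.
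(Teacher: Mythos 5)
There is a genuine gap in your Step~2, and a smaller imprecision in Step~1.

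In Step~1, conjugating so that literally $\theta=\theta'$ is too strong an outcome: if the underlying lattice sequences $\La$ and $\La'$ differ, the two characters live on groups $\H^1(\Delta)\neq\H^1(\Delta')$ and cannot be made equal by conjugation alone. What the paper actually achieves at this stage (via \textbf{II}.6.10, the $\dag$-construction, and \textbf{II}.4.14) is $\b=\b'$ together with $\theta'$ being the \emph{transfer} of $\theta$ from $(\b,\La)$ to $(\b,\La')$; the lattice sequences may still differ. Had you really arranged $\theta=\theta'$, then $\J=\J'$ would already hold and the first half of your Step~2 would be vacuous --- a sign that the two steps as written are not cleanly separated.

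The more serious problem is your claim that $\P^0(\La_\b)$ and $\P^0(\La'_\b)$ become $\G_\b$-conjugate from the intertwining formula $\I_\G(\eta,\eta')=\J'^1\G_\b\J^1$ and maximality alone. This does not follow. Writing an intertwiner as $j'gj$ with $g\in\G_\b$ only tells you $g$ intertwines the Heisenberg representations; it says nothing about $g$ carrying one maximal parahoric to the other. Concretely, in the depth-zero case ($\b=0$) the Heisenberg representation is trivial and intertwines with everything, yet two cuspidal types supported on non-conjugate maximal parahorics of $\G$ do \emph{not} induce equivalent representations. What forces the parahorics to match is the cuspidality of the level-zero parts $\rho,\rho'$: the paper transmits the intertwining of $\lambda$ with $\lambda'$ down to the finite-group level through the isotypic functors $\R_\kappa,\I_\kappa,\R_{\Gamma_\b},\I_{\Gamma_\b},\R^0_{\Gamma_\b},\I^0_{\Gamma_\b}$ (using~\cite[8.5]{kurinczukStevens:19}, whose quaternionic validity is argued via $\Gal(\L|\F)$-descent of the exact diagram), obtains a non-vanishing $\R^0_{\Gamma_\b}\circ\I^0_{\Gamma'_\b}(\rho'^0)$, and then invokes the depth-zero type theory result~\cite[7.2(ii)]{kurinczukStevens:19} (Morris theory) to conclude the parahorics are $\G_\b$-conjugate. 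Your Step~2 omits this entirely: $\rho$ and $\rho'$ appear only in Step~3. Once the parahoric matching is in hand, your Step~3 is essentially the paper's Part~2 and is fine; but without the cuspidality input, Step~2 as stated is false. (A minor slip: $\J/\J^1\cong\P(\La_\b)/\P_1(\La_\b)$, not $\P(\La_\b)/\P^0(\La_\b)$.)
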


The proofs of those two Theorems will occupy the next two sections.
\section{Exhaustion}
\subsection{Bushnell--Kutzko type theory for showing non-cuspidality}
There is a well--know procedure to show exhaustion results using the computational results in~\cite{bushnellKutzko:98}.
We give a remark for the modular case and we exclusively use their notation in this subsection. Let here~$\G$ be the set of~$\F$-rational points of a connected reductive group defined over~$\F$, and let~$\P=\M\N_u$ be a parabolic subgroup with opposite unipotent group~$\N_l$. The subscripts, see for example~$\J_u,\J_\M,\J_l$ below, are corresponding to the Iwahori decomposition with respect to~$\N_l\M\N_u$.
The statements and proofs of~\cite[6.8--7.9(i) and 7.9(ii injectivity)]{bushnellKutzko:98} carry over to the modular case if there~$\J_u$ and~$\J_l$ are supposed to be pro-$p$, but one has to undertake two modifications:
\begin{enumerate}
 \item Given a representation~$(\J,\tau)$ on a
 compact open subgroup of~$\G$ the formula for the convolution in the Hecke algebra~$\mathcal{H}(\G,\tau)$ is given by
 \[(\Phi\star\Psi)(x)=\sum_{[z]\in\J\backslash\G}\Phi(xz^{-1})\Psi(z),\]
 to avoid any Haar measure. 
 \item The formula in~\cite[6.8]{bushnellKutzko:98} needs to be replaced by:
 \[(\Phi\star\Psi)(x)=[\J_l:\J_l^w] \sum_{[y_\M]\in \J_\M \cap\J_\M^w \backslash\J_\M}\varphi(xy_\M^{-1}w^{-1})\psi(wy_\M) q(x,y_\M),\]
 for~$x\in\M$. 
\end{enumerate}
(Note that we did deliberately not include the surjectivity statement of~\cite[7.9(ii)]{bushnellKutzko:98}.)
The main ingredient for the exhaustion arguments is the following. 

\begin{theorem}[\cite{blondel:05}~(0.4),~\cite{bushnellKutzko:98}~7.9(ii injective) also for mod~$l$]\label{thmBKJacquet}
 Suppose~$(\tau,\J)$ is an irreducible representation (with coefficients in~$\CoefField$) which decomposes under the Iwahori decomposition~$\N_l\M\N_u$ and suppose that~$\J_u$ and~$\J_l$ are pro-$p$. Suppose further that there is a~$(\P,\J)$-strongly positive element~$\zeta$ in the center of~$\M$ such that there is an invertible element of the  Hecke algebra~$\mathcal{H}(\G,\tau)$ with support in~$\J\zeta\J$. Let~$(\pi,\G)$  be a smooth representation
 of~$\G$. Then the canonical map:
 \[\Hom_{\J}(\tau,\pi)\hookrightarrow\Hom_{\J_\M}(\tau,\pi_{\N_u}).\]
 is injective, where~$\pi_{\N_u}$ is the corresponding Jacquet module. In particular, if~$\P\neq\G$ and~$\tau$ is contained in~$\pi$ then~$\pi$ is not cuspidal.
\end{theorem}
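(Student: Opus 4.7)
The plan is to follow the strategy of Bushnell--Kutzko~\cite[\S7]{bushnellKutzko:98} and Blondel~\cite{blondel:05}, adapted to $\CoefField$-coefficients by the two conventions recalled above. First I would construct the map canonically. Given $\phi\in\Hom_\J(\tau,\pi)$, the assumption that $\tau$ decomposes under the Iwahori decomposition implies in particular that $\J_u$ acts trivially on $\tau$, so $\phi(\tau)\subseteq\pi^{\J_u}$. Composing with the canonical quotient $\pi\twoheadrightarrow\pi_{\N_u}$ produces $\bar\phi:\tau\to\pi_{\N_u}$, and $\J_\M$-equivariance follows because $\J_\M$ normalizes $\N_u$ and $\J_u$. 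The assignment $\phi\mapsto\bar\phi$ is clearly $\CoefField$-linear.

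The heart of the matter is injectivity of $\phi\mapsto\bar\phi$. Suppose $\bar\phi=0$, equivalently $\phi(\tau)\subseteq\pi(\N_u)$, the kernel of the Jacquet projection. The strategy is to transport the vanishing against the invertible Hecke element $\Psi\in\mathcal{H}(\G,\tau)$ supported in $\J\zeta\J$. The strong positivity of $\zeta$ ensures that the conjugates $\zeta^{-n}\J_u\zeta^n$ shrink toward $\{1\}$ while $\zeta^n\J_u\zeta^{-n}$ exhausts $\N_u$; consequently, the convolution powers $\Psi^{\star n}$ are supported in $\J\zeta^n\J$, and for each $v\in\tau$ one finds $n$ large enough so that $(\Psi^{\star n}\cdot\phi)(v)$ equals, by the reformulated convolution of item~(ii), an average of $\phi(v)$ over a compact open subgroup of $\N_u$ big enough to witness the vanishing $\phi(v)\in\pi(\N_u)$. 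Hence $\Psi^{\star n}\cdot\phi=0$ for some $n\geq 1$, and invertibility of~$\Psi$ in $\mathcal{H}(\G,\tau)$ then forces $\phi=0$.

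The last sentence of the theorem is then immediate: if $\P\neq\G$ and $\pi$ were cuspidal, then $\pi_{\N_u}=0$ by definition, so every $\bar\phi$ is zero and the injectivity just proved gives $\Hom_\J(\tau,\pi)=0$, contradicting the assumption that $\tau$ is contained in $\pi$.

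The main obstacle is to verify that the classical argument truly survives the passage to characteristic $p_\CoefField$. The only scalars to appear are indices of the form $[\J_l:\J_l^w]$ and cardinalities of the double-coset splittings of $\J g\J$; by the pro-$p$ hypothesis on $\J_u$ and $\J_l$ combined with $p_\CoefField\neq p$, all these are units in $\CoefField$, so no denominator ever gets killed. This is exactly where the pro-$p$ hypothesis enters, and it is why the substitution of $[\J_l:\J_l^w]$ for the Haar volume in~\cite[6.8]{bushnellKutzko:98} is sufficient to make the whole of~\cite[\S\S6--7]{bushnellKutzko:98} go through.
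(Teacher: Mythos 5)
Your proposal is in the right spirit and captures the structure of the intended argument. Note, however, that the paper does not actually prove this theorem: it is stated as a known result (Blondel, Bushnell--Kutzko) and the surrounding text only records the two places in [BK98, \S\S6--7] that require adjustment for $\CoefField$-coefficients, namely the Haar-measure-free convolution $(\Phi\star\Psi)(x)=\sum_{[z]\in\J\backslash\G}\Phi(xz^{-1})\Psi(z)$ and the modified form of [BK98, 6.8]. Your sketch reconstructs what lies behind that citation, and it is essentially the Bushnell--Kutzko argument; the map $\phi\mapsto\bar\phi$ is constructed as you say (since decomposition with respect to $(\M,\P)$ puts $\J_u$ in the kernel of $\tau$), and injectivity does go through the invertible Hecke operator supported on $\J\zeta\J$. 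Two small points worth fixing: the conjugation direction in your description of strong positivity is reversed relative to the convention used here (a $(\P,\J)$-positive $\zeta$ satisfies $\zeta\J_u\zeta^{-1}\subseteq\J_u$, so $\zeta^n\J_u\zeta^{-n}$ shrinks while $\zeta^{-n}\J_u\zeta^n$ exhausts $\N_u$, not the other way round); and your passage from ``for each $v$ there is $n$ with $(\Psi^{\star n}\cdot\phi)(v)=0$'' to ``$\Psi^{\star n}\cdot\phi=0$ for some $n$'' silently uses that $\tau$ is finite-dimensional (true, being irreducible and smooth on the compact open $\J$), and should be stated. Beyond that, the claim that $\Psi^{\star n}\cdot\phi$ realizes an average of $\phi(v)$ over larger and larger compact opens of $\N_u$ is exactly the computational content of [BK98, 6.8 and 7.9], and you are — quite legitimately, and consistently with the paper — deferring to that source for the details; you do correctly identify the one modular issue, that the indices $[\J_l:\J_l^w]$ and the double-coset counts must be units in $\CoefField$, which follows from $\J_u,\J_l$ being pro-$p$ with $p_\CoefField\neq p$.
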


\subsection{Skew characters and cuspidality}\label{secSkew}

In this section we prove:

\begin{theorem}[cf.~\cite{stevens:05}~Theorem~5.1]\label{thmCuspidalImplesSkew}
 Let~$\pi$ be a cuspidal irreducible representation of~$\G$ and $\theta\in\C(\Delta)$ with~$r=0$ be a self-dual semisimple character contained in~$\pi$. Then~$\Delta$ 
 is skew-semisimple, i.e. the adjoint involution of~$h$ acts trivially on the index set of~$\Delta$. 
\end{theorem}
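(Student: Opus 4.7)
The plan is to argue by contradiction: assume $\I_+$ is non-empty and produce a representation of a suitable open compact subgroup that both is contained in $\pi$ and fits into the framework of Theorem~\ref{thmBKJacquet} relative to a proper parabolic, contradicting cuspidality. First, since $\theta \subseteq \pi$, its Heisenberg representation $(\eta,\J^1)$ from Proposition~\ref{propEtaLa} is contained in $\pi$, and since $\eta$ is normalized by $\J$, Lemma~\ref{lemIsotypic} applied with $(\K,\K^1) = (\J,\J^1)$ yields a non-zero $\J$-subrepresentation $\pi^\eta$. Any irreducible $\J$-subrepresentation of $\pi^\eta$ has the form $\kappa \otimes \rho$ with $\kappa$ an extension of $\eta$ to $\J$ and $\rho$ trivial on $\J^1$, by Lemma~\ref{lemTensProdOffEnd}\ref{lemTensProdOffEndPartii}; after possibly twisting by a character trivial on $\J^1\P^0(\La_\b)$, we can take $\kappa$ to be a $\b$-extension relative to some chosen maximal $\La_\M$ (Proposition~\ref{propExisteceOfbextLaM}).

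Next, use the assumption $\I_+ \neq \emptyset$ to build the coarse $\E\otimes_\F\D$-partition $(e^{(-1)}, e^{(0)}, e^{(1)})$ corresponding to $\V = \V_+ \oplus \V_0 \oplus \V_-$ with $\V_\pm := \oplus_{i \in \I_\pm} \V^i$ and $\V_0 := \oplus_{i \in \I_0} \V^i$. This partition is properly self-dual-subordinate to $\Delta$ (the idempotents lie in $\timfb(\La)$ because each $\V^i$ does), and it is non-trivial, giving rise to a proper parabolic $\P = \M\N_u$ of $\G$ whose Levi has a $\GL$-type factor $\Aut_\D(\V_+)$; in particular the centre of $\M$ contains a $(\P,\J_\P)$-strongly positive element $\zeta$.

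Now apply Section~\ref{secIwahori}: Lemma~\ref{lemStevensIwahori} gives the Iwahori decompositions of $\H^1$, $\J^1$ and $\J$ with respect to $\U_- \M \U_+$ (all unipotent intersections are pro-$p$, being subgroups of $1 + \timfa$), and Proposition~\ref{propStevens_5_12_5_13_etaP} produces $(\theta_\P, \eta_\P, \kappa_\P)$ on $(\H^1_\P, \J^1_\P, \J_\P)$ with $\ind_{\J^1_\P}^{\J^1} \eta_\P \cong \eta$ and $\ind_{\J_\P}^\J \kappa_\P \cong \kappa$. Setting $\tau := \kappa_\P \otimes \rho_\P$ on $\J_\P$, where $\rho_\P$ is the natural extension of $\rho|_{\J \cap \M}$ trivially on $\J^1 \cap \U_+$, Frobenius reciprocity yields $\tau \subseteq \pi$, and $\tau$ inherits the required Iwahori decomposition by construction.

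The crux is then to verify the Hecke-algebra hypothesis of Theorem~\ref{thmBKJacquet}, namely the existence of an invertible element of $\mathcal{H}(\G,\tau)$ supported on $\J_\P \zeta \J_\P$. This is the quaternionic analogue of the ``positive covering type'' construction in \cite[\S6]{stevens:08} and its predecessor \cite{bushnellKutzko:98}: one checks, using the explicit intertwining formula for $\eta_{\La',\La}$ from Proposition~\ref{propEtaLaLa} and Proposition~\ref{propIntPsi}, that $\zeta$ intertwines $\tau$ with a one-dimensional intertwining space, and that the resulting Hecke operator $T_\zeta$ can be inverted by a standard polynomial manipulation since $\zeta$ is strongly positive. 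Once this step is established, Theorem~\ref{thmBKJacquet} gives an injection $\Hom_{\J_\P}(\tau,\pi) \hookrightarrow \Hom_{\J_\P \cap \M}(\tau_\M, \pi_{\N_u})$, forcing $\pi_{\N_u} \neq 0$ and contradicting cuspidality. The main obstacle I expect is precisely this Hecke-algebra step: everything else is a formal consequence of the partition-theoretic machinery in Section~\ref{secIwahori} (designed for this quaternionic setting) together with the $\b$-extension theory of Section~\ref{secBetaExt}.
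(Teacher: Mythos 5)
Your approach has two genuine problems, one structural and one that you yourself flag.

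The structural problem is circularity. You build the partition $(e^{(-1)},e^{(0)},e^{(1)})$ from $\I_\pm,\I_0$ and then invoke the machinery of Section~\ref{secIwahori} — in particular Lemma~\ref{lemStevensIwahori}(ii) and Proposition~\ref{propStevens_5_12_5_13_etaP}, which give the Iwahori decomposition of $\J$ (not just $\J^1$) and the representation $\kappa_\P$ on $\J_\P$. But both of these are stated and proved only under the hypothesis that $\Delta$ is \emph{skew}-semisimple: the definition of ``self-dual-subordinate'' itself opens with ``Suppose $\Delta$ is a skew semisimple stratum,'' and the paragraph preceding Proposition~\ref{propStevens_5_12_5_13_etaP} repeats this. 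You are trying to prove precisely that $\Delta$ is skew, so you may not assume it. Since $\Delta$ is not skew, $\G_\b$ has a non-compact centre ($\GL$-type factors coming from $\I_\pm$), which is exactly the situation the skew hypothesis is designed to exclude, and the representation $\kappa_\P\otimes\rho_\P$ on $\J_\P$ is not available to you.

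The paper sidesteps all of this by working at a much lower level. It does \emph{not} use $\J$, $\kappa$, $\rho$ or $\beta$-extensions in this proof at all. It only needs the Iwahori decomposition of $\H^1$ and $\J^1$ with respect to $\N_-\M\N_+$ (these are pro-$p$ groups, and their decompositions are obtained by taking $\tau$-fixed points of the $\GL$-statement, no skewness needed). It then forms the pro-$p$ group $\K:=\H^1\J^1_+$ and extends $\theta$ to a \emph{character} $\xi$ of $\K$ trivial on $\J^1_+$; since $\K/\H^1$ is abelian every irreducible representation of $\K$ containing $\theta$ is such a character, and Proposition~\ref{propTrans} shows $\J^1$ acts transitively on these extensions, so $\pi\supseteq\xi$. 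The Hecke algebra step (Proposition~\ref{propCoverxi}) is then a short direct computation in the style of Bushnell--Kutzko: $\zeta$ intertwines $\xi$ because $\xi$ respects the Iwahori decomposition and $\I_{\N_+}(\xi)\subseteq\N_+\cap\J^1$, and the relevant constant $[\K:\K\cap\zeta^{-1}\K\zeta]$ is a $p$-power, hence invertible in $\CoefField$. By working with a character on a pro-$p$ group rather than with $\kappa_\P\otimes\rho_\P$, the paper avoids the second problem you correctly identify — the Hecke-algebra step that you declare to be ``the main obstacle'' simply does not arise in this form.

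So: same decomposition $\V_+\oplus\V_0\oplus\V_-$ and same conclusion via Theorem~\ref{thmBKJacquet}, but the object fed into it is a character $\xi$ on $\K=\H^1\J^1_+$, not the full type $\kappa_\P\otimes\rho_\P$ on $\J_\P$. Your version both rests on machinery you are not entitled to (skewness of $\Delta$) and leaves the key Hecke computation open; the paper's version needs neither.
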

Proof of Theorem~\ref{thmCuspidalImplesSkew}: 
Suppose for deriving a contradiction that~$\Delta$ is not skew-semi\-simple.
Consider the decomposition
\begin{equation}\label{eqDecompExhaust} \V=\V_+\oplus\V_0\oplus\V_-,\end{equation}
given by
$\V_\d:=\oplus_{i\in\I_\d}\V^i, ~\d\in\{+,0,-\}.$
Let~$\M$ be the Levi subgroup of~$\G$ defined over~$\F$ given by the stabilizer of the decomposition~\eqref{eqDecompExhaust}. 
The decomposition also defines unipotent subgroups:~$\N_+$, the unipotent radical of the stabilizer of the flag~$\V_+,\V_+\oplus\V_0,\V$ in~$\G$, and the 
opposite~$\N_-$. We have the Iwahori decompositions for~$\H^1$ and~$\J^1$ with respect to~$\N_-\M\N_+$, and we write~$\H^1_\d$ and
$\J^1_\d$ for the obvious intersections (e.g.~$\J^1_+:=\J^1\cap\N_+$) ,~$\d\in\{\pm,0\}$. 
Note that every irreducible representation of~$\K:=\H^1\J^1_+$ containing~$\theta$ is a character, because~$\K/\H^1$ is abelian and 
$\th$ admits an extension~$\xi$ to~$\K$ which is trivial on~$\J^1_+$. 

\begin{proposition}\label{propTrans}
 The group~$\J^1$ acts transitively on the set of characters of~$\K$ extending~$\th$.
\end{proposition}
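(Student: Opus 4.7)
The plan is to identify the set of extensions of $\theta$ to $\K$ with a torsor under $\Hom(\K/\H^1,\CoefField^\times)$, recognise the $\J^1$-action on this torsor as the one given by the commutator pairing, and then invoke the non-degeneracy of that pairing (Lemma~\ref{lemNonDeg}) to conclude transitivity.

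First I would check that $\K=\H^1\J^1_+$ is normalised by $\J^1$. Since $[\J^1,\J^1]\subseteq\H^1$ (this is what gives $\J^1/\H^1$ its abelian structure) and $\H^1$ is normal in $\J^1$, for any $g\in\J^1$ and $y\in\J^1_+$ we have $gyg^{-1}=[g,y]\,y\in\H^1\J^1_+=\K$; together with $g\H^1 g^{-1}=\H^1$ this shows $g\K g^{-1}=\K$. In particular $\J^1$ acts on the characters of $\K$, and because $\theta$ is $\J^1$-invariant this action preserves the (non-empty, by the existence of $\xi$) set of extensions of $\theta$.

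Next I would compute, for a given extension $\xi$ of $\theta$ and $g\in\J^1$, the ratio $(g\cdot\xi)\xi^{-1}$ as a character of $\K/\H^1$. Using the identity $g^{-1}xg=[g^{-1},x]\,x$ and the fact that $[g^{-1},x]\in[\J^1,\K]\subseteq\H^1$ for $x\in\K$, we obtain
\[
(g\cdot\xi)(x)\,\xi(x)^{-1}=\xi([g^{-1},x])=\theta([g^{-1},x])=k_\theta(\overline{g^{-1}},\bar x),
\]
where $k_\theta$ is the form of Lemma~\ref{lemNonDeg}. Thus the ``difference'' between $g\cdot\xi$ and $\xi$ is the linear functional $k_\theta(\overline{g^{-1}},\cdot)$ on $\K/\H^1$. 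Since the set of extensions of $\theta$ is a torsor under the Pontryagin dual of $\K/\H^1\cong\J^1_+/\H^1_+$, transitivity amounts to showing that every such character is of the form $\bar x\mapsto k_\theta(\bar h,\bar x)$ for some $h\in\J^1/\H^1$.

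Finally, the key input: by Lemma~\ref{lemNonDeg} the pairing $k_\theta$ is non-degenerate on the finite $\mathbb{F}_p$-vector space $\J^1/\H^1$, so it identifies $\J^1/\H^1$ with its own Pontryagin dual. For any subspace $W\subseteq\J^1/\H^1$ the restriction map $(\J^1/\H^1)^\vee\to W^\vee$ is therefore surjective with kernel $W^\perp$. Applied to $W=\K/\H^1$ this yields, for any two extensions $\xi_1,\xi_2$ of $\theta$, an element $g\in\J^1$ with $k_\theta(\overline{g^{-1}},\bar x)=(\xi_2\xi_1^{-1})(x)$ for all $x\in\K$, i.e.\ $g\cdot\xi_1=\xi_2$. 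The only real work is the non-degeneracy of $k_\theta$, which is already in hand from Section~\ref{secHeisenberg}; the rest is a bookkeeping argument on the commutator pairing.
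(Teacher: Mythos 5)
Your proof is correct, but it takes a genuinely different route from the paper's. The paper argues via Clifford theory: any extension $\xi$ of $\theta$ to $\K$ appears inside $\ind_{\H^1}^{\J^1}\theta$, hence inside the Heisenberg representation $\eta$; since $\eta$ is irreducible and $\K\trianglelefteq\J^1$, the extensions appearing in $\eta|_\K$ form a single $\J^1$-orbit. Your argument bypasses $\eta$ entirely and works at the level of the commutator pairing: you compute that the twist $g\cdot\xi\,\xi^{-1}$ equals $k_\theta(\overline{g^{-1}},\cdot)|_{\K/\H^1}$, and then deduce transitivity from the non-degeneracy of $k_\theta$ on $\J^1/\H^1$ (Lemma~\ref{lemNonDeg}) together with surjectivity of the restriction map $(\J^1/\H^1)^\vee\to(\K/\H^1)^\vee$. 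In effect you unpack Stone--von Neumann rather than quote it: the non-degeneracy of $k_\theta$ is the same input that underlies the irreducibility of $\eta$ via Bushnell--Fröhlich, so both proofs rest on the same lemma, but yours is more hands-on and makes the torsor structure explicit, while the paper's is shorter once one accepts the Heisenberg machinery. One tiny caveat: $\J^1/\H^1$ need not be an $\mathbb{F}_p$-vector space (only a finite abelian $p$-group), but your use of Pontryagin duality and the surjectivity of restriction to a subgroup does not actually require the elementary abelian assumption, so the argument goes through unchanged.
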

\begin{proof}
 The group~$\K$ is normalized by~$\J^1$ because~$\J^1/\H^1$ is abelian. 
 A character of~$\K$ extending~$\th$ is contained in~$\ind_{\H^1}^{\J^1}\th$ and is therefore contained in~$\eta$. 
 Thus the~$\J^1$-action on the set of these characters must be transitive because~$\eta$ is irreducible. 
\end{proof}

For the notion of cover we refer to~\cite[\S8]{bushnellKutzko:98}. In fact we take the weaker version where we only want to consider strongly positive elements 
for the parabolic subgroup~$\Q:=\M\N_+$, i.e. not for other parabolic subgroups. This is enough for our purposes. 

\begin{proposition}\label{propCoverxi}[cf.\cite{stevens:05}~Proposition~4.6~and~\cite{bushnellKutzko:99}~Corollary~6.6]
There exists a strongly positive~$(\Q,\K)$-element~$\zeta$ of the centre of~$\M$, such that there is an invertible element of the Hecke algebra~$\mc{H}(\G,\xi)$ with support in~$\K\zeta\K$. 
 %The character~$\xi$ is a cover of~$\xi|_{\H^1\cap\M}$. 
\end{proposition}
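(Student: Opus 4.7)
The plan is to adapt the proof of~\cite[Proposition~4.6]{stevens:05} (in the spirit of~\cite[Corollary~6.6]{bushnellKutzko:98}) to the quaternionic setting. Three ingredients are required: first, an explicit construction of a central element~$\zeta$ of~$\M$ which lies in~$\G_\b$ and is strongly positive with respect to~$(\Q,\K)$; second, the computation that the intertwining space~$\I_{\zeta^n}(\xi)$ is one-dimensional for every~$n\ge 1$; third, the abstract Hecke-algebra argument producing the invertible function supported on~$\K\zeta\K$.

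For the first ingredient, since the form~$h$ pairs~$\V_+$ with~$\V_-$, the Levi~$\M$ decomposes as~$\GL_\D(\V_+)\times \U(h|_{\V_0})$ via projection, and the centre of~$\M$ contains all elements of the form
\[\zeta_\alpha := \alpha\cdot 1_{\V_+} + 1_{\V_0} + \sigma_h(\alpha)^{-1}\cdot 1_{\V_-},\qquad \alpha\in\F^\times.\]
Taking~$\alpha := \varpi_\F^N$ for a sufficiently large positive integer~$N$, I claim that~$\zeta := \zeta_\alpha$ does the job. Since~$\alpha\in\F\subseteq Z(\D)$, the element~$\zeta$ commutes with~$\b$ and so lies in~$\G_\b$; it preserves~$h$ by construction, and since~$\dim_\D\V_+=\dim_\D\V_-$ one has~$\Nrd(\zeta)=1$, so~$\zeta\in\G$. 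An explicit computation using the description of~$\J^1$ and~$\H^1$ via the lattice sequence shows that conjugation by~$\zeta$ rescales each component of~$\J^1_+$ by a positive power of~$\varpi_\F$, and symmetrically for~$\H^1_-$ under~$\zeta^{-1}$; for~$N$ large enough relative to the depth of~$\La$ this yields the strong-positivity conditions~$\zeta\J^1_+\zeta^{-1}\subsetneq \J^1_+$ and~$\zeta^{-1}\H^1_-\zeta\subsetneq \H^1_-$, together with the centralisation of~$\K\cap\M$.

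For the second and third ingredients, the intertwining space~$\I_{\zeta^n}(\xi)$, being an intertwining space of one-dimensional representations, is at most one-dimensional; it is exactly one-dimensional because~$\zeta^n\in\G_\b$ intertwines~$\theta$ and, by the Iwahori decomposition of~$\K$ (Lemma~\ref{lemStevensIwahori}) together with the triviality of~$\xi$ on~$\J^1_+$, this intertwining extends to~$\xi$ itself. A non-zero~$\Phi_\zeta\in\mc{H}(\G,\xi)$ supported on~$\K\zeta\K$ is then unique up to scalar, and the strong positivity implies, via the convolution formula modified as at the start of this subsection, that~$\Phi_\zeta^{*n}$ has support in~$\K\zeta^n\K$; since each~$\I_{\zeta^n}(\xi)$ is also one-dimensional the subalgebra~$\CoefField[\Phi_\zeta]\subseteq\mc{H}(\G,\xi)$ is a polynomial ring in~$\Phi_\zeta$, so~$\Phi_\zeta$ becomes invertible after central localisation.

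The principal obstacle I expect is not the construction of~$\zeta$, which is essentially forced by the~$\F$-structure of the centre of~$\M$, but rather verifying that the convolution formula of~\cite[6.8]{bushnellKutzko:98} (with the modular normalisation noted earlier) yields the precise support calculation~$\supp(\Phi_\zeta^{*n})\subseteq\K\zeta^n\K$. This is a formal but delicate Iwahori-decomposition calculation; should complications arise one may base-extend to~$\L$, apply~\cite[Proposition~4.6]{stevens:05} to~$\G_\L$ directly (since~$\zeta\in\F^\times$ is already defined over~$\F$ and hence is also available in~$\G\otimes\L$), and then descend the invertibility statement using~$\Gal(\L|\F)$-invariance of~$\Phi_\zeta$.
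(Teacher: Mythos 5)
Your construction of $\zeta$ is essentially the paper's (the paper simply takes $\alpha=\varpi_\F$; no need for a large power $N$, since strong positivity is immediate), and your second ingredient — one-dimensionality of the relevant intertwining — is on the right track, though the paper phrases it as the computation $\I_{\N_+}(\xi)=\N_+\cap\J^1$, which is precisely the hypothesis needed to invoke the argument of Bushnell--Kutzko, \emph{Semisimple types in} $\GL_n$, Corollary~6.6.

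However, your third ingredient contains a genuine gap. You assert that since $\CoefField[\Phi_\zeta]$ is a polynomial ring, ``$\Phi_\zeta$ becomes invertible after central localisation.'' But the statement to be proved is that $\Phi_\zeta$ is invertible \emph{in $\mc{H}(\G,\xi)$ itself}, not in some localisation — localisation destroys exactly the conclusion you need. What the Bushnell--Kutzko argument actually produces is the identity $\Phi_\zeta\star\Phi_{\zeta^{-1}}=c\cdot e_\xi$ for an explicit non-negative integer $c$; invertibility then requires that $c$ be a unit in $\CoefField$. In the modular case ($p_\CoefField>0$) this is a real constraint, and the paper addresses it explicitly: the constant equals $[\K:\K\cap\zeta^{-1}\K\zeta]$, which is a power of $p$ and hence not divisible by $p_\CoefField$. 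Your proof never confronts this point. Your suggested fallback — base-extend to $\L$, prove invertibility of $\Phi_\zeta$ there via \cite[4.6]{stevens:05}, and ``descend using $\Gal(\L|\F)$-invariance'' — does not repair the gap either: $\mc{H}(\G,\xi)$ is not a fixed-point subalgebra of $\mc{H}(\G_\L,\xi_\L)$ in any way that transports invertibility, and you give no mechanism for such a descent.

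The fix is to drop the localisation language and instead follow the paper: check $\I_{\N_+}(\xi)=\N_+\cap\J^1$ (for which your computation in $\G_\b$ works, since $\I(\theta)=\J^1\G_\b\J^1$), invoke the end of the proof of Bushnell--Kutzko Cor.~6.6 to obtain $\Phi_\zeta\star\Phi_{\zeta^{-1}}=c\cdot e_\xi$, and then observe that $c=[\K:\K\cap\zeta^{-1}\K\zeta]$ is a $p$-power, hence invertible in $\CoefField$.
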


\begin{proof}
The element~$\zeta$ defined as follows (see~\cite[4.5]{stevens:05})
\[\left(\begin{array}{ccc}
   \varpi_\F & 0 & 0 \\
   0 & 1 & 0 \\
   0 & 0 & \varpi_\F^{-1} \\
  \end{array}\right)
\]
is central in~$\M$ and a strongly~$(\P,\K)$-positive element. Note that~$\zeta$ and~$\zeta^{-1}$ intertwine 
~$\xi$, because~$(\xi,\K)$ respects the Iwahori-decomposition with respect to~$\N_-\M\N_+$.
Now the argument at the end of the proof in~\cite[6.6]{bushnellKutzko:99} finishes the proof, because 
\begin{eqnarray*}
 \I_{\N_+}(\xi) &\subseteq& \N_+\cap\I(\th)\\
 &=&\N_+\cap(\J^1\G_\b\J^1)\\
 &=&\N_+\cap\J^1.\\
\end{eqnarray*} and because the constant~$c$ there is equal to~$[\K:\K\cap \zeta^{-1}\K \zeta]$ and therefore not divisible by~$p_\CoefField$. Note that the condition~\cite[(7.15)]{bushnellKutzko:98} is satisfied in the modular case, because of~\cite[III~Proposition 2~(ii)]{blondel:05}.
\end{proof}

We now can finish the proof of Theorem~\ref{thmCuspidalImplesSkew}: As~$\pi$ contains~$\th$ it also must contain an extension of~$\th$ to~$\K$ and therefore, 
we obtain from Proposition~\ref{propTrans} that~$\pi$ contains~$\xi$. Thus~$\pi$ has a non-trivial Jacquet module for the parabolic~$\Q$, by Proposition~\ref{propCoverxi} and Theorem~\ref{thmBKJacquet}. A contradiction, because~$\M\neq\G$ and~$\pi$ is cuspidal. 
This finishes the proof of Theorem~\ref{thmCuspidalImplesSkew}.

% \begin{proposition}[cf.~\cite{stevens:05}~]\label{propchi}
%  
% \end{proposition}
% 

\subsection{Exhaustion of cuspidal types (Proof of Theorem~\ref{thmExhaustionG})}\label{secExhaust}

The proof of exhaustion is mutatis mutandis to~\cite{stevens:08} (and to~\cite{kurinczukStevens:19} for mod-$p_\CoefField$), see also~\cite[3.3]{MiSt} for the final argument. 
We are going to give the outlook of the proof in this section and refer to the 
corresponding results in~\cite{stevens:08}. The referred statements of \cite[\S6 and 7]{stevens:08} are mutatis mutandis valid for the quaternionic case.
To start, let~$\pi$ be a cuspidal irreducible representation of~$\G$. Then, by Theorem~\ref{thmStDuke5.1ForQuatCase} and Theorem~\ref{thmCuspidalImplesSkew}, there exists a skew-semisimple character~$\th\in\C(\La,0,\b)$
such that:
\begin{equation}\label{eqTheta_in_pi}
 \theta\subseteq\pi.
\end{equation}
Let~$\La$ be chosen such that~$\timfb(\La)$ is minimal with respect to~\eqref{eqTheta_in_pi}. The aim is to show that~$\P^0(\La_\b)$ is maximal parahoric. Take any standard~$\b$-extension $(\kappa,\J_\La^0)$ of~$\th$, i.e.~$\kappa$ is a~$\b$-extension with respect to a lattice sequence~$\Lambda_{stmax}$, which  corresponds to a standard vertex of~$\Lambda_\beta$, and~$\kappa$ satisfies \textbf{(ORD)}, 
see~\S\ref{secStandardbetaExt}.
Then, by Lemma~\ref{lemTensProdOffEnd}\ref{lemTensProdOffEndPartii} there is an irreducible representation~$\rho$ of~$\P^0(\La_\b)/\P_1(\La_\b)$ such that~$\lambda:=\kappa\otimes\rho$ is contained in~$\pi$.
Note that~$\rho$ has to be cuspidal by the minimality of~$\timfb(\La)$, see~\cite[7.4]{stevens:08} and use Proposition~\ref{propExisteceOfbextLaM}\ref{propExisteceOfbextLaM-ii} instead of~\cite[Lemma 7.5]{stevens:08}. 

Further by the minimality condition on~$\timfb(\La)$ there is a tuple of idempotents~$(e_j)_{j\in S}$ exactly subordinate to~$\Delta=[\La,-,0,\b]$ such that 
~$\P^0(\La_{\b}^{(0)})$ is a maximal parahoric of~$\G^{(0)}_\b$ if~$0\in S$ is non-zero. (by~\cite[7.7]{stevens:08}; take quotient instead of subrepresentation in the definition of~\emph{lying over}). 
Indeed: There exists a tuple of lattice functions~$(\Gamma^{(i,0)})_{i\in\I}\in\prod_{i\in\I}\Latt_{h_i,\o_{\E_i},\o_\D}^1(\W^{(0)}\cap\V^i)$ such that~$\timfb(\Gamma^{(0)})\subsetneqq\timfb(\Lambda^{(0)})$ and~$\P^0(\Gamma_\b^{(0)})=\P^0(\Lambda^{(0)}_\beta)$, for $\Gamma^{(0)}:=\oplus_{i\in\I}\Gamma^{(i,0)}$, 
if~$\P^0(\La^{(0)}_\b)$ is not a maximal parahoric subgroup of~$\G_\b^{(0)}$, by Lemma~\ref{lemCompNonParahoric}.
Take a self-dual~$\o_\E$-$\o_\D$-lattice sequence~$\La'$ split by~$(e^{(j)})_{j\in S}$ such that~$\oplus_{j\neq 0}\La'^{(j)}$ is an affine translation of~$\oplus_{j\neq 0}\La^{(j)}$,~$\timfb(\La')$ is contained in~$\timfb(\La)$ and~$\timfb(\Gamma^{(0)})=\timfb(\Lambda'^{(0)})$. We then   apply~\cite[Lemma 7.7]{stevens:08} to conclude that the transfer of~$\th$ to~$\C(\La',0,\b)$ is contained in~$\pi$. A contradiction, because~$\timfb(\La'^{(0)})$ is properly contained in~$\timfb(\La^{(0)})$.

Assume~$\P^0(\La_\b)$ is not a maximal parahoric of~$\G_\b$ (Note that in our quaternionic case the center of~$\G_\b$ is compact, i.e.~$\SO(1,1)(\F)$
does not occur as a factor of~$\G_\b$). We then have~$m>0$, see~\eqref{eqS} for the choice of~$S$, i.e.~$(e_j)_{j\in S}$  has at least 2 idempotents.  
Let~$\M$ be the stabilizer in~$\G$ of the decomposition of~$\V$ given by~$(e_j)_{j\in S}$, and let~$\U$ be the set of upper unipotent elements of 
~$\G$ with respect to the latter decomposition. We put~$\P=\M\U$. Let~$\lambda_\P$ be the natural representation of~$\J^0_\P\:=\H^1_\La(\J^0_\La\cap\P)$ 
on the set of~$(\U\cap\J^0_\La)$-fixed points of~$\lambda.$ 

We need more notation corresponding to the partition~$(e_j)_{j\in S}$:
\begin{enumerate}
\item For every non-zero~$j\in S$ there is a unique~$i_j\in\I$ such that $e_j\beta_{i_j}\neq 0$.
Under the map~$j_\E$ the~$o_\E$-$\o_D$-lattice sequence corresponds to a tuple~$(\Lambda_\beta^i)_{i\in\I}$, with~$\Lambda_\beta^i$ a self-dual~$\o_{\D_\b^i}$-lattice sequence in~$\V_\b^i$. 
Further~$\V_\beta^i$ and~$\Lambda_\beta^i$ decompose under~$(e_j)_{j\in S}$:
\[\V_\beta^i=(\bigoplus_{j\neq 0,i_j=i}\V_\beta^{(j)})\oplus\V_\beta^{(i,0)},\ \La_\beta^i=(\bigoplus_{j\neq 0,i_j=i}\La_\beta^{(j)})\oplus\La_\beta^{(i,0)}\]
($\V_\beta^{(i,0)}=0$ is possible).
Then
\begin{equation}\label{eqexactsuborddecomp}\P^0(\La_\beta)/\P_1(\La_\beta)\simeq\bigtimes_{j=0}^m\P^0(\La_\beta^{(j)})/\P_1(\La_\beta^{(j)}),\end{equation}
because the~$\E\otimes_\F\D$-partition~$(e_j)_{j\in S}$ is exactly subordinate to~$\Delta$.
Without loss of generality we assume that~$\La_\beta^i$ is~\emph{standard}, i.e.
\[\ 2|e_i=e(\Lambda_\beta^{i}|o_{\D_\beta^i})\ \text{and }\Lambda_\beta^i(t)^{\#_i}=\Lambda_\beta^i(1-t),  \]
for all~$t\in\ZZ$, where~$\#_i$ is the duality operator defined by~$h_\beta^i$. 
\item For every~$j\neq 0$ there is a unique integer
$q_j$ satisfying~$-\frac{e_{i_j}}{2}<q_j<\frac{e_{i_j}}{2}$ such that
\[\Lambda_\beta^{(j)}(q_j)\supsetneqq\La_\beta^{(j)}(q_j+1),\]
and if we fix a total order on the index set~$\I$ of~$\beta$ then we choose the numbering of the idempotents~$e_j$ the way such that we have for non-zero~$j,k\in S$:
\begin{equation}\label{numbering}j<k \text{ if and only if }(i_j<i_k \text{ or }(i_j=i_k \text{ and }q_j<q_k)),\end{equation}
see \cite[\S6.2 and remark after Lemma 6.6]{stevens:08}. %ordering of the $j$
\item For every positive~$j\in S$ Stevens constructs two  Weyl group elements~$s_j$ and 
~$s_j^\varpi$ of~$\G_\beta$, which through conjugation swap the blocks corresponding to~$j$ and~$-j$ and act trivially on~$\W^{(k)}$ for~$k\neq \pm j$, see~\cite[\S 6.2]{stevens:08}. Now they are used to define an involution on~$\Aut_{\D}(\W^{(j)})$ via: 
\[\sigma_j(g_j)=s_j\sigma_h(g_j)^{-1}s_j^{-1}.\]
The representation~$\rho$ decomposes under~\ref{eqexactsuborddecomp} into
\[\rho\simeq\rho^{(0)}\otimes(\bigotimes_{j=1}^m\tilde{\rho}^{(j)}),\]
and one defines for negative~$j$:
\[\tilde{\rho}^{(-j)}=\tilde{\rho}^{(j)}\circ\sigma_j.\]
%involution \sigma_i
\end{enumerate}

We have now two cases to consider: 
\begin{enumerate}
 \item[Case 1:] There exists a positive~$j$ such that~$\rho^{(j)}\not\simeq\rho^{(-j)}$. In this case Stevens constructs in~\cite[7.2.1]{stevens:08} a decomposition~$\cY_{-1}\oplus\cY_0\oplus\cY_1$ with Levi~$\M'$ (the stabilizer of the decomposition) and 
 non-zero~$\cY_{-1}$,~$\cY_1$ such that the normalizer of~$\rho|_{\M\cap\P^0(\La_\b)}$ in~$\G_\b$ is contained in~$\M'$. 
 Then  by~\cite[Theorem 7.2]{bushnellKutzko:98} and~\cite[6.16]{stevens:08} (see~\cite[Lemma 9.8]{kurinczukStevens:19} for the modular case) the representation~$\lambda_\P$ satisfies the conditions of Theorem~\ref{thmBKJacquet},
%  , see~\cite[7.2(ii)]{bushnellKutzko:98}, 
 for a parabolic of~$\G$ with Levi~$\M'$. Here we use that~$\kappa_\P$ satisfies~\textbf{(ORD)}, see Proposition~\ref{propJPORD}, which allows to apply Proposition~\ref{propGLmaxORD} in the proof of~\cite[Lemma 6.14]{stevens:08}, a lemma used for~\cite[Lemma 6.16]{stevens:08}.
 \item[Case 2:] For all positive~$j$ we have~$\rho^{(j)}\simeq\rho^{(-j)}$. 
 Here let
 \[\cY_ {-1}:=e^{(-m)}\V,\ \cY_0:=(1-e^{(m)}-e^{(-m)})\V,\ \cY_ {1}:=e^{(m)}\V\]
 with stabilizer~$\M'$ in~$\G$ and (upper block triangular) parabolic~$\P'$. Here Stevens constructs in~\cite[7.2.2]{stevens:08} a strongly~$(\P',\J^0_\P)$-positive element~$\zeta$ of~$\G$
 in the centre of~$\M'$ such that there is an invertible element of~$\mc{H}(\G,\lambda_\P)$ 
 with support in~$\J^0_\P\zeta\J^0_\P$. The construction of~$\zeta$ carries mutatis mutandis over to the quaternionic case using the ordering~\eqref{numbering}. The elements~$s_m$ and~$s_m^\varpi$ in Section~\loccit\ are automatically in~$\G$ because 
 all isometries of~$h$ have reduced norm~$1$, so one does not need to consider~\loccit\ (7.2.2)((i) and~(ii)). Further, for the modular case, in the paragraph after~\cite[7.12]{stevens:08}, as indicated in~\cite[Theorem 9.9(ii)]{kurinczukStevens:19}, one needs to refer to the description of the Hecke algebra of a cuspidal representation on a maximal parahoric given by Geck--Hiss--Malle~\cite[4.2.12]{geckJacon:11}.
 Thus~$\lambda_\P$ satisfies the conditions of Theorem~\ref{thmBKJacquet} with respect to~$\P'$. 
\end{enumerate}
In either case Theorem~\ref{thmBKJacquet} and the fact that~$\M'\cap\G$ is a proper Levi subgroup of~$\G$ imply that~$\pi$
is not cuspidal. A contradiction.

\section{Conjugate cuspidal types (Proof of Theorem~\ref{thmIntConG})}\label{secConjCuspTypes}
In this section we finish the classification of cuspidal irreducible representations of~$\G$. 
% 
% 
% \begin{theorem}
%  Let~$\pi$ be a cuspidal irreducible representation of~$\G$ and assume that it contains two cuspidal types~$(\lambda,\J)$ and~$(\lambda',\J')$ 
%  then there is an element~$g$ of~$\G$ such that~$ ^g\J=\J'$ and~$ ^g\lambda$ is equivalent to~$\lambda'$.
% \end{theorem}
 By the assumption of Theorem~\ref{thmIntConG} we are given two cuspidal types~$(\lambda,\J(\b,\La))$ and $(\lambda',\J(\b',\La'))$ 
 which induce equivalent representations of~$\G$. Let us denote the representation~$\ind_{\J}^\G\lambda$ by~$\pi$.  
 Let~$\theta\in\C(\La,0,\beta)$ and~$\theta'\in\C(\La',0,\beta')$ be the skew-semisimple characters used for the construction of~$\lambda$ 
 and~$\lambda'$. As~$\theta$ and~$\theta'$ are contained in the irreducible~$\pi$ we obtain that both have to intertwine by an element of~$\G$, say with matching~$\zeta:\I\rightarrow\I'$. 
 By~\bfII.6.10, cf.~\cite[Proposition~11.7]{kurinczukSkodlerackStevens:20}, we can assume without loss of generality that~$\beta$ and~$\beta'$ have the same characteristic polynomial and that~$\theta'$ is the transfer of~$\theta$ from~$(\b,\La)$
 to~$(\b',\La')$. One can now apply a~$\dag$-construction, see~\bfI.\S5.3~to transfer to the case where all~$\V^i$ and ~$\V^{i'}$ have the same $\D$-dimension, to then observe that the matching~$\zeta$ must fulfill that~$\b_i$ and~$\beta'_{\zeta(i)}$ have the same minimal polynomial by the unicity of the matching. We can therefore conjugate to the case~$\b=\b'$, by~\bfII.4.14. Now,~\cite[Theorem~12.3]{kurinczukStevens:19} is valid for the quaternionic case, see below. We conclude that there is an element~$g$ of~$\G$ such that~$g\J g^{-1}=\J'$ and~$ ^g\lambda\simeq\lambda'$. 
 
Let us outline~\loccit\ to show which of their statements and constructions are needed: 
Without loss of generality we can assume that~$\La$ and~$\La'$ are standard self-dual with the same~$o_\D$-period. We consider the associated self-dual~$o_\E$-$o_\D$-lattice functions~$\Gamma$ and~$\Gamma'$ respectively and, in the building of~$\G_\beta$, the lattice functions~$\Gamma_\beta=j_\beta^{-1}(\Gamma)$ and~$\Gamma'_\beta=j^{-1}_\beta(\Gamma')$. 
$\lambda$ is constructed using an irreducible representation~$\rho$ of~$\M(\Gamma_\b):=\bbP(\Gamma_\b)(\k_\F)$ with cuspidal restriction to~$\M^0(\Gamma_\b):=\bbP^0(\Gamma_\b)(\k_\F)$ and a~$\beta$-extension~$(\kappa,\J)$, 
~$\lambda=\kappa\otimes\rho$. Analogously we have~$\lambda'=\kappa'\otimes\rho'$ for respective~$\rho'$ and~$\kappa'$. We now have three pairs of functors: 
\begin{enumerate}
 \item $\R_\kappa:\ \mf{R}(\G)\ra \mf{R}(\M(\Gamma_\b))$ and~$\I_\kappa:\ \mf{R}(\M(\Gamma_\b))\ra \mf{R}(\G)$ defined via
 \[\R_\kappa(\omega):=\Hom_{\J^1}(\kappa,\omega),\ \I_\kappa(\varrho):=\ind_\J^\G(\kappa\otimes\varrho)\]
 \item $\R_{\Gamma_\b}:\ \mf{R}(\G_\b)\ra \mf{R}(\M(\Gamma_\b))$ and~$\I_{\Gamma_\b}:\ \mf{R}(\M(\Gamma_\b))\ra \mf{R}(\G_\b)$ defined via
 \[\R_{\Gamma_\b}(\omega):=\Hom_{\P_1(\Gamma_\b)}(1,\omega),\ \I_{\Gamma_\b}(\varrho):=\ind_{\P(\Gamma_\b)}^{\G_\b}(\varrho)\]
 \item $\R_{\Gamma_\b}^0:\ \mf{R}(\G_\b)\ra \mf{R}(\M^0(\Gamma_\b))$ and~$\I^0_{\Gamma_\b}:\ \mf{R}(\M^0(\Gamma_\b))\ra \mf{R}(\G_\b)$ defined via
 \[\R_{\Gamma_\b}^0(\omega):=\Hom_{\P_1(\Gamma_\b)}(1,\omega),\ \I_{\Gamma_\b}^0(\varrho):=\ind_{\P^0(\Gamma_\b)}^{\G_\b}(\varrho).\] 
\end{enumerate}

Before we start to explain their proof we want to remark that we use~\cite[8.5]{kurinczukStevens:19} which is valid for the case of~$\G$, because
the key is the exact diagram in the proof of~\cite[Lemma~5.2]{kurinczukStevens:19} which can be obtained for~$\G$ by taking~$\Gal(\L|\F)$-fixed points of the 
corresponding diagram for~$\G\otimes\L$. Now we come to their proof of~\cite[Theorem~12.3]{kurinczukStevens:19}. 
It contains two parts:

Part 1: %Without loss of generality we can assume that~$\Lambda$ and~$\Lambda'$ have the same~$\F$-period.
%and~$\La(0)^\#=\La(1)$ and~$\La'(0)^\#=\La'(1)$. 
The first part is to show that~$\Gamma$ and~$\Gamma'$ are~$\G_\b$-conjugate. It is implied as follows: 
We have that~$\I_\kappa(\rho)$ and~$\I_{\kappa'}(\rho')$ are isomorphic to~$\pi$, in particular isomorphic to each other, and therefore  
$\R_{\kappa}\circ\I_{\kappa'}(\rho')$ contains~$\rho$ and therefore is non-zero. 
Thus~$\R_{\Gamma_\b}\circ \I_{\Gamma'_\b}(\rho')$ is non-zero by~\cite[8.5(i)]{kurinczukStevens:19}.
Let~$\rho'^0$ be a cuspidal irreducible sub-representation of the restriction of~$\rho'$ to~$\M^0(\Gamma'_\b)$. 
Then~$\R_{\Gamma_\b}\circ \I^0_{\Gamma'_\b}(\rho'^0)$ is non-zero because it contains~$\R_{\Gamma_\b}\circ \I_{\Gamma'_\b}(\rho')$. 
Thus~$\R_{\Gamma_\b}^0\circ \I^0_{\Gamma'_\b}(\rho'^0)$ is non-zero and therefore~$\P^0(\Gamma_\b)$ and~$\P^0(\Gamma'_\b)$ are~$\G_\b$-conjugate 
by~\cite[7.2(ii)]{kurinczukStevens:19}. Therefore~$\Gamma_\b$ is~$\G_\b$-conjugate to~$\Gamma'_\b$, because~$\P^0(\Gamma_\b)$ and~$\P^0(\Gamma'_\b)$ are~$\G_\b$-conjugate maximal parahoric sub-groups of~$\G_\b$, see~\cite[after 5.2.6]{bruhatTitsII:84}. This finishes Part 1 and we can assume~$\La=\La'$ without loss of generality,
in particular~$\theta=\theta'$ and we have the same Heisenberg representation. 

Part 2 is for showing~$\lambda\simeq\lambda'$. Take a character~$\chi$ of~$\M(\Ga_\b)$ such that~$\kappa'=\kappa\otimes\chi$. 
Then we get~$\lambda'=\kappa\otimes (\chi\otimes\rho')$ and we get 
\[(\R_{\kappa}\circ\I_\kappa)(\chi\otimes\rho')\cong (\R_{\kappa}\circ\I_\kappa)(\rho)\]
where the left hand side contains~$\chi\otimes\rho'$ and the right hand side is equivalent to~$\rho$ by~\cite[8.5(ii)]{kurinczukStevens:19}.
Thus by irreducibility we obtain the existence of an isomorphism from~$\chi\otimes\rho'$ to~$\rho$, and therefore of an isomorphism from~$\lambda'$
to~$\lambda$. 

\appendix
\daniel{
\section{Erratum on semimsimple strata for~$p$-adic classical groups}\label{appErratum4p2}
This part of the appendix is a fix of~\cite[Proposition~4.2]{stevens:02}. As it is stated in  \loccit\ the proposition is false. This was pointed out by Blondel and Van-Dinh Ngo. The fix was provided by Stevens, author of~\cite{stevens:08}, in 2012, but until now not published. 
At first we need to set up the notation to discuss the proposition.
In Appendix~\ref{appErratum4p2} and~\ref{appThmB} we only work over~$\F$ (not~$\D$) and, as usual, with odd residual characteristic~$p$, and we are given an~$\epsilon$-hermitian form~$(h,\V)$ with respect to some at most quadratic extension~$\F|\F_0$ with Galois group~$\langle \bar{\ }\rangle$. We fix a uniformizer~$\varpi$ of~$\F$ with~$\varpi\in\F_0$ if~$\F|\F_0$ is unramified and~$\sigma_h(\varpi)=-\varpi$ if~$\F|\F_0$ is ramified. Further we will use a fixed uniformizer~$\varpi_0$ of~$\F_0$ which satisfies~$\varpi=\varpi_0$ if~$\F|\F_0$ is unramified and~$\varpi_0=\varpi^2$ if not.   Given a lattice sequence~$\La$ on~$\V$ Stevens defines a finite dimensional~$\k_\F$-vector space~$\ti{\La}$ via
\[\ti{\La}:=\tilde{\La}(0)\oplus\tilde{\La}(1)\oplus\ldots\oplus\tilde{\La}(e_0-1),\ \tilde{\La}(j)=\La(j)/\La(j+1),\ j\in\ZZ,\]
where~$e_0$ is the~$\F_0$-period of~$\La$ and considers its endomorphism algebra~$\End_{\k_\F}(\tilde{\La})$. The space~$\tilde{\La}(j)$ is identified with~$\tilde{\La}(j+ie_0)$ by multiplication with the~$i$th power of~$\varpi_0$, so instead of~$\tilde{\La}(j)$ we write~$\tilde{\La}(\tilde{j})$,~$\tilde{j}$ being the mod $e_0$ congruence class of~$j$. We consider a stratum~$\Delta=[\La,n,n-1,b]$. 
One defines an endomorphism~$\tilde{b}\in\End_{\k_\F}(\tilde{\La})$ by the maps\[~\tilde{\La}(\ti{j})\rightarrow\tilde{\La}(\ti{j}-\tilde{n}),~[v]\mapsto [bv],\ \tilde{j}\in\bbZ/e_0\bbZ.\]
 Now, Proposition~4.2 in \loccit\ states that if~$[\La,n,n-1,b]$ is a self-dual stratum then there is a self-dual stratum~$[\La',n',n'-1,b']$ with~$b'=b$ such that
 \[\frac{n'}{e(\La'|\F)}=\frac{n}{e(\La|\F)},\ \timfa_{1-n}(\La)\subseteq\timfa_{1-n'}(\La')\]
 with semisimple endomorphism~$\tilde{b'}$ in~$\End_{\k_\F}(\widetilde{\La'})$.
 Here is a counter example.
We take the symplectic group~$\Sp_6(\bbQ_3)$ with the usual anti-diagonal Gram matrix and we consider the self-dual lattice chain~$\La$ to the following hereditary order together with an element~$b$:
$$ 
\left(\begin{array}{cccccc}
\o &\p &\p &\o &\p &\p \\
\o &\o &\p &\o &\o &\p\\
\o &\o &\o &\o &\o &\o \\
\o &\p &\p &\o &\p &\p \\
\o &\o &\p &\o &\o &\p\\
\o &\o &\o &\o &\o &\o \\
\end{array}\right),\   
\left(\begin{array}{cccccc}
 & &-\varpi & & & \\
-1 & & & & &\\
 &-1 & & & & \\
 & & & & &\varpi \\
 & & & 1 & &\\
 & & & & 1 & \\
\end{array}\right)\varpi^{-1}, 
$$
e.g. take~$\varpi=3$. The stratum~$\Delta=[\La,2,1,b]$ is fundamental with characteristic polynomial~$\chi_\Delta(X)=(X-1)^3(X+1)^3.$
Note that~$\Lambda$ is a regular lattice chain of period~$3$. Now suppose~$\Delta'=[\La',n',n'-1,b'=b]$ is another stratum not equivalent to a null stratum such that~$\Delta$ has the same depth as~$\Delta'$, i.e.~$\frac{n}{e}=\frac{n'}{e'}$ for~$e=e(\La|\F)$ and~$e'=e(\La'|\F)$. 
The equality of depth and~$b=b'$ imply that~$\Delta$ and~$\Delta'$ are intertwining fundamental strata which share the characteristic polynomial. 
From~$\chi_{\Delta'}(0)=-1\in \k_\F^\times$ follows now that~$b'$ normalizes~$\La'$. Now if~$\tilde{b'}\in\End_{\k_\F}(\ti{\La'})$ is semisimple its minimal polynomial has to divide~$(X-1)(X+1)$ because its third power vanishes~$\widetilde{b'}$ (Note that~$3$ is the residual characteristic). In other words:~$X+1$ and~$X(X+1)$ define the same endomorphism for~$X=\ti{b'}$. Thus by homogeneity~$e'$ divides~$n'$ or~$2n'$, i.e. $\frac{2n'}{e'}$ is an integer, which is absurd, as~$e=3$ and~$n=2$.
We were able to exclude~$\Delta'$ equivalent to a null stratum immediately by~\cite[Proposition~6.9]{skodlerackStevens:18}.}
\daniel{
%Stevens fix
Given a lattice sequence~$\La$ we call another lattice sequence~$\La'$ a refinement of~$\La$ if there is positive integer~$m$ such that~$\La'(mj)=\La(j)$ for all~$j\in\bbZ$.
Now, Proposition~4.2 in~\loccit\ will be replaced by:
\begin{proposition}[S. Stevens,~2012]\label{propStevensStrataFix}
Let~$\Delta=[\La,n,n-1,b]$ be a self-dual stratum. Then there is a self-dual stratum~$\Delta'=[\La',n',n'-1,b'=b]$ such that:
\begin{enumerate}
 \item $\timfa_{1-n}(\La)\subseteq\timfa_{1-n'}(\La')$ and~$\La'$ is a refinement of~$\La$,
 \item $\frac{n}{e}=\frac{n'}{e'}$ with~$e=e(\La|\F)$ and~$e'=e(\La'|\F)$,
 \item\label{propStevensStrataFixiii} and if we define~$y=\varpi^{n/g}b^{e/g}$ with~$g=\gcd(e,n)$ we have~$\tilde{y}\in\End_{\k_\F}(\widetilde{\La'})$ is semisimple.
 \item\label{propStevensStrataFixiiiv} If~$\Delta$ is non-fundamental then we can choose~$\Delta'$ such that it further satisfies~$\ti{b}=0$~in~$\End_{\k_\F}(\widetilde{\La'})$, i.e. such that~$\Delta'$ is equivalent to a null stratum. 
 \end{enumerate}
\end{proposition}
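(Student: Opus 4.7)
My plan is to use the Jordan--Chevalley decomposition of $\ti{y}$ in the perfect-field algebra $\End_{\k_\F}(\ti{\La})$ to split $\La$ self-dually into summands, and then on each summand to refine $\La$ so that the nilpotent part of $\ti{y}$ disappears in the refined associated graded.

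First I would write $\ti{y} = \ti{y}_s + \ti{y}_n$ with $\ti{y}_s$ semisimple and $\ti{y}_n$ nilpotent, both polynomials in $\ti{y}$. The spectral idempotents attached to the non-zero eigenvalues of $\ti{y}_s$ are polynomials in $\ti{y}$ without constant term; lifting them by Hensel's lemma yields orthogonal idempotents $e_\alpha \in \o_\F[y]$, which commute with $b$ because $y$ does. Since $\Delta$ is self-dual and $y = \w^{n/g} b^{e/g}$, the element $y$ satisfies $\sigma_h(y) = \pm u y$ for a central unit $u$ (coming from $\sigma_h(\w)$), so $\sigma_h$ permutes the family $\{e_\alpha\}$. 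This produces a self-dual decomposition $\V = \bigoplus_\alpha \V_\alpha$ and a corresponding splitting $\La = \bigoplus_\alpha \La_\alpha$. The problem reduces to each summand, on which $\ti{y}$ either has a single non-zero eigenvalue $\alpha$ (so $\ti{y} - \alpha\cdot\id$ is nilpotent) or is itself nilpotent (equivalently, the piece is non-fundamental).

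Second, on each summand I would produce a refinement $\La'$ of $\La$ of period $e' = me$ with $n' = mn$ as follows. The kernel filtration of the nilpotent operator $\ti{y} - \alpha\cdot\id$ (respectively $\ti{y}$) on $\ti{\La}$ induces, on each graded piece $\ti{\La}(j)$, a flag of $\k_\F$-subspaces; I lift this flag to a chain of intermediate $\o_\F$-lattices between $\La(j)$ and $\La(j+1)$ and use these chains to define $\La'$. A direct computation shows that in $\End_{\k_\F}(\widetilde{\La'})$ the nilpotency index of the relevant operator strictly drops. Iterating at most $\dim_\F \V$ times yields $\ti{y}$ semisimple on $\widetilde{\La'}$, proving~\ref{propStevensStrataFixiii}. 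In the nilpotent case the iteration can be pushed further until the operator vanishes, which by $y \in b^{e/g}\cdot \o_\F^\times$ also forces $\ti{b} = 0$ in $\End_{\k_\F}(\widetilde{\La'})$, proving~\ref{propStevensStrataFixiiiv}. Condition~(i) holds automatically from $\La'(mj) = \La(j)$ and $n'/e' = n/e$, and (ii) is built into the construction.

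For self-duality of $\La'$ I would carry out the kernel-filtration lift on each $\sigma_h$-fixed summand using the kernel filtration itself (which is $\sigma_h$-stable because $y$ is, up to sign and a unit), and on each pair of $\sigma_h$-exchanged summands by choosing intermediate lattices on one side and transporting by $\sigma_h$. The main obstacle will be carrying this out consistently across all residues $j \bmod e_0$ simultaneously while preserving the standard-self-dual normalisation $\La'(k)^\# = \La'(1-k)$; the odd-residue-characteristic assumption enters precisely here, to allow averaging of lifts of eigenvalues and of idempotents under $\sigma_h$ so that no parasitic $2$-torsion obstruction appears when matching the two halves of the dual flag.
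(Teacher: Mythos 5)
Your overall plan---split by the spectral decomposition of $\ti y$, then refine $\La$ so the nilpotent part of $\ti y$ disappears in the new associated graded---is in the right spirit, but it has a genuine gap exactly where you flag the "main obstacle", namely the self-duality of the refinement. The kernel filtration of a nilpotent operator $N$ with $\sigma_h(N)=\pm N$ is \emph{not} self-dual in the needed sense: under the graded form $\ti h$ one has $(\ker N^i)^\perp=\operatorname{im}N^i$, so the dual of the kernel filtration is the \emph{image} filtration, not the kernel filtration re-indexed. If you lift the kernel flag to intermediate lattices between $\La(j)$ and $\La(j+1)$ at every position, the resulting chain $\La'$ will in general fail $\La'(k)^\#=\La'(a-k)$, and ``averaging under $\sigma_h$'' does not fix this---you cannot take a midpoint of two non-nested flags of lattices, and a convex combination of lattice chains need not refine $\La$. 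The paper resolves precisely this difficulty by a one-step, explicitly symmetrized construction: it uses the \emph{image}-type filtrations $\Vv^i_{\tilde{k}}=\ti b^i(\ti\La(\tilde k+\tilde\imath\tilde n)\cap\Yy_0)\perp\ti y_n^i(\ti\La(\tilde k)\cap\Yy_1)$ for $0\le i\le m$ with $m=2s-1$ \emph{odd}, then forces self-duality by setting $\Ww^i_{\tilde k}=\bigcap_{q-p=2(s-1-i)}\bigl(\Vv^p_{\tilde k}+(\Vv^q_{-\tilde k})^\perp\bigr)$ for $0\le i\le s-1$ and $\Ww^i_{\tilde k}=(\Ww^{m-i}_{-\tilde k})^\perp$ for $s\le i\le m$. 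This symmetrizing intersection is the technical heart of the argument, and it is exactly what is missing from your proposal.

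A second, smaller gap: you claim that inserting the kernel flag causes the nilpotency index on $\widetilde{\La'}$ to "strictly drop" and that iterating finitely many times makes $\ti y$ semisimple. This is asserted, not proved, and it is not obvious: after refinement the graded pieces change, the induced $\ti y'$ acts on a different space, and the index of nilpotency of $\ti y'-\alpha$ depends on how the chosen intermediate lattices interact with $b$, not only on $\ti y$. The paper avoids iteration entirely by choosing $m$ large enough (namely $\ti y_n^m=0$ and $\ti b^m|_{\Yy_0}=0$, with $m$ odd for the symmetrization) and producing a single refinement $\La'$ of period $me$ for which the induced $\ti y$ is already semisimple. Finally, your decomposition into individual eigenvalue blocks is finer than necessary and creates extra bookkeeping (eigenvalues over extensions of $\k_\F$, $\sigma_h$-compatibility of the idempotent lifts); the paper only splits $\V=\Y_0\perp\Y_1$ via the Hensel lift of the factorization $\varphi=\psi_0\psi_1$ with $\psi_0$ a power of $X$, which suffices because only the nilpotent-kernel part of $\ti y$ needs to be separated.
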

}

% \daniel{
% Before we start Stevens' proof we want to quote his aknowledgement to Blondel and Van-Dinh Ngo who pointed out the mistake:
% \begin{quote}
% \textit{``Many thanks to Corinne Blondel and Van-Dinh Ngo, who pointed out a mistake in the proof of~\cite[Proposition~4.2]{stevens:02}. The problem is in the use of the Jordan deomposition: in positive characteristic, there is no graded Jordan decomposition -- that is, if~$b$ is an endomorphism of a~$\ZZ/e\ZZ$-graded vector space which is pure of degree~$k$, it is not necessarily the case that its nilpotent part is pure of degree~$k$. (Note that this is true if~$k=0$, and also for arbitrary~$k$ when the vector space is over a field of characteristic zero.) 
% The following is a correction, allowing one to obtain~\cite[Theorem~4.4]{stevens:02}, which was the main result.''}
% \end{quote}}
\begin{proof}[Proof by S. Stevens, edited by the author] 
\daniel{In passing through the affine class of~$\La$ we can assume~$\La$ to be standard self-dual without loss of generality. Denote by~$\Phi(X)\in\o_\F[X]$ the characteristic polynomial of~$y$
and by~$\varphi(X)\in \k_\F[X]$ its reduction
modulo~$\p_\F$. Then by the definition of~$y$  there is a sign~$\eta$ which satisfies~$\sigma_h(y)=\eta y$ and~$\ov\Phi(\eta X)=\pm\Phi(X)$ and the same applies
to~$\varphi$. We write
\[
\varphi(X)=\psi_0(X)\psi_1(X),
\]
 with~$\psi_0(X)$ a power of~$X$ and coprime to~$\psi_1(X)$. By
 Hensel's Lemma, this lifts to a factorization
 \[
 \Phi(X)=\Psi_0(X)\Psi_1(X),
 \]
 such that~$\ov\Psi_i(\eta X)=\pm\Psi_i(X)$. For~$i=0,1$ we
 put~$\Y_i=\ker\Psi_i(y)$, so that~$\V=\Y_0\perp\Y_1$ and this
decomposition is stabilized by~$b$. Moreover,
putting~$\La_i(k)=\La(k)\cap\Y_i$, we
have~$\La(k)=\La_0(k)\oplus\La_1(k)$ for all~$k\in\bbZ$.
Now we pass to the graded~$\k_\F$-vector space~$\tilde{\La}$
%=\bigoplus_{\tilde{k}\in\ZZ/e_0\ZZ} \ti{\La}(\tilde{k})$, where~$\ti{\La}(\til
% k)=\La(k)/\La(k+1)$. (We identify these spaces for different
% representatives~$k$ via multiplication by the uniformizer~$\w_0$.) We
write~$\Yy_i$ for the image in~$\ti{\La}$ of the space~$\Y_i$; that is
\[
\Yy_i=\bigoplus_{k=0}^{e_0-1}\(\La(k)\cap\Y_i\)/\(\La(k+1)\cap\Y_i\).
\]
Then we have an orthogonal decomposition~$\ti{\La}=\Yy_0\perp\Yy_1$,
with respect to the (graded) nondegenerate~$\e$-hermitian
pairing~$\ti{h}$ on~$\ti{\La}$, see~\cite[\S4]{stevens:02} where~$\tilde{h}$ is defined using that~$\La$ is standard self-dual.  
Both~$b$ and~$y$ induce homogeneous graded maps~$\ti{b}$ and~$\ti{y}$
in~$\End_{\k_\F}(\ti{\La})$, respectively of degree~$-n$ and~$0$~mod~$e_0\bbZ$. Then we can also interpret~$\Yy_i$ as the kernel of the
map~$\psi_i(\ti{y})$ (which is also homogeneous of degree~$0$) and~$b$
preserves the decomposition~$\ti{\La}=\Yy_0\perp\Yy_1$. The restriction
of~$\ti{b}$ to~$\Yy_0$ is nilpotent, since~$\ti{y}^2=\ti{b}^{2e/g}$ is
nilpotent on~$\Yy_0$. On the other hand, the restriction of~$\tilde{y}$
to~$\Yy_1$ is invertible and has a Jordan decomposition
\[
\tilde{y}|_{\Yy_1}=\tilde{y}_s + \tilde{y}_n,
\]
with both~$\tilde{y}_s,\tilde{y}_n$ homogeneous of degree~$0$. Note
that~$\tilde{y}|_{\Yy_1}$ can be written as a polynomial in~$\tilde{y}$ so the
same applies to~$\tilde{y}_n$ and~$\tilde{y}_s$; in particular, they commute
with~$\ti{b}$.
We pick an
odd integer~$m=2s-1$ such that both~$\tilde{y}_n^m=0$ and~$\tilde{b}^m|_{\Yy_0}=0$. Now we put
\[
\begin{array}{ll}
\Vv^i_{\tilde{k}} = \tilde{b}^{\,i}\(\ti{\La}(\tilde{k}+\tilde{\imath}\tilde{n})\cap\Yy_0\)
\perp \tilde{y}_n^{\,i}\(\ti{\La}(\tilde{k})\cap\Yy_1\),\quad
&\hbox{for }\tilde{k}\in\ZZ/e_0\ZZ,\ 0\le i\le m; 
\\[10pt]
\Ww^i_{\tilde{k}} = 
\displaystyle\bigcap_{q-p=2(s-1-i)}\(\Vv^p_{\tilde{k}}+\(\Vv^q_{-\tilde{k}}\)^\perp\),
&\hbox{for }\tilde{k}\in\ZZ/e_0\ZZ,\ 0\le i\le s-1; 
\\[10pt]
\Ww^i_{\tilde{k}} = \(\Ww^{m-i}_{-\tilde{k}}\)^\perp,
&\hbox{for }\tilde{k}\in\ZZ/e_0\ZZ,\ s\le i\le m.\\
\end{array}
\]
Note that
we have~$\Ww^i_{\tilde{k}}=\left(\Ww^i_{\tilde{k}}\cap\Yy_0\right)\perp \left(\Ww^i_{\tilde{k}}\cap\Yy_1\right)$ 
 and~$\tilde{b}\(\Ww^i_{\tilde{k}}\cap\Yy_0\right)\subseteq\left(\Ww^{i+1}_{\tilde{k}-\tilde{n}}\cap\Yy_0\right)$;
similarly,~$\tilde{y}_n\(\Ww^i_{\tilde{k}}\cap\Yy_1\right)\subseteq\(\Ww^{i+1}_{\tilde{k}}\cap\Yy_1\right)$. Moreover, 
\[
\ti{\La}(\tilde{k}) = \Ww^0_{\tilde{k}} \supseteq \Ww^1_{\tilde{k}} \supseteq
\cdots \supseteq \Ww^m_{\tilde{k}}=0, \quad\hbox{for }\tilde{k}\in\ZZ/e_0\ZZ,
\]
and~$\tilde{\w}_\F\Ww^i_{\tilde{k}} = \Ww^i_{\tilde{k}+\tilde{e}}$, for~$\tilde{
k}\in\ZZ/e_0\ZZ$, $0\leq i\leq m$. In particular, this gives rise to a
refinement~$\La'$ of~$\La$, with
\[
\La'(km+i)/\La(k+1)=\Ww_{\tilde{k}}^i,\qquad\hbox{for }k\in\ZZ,\ 0\le i\le m-1.
\]
We put~$n'=nm$. 
The proof is now the same as in~\cite[Proposition~4.2]{stevens:02}, but with showing that~$\tilde{y}$ is semisimple in~$\End_{\k_\F}(\widetilde{\La'})$ instead of~$\tilde{b}$ because of the change of statement in~\ref{propStevensStrataFix}\ref{propStevensStrataFixiii} compared to~\loccit. 
 Note that
the construction implies that~$y|_{\Y_0}$ induces the zero map
in~$\End_{\k_\F}(\widetilde{\La'})$ so one needs only to prove that~$y|_{\Y_1}$
induces a semisimple map.
The assertion~\ref{propStevensStrataFixiiiv} follows now from the construction, as~$\tilde{b}|_{\Yy_0}$ is null in~$\End_{\k_\F}(\widetilde{\La'})$.   
}
\end{proof}
\daniel{
Now we state Theorem~4.4 of \loccit,
% We also include the~$\GL$-statement, because the proof is similar. It is in fact easier, because one does not need to manipulate the spaces~$\Vv_{\ti{k}}^i$ further in the proof of~\ref{propStevensStrataFix}.
We write~$\G$ for~$\U(h)$. For the notion of $\G$-split stratum see~\cite[\S2]{stevens:02}.
\begin{theorem}[cf. \cite{stevens:02}~Theorem~4.4]\label{thmStevens4p4Gsplit}
Suppose we are given a non-$\G$-split self-dual fundamental stratum~$\Delta=[\La,n,n-1,b]$ over~$\F$. Then there is a skew-semisimple stratum~$\Delta'=[\La',n',n'-1,\beta']$ of the same depth as~$\Delta$ such that 
 \begin{equation}\label{eqmacond}b+\timfa_{1-n}(\La)\subseteq\beta'+\timfa_{1-n'}(\La').\end{equation}
%  and
%  \begin{equation}\label{eqbcond}b\in\beta'+\mf{a}_{n'-1}(\La').\end{equation}
%  If in the self-dual case we further assume~$\Delta$ to be~$\G$-split, then we can choose~$\Delta'$ to be skew semisimple.
%  \item Suppose~$\Delta=[\La,n,n-1,b]$ is a non-fundamental (self-dual) stratum. Then there is a (self-dual) null-stratum~$\Delta'=[\La',n'-1,n'-1,0]$ such that
%  \[b+\mf{a}_{n-1}(\La)\subseteq\mf{a}_{n'-1}(\La')\]
%  and
%  \[\frac{n}{e(\La|\F)}=\frac{n'}{e(\La'|\F)}.\]
% \end{enumerate}
\end{theorem}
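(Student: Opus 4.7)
The plan is to combine the fixed Proposition~\ref{propStevensStrataFix} with the standard Hensel-type lifting argument of~\cite{stevens:02}, adjusted for the self-dual skew setting. I will first apply Proposition~\ref{propStevensStrataFix} to~$\Delta=[\La,n,n-1,b]$ to obtain a self-dual refinement~$\La'$ of~$\La$ and a self-dual stratum~$\Delta_1=[\La',n',n'-1,b]$ of the same depth such that~$\timfa_{1-n}(\La)\subseteq\timfa_{1-n'}(\La')$ and such that the element~$\ti{y}\in\End_{\k_\F}(\widetilde{\La'})$ with~$y=\varpi^{n'/g'}b^{e'/g'}$ (here~$g'=\gcd(n',e')$) is semisimple; in particular the characteristic polynomial~$\varphi\in\k_\F[X]$ of~$\ti y$ factors as a product of pairwise coprime square-free polynomials, each factor stable (up to the sign~$\pm X\mapsto \pm\eta X$) under the involution induced by~$\sigma_h$.

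Next I would lift this factorization. Since~$\Delta$ is fundamental, so is~$\Delta_1$, and since~$\ti y$ is semisimple its minimal polynomial~$\mu(X)$ is square-free and decomposes as~$\mu=\prod_{i\in\I_0}\mu_i\cdot\prod_{i\in\I_+}(\mu_i\mu_{-i})$ according to the action of~$\sigma_h$, where~$\mu_i$ and~$\mu_{-i}(X)=\pm\ov\mu_i(\eta X)$ are paired. By Hensel's lemma I lift these to pairwise coprime~$\sigma_h$-compatible polynomials~$\Psi_i\in\o_\F[X]$, and the corresponding idempotents~$1^i=\Psi_i^*(y)$ (for suitable cofactor polynomials) produce a self-dual splitting~$\V=\bigoplus_{i\in\I}\V^i$ which is stabilized by~$b$. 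Splitting~$b=\sum_i b_i$ along this decomposition, each block~$[\La'\cap\V^i,n',n'-1,b_i]$ is a simple stratum (since the corresponding~$\ti y_i$ has irreducible minimal polynomial), and the block corresponding to~$\I_0$ is fixed by~$\sigma_h$ while~$\I_+$ and~$\I_-$ are interchanged, yielding a self-dual semisimple stratum~$[\La',n',n'-1,b]$.

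At this stage the only remaining issue is whether there is a factor~$\mu_i$ with~$i\in\I_0$ corresponding to a defect in self-duality: concretely, whether the block corresponding to~$\mu_i=X$ (i.e.~the nilpotent part) forces a non-skew stratum. Here the hypothesis that~$\Delta$ is~\emph{non-$\G$-split} is crucial, since a~$\G$-split fundamental stratum is precisely one for which the nilpotent block~$\Y_0$ (in the notation of the proof of Proposition~\ref{propStevensStrataFix}) produces a defect that cannot be absorbed into a skew representative of the coset. Non-$\G$-splitness guarantees that I can modify~$b$ within the coset~$b+\timfa_{1-n'}(\La')$ by an element of~$\mfa_{1-n'}(\La')$ so that the resulting element~$\beta'$ lies in~$\Lie(\G)$ and still has the same semisimple image modulo~$\timfa_{1-n'}(\La')$; this gives the required skew-semisimple stratum~$\Delta'=[\La',n',n'-1,\beta']$ of the same depth as~$\Delta$ with $b+\timfa_{1-n}(\La)\subseteq\beta'+\timfa_{1-n'}(\La')$.

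The main obstacle I anticipate is the last paragraph: ensuring the block-by-block lifting is compatible with~$\sigma_h$ and that the non-$\G$-split hypothesis really excludes the obstruction to skew-ness. This is exactly the point where the original version of Proposition~4.2 of~\cite{stevens:02} was used to control not only~$\ti b$ but its semisimple image; since our corrected version only controls~$\ti y$ rather than~$\ti b$, some care is needed to verify that the idempotents~$1^i$ (obtained from~$y$) still provide a Witt-type splitting compatible with~$h$ on~$\V$ and with the filtration steps of~$\La'$ modulo~$\varpi_\F$, rather than just modulo~$\varpi_\F^{e'/g'}$. I expect this to follow by standard averaging over~$\sigma_h$ together with the already-guaranteed~$\sigma_h$-stability of~$\tilde{y}$ in~$\End_{\k_\F}(\widetilde{\La'})$, but it is the technical heart of the argument.
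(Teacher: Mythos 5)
Your overall strategy coincides with the paper's proof, which simply defers to Stevens' argument for Theorem~4.4 of~\cite{stevens:02} with the single correction that the criterion for a non-split fundamental stratum to be equivalent to a simple stratum is that~$\tilde y$ (not~$\tilde b$) be semisimple, citing~\cite[Propositions~2.5.8, 2.5.11]{bushnellKutzko:93} and~\cite[\S2.1]{broussous:99}. So steps one through three of your plan (apply Proposition~\ref{propStevensStrataFix}, Hensel-lift the factorization, obtain a~$\sigma_h$-compatible block decomposition) are in the right spirit, although at the block level what one gets is a stratum \emph{equivalent} to a simple stratum — not literally a simple stratum — and the non-obvious content is exactly those Bushnell--Kutzko propositions applied block by block, not the informal observation that each~$\tilde y_i$ has irreducible minimal polynomial.

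The genuine gap is in your final paragraph, which misidentifies what the non-$\G$-split hypothesis is for. You say a~$\G$-split fundamental stratum is "precisely one for which the nilpotent block~$\Y_0$ produces a defect that cannot be absorbed into a skew representative." That is not the definition of~$\G$-split in~\cite[\S2]{stevens:02}, and the nilpotent block is not an obstruction at all: $\Y_0$ is~$\sigma_h$-stable, so it simply contributes a null summand indexed by~$\I_0$, which is perfectly compatible with skew-ness. The actual obstruction to concluding a \emph{skew}-semisimple (as opposed to merely self-dual semisimple) stratum would be a pair of irreducible factors~$\mu_i,\mu_{-i}$ of the characteristic polynomial interchanged by~$\sigma_h$; the corresponding blocks~$\V^i,\V^{-i}$ are then totally isotropic and interchanged by~$\sigma_h$, which is exactly the condition for the stratum to be~$\G$-split. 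Thus the non-$\G$-split hypothesis is what forces~$\I_+=\I_-=\emptyset$ and hence skew-ness; it has nothing to do with absorbing the nilpotent block. (Compare Theorem~\ref{thmStevens4p4ForNonGsplitAndAlsoForNonFundamental}(i), which drops the non-$\G$-split hypothesis and correspondingly only delivers a self-dual, not skew, semisimple stratum.) Your closing plan to "modify~$b$ within the coset by an element of~$\mfa_{1-n'}(\La')$ so that~$\beta'$ lies in~$\Lie(\G)$" is a red herring: for a self-dual stratum the coset is~$\sigma_h$-stable and producing a~$\sigma_h$-skew representative is the easy averaging step, whereas the substance is showing the index set is entirely~$\sigma_h$-fixed.
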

}

\begin{proof}[Proof by S. Stevens, edited by the author:]
 \daniel{ The theorem follows as in the proof given in \loccit\ with the following changes. The proof there uses the fact that,
a non-split fundamental stratum~$[\La,n,n-1,b]$ with~$\La$
strict is equivalent to a simple stratum if and only if~$\tilde{y}$ is
semisimple in~$\End_{\k_\F}(\ti{\La})$: this is true for strata
in~$\g$-standard form by~\cite[Proposition~2.5.8]{bushnellKutzko:93}, and follows for all
strata by~\cite[Proposition~2.5.11]{bushnellKutzko:93}. Moreover, the same result is
true for non-strict~$\La$ since one can replace~$\La$ by the
underlying lattice chain without changing the coset or the induced
map~$\tilde{y}$ (cf.~\cite[\S2.1]{broussous:99}). 
}
\end{proof}
\daniel{
This finishes the erratum.}
\daniel{
\section{The self-dual and the non-fundamental case}\label{appThmB}
The strategy of the proof of~\ref{thmStevens4p4Gsplit} carries to complementary  general cases. This all is not new, but we need it for the main part of the article and it follows directly from \S\ref{appErratum4p2}. So we state it and give the short argument. 
% Now we state Theorem~4.4 of \loccit,
% We also include the~$\GL$-statement, because the proof is similar. It is in fact easier, because one does not need to manipulate the spaces~$\Vv_{\ti{k}}^i$ further in the proof of~\ref{propStevensStrataFix}.
% We write~$\G$ for~$\U(h)$. For the notion of $\G$-split stratum see~\cite[\S2]{stevens:02}.
\begin{theorem}\label{thmStevens4p4ForNonGsplitAndAlsoForNonFundamental}
\begin{enumerate}
\item Suppose we are given a (self-dual) fundamental stratum~$\Delta=[\La,n,n-1,b]$ over~$\F$. Then there is a (self-dual) semisimple stratum~$\Delta'=[\La',n',n'-1,\beta']$ of the same depth as~$\Delta$ which satisfies   
\eqref{eqmacond}.
% \begin{equation}\label{eqmacond}b+\\mf{a}_{n-1}(\La)\subseteq\beta'+\mf{a}_{n'-1}(\La').\end{equation}
%  and
%  \begin{equation}\label{eqbcond}b\in\beta'+\mf{a}_{n'-1}(\La').\end{equation}
%  If in the self-dual case we further assume~$\Delta$ to be~$\G$-split, then we can choose~$\Delta'$ to be skew semisimple.
\item Suppose~$\Delta=[\La,n,n-1,b]$ is a non-fundamental (self-dual) stratum. Then there is a (self-dual) null-stratum~$\Delta'=[\La',n'-1,n'-1,0]$ such that
 \[b+\mf{a}_{1-n}(\La)\subseteq\mf{a}_{1-n'}(\La')\]
 and
 \[\frac{n}{e(\La|\F)}=\frac{n'}{e(\La'|\F)}.\]
\end{enumerate}
\end{theorem}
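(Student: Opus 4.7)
The overall plan is to apply Proposition~\ref{propStevensStrataFix} in both parts and then exploit the characterization of fundamental strata equivalent to semisimple ones via the semisimplicity of the associated residual endomorphism $\tilde{y}$, in the spirit of the proof of Theorem~\ref{thmStevens4p4Gsplit}.

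For part~(i), I would invoke Proposition~\ref{propStevensStrataFix} applied to $\Delta$, producing a refinement $\La'$ of $\La$ of the same depth, with $\timfa_{1-n}(\La)\subseteq\timfa_{1-n'}(\La')$, and such that $\tilde{y}$ is semisimple in $\End_{\k_\F}(\widetilde{\La'})$. The next step is to factor the minimal polynomial of $\tilde{y}$ in $\k_\F[X]$ as a product of pairwise coprime irreducible factors (in the self-dual setting, Galois-paired factors grouped appropriately so that the factorization respects the involution induced by $\sigma_h$ on $\widetilde{\La'}$). This factorization lifts via Hensel's lemma to a $b$-stable decomposition $\V=\bigoplus_{i\in\I}\V^i$ splitting $\La'$ and, in the self-dual case, inducing a $\sigma_h$-compatible decomposition of $\V$ indexed as $\I=\I_+\cupdot\I_0\cupdot\I_-$. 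The stratum $[\La',n',n'-1,b]$ is then equivalent to a (skew-)semisimple stratum $\Delta'$ with $\beta'=b$, by the semisimple analogue of the simple-stratum characterization cited in the proof of Theorem~\ref{thmStevens4p4Gsplit}, applied blockwise; the standard tricks, namely passage to $\g$-standard form and replacement of $\La'$ by its underlying lattice chain, allow one to handle non-strict $\La'$ without changing the coset or the induced $\tilde{y}$.

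For part~(ii), I would apply Proposition~\ref{propStevensStrataFix}\ref{propStevensStrataFixiiiv} directly. It yields a refinement $\La'$ of the same depth satisfying $\timfa_{1-n}(\La)\subseteq\timfa_{1-n'}(\La')$ and $\tilde{b}=0$ in $\End_{\k_\F}(\widetilde{\La'})$. The vanishing of $\tilde{b}$ translates to $b\in\timfa_{1-n'}(\La')$, and intersecting with $\A_-$ in the self-dual case gives $b\in\mf{a}_{1-n'}(\La')$. Consequently, $\Delta':=[\La',n'-1,n'-1,0]$ is a (self-dual) null-stratum, the inclusion $b+\mf{a}_{1-n}(\La)\subseteq\mf{a}_{1-n'}(\La')$ follows by combining $b\in\mf{a}_{1-n'}(\La')$ with the filtration inclusion, and the depth equality $n/e(\La|\F)=n'/e(\La'|\F)$ is exactly the equal-depth assertion of Proposition~\ref{propStevensStrataFix}.

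The main obstacle lies in part~(i): establishing that a fundamental stratum with $\tilde{y}$ semisimple is equivalent to a (skew-)semisimple stratum. The classical reference (Bushnell--Kutzko~2.5.8/2.5.11) treats only the simple (i.e.\ non-split) case, and the generalization requires that the coprime factorization of the minimal polynomial of $\tilde{y}$ interact correctly with the algebra $\End_\F(\V)$ and its idempotents. In the self-dual situation, one additionally needs that Galois-paired factors give rise to a $\sigma_h$-invariant block decomposition, producing a skew-semisimple rather than a mere semisimple stratum. These technicalities are, however, the same as those already handled in the proof of Theorem~\ref{thmStevens4p4Gsplit}; the novel content of Theorem~\ref{thmStevens4p4ForNonGsplitAndAlsoForNonFundamental} is thus essentially the removal of the non-$\G$-split hypothesis in~(i) and the direct reading-off of the null-stratum from Proposition~\ref{propStevensStrataFix}\ref{propStevensStrataFixiiiv} in~(ii).
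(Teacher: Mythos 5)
Your proposal is correct in broad outline and shares the essential mechanism with the paper — both parts are ultimately read off from Proposition~\ref{propStevensStrataFix} and the semisimplicity of~$\tilde y$ — but for part~(i) you route the self-duality argument differently than the paper does, and this is where your sketch is thinnest.

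The paper's proof is deliberately modular. It first notes that in the general (non-self-dual) fundamental case the argument for Theorem~\ref{thmStevens4p4Gsplit} goes through with \emph{simplifications}: one does not need the orthogonal decomposition~$\V=\Y_0\perp\Y_1$ and does not need to invoke the analogue of~\cite[1.10]{stevens:01}, because no hermitian structure has to be preserved. This produces a semisimple stratum~$\Delta'$ with~$\beta'=b$ supported on the lattice sequence~$\La'$ of Proposition~\ref{propStevensStrataFix}. For the self-dual case the paper then observes that~$\La'$ is self-dual by construction and~$b$ is skew, so~$\Delta'$ is a self-dual stratum that is equivalent to a semisimple stratum; the passage to an \emph{equivalent self-dual semisimple} stratum is then delegated to~\cite[Proposition~A.9]{kurinczukSkodlerackStevens:20}, which is precisely the statement that a self-dual stratum equivalent to a semisimple stratum is already equivalent to a self-dual semisimple one. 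Your proposal instead tries to build the~$\sigma_h$-compatible block decomposition by hand, by grouping Galois-paired coprime factors of the minimal polynomial of~$\tilde y$ before lifting via Hensel. That is not wrong in principle, but it is exactly the work that~\cite[Proposition~A.9]{kurinczukSkodlerackStevens:20} packages up, and your sketch passes over the delicate point: pairwise coprime irreducible factors of the minimal polynomial of~$\tilde y$ need not individually be~$\sigma_h$-stable, and you must pair non-fixed factors and verify that the resulting idempotents lie in~$\timfa(\La')$ and that the induced decomposition is~$\sigma_h$-adapted. Citing the abstract result is cleaner and avoids re-proving it. Also be careful with your framing that "the technicalities are the same as those already handled in Theorem~\ref{thmStevens4p4Gsplit}": that theorem only treats the non-$\G$-split case and yields a \emph{simple} stratum, so the blockwise/semisimple extension is precisely the new content and cannot simply be appealed to.

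Part~(ii) matches the paper: Proposition~\ref{propStevensStrataFix}\ref{propStevensStrataFixiiiv} gives~$\tilde b=0$ on~$\widetilde{\La'}$, hence~$b\in\timfa_{1-n'}(\La')$, and the self-dual inclusion follows by intersecting with~$\A_-$; the depth equality is built into the Proposition.
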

}

\begin{proof}
For the general fundamental case the proof in Theorem~\ref{thmStevens4p4Gsplit} simplifies because one does not need orthogonal sums and does not need to consider~\cite[1.10]{stevens:01}.
So let us consider the self-dual fundamental case. By the general fundamental case we can find a semisimple stratum~$\Delta'$ of the same depth as~$\Delta$ satisfying the condition~\eqref{eqmacond}, but with the self-dual lattice sequence~$\La'$ constructed in Proposition~\ref{propStevensStrataFix}. Now~$\Delta'$ is in addition self-dual because~$b$ is. It is equivalent to a self-dual semisimple stratum by~\cite[Proposition~A.9]{kurinczukSkodlerackStevens:20}.

In the second part we use that in the proof of~\ref{propStevensStrataFix} the lattice sequence~$\La'$ is constructed such that~$\widetilde{b}\in\End_{\k_\F}(\widetilde{\La'})$ is~$0$. This finishes the proof.
\end{proof}

\section{Non-parahoric subgroups on maximal self-dual orders}
\label{appMaxSelfdualOrderNonParahoric}

This is about a technical step for going from a vertex in the weak structure of the Bruhat-Tits building of a classical group to a vertex which supports a maximal parahoric subgroup. Here we also consider non-quaternionic classical groups. 
Let~$h:\V\times\V\rightarrow\D$ be an~$\epsilon$-hermitian form with respect to~$(\D,(\bar{\ }))$, where~$\D$ is a skew-field of degree at most~$2$ over~$\F$ and~$(\bar{\ })$ is an anti-involution on~$\D$. We write~$\G$ for the set of isometries of~$h$ whom if~$h$ is orthogonal we further require to have reduced norm~$1$. 
We call a hereditary order of~$\End_\D(\V)$ self-dual if it is stable under the adjoint anti-involution~$\sigma_h$ of~$h$, and those self-dual hereditary orders which are among all self-dual hereditary orders maximal under inclusion are called~\emph{maximal self-dual orders}.  
We fix a uniformizer~$\varpi_\D$ of~$\D$ such that~$\bar{\varpi}_\D=\pm\varpi_\D$.

\begin{lemma}\label{lemCompNonParahoric}
 Let~$\Gamma\in\Latt^1_{h}(\V)$ be a self-dual~$\o_\D$-lattice function such that~$\timfa(\Gamma)$ is a maximal self-dual order  and~$\P^0(\Gamma)$ is not a maximal parahoric subgroup of~$\G$. Then, there exists a self-dual~$\o_{\D}$-lattice function~$\Gamma'$, such that~$\P^0(\Gamma')=\P^0(\Gamma)$ and~$\timfa(\Gamma')\subsetneqq\timfa(\Gamma)$.
\end{lemma}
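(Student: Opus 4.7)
The plan is to identify a self-dual discontinuity of $\Gamma$ whose contribution to the reductive quotient $\bbP(\Gamma)$ has a non-trivial component group, and to refine $\Gamma$ there by inserting a self-dual sublattice stable under the parahoric.

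First, I would invoke the oriflame description of $\bbP(\Gamma)=\P(\Gamma)/\P_{0+}(\Gamma)$: by~\cite{abramenkoNebe:02} it decomposes as a product of factors indexed by the orbits of $t\mapsto -t$ on $\disc(\Gamma)$ modulo the $\o_\D$-period. Orbits of length two contribute connected general-linear factors, whereas self-dual discontinuities $t_0\equiv -t_0$ contribute the isometry group of the non-degenerate induced form $\bar h_{t_0}$ on the residue $\bar M_{t_0}:=\Gamma(t_0)/\Gamma(t_0+)$. Hence the hypothesis $\P^0(\Gamma)\neq\P(\Gamma)$ forces the existence of at least one self-dual $t_0$ whose associated factor is disconnected, necessarily of orthogonal type by the classification of classical-group reductive quotients over $\k_\F$.

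Next, I would refine $\Gamma$ at $t_0$. Choose a maximal totally isotropic $\k_\D$-subspace $\bar W\subsetneq\bar M_{t_0}$ with $\bar W=\bar W^{\perp}$ whose stabiliser in the orthogonal factor coincides with the identity component of that factor; the existence of such $\bar W$ in the cases at hand corresponds to the oriflame splitting of a thin panel, as happens e.g.\ for the split two-dimensional orthogonal factor, whose two isotropic lines are each stable under $\SO_2$. Let $W\subseteq\Gamma(t_0)$ be the preimage of $\bar W$, so that $W^\#=W$, and fix $\varepsilon>0$ smaller than the gap from $t_0$ to any other discontinuity of $\Gamma$ within a fundamental period. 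Define
\[
\Gamma'(t):=\begin{cases} W & \text{if } t\in (t_0-\varepsilon,\,t_0+\varepsilon], \\ \Gamma(t) & \text{otherwise}, \end{cases}
\]
extended periodically. The self-duality $W=W^\#$ together with the symmetric placement around $t_0$ makes $\Gamma'$ a self-dual $\o_\D$-lattice function, the original jump at $t_0$ having been replaced by two jumps at $t_0\pm\varepsilon$; in particular $\timfa(\Gamma')\subsetneqq\timfa(\Gamma)$.

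The crucial step is then to verify $\P^0(\Gamma')=\P^0(\Gamma)$. For $a\in\timfa_{0+}(\Gamma)$ one has $a\Gamma(t_0)\subseteq\Gamma(t_0+)\subseteq W$, hence $(1+a)W\subseteq W$ and $\P_{0+}(\Gamma)\subseteq\P(\Gamma')$. The image of $\P(\Gamma')$ in $\bbP(\Gamma)(\k_\F)$ is the stabiliser of $\bar W$, which by the choice of $\bar W$ equals $\bbP^0(\Gamma)(\k_\F)$, so $\P(\Gamma')=\P^0(\Gamma)$. Moreover, in $\bbP(\Gamma')$ the previously disconnected orthogonal factor at $t_0$ has been replaced by a connected general-linear factor coming from the new length-two orbit $\{t_0-\varepsilon,t_0+\varepsilon\}$, so this part of $\bbP(\Gamma')$ is now connected and $\P^0(\Gamma')=\P(\Gamma')$. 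Combining the two identities yields $\P^0(\Gamma')=\P^0(\Gamma)$. The main obstacle is the case-by-case identification of a suitable $\bar W$ whose stabiliser is exactly the identity component of the orthogonal factor; this reduces to elementary facts on parabolic subgroups of small-rank classical groups over $\k_\F$ and, in the quaternionic setting, to the explicit classification of parahoric structures in~\bfII.2.9.
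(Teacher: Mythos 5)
Your construction is the same one the paper uses: locate a self-dual discontinuity whose residue carries a disconnected orthogonal form, pick an isotropic self-dual sublattice $W$ between $\Gamma(t_0)$ and $\Gamma(t_0+)$, and insert it to produce $\Gamma'$. The verifications that $\P_{0+}(\Gamma)\subseteq\P(\Gamma')$ and that $\P(\Gamma')$ is the preimage of the stabiliser of $\bar W$ are also essentially the paper's steps~1)--3).

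The real gap is the point you flag at the end as ``the main obstacle''. You posit a Lagrangian $\bar W=\bar W^\perp$ whose stabiliser in the orthogonal factor is exactly the identity component, but this is only true when that factor is the split $\O(1,1)$: for $\O_{2n}(\k_\D)$ with $n\geq 2$ the stabiliser of a Lagrangian is a Siegel parabolic, which is a \emph{proper} subgroup of $\SO_{2n}(\k_\D)$, so your $\P(\Gamma')$ would be strictly smaller than $\P^0(\Gamma)$ and the lemma would fail. The paper's proof begins precisely by showing that, after a translation and renormalising $h$ by $\varpi_\D^{-1}$, one may take $\Lambda_0^\#=\Lambda_1$ with $(\bar h,\Lambda_0/\Lambda_1)$ of type $\O(1,1)$; that reduction is the substance of the lemma, not an ``elementary fact'' to be deferred. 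Relatedly, your final claim that $\bbP(\Gamma')$ is connected (hence $\P^0(\Gamma')=\P(\Gamma')$) presumes that $t_0$ was the \emph{only} source of disconnectedness in $\bbP(\Gamma)$; this again follows from the maximality hypotheses once the $\O(1,1)$ normalisation is in place, but it is not addressed in your argument. Finally, the paper's steps~2)--4) (the equality $\P_1(\Lambda)=\P_1(\Lambda')$ via uniqueness of the identity in $\O(1,1)$ fixing one isotropic line, and the derivative/dimension argument to compare $\bbP^0(\Lambda)$ and $\bbP^0(\Lambda')$) give a self-contained group-theoretic verification where your proof replaces these by the assertion about the oriflame product; the ideas agree, but the justifications are missing.
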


\begin{proof}
 Instead with~$\Gamma$ we work with a self-dual lattice chain~$\Lambda=\Lambda_{\Gamma}$ associated to~$\Gamma$. 
 Because of the assumptions on~$\Gamma$, after possibly multiplying~$h$ with~$\varpi_\D^{-1}$, we can assume without loss of generality, 
 \begin{itemize}
  \item $\La_0^\#=\La_1$
  \item $(\bar{\ })$ is trivial on~$\k_\D$, and
  \item that the~$\k_\D$-form~$(\bar{h},\La_0/\La_1)$, given by~$\bar{h}(\bar{v},\bar{w}):=\overline{h(v,w)}$, is orthogonal of the form~$\O(1,1)$. 
 \end{itemize}
Now we choose a lattice~$\La_{\frac{1}{2}}$ in between~$\La_0$ and~$\La_1$ such that~$\La_{\frac12}/\La_1$ is isotropic. There are exactly~$2$ choices. 
Now we add all the~$\D^\times$-translates of~$\Lambda_{\frac12}$ to~$\Lambda$ to obtain a new self-dual lattice sequence~$\La'$ which satisfies~$\timfa(\La')\subsetneqq\timfa(\La)$. We claim that~$\P^0(\La')$ and~$\P^0(\La)$ coincide. 

Proof of the claim in several steps: 

1) At first we show that~$\P^0(\La)$ is contained in~$\P(\La')$. Take an element~$g\in\P^0(\La)$. Then its reduction~$\bar{g}$ modulo~$\timfa_1(\La)$ projects to an element of~$\SU(\bar{h})$. Thus~$g\La_{\frac12}=\La_{\frac12}$, and thus~$g$ is an element of~$\P(\La')$. 

2) We show~$\P_1(\La)=\P_1(\La')$. The group~$\P_1(\La)$ is contained in~$\P_1(\La')$ because the image of~$\La'$ contains the image of~$\La$. To prove the other inclusion take an element~$g=1+x\in\P_1(\La')$. It acts as the identity in the isotropic space~$\La_{\frac12}/\La_1$. The only element of~$\O(1,1)(\k_\D)$ which coincides with the identity on one isotropic space is the identity itself. Therefore~$x\La_0\subseteq\La_1$ and~$g$ has to be an element of~$\P_1(\La)$.   

3) We show~$\P^0(\La)$ is contained in~$\P^0(\La')$.
The quotient map~$\phi$ from~$\timfa(\La')/\timfa_1(\La)$ to~$\timfa(\La')/\timfa_1(\La')$ is a~$\k_\D$-algebra map which over the algebraic closure of~$\k_\D$ maps the group~$\bbP^0(\La)$ into~$\bbP(\La')$. Thus~$\bbP^0(\La')$ contains~$\phi(\bbP^0(\La))$ and we obtain
\begin{eqnarray*}
\P^0(\La)/\P_1(\La')&=&\phi(\P^0(\La)/\P_1(\La))\\
&=&\phi(\bbP^0(\La)(\k_\D))\\
&=&\phi(\bbP^0(\La))(\k_\D)\\
&\subseteq&\bbP^0(\La')(\k_\D)\\
&=&\P^0(\La')/\P_1(\La').\\
\end{eqnarray*}
The 3rd equality arises, because~$k_\D$ is perfect: use that~$\phi$ is defined over~$k_\D$, pass to the algebraic closure and use the Galois action of~$\Gal(\bar{\k}_\D|\k_\D)$. Now, the desired containment follows from 2). 

4) It remains to prove:~$\P^0(\La')$ is contained in~$\P^0(\La)$. 
From 3) we get the following for algebraic groups over~$\k_\D$:
\[\bbP(\La)\supseteq\bbU(\timfa(\La')/\timfa_1(\La),\bar{\sigma}_h)\stackrel{\phi\times\bar{\k}_\D}{\longrightarrow}\bbU(\timfa(\La')/\timfa_1(\La'),\bar{\sigma}_h)=\bbP(\La'),\]
and the derivative of~$\phi$ at the identity is an isomorphism, because~$\mfa_1(\La')=\mfa_1(\La)$. Thus the kernel of~$\phi\times\bar{\k}_\D$ is finite, and therefore~$\bbP^0(\La),\bbP^0(\La')$ and~$\phi(\bbP^0(\La))$ have the same dimension, i.e. the latter two algebraic groups coincide. This finishes the proof. 
\end{proof}

\def\Circlearrowleft{\ensuremath{%
  \rotatebox[origin=c]{180}{$\circlearrowleft$}}}

\bibliographystyle{plain}
%\bibliography{/home/zhou/LaTeX/bib/bibliography}
\bibliography{./bibliography}

\end{document}